\numberwithin{equation}{section}
\numberwithin{table}{section}
\numberwithin{figure}{section}
\providecommand{\tabularnewline}{\\}
  \theoremstyle{plain}
  \newtheorem{prop}{\protect\propositionname}
\theoremstyle{plain}
\newtheorem{thm}{\protect\theoremname}
  \theoremstyle{plain}
  \newtheorem{lem}{\protect\lemmaname}
  \theoremstyle{plain}
  \newtheorem{assumption}{\protect\assumptionname}
 \theoremstyle{definition}
  \newtheorem{example}{\protect\examplename}
    \newtheorem*{prop*}{\protect\propositionname}
  \theoremstyle{plain}
\definecolor{Red}{rgb}{1,0,0}
\definecolor{Blue}{rgb}{0,0,1}
\definecolor{Green}{rgb}{0,1,0}
\definecolor{magenta}{rgb}{1,0,.6}
\definecolor{lightblue}{rgb}{0,.5,1}
\definecolor{lightpurple}{rgb}{.6,.4,1}
\definecolor{gold}{rgb}{.6,.5,0}
\definecolor{orange}{rgb}{1,0.4,0}
\definecolor{hotpink}{rgb}{1,0,0.5}
\definecolor{newcolor2}{rgb}{.5,.3,.5}
\definecolor{newcolor}{rgb}{0,.3,1}
\definecolor{newcolor3}{rgb}{1,0,.35}
\definecolor{darkgreen1}{rgb}{0, .35, 0}
\definecolor{darkgreen}{rgb}{0, .6, 0}
\definecolor{darkred}{rgb}{.75,0,0}
\def\half{\frac{1}{2}}
\def\r{\rho}
\def\a{\alpha}
\def\bq{\begin{equation}}
\def\eq{\end{equation}}
\def\ba{\begin{eqnarray}}
\def\ea{\end{eqnarray}}
\def\bas{\begin{eqnarray*}}
\def\eas{\end{eqnarray*}}
\renewenvironment{itemize}[1]{\begin{compactitem}#1}{\end{compactitem}}
\renewenvironment{enumerate}[1]{\begin{compactenum}#1}{\end{compactenum}}
  \providecommand{\assumptionname}{Assumption}
  \providecommand{\examplename}{Example}
  \providecommand{\lemmaname}{Lemma}
  \providecommand{\propositionname}{Proposition}
\providecommand{\theoremname}{Theorem}
  \providecommand{\assumptionname}{Assumption}
  \providecommand{\examplename}{Example}
  \providecommand{\lemmaname}{Lemma}
  \providecommand{\propositionname}{Proposition}
\providecommand{\theoremname}{Theorem}
  \providecommand{\assumptionname}{Assumption}
  \providecommand{\examplename}{Example}
  \providecommand{\lemmaname}{Lemma}
  \providecommand{\propositionname}{Proposition}
\providecommand{\theoremname}{Theorem}
  \providecommand{\assumptionname}{Assumption}
  \providecommand{\examplename}{Example}
  \providecommand{\lemmaname}{Lemma}
  \providecommand{\propositionname}{Proposition}
\providecommand{\theoremname}{Theorem}
  \providecommand{\assumptionname}{Assumption}
  \providecommand{\examplename}{Example}
  \providecommand{\lemmaname}{Lemma}
  \providecommand{\propositionname}{Proposition}
\providecommand{\theoremname}{Theorem}
  \providecommand{\assumptionname}{Assumption}
  \providecommand{\examplename}{Example}
  \providecommand{\lemmaname}{Lemma}
  \providecommand{\propositionname}{Proposition}
\providecommand{\theoremname}{Theorem}
  \providecommand{\assumptionname}{Assumption}
  \providecommand{\examplename}{Example}
  \providecommand{\lemmaname}{Lemma}
  \providecommand{\propositionname}{Proposition}
\providecommand{\theoremname}{Theorem}
\begin{document}

\title{Optimal Dynamic Allocation of Attention\thanks{ We thank Martino Bardi, Guy Barles, Hector Chade, Martin Cripps,
Mark Dean, Mira Frick, Duarte Goncalves, Johannes H\"{o}rner, Han Huynh, Philippe Jehiel,
Ian Krajbich, Botond K\H{o}szegi, Suehyun Kwon, George Mailath, Filip
Mat\v{e}jka, Pietro Ortoleva, Andrea Prat, Ed Schlee, Sara Shahanaghi, Philipp Strack,
Tomasz Strzalecki, Glen Weyl, Michael Woodford, Xingye Wu, Weijie
Zhong, and audiences at numerous conferences and seminars for helpful
comments; and Timur Abbyasov for excellent research assistance.\protect \\
 Che: Department of Economics, Columbia University, 420 W. 118th Street,
1029 IAB, New York, NY 10025, USA; email: \protect\protect\href{mailto:yeonkooche@gmail.com}{yeonkooche@gmail.com};
web: \protect\protect\href{http://blogs.cuit.columbia.edu/yc2271/}{http://blogs.cuit.columbia.edu/yc2271/}.\protect \\
 Mierendorff: Department of Economics, University College London,
30 Gordon Street, London WC1H 0AX, UK; email: \protect\protect\href{mailto:k.mierendorff@ucl.ac.uk}{k.mierendorff@ucl.ac.uk};
web: \protect\protect\href{http://www.homepages.ucl.ac.uk/\~uctpkmi/}{http://www.homepages.ucl.ac.uk/$\sim$uctpkmi/}. }
}

\author{Yeon-Koo Che\qquad{}Konrad Mierendorff}

\date{This draft: December 15, 2018\\First draft: April 18, 2016}
\maketitle
\begin{abstract}
We consider a decision maker (DM) who, before taking an action, seeks information by allocating her limited attention dynamically over different news sources that are biased toward alternative actions. Endogenous choice of information generates rich dynamics: The chosen news source either reinforces or weakens the prior, shaping subsequent attention choices, belief updating, and the final action. The DM adopts a learning strategy biased toward the current belief when the belief is extreme and  against that belief when it is moderate.  Applied to consumption of news media, observed behavior exhibits an ``echo-chamber'' effect for partisan voters and a novel ``anti echo-chamber'' effect for moderates.
\medskip{}

\noindent \textsc{Keywords}: Wald sequential decision problem, choice
of information, own-biased and opposite-biased learning strategies,
limited attention. 
\end{abstract}

\section{Introduction}

Information is central to decision making. Individuals, firms, and
government agencies often expend significant resources to evaluate
their choices in consumption, investment, and public projects.   This is particularly so in high-stakes deliberation; e.g.,
when a voter deliberates on alternative candidates, when a researcher investigates a hypothesis, or when a judge or a juror weighs a defendant's guilt in a criminal case.
In such situations and others, decision makers have access to different
``news'' sources or diverse views on their actions. For instance, voters may expose themselves to like-minded news channels or to opposite-minded ones.
Jurors may hear adversarial lawyers advocating opposing views
on the case. Individuals also make attention choices in their internal processes of deliberation and thinking. For instance,
a researcher may expend efforts to either prove or disprove a hypothesis.

In this paper, we ask how a decision maker (DM) should allocate her
limited attention across different news sources  or different
deliberation strategies dynamically over time, and how that process shapes her choice of action.  An important aspect of this problem is how long the DM should search
for information before stopping to take an action. This \emph{stopping
problem} has been studied extensively by many authors, starting with
\citet{wald:47} and \citet{arrow/blackwell/girshick:49}. While sharing
the premise that information is costly and takes time to
arrive, the contribution of the current paper is to study how a DM allocates her attention over different \emph{types} of news sources.

In our model the DM faces binary actions, $r$ and $\ell$,
which are optimal in states $R$ and $L$, respectively. The state
is initially unknown, and the DM has a prior belief. At each point
in time, the DM may stop and take an action which is irreversible,
or she may acquire more information about the state. In the latter case,
she incurs a flow cost and payoffs are discounted.

Information can be received from of two sources: an $L$-biased or an $R$-biased news source.%
\footnote{\label{fn:bias_intro}We assume that the DM is Bayesian, so that a news source cannot systematically bias her belief.  The term ``bias'' here refers to the frequency of a signal favoring one state, which will be clarified in detail later.} %
The $L$-biased news source always sends an
$L$-signal in state $L$ and sometimes also in state $R$. Otherwise, it sends an $R$-signal. Since the $R$-signal is sent only in state $R$, it fully reveals the state.
Symmetrically, the $R$-biased news source is biased toward sending
an $R$-signal, except that in state $L$ it  occasionally reveals the state to be
$L$. At each point in time, the DM has a unit budget of attention to allocate between these
two news sources, and she may ``multi-home'' by arbitrarily dividing
her attention between the two sources. In our continuous time model, these two news sources reduce to
two Poisson processes that each generate breakthrough news revealing one
state. In the absence of a breakthrough, each source leads to continuous updating of the belief in the direction of the source's bias. We show that these Poisson processes
can be justified as optimal within a class of experiments which encompass general non-conclusive Poisson signals.

The main trade-off in our model is the decision between news sources that are biased in favor of, or against one's current belief.
We obtain a novel characterization of the optimal learning/attention strategy. 
While our model allows for general strategies, we show that for each
prior belief, the DM optimally uses one of three simple heuristics:
(i) \emph{immediate action}, (ii) \emph{own-biased learning}, and
(iii) \emph{opposite-biased learning}; and she never switches between these different
modes of learning.

\textbf{Immediate action} is a simple strategy where the DM takes
an optimal action given her prior without acquiring any information.
\textbf{Own-biased learning} focuses attention on the news source
that is biased toward the state that the DM finds \emph{relatively likely}.
An example is to focus on the $R$-biased news source when state $R$
is relatively likely. Given this strategy, the DM will take action
$\ell$ if breakthrough news reveals the state to be $L$. Otherwise, and more
likely, the DM becomes more convinced of state $R$, which leads to
further own-biased learning. Eventually,
she becomes sufficiently certain that the state is $R$---her belief reaches a stopping boundary, and she chooses action $r$ without fully learning the state. 
\textbf{Opposite-biased learning} focuses attention on the
news source biased toward the state she finds \emph{relatively unlikely}.
An example is to focus initially on the $L$-biased news source when
state $R$ is relatively likely. When following this strategy, the DM becomes less confident about
state $R$ when no breakthrough news arrives. Eventually, she becomes
so uncertain that she switches to a second phase where she divides
her attention equally between the $R$-biased and the $L$-biased news source.
She continues to acquire information and never stops until a breakthrough reveals the true state.

The optimality of the alternative learning strategies depends on the
parameters as well as the DM's prior. In particular, the cost of information
as measured by the flow cost and discounting is important. Not surprisingly,
if information is very costly, the DM takes an immediate action for
all beliefs. For moderate information acquisition costs, we show that
the DM optimally takes an immediate action when she is extremely certain,
while she employs own-biased learning when she is more uncertain.
Finally, if information acquisition costs are low, immediate action
is again optimal for extreme priors, and own-biased learning is optimal
for less extreme priors. For very uncertain priors, however, opposite-biased
learning becomes optimal.

The intuition behind the optimal policy is explained by a trade-off
between \emph{accuracy} and \emph{delay}. With an extreme belief,
a fairly accurate decision can be made even without evidence, so further information acquisition
is worth relatively little compared to the delay it causes. Conversely,
with a less extreme belief, information acquisition is more valuable.
This explains why the \emph{experimentation region} contains moderate
beliefs and the \emph{stopping region} is located at the extreme ends
of the belief space. This trade-off also explains which strategy is
optimal inside the experimentation region.  Opposite-biased learning will
lead to a fully accurate decision because the DM never takes an action
before learning the state, but this could take a long time. By contrast, own-biased learning is likely to produce a quick decision, because when no breakthrough arrives, it takes only a finite period of time for the DM to reach the stopping boundary and take an action. The price of a quick decision is lower accuracy, since the DM sometimes takes an incorrect action when she reaches the stopping region without breakthrough news.
When the DM is already quite certain,
under own-biased learning
the time needed to reach the stopping boundary is very short, 
and the higher accuracy of opposite-biased learning is less valuable.
This explains  why the DM chooses own-biased learning when she is more certain and
opposite-biased learning only when she is more uncertain.  An implication is that
\emph{a ``skeptic'' is more likely to make an accurate decision
but with a longer delay than a ``believer}.''  This prediction---particularly the dependence of decision accuracy on the prior beliefs---constitutes an important difference 
relative to the existing literature, as will be explained in Section \ref{sec:optimal-strategy}. 

Our model yields rich implications in terms of dynamic feedback between the
DM's selective exposure to a news source and her belief updating.
This feedback is particularly relevant for voters who  consult media, 
say before an election.
Our results imply that optimal media choice leads to an ``echo-chamber
effect,'' where voters with relatively extreme beliefs subscribe
to own-biased media that are likely to reinforce their prior beliefs.
With their beliefs reinforced, such voters further subscribe to own-biased
news media, repeating the same process until they become sufficiently
convinced. The resulting feedback loop results in polarization of beliefs. Interestingly, with sufficiently informative media, this
effect is reversed for voters with moderate beliefs. They optimally
seek opposite-biased outlets. As a result, they are likely to become
more skeptical about their initial beliefs. With growing skepticism,
such voters will eventually multi-home both types of media outlets
until they receive conclusive news that leads them to make up their
minds. Their behavior thus exhibits an ``anti-echo chamber effect.''

\subsection*{Relation to the Literature}
Our model incorporates endogenous choice of information in an optimal stopping framework \`a la \cite{wald:47}. Optimal stopping problems with exogenous information have been analyzed in a Poisson framework \citep[see][ch.~VI]{Peskir2006}, but economic applications have focused on drift-diffusion models (DDM) in which the signal follows a Brownian motion with a drift determined by the state. \cite{moscarini/smith:01} extend the stopping problem by allowing for an endogenous choice of signal precision. Other applications of DDMs include \cite{Chan2016}, \cite{fudenberg:15}, \cite{Henry2017}, and \cite{McClellan2017}.
An exception is \citet{nikandrova/pancs:15} who consider the problem
of selectively learning about different investment projects in a Poisson framework. In their model, the payoffs of final actions are uncorrelated whereas our model assumes negatively correlated payoffs.\footnote{See also the recent paper by \citet{Mayskaya2016}, and \citet{Ke2016}, which is concurrent to our paper.} 

Ultimately, whether a DDM or a Poisson model is appropriate depends on the specific application at hand.%
\footnote{DDMs, which lead to a continuous belief process, are more suitable for the problem of learning the properties of a data-generating process from a sequence of samples, as in clinical trials, or for the analysis of statistical information. Poisson models, which lead to discontinuous updating, are useful to model the discovery of individual pieces of information that are very informative, as is common in the political sphere, in criminal investigations, or when a scientist searches for a breakthrough insight that will prove or disprove a hypothesis.} %
From a theoretical point of view, \cite{Zhong2017} justifies Poisson learning as  an optimal information choice in a model that also allows for learning from a diffusion process.\footnote{His model differs from ours in that it assumes a posterior separable cost function \citep[see e.g.][]{Caplin2018}. Much like the rational inattention model described below,  the cost of a Blackwell experiment depends on the DM's belief in his model. By contrast, feasible experiments  and their costs do not depend on the DM's belief in our model.  Due to this difference, our models are not directly comparable; his framework cannot be used to formulate our model (see also footnote \ref{fn:zhong} in Section \ref{sec:Micro-foundation}).}%

Our model shares a common theme with the rational inattention model (henceforth, RI) introduced by \citet{sims:03} and further developed by \citet{matejka/mckay:15} and \citet{steiner/stewart/matejka:15}.  Like our paper, the RI models explain individual choice as resulting from the optimal allocation of limited attention over diverse information.   However, our model differs from the RI models in two respects.  One feature of the RI model is that  the same Blackwell experiment entails different costs for different beliefs. 
We do not allow for such belief-dependence of the information technology. Thus any dependence of information choice on beliefs arises from the DM's incentives, rather than the technology she faces.   Second and more important,  the RI model abstracts from the precise dynamic process of allocating one's attention.  By contrast, our objective is to unpack the ``black box'' and explicitly characterize the DM's dynamic attention choice.  \citet{steiner/stewart/matejka:15}
extend the RI model to a dynamic setting in which a DM chooses a sequence of multiple actions.  However, their main objective is to characterize stochastic choice rather than the dynamic information acquisition, the main focus of our paper.  As they acknowledge, the information acquisition strategy implementing the optimal stochastic choice is not uniquely pinned down (see p. 527 of  \citet{steiner/stewart/matejka:15}).
 \cite{Hebert-Woodford:17} and \cite{morris-strack:17} provide a link between RI and DDM models by showing that a class of static reduced-form cost functions, including RI, can be microfounded by dynamic DDM models.
 
Our model leads to rich predictions about the stochastic choice function that are not obtained in the DDM or RI framework. In the latter two, the accuracy of the DM's decision is independent of the prior (conditional on information acquisition). In our model, the accuracy varies with the prior.%
\footnote{Note however, that this dichotomy is less clear in decision problems with a richer state-space such as \cite{fudenberg:15} and \cite{Ke2016}. A complete comparison of the implications of continuous versus discontinuous learning is beyond the current state of the literature.} %
There are two reasons for this difference. First, the endogenous choice of information sources leads to different modes of learning depending on the prior, which results in significant differences in the accuracy and delay. Second, in our Poisson model, the DM may reach a decision either following a breakthrough, or after the belief drifts to a stopping bound. In the DDM framework, all decisions are reached after drifting to a decision boundary.\footnote{Note that we are comparing our model with discontinuous learning and endogenous information choice to a DDM model with continuous learning and exogenous information. In our two-state model, it is not possible to formulate a DDM with multiple information sources making it impossible to describe separately the effect of discontinuous learning and endogenous information.} (See Section \ref{sec:optimal-strategy} for further discussion.) 

The Poisson signal structure was introduced in bandit problems by \citet{keller:05}. The negatively-correlated
bandits model of \citet{klein/rady:11} 
parallels the choice between two biased Poisson processes studied in the current paper.
However, there are two fundamental differences. First, there is difference in timing. In our model, exploiting a payoff requires the DM to stop learning, whereas in bandit problems, she can exploit payoffs while learning.
Second, in bandit models, information sources are linked to exploitation of specific arms. For these reasons, a distinct characterization
emerges; for instance, there is no analogue to our ``own-biased learning'' in the bandit literature.\footnote{This is also the case in \citet{Damiano2017} who add additional learning to a Poisson bandit model.}

Finally, the current paper is related to the media choice literature.
It has been observed before that a Bayesian voter may find it optimal
to consume news from a biased outlet (see \citet{Calvert1985}, \citet{Suen2004},
and \citet{burke:08}). In particular, \citet{Suen2004} observes self-perpetuation
and polarization of beliefs. As we elaborate later, however, these
models consider very special cases that prevent nontrival dynamics
 from emerging.

The paper is organized as follows. Section \ref{sec:Model}
presents the model. Section \ref{sec:Example} presents an example
to illustrate the main results. Section \ref{sec:Analysis_Baseline}
characterizes the optimal policy. Section \ref{sec:Application} applies
the model to media choice. Section \ref{sec:Extensions} extends the
model in several directions.   Section
\ref{sec:Conclusion} concludes. Proofs are defered to Appendix
\ref{sec:Main_Appendix} and the Supplemental Material.

\section{Model\label{sec:Model}}

\paragraph*{States, Actions and Payoffs.}

A DM must choose from two \emph{actions,} $r$ or $\ell$, whose payoffs
depend on the unknown \emph{state} $\omega\in\{R,L\}$. The \emph{payoff}
of taking action $x\in\{r,\ell\}$ in state $\omega$ is denoted by $u_{x}^{\omega}\in\mathbb{R}$.
We label states and actions such that it is optimal to match the state,
and assume that the optimal action yields a positive payoff\textemdash that
is, $u_{r}^{R}>\max\left\{ 0,u_{\ell}^{R}\right\} $ and $u_{\ell}^{L}>\max\left\{ 0,u_{r}^{L}\right\} $.\footnote{Note that we allow for $u_{x}^{R}=u_{x}^{L}$ so that one action $x\in\{r,\ell\}$
can be a \emph{safe action}. We rule out the trivial case in which
$u_{x}^{\omega}\ge u_{y}^{\omega}$ for $x\neq y$, in both states
$\omega=R,L$.} The DM may delay her action and acquire information. In this case,
she incurs a flow cost of $c\ge0$ per unit of time. In addition,
her payoffs (and the flow cost) are discounted exponentially at rate
$\rho\ge0$. Either $c$ or $\rho$ may be zero, but not both.

The DM's \emph{belief} is denoted by the probability $p\in[0,1]$
that the state is $R$. Her \emph{prior belief} at time $t=0$ is
denoted by $p_{0}$. If the DM chooses her action optimally without
information acquisition, then given belief $p$, she will realize
an expected payoff of $U(p):=\max\{U_{r}(p),U_{\ell}(p)\}$, where
$U_{x}(p):=pu_{x}^{R}+(1-p)u_{x}^{L}$ is the expected payoff of taking
action $x$. $U(p)$ takes the piece-wise linear form depicted in
Figure \ref{fig:examples_optimal_solution_B} on page \pageref{fig:examples_optimal_solution_B}.

\paragraph*{Information Acquisition and Attention.}

We model information acquisition in continuous time. At each point
in time, the DM may allocate one unit of \emph{attention} across an \emph{$L$-biased} and an \emph{$R$-biased} news source.  
The $L$-biased source sends Poisson breakthrough news only in state $R$ with an arrival rate of $\lambda>0$, and the $R$-biased source sends breakthrough news only in state $L$ with an arrival rate of $\lambda$. Since, for a given source, a breakthrough arrives only in one state, it conclusively reveals the true state.  
In this sense, paying attention to the $L$-biased source say can be interpreted as ``looking for  $R$-evidence.''  Indeed, this interpretation describes some circumstances well and is hence useful to keep in mind. %
For instance, one can interpret this as a judge or juror focusing attention to evidence proving a certain state---guilt or innocence of a suspect, or a scientist seeking firm evidence that either proves or disproves a certain hypothesis. %
For the media choice application, however, interpreting news-sources according to their ``biases'' is more useful.

To get a better understanding of this interpretation, it is useful to study the \emph{statistical experiments} induced by different attention choices.  Suppose the DM pays full attention to the $L$-biased news source for a short duration $dt>0$.
Then, she receives one of two signals---breakthrough news (``signal $\sigma_{R}^{L}$''), or its absence (``signal $\sigma_{L}^{L}$'').  Panel (a) of Figure \ref{fig:two-biased-experiments} describes the probabilities of receiving these two signals in each state. 
 \medskip
\begin{figure}[htp]
\hfill
\begin{tabular}{ccc}
		\multicolumn{3}{c}{(a) $L$-biased experiment: $\sigma^{L}$}\tabularnewline
		\hline 
		\hline 
		state/signal  & $\sigma_{L}^{L}$  & $\sigma_{R}^{L}$ \tabularnewline
		\hline 
		$L$  & $1$  & $0$\tabularnewline
		$R$  & $1-\lambda dt$  & $\lambda dt$ \tabularnewline
		\hline 
\end{tabular}
\hfill
\begin{tabular}{ccc}
 \multicolumn{3}{c}{(b) $R$-biased experiment: $\sigma^{R}$}\tabularnewline
	\hline 
	\hline 
	state/signal  & $\sigma_{L}^{R}$  & $\sigma_{R}^{R}$ \tabularnewline
	\hline 
	$L$  & $\lambda dt$  & $1-\lambda dt$\tabularnewline
	$R$  & $0$  & $1$\tabularnewline
	\hline 
\end{tabular}\hfill
\medskip
\caption{Experiments induced by two Poisson signals.} \label{fig:two-biased-experiments}
\end{figure}

 Experiment $\sigma^{L}$ is ``biased'' toward $L$ in the sense
of sending the $L$-favoring signal $\sigma_{L}^{L}$ excessively\textemdash always
in state $L$ but even in state $R$ with some probability.\footnote{As mentioned in Footnote \ref{fn:bias_intro}, we use the term ``bias'' only in this sense. This terminology is in keeping with the media choice literature as we will discuss in Section \ref{sec:Micro-foundation}.} An implication of this is that $\sigma_{L}^{L}$ is a relatively
weak signal that moves the belief toward $L$ but not by much. By contrast, the signal $\sigma_{R}^{L}$\textemdash the $R$-favoring
signal from source $\sigma^{L}$\textemdash reveals conclusively that
the state is $R$.  Similarly, if the DM chooses the $R$-biased news source for a duration $dt>0$,
 then this induces the experiment  $\sigma^{R}$ depicted in Panel (b).  This experiment
   is biased toward $R$ in
the sense of sending the $R$-favoring signal excessively.  
While our model focuses on conclusive Poisson experiments, we show in Section \ref{sec:Micro-foundation} that this focus can be justified in a more general class of information technologies.

We assume that the DM may allocate any fraction $\alpha\in[0,1]$ of her attention to $L$-biased news and the remaining fraction $\beta=1-\a$ to $R$-biased news. In this case, she receives $R$-evidence with
arrival rate $\alpha\lambda$ in state $R$, and $L$-evidence with
arrival rate $\beta\lambda$ in state $L$. An interior attention
choice can be interpreted as ``multi-homing''---or switching back and forth arbitrarily frequently---between the two news sources.
We denote the DM's attention strategy by $(\alpha_{t})=(\alpha_{t})_{t\ge0}$,
and assume that $\alpha_{t}$ is a measurable function of $t.$

Suppose the DM uses the attention strategy $(\alpha_{t})$. In the
absence of a breakthrough, the DM's belief will evolve according
to Bayes rule:\footnote{Since the belief is a martingale, we have $\lambda\alpha_{t}p_{t}dt+\left(1-\lambda\alpha_{t}p_{t}dt-\lambda\beta_{t}\left(1-p_{t}\right)dt\right)\left[p_{t}+\dot{p}_{t}dt\right]=p_{t}.$
Dividing by $dt$ and letting $dt\rightarrow0$ yields \eqref{eq:pdot}.} 
\begin{equation}
\dot{p}_{t}=-\lambda(\alpha_{t}-\beta_{t})p_{t}(1-p_{t})=-\lambda(2\alpha_{t}-1)p_{t}(1-p_{t}).\label{eq:pdot}
\end{equation}
The bias of a news source corresponds to the direction of updating
in the absence of breakthrough news. The higher $\alpha$, the more
$L$-biased is the mix of news sources that the DM pays attention
to. For instance, full attention to $L$-biased news ($\alpha=1$)
makes the DM pessimistic about state $R$ if no breakthrough
arrives. Consequently, her belief drifts towards $L$. Finally, note
that if the DM divides her attention equally between both news sources,
she never updates her belief in case of no breakthrough news: $\dot{p}_{t}=0$
if $\a_{t}=1/2$.

\section{Illustrative Examples \label{sec:Example}}

Before proceeding, we use simple examples to illustrate the main insights
of our results. The examples will highlight the role of dynamics by
considering one-period and two-period versions of our model.\footnote{These examples are similar in spirit to \citet{Suen2004} and \citet{burke:08},
except that information is not costly in these models, which leads
to a different characterization. In particular, dynamics has no significant
effect in these models, in contrast to the point made here.} For simplicity, assume the DM enjoys a payoff of $1$ when matching
the state and $-1$ otherwise ($u_{r}^{R}=u_{\ell}^{L}=1$, and
$u_{\ell}^{R}=u_{r}^{L}=-1$). Time is discrete with a period length
of $dt=1$, and the DM incurs a cost $c$ per period if she acquires
information. There is no discounting. 
In each period, the DM can either stop and take an irreversible action, or pay the cost $c$ to observe one of the experiments $\sigma^L$ and $\sigma^R$ in Figure \ref{fig:two-biased-experiments}.

\paragraph*{One-Period Problem.}

Suppose the DM can experiment only once before taking an action. For a prior
$p_{0}$ sufficiently high or low, the DM will optimally choose to
take an action immediately, since experimentation is costly. For less extreme priors
$p_{0}$, she will experiment. The question is which news source she
will pay attention to. We argue that it is optimal to pick the ``own-biased news source''---namely the experiment biased toward one's prior:  $\sigma^{R}$  if $p_{0}>1/2$ and $\sigma^{L}$ if $p_{0}<1/2$. 
 
The reason is that own-biased news lowers the chance of mistakes compared to opposite-biased news. Specifically, suppose $p_{0}>1/2$, so that the prior indicates that action $r$ is optimal, and assume that the DM
picks the opposite-biased experiment, $\sigma^L$.
There are two cases.  Suppose first $p_0$ is very high.  Then even when the DM receives the $L$-favoring signal $\sigma_{L}^{L}$, this will not move her belief enough to ``change'' her action to $\ell$, meaning she will choose $r$ regardless of the signal.  Hence, $\sigma^L$ is worthless to her.  By contrast, a DM choosing  $\sigma^R$ will be ``convinced'' by an $L$-signal to change her action to $\ell$, which makes $\sigma^R$ valuable. 

Next suppose that $p_{0}>1/2$ but lower, so that an $L$-signal
from $\sigma^{L}$ also leads the DM to choose $\ell$. Why is $\sigma^{L}$
still inferior to $\sigma^{R}$?  The reason is that  $\sigma^{L}$ chooses
 $\ell$ as the ``default action'' which is taken unless conclusive $R$-evidence is observed. 
 Since the prior $p_{0}>1/2$ favors state $R$, action $r$ is a better default; so the own-biased  experiment $\sigma^R$ is better because it leads to action $r$ unless conclusive $L$-evidence is observed.\footnote{\label{fn:one-period} To see precisely why $\sigma^{R}$ is more
valuable than $\sigma^{L}$ if $p_{0}>1/2$, we consider a (hypothetical)
experiment $S$ with three possible signals $S_L$, $S_0$, and $S_R$: 
\begin{center}
\begin{tabular}{cccc}
\hline 
state/signal  & $S_{L}$  & $S_{0}$  & $S_{R}$ \tabularnewline
\hline 
$L$  & $\lambda$  & $1-\lambda$  & $0$\tabularnewline
$R$  & $0$  & $1-\lambda$  & $\lambda$ \tabularnewline
\hline 
\end{tabular}
\par\end{center}
Clearly, $S$ is Blackwell more informative than $\sigma^{L}$ and
$\sigma^{R}$, respectively. $\sigma^{L}$ can be obtained from $S$
by observing only the partition $\left\{ \left\{ S_{L},S_{0}\right\} ,\left\{ S_{R}\right\} \right\} $,
and $\sigma^{R}$ can be obtained by observing only the partition
$\left\{ \left\{ S_{L}\right\} ,\left\{ S_{0},S_{R}\right\} \right\} $.
What is the optimal decision rule following experiment $S$? If $p_{0}>1/2$,
the DM should choose action $r$ if she observes a signal in $\left\{ S_{0},S_{R}\right\} $
and $\ell$ only if she observes $S_{L}$. Action $r$ is the ``default'' chosen after the uninformative signal $S_0$. This corresponds to the decision implemented by the partition corresponding to $\sigma^R$.  By contrast the partition corresponding
to $\sigma^{L}$ leads to action $\ell$ as the ``wrong default,'' which reduces the value of this experiment compared to the value of $\sigma^{R}$
(or of $S$).}

\paragraph*{Two-Period Problem.}

Now suppose the DM can experiment for up to two periods. After observing the
first signal, the DM may stop and take an action, or experiment
for another period. The problem facing the DM in the second period
is precisely the one-period problem above: Depending on her posterior
belief after the first experiment, she will either take an immediate
action, or choose an own-biased experiment. But what should she do
in the first period?

\begin{figure}
\begin{center}
\includegraphics[height=0.2\textheight]{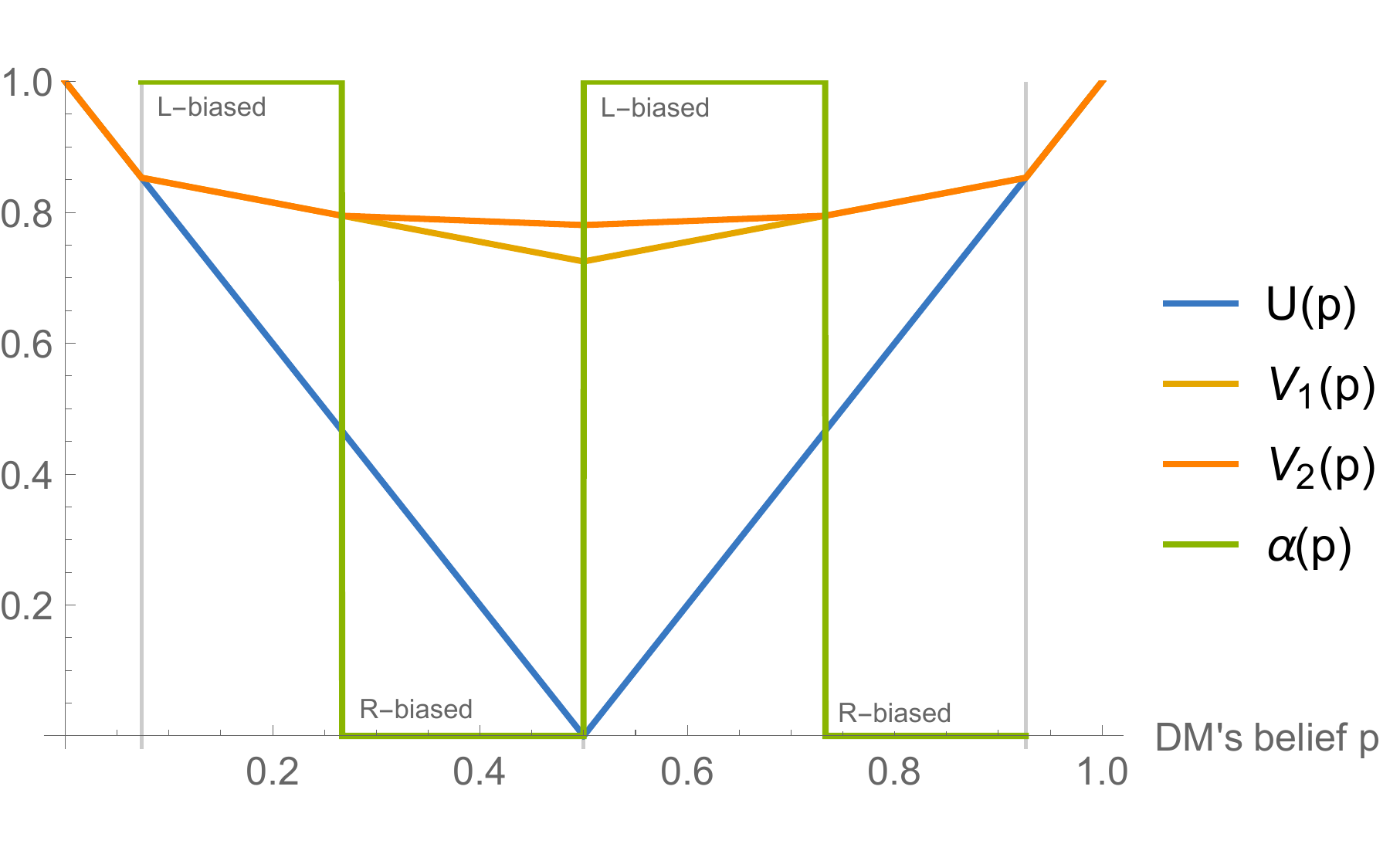}
\end{center}
\vspace{-20pt}
\caption{Optimal policy in two-period learning}
\medskip
\begin{footnotesize}\begin{spacing}{1} Note:   $\lambda=.85,c=.125$; $U(p)$ is the payoff from immediate action, $V_1(p)$ is the optimal payoff in the one-period problem, $V_2(p)$ is the optimal payoff in the two-period problem, and $\alpha(p)$ is the choice of experiment in the first (of two) periods, $\alpha=1$ corresponds to $\sigma^L$ and $\alpha=0$ corresponds to $\sigma^R$.\end{spacing}\end{footnotesize}
\label{fig:2-period} 
\end{figure}
Figure \ref{fig:2-period} illustrates a case where the optimal choice of experiment in the first period differs from that of the one-period problem.
For extreme values of $p_0$,
an immediate action is optimal as before. For less extreme priors ($p_0\in[.07,.93]$), the DM chooses to experiment.  When the belief is close
to the boundaries of the experimentation region ($p_0 \in [.07,.27]$ or $p_0\in [.73,.93]$), the DM chooses own-biased news in period one. Afterwards, she immediately
takes an action without any further experimentation. 
The intuition is that when her belief is already quite extreme, the
marginal value of increasing the accuracy for the decision is small,
so she experiments only once. Conditional on experimenting once, she
prefers own-biased news for the same reason as in the one-period problem.

 If the belief is moderate ($p_0\in [.27,.73]$), the DM picks an opposite-biased
news source. For instance, for $p_{0}=.7$, she chooses $\sigma^{L}$. Recall that this was not a good strategy in the one-period problem since an action had to be taken even after a ``weak''  
$L$-signal. In the two-period problem, however, the DM can continue to experiment.
This option value can make both opposite-biased and own-biased learning more attractive.
However, for own-biased learning, the posterior after the first experiment becomes more extreme, which limits the option value so that it is optimal to stop after period one.\footnote{It is in fact optimal to stop after period one for $p_0>.65$. We focus on this range of beliefs because it illustrates the difference between a static and a dynamic model most clearly.}
By contrast, opposite-biased learning leads to a second round of experimentation, so the option value is more significant in particular for moderate beliefs where the value of accuracy is high. 
This explains the optimality of the opposite-biased learning with longer delay for the moderate beliefs.

These examples suggest that dynamics matters for the optimal allocation of attention. Different learning strategies are optimal for different beliefs, and a learning strategy that is not myopically optimal becomes optimal when a DM can experiment for more than one period.  The pattern of the optimal strategy as well as the insights illustrated in these examples generalize to our continuous time model, which we now turn to.

\section{Analysis of the Optimal Strategy\label{sec:Analysis_Baseline}}

We now analyze the DM's optimal strategy in the continuous time model
introduced in Section \ref{sec:Model}.

\subsection{Formulation of the Problem}

The DM chooses an attention strategy $(\alpha_{t})=(\alpha_{t})_{t\ge0}$,
and a stopping time $T\in[0,\infty]$ at which a decision will be
 made if no   breakthrough news is received by then.\footnote{Given the linearity of the arrival rates in $\alpha$, the DM cannot benefit from
randomization in the continuous time model. For this reason, we only
consider deterministic strategies $(\alpha_{t})_{t\in\mathbb{R}_{+}}$.
Moreover, it suffices to consider strategies that specify $\alpha$
as a function of $t$ since the attention choice at time $t$ is only
relevant if the DM has not received any conclusive signals until time
$t$.} Her problem is thus given by 
\begin{align}
V^{*}(p_{0})= & \max_{(\alpha_t),T}\int_{0}^{T}e^{-\rho t}P_{t}\left[p_{t}\lambda\alpha_{t}u_{r}^{R}+(1-p_{t})\lambda\beta_{t}u_{\ell}^{L}-c\right]dt+e^{-\rho T}P_{T}\,U(p_{T}),\label{eq:DMs_probelm}
\end{align}
where $p_{t}$ satisfies \eqref{eq:pdot}, $\beta_{t}=1-\alpha_{t}$,
and $P_{t}=p_{0}e^{-\lambda\int_{0}^{t}\alpha_{s}ds}+(1-p_{0})e^{-\lambda\int_{0}^{t}\beta_{s}ds}$
is the probability that no signal is received by time $t$ given strategy
$(\alpha_{\tau})$. The integrand in the objective function captures
the payoffs from taking an action following discovery of evidence,
and the flow cost incurred until the DM stops. Specifically,
at each time $t$, conditional on no discovery so far (which occurs
with probability $P_{t}$), the strategy $\a_{t}$ leads to discovery
of $R$-evidence with probability $p_{t}\lambda\alpha_{t}$, and of
$L$-evidence with probability $(1-p_{t})\lambda\beta_{t}$, per unit time. The second
term accounts for the payoff from the optimal decision in case of
no discovery by $T$.\footnote{ For a given $(\alpha_{t})_{t\in\mathbb{R}_{+}}$, conditional on
no discovery, the posterior belief evolves according to a deterministic
rule (\ref{eq:pdot}). Since stopping matters only when there is no
discovery, it is without loss to focus on a \emph{deterministic} stopping
time $T$.}

The Hamilton-Jacobi-Bellman (HJB) equation for this problem is 
\begin{align}
c+\rho V(p) & =\max_{\alpha\in[0,1]}\begin{Bmatrix}\lambda\alpha p\,\left(u_{r}^{R}-V(p)\right)+\lambda(1-\alpha)(1-p)\,\left(u_{\ell}^{L}-V(p)\right)\\
-\lambda(2\alpha-1)p(1-p)V'(p)
\end{Bmatrix},\label{eq:HJB}
\end{align}
if $V(p)>U(p)$. If $V(p)=U(p)$, the LHS of \eqref{eq:HJB} should be no less than the RHS---in this case
$T(p)=0$ and immediate action is optimal.   The objective in \eqref{eq:HJB} is linear in $\alpha$,
which implies that the optimal policy is a bang-bang solution $\alpha^{*}(p)\in\{0,1\},$
except when the derivative of the objective vanishes.  While this observation narrows down our search, there is a large class of strategies consistent with bang-bang choices.  Ultimately, one needs to characterize the attention choice for each belief, which  we now turn to.

\subsection{Learning Heuristics}

We begin with several intuitive learning heuristics that the DM could
employ. These heuristics form basic building blocks for the
DM's optimal strategy. Specifically, it will be seen that for each prior, the optimal strategy employs the heuristic with the highest value and never switches even after the belief has moved away from the prior. The details
of the formal construction are presented in Appendix \ref{sec:Main_Appendix}.

\paragraph*{Immediate action (without learning).}

A simple strategy is to take an immediate action
and realize $U(p)$ without any information acquisition. Since information
acquisition is costly, this can be optimal if the DM is sufficiently
confident in her belief\textemdash that is, if $p$ is either sufficiently
high or sufficiently low.

\paragraph*{Own-Biased Learning.}

When the DM decides to experiment, one natural strategy is to focus attention 
on the news source that is biased toward the more likely state.  
 Formally, own-biased learning prescribes 
\begin{equation}
\alpha(p)=\begin{cases} 1 & \mbox{ if }p\in\left(\underline{p}^{*},\check{p}\right),\\
0 & \mbox{ if }p\in\left[\check{p},\overline{p}^{*}\right),
\end{cases}\label{eq:contr_alpha_thm1}
\end{equation}
for some reference belief $\check{p}$ and boundaries of the \emph{experimentation
	region} $\left(\underline{p}^{*},\overline{p}^{*}\right)$, which
will each be chosen optimally.\footnote{When payoffs are symmetric between $\ell$ and $r$, then $\check{p}$ equals  $1/2$. In general,  $\check p$ may not equal $1/2$.}   For instance, if $L$ is relatively likely, the DM  chooses the $L$-biased news source,  or equivalently looks for conclusive $R$-evidence.   In the absence of such ``contradictory'' evidence, the DM's belief drifts toward $L$.  The belief updating is illustrated by the direction of the arrows in Panel (a) of Figure \ref{fig:structure_of_optimal_solution}.  Eventually, the DM's belief will reach one of the boundary points
	$\underline{p}^{*}$ or $\overline{p}^{*}$, at which she is sufficiently
	certain to take an immediate action without conclusive evidence. 
	Since contradictory evidence is unlikely, the DM adopting this strategy can be seen as seeking to gradually rule  out the unlikely state. For example,  a   juror or a judge sympathetic to a defendant's innocence may try to rule out incriminating evidence by actively looking for it, or a mathematician convinced of her proof may try to rule out ``errors'' by actively searching for them.

\paragraph*{Opposite-Biased Learning.}

Alternatively, the DM could focus attention on a news source  biased toward the state she finds relatively unlikely.  Formally, opposite-biased
learning prescribes:
\begin{equation}
 \a(p)=\begin{cases}
 0 & \mbox{ if }p<p^{*},\\
 \half & \mbox{ if }p=p^{*},\\
 1 & \mbox{ if }p>p^{*},
\end{cases}\label{eq:alpha_conf} 
\end{equation}
for some  reference belief $p^{*}\in(0,1)$, which will be chosen
optimally.\footnote{In our notation, an asterisk ($*$) indicates an absorbing point, as in $p^{*},\underline{p}^{*},\overline{p}^{*}$. Cutoff beliefs without an asterisk indicate
points were the belief diverges (e.g. $\check{p}$, $\underline{p}$,
$\overline{p}$). An overline (underline) is used to denote cutoff
beliefs to the right (left).}  %
At the optimal $p^{*}$, the DM is indifferent between all $\alpha\in[0,1]$,
so that the bang-bang result from the previous section does not apply. %
For beliefs below $p^{*}$, the DM subscribes to the $R$-biased news source, or equivalently, she seeks ``confirmatory'' evidence supporting the likely state $L$.  An example of such a strategy could be that  a mathematician tries to prove a promising hypothesis and to disprove unpromising one.  
In the absence of such affirmative proof, the DM becomes more uncertain in her belief, which thus drifts inward.  The belief updating is illustrated in Panel (b) of Figure \ref{fig:structure_of_optimal_solution}, with arrows indicating the direction of Bayesian updating.  
As is clear from equation \eqref{eq:pdot} and \eqref{eq:alpha_conf}, her belief drifts from
both extremes towards the absorbing point $p^{*}$. Once $p^{*}$
is reached, the DM divides her attention equally between both news
sources. In this case, no further updating occurs, and she repeats
the same strategy until she obtains evidence that reveals the true
state.  

\begin{figure}
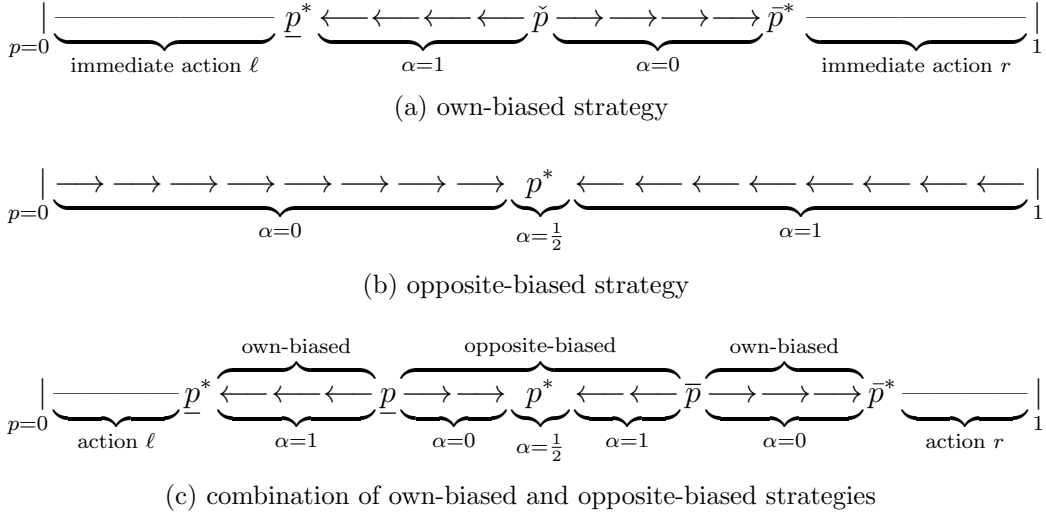


\noindent \begin{centering}
\[
\underset{p=}{\phantom{|}}\underset{0}{|}\underbrace{\text{\ensuremath{\vphantom{p^{*}}}---------------------}}_{\text{immediate action }\ell}\,\underline{p}^{*}\underbrace{\longleftarrow\longleftarrow\longleftarrow\vphantom{p^{*}}\negthickspace\negmedspace\longleftarrow}_{\alpha=1}\check{p}\underbrace{\longrightarrow\vphantom{p^{*}}\negthickspace\negmedspace\longrightarrow\longrightarrow\longrightarrow}_{\alpha=0}\bar{p}^{*}\,\underbrace{\text{\ensuremath{\vphantom{p^{*}}}---------------------}}_{\text{immediate action }r}\underset{1}{|}
\]
\par\end{centering}
\noindent \begin{centering}
{\footnotesize{}{}{}{}{}{}{}(a) own-biased strategy} 
\par\end{centering}

\noindent \begin{centering}
	\[
	\underset{p=}{\phantom{|}}\underset{0}{|}\underbrace{\longrightarrow\,\longrightarrow\,\longrightarrow\,\longrightarrow\,\longrightarrow\,\longrightarrow\,\longrightarrow\vphantom{p^{*}}\negthickspace\negmedspace\,\longrightarrow}_{\alpha=0}\underbrace{p^{*}}_{\alpha=\frac{1}{2}}\underbrace{\longleftarrow\vphantom{p^{*}}\negthickspace\negmedspace\,\longleftarrow\,\longleftarrow\,\longleftarrow\,\longleftarrow\,\longleftarrow\,\longleftarrow\,\longleftarrow}_{\alpha=1}\underset{1}{|}
	\]
	{\footnotesize{}{}{}{}{}{}{}(b) opposite-biased strategy} 
	\par\end{centering}

\noindent \begin{centering}
\[
\underset{p=}{\phantom{|}}\underset{0}{|}\underbrace{\text{\ensuremath{\vphantom{p^{*}}}------------}}_{\text{action }\ell}\underline{p}^{*}\overbrace{\underbrace{\longleftarrow\vphantom{p^{*}}\negthickspace\negmedspace\longleftarrow\longleftarrow}_{\alpha=1}}^{\text{own-biased}}\underline{p}\overbrace{\underbrace{\longrightarrow\vphantom{p^{*}}\negthickspace\negmedspace\longrightarrow}_{\alpha=0}\underbrace{p^{*}}_{\alpha=\frac{1}{2}}\underbrace{\longleftarrow\vphantom{p^{*}}\negthickspace\negmedspace\longleftarrow}_{\alpha=1}}^{\text{opposite-biased}}\overline{p}\overbrace{\underbrace{\longrightarrow\vphantom{p^{*}}\negthickspace\negmedspace\longrightarrow\longrightarrow}_{\alpha=0}}^{\text{own-biased}}\bar{p}^{*}\underbrace{\text{\ensuremath{\vphantom{p^{*}}}------------}}_{\text{action }r}\underset{1}{|}
\]
{\footnotesize{}{}{}{}{}{}{}{}(c) combination of own-biased
and opposite-biased strategies} 
\end{centering}
\centering{}\protect\caption{\label{fig:structure_of_optimal_solution}Structure of Heuristic Strategies
and Optimal Solution.}
\end{figure}
\subsection{Optimal Strategy}\label{sec:optimal-strategy}

The structure of the optimal policy depends on the \emph{cost of information}
$c$. Intuitively, the higher the flow cost, the lower  the net
value of experimenting. As will be seen, the experimentation region
expands as the cost of information falls. More interestingly, the
type of learning strategy employed also changes in a nontrivial way.
The following theorem shows that there are three cases. If $c$ is
very high, immediate action is always optimal (case (a)). For intermediate
values of $c$, the optimal strategy involves only own-biased learning
(case (b)). For low values of $c$, both own-biased and opposite-biased learning occur (case (c)). For the theorem, we denote the optimal
immediate action by $x^*(p)\in\arg\max_{x\in\left\{ r,\ell\right\} }U_{x}(p)$,
which is unique almost everywhere. 
\begin{thm}
\label{thm:optimal_attention_strategy} For given utilities $u_{x}^{\omega}$,
$\lambda>0$, and $\rho\ge0$, there exist $\overline{c}=\overline{c}(\rho,u_{x}^{\omega},\lambda)$
and $\underline{c}=\underline{c}(\rho,u_{x}^{\omega},\lambda)$, $\overline{c}\ge\underline{c}\ge0$,
with strict inequalities for $\rho$ sufficiently small, such that the
unique optimal strategy is characterized as follows:\footnote{The strategy is unique up to tie breaking at the beliefs $\underline{p}^{*},\overline{p}^{*},\underline{p},\overline{p},\check{p}$.
See  \eqref{eq:zetah} and \eqref{eq:zetal} in Appendix \ref{sec:Main_Appendix}
for explicit expressions for $\underline{c}$ and $\overline{c}$. See \eqref{eq:plsG} and \eqref{eq:pHsG} for $\underline{p}^*$ and $\overline{p}^*$, and \eqref{eq:pstarG} for $p^*$.}
\begin{enumerate}
\item (\textbf{No learning}) If $c\ge\overline{c}$, the DM takes action
$x^*(p)$ without any information acquisition. 
\item (\textbf{Own-biased learning}) If $c\in\left[\underline{c},\overline{c}\right)$,
there exist cutoffs $0<\underline{p}^{*}<\check{p}<\overline{p}^{*}<1$
such that for $p\in(\underline{p}^{*},\overline{p}^{*})$, the optimal policy
$\alpha^*(p)$ is given by \eqref{eq:contr_alpha_thm1}. If $p\not\in(\underline{p}^{*},\overline{p}^{*})$,
the DM takes action $x^*(p)$ without any information acquisition. 
\item (\textbf{Own-biased and Opposite-biased learning}) If $c<\underline{c}$,
then there exist cutoffs $0<\underline{p}^{*}<\underline{p}<p^{*}<\overline{p}<\overline{p}^{*}<1$
such that for $p\in(\underline{p}^{*},\overline{p}^{*})$, the optimal policy
is given by 
\begin{equation}
\alpha^*(p)=\begin{cases}
1, & \text{ if }p\in\left(\underline{p}^{*},\underline{p}\right),\\
0, & \text{ if }p\in\left[\underline{p},p^{*}\right),\\
\half & \mbox{ if }p=p^{*},\\
1, & \text{ if }p\in\left(p^{*},\overline{p}\right],\\
0, & \text{ if }p\in\left(\overline{p},\overline{p}^{*}\right).
\end{cases}\label{eq:mixed_alpha_thm1}
\end{equation}
If $p\not\in(\underline{p}^{*},\overline{p}^{*})$ the DM takes action
$x^*(p)$ without any information acquisition. 
\end{enumerate}
\end{thm}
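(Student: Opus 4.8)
The plan is to treat this as a free‑boundary problem (optimal stopping with a control) and to prove the theorem in three moves: (1) reduce the attention choice to a bang‑bang rule governed by the sign of a single switching function; (2) solve the resulting ODEs in closed form on each candidate region and pin down the cutoffs by value‑matching and smooth‑pasting; and (3) run a verification argument showing the constructed candidate equals $V^*$ and that the induced strategy is the unique maximizer. First I would rewrite the HJB objective \eqref{eq:HJB}, which is affine in $\alpha$, as a constant term plus $\alpha\,g(p)$, where
\[
g(p)=\lambda p\left(u_{r}^{R}-V(p)\right)-\lambda(1-p)\left(u_{\ell}^{L}-V(p)\right)-2\lambda p(1-p)V'(p).
\]
Then $\alpha^{*}(p)=1$ where $g(p)>0$, $\alpha^{*}(p)=0$ where $g(p)<0$, and $\alpha^{*}$ is indeterminate where $g(p)=0$. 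Substituting $\alpha=1$ and $\alpha=0$ into \eqref{eq:HJB} gives two first‑order linear ODEs for $V$, each solvable in closed form up to one integration constant. Using the belief law \eqref{eq:pdot}, a zero of $g$ where the sign changes from $+$ to $-$ is a \emph{repelling} reference point (own‑biased learning: the belief drifts to the stopping boundaries, as in panel (a) of Figure \ref{fig:structure_of_optimal_solution}), while a zero where the sign changes from $-$ to $+$ is an \emph{attracting} absorbing point (opposite‑biased learning: the belief drifts inward, as in panel (b)); at such an attracting point $\dot p=0$ forces $\alpha=1/2$, and the no‑drift HJB determines
\[
V(p^{*})=\frac{\tfrac{\lambda}{2}\left[p^{*}u_{r}^{R}+(1-p^{*})u_{\ell}^{L}\right]-c}{\rho+\tfrac{\lambda}{2}}.
\]

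Next I would assemble the candidate $V$ in each cost regime by gluing these ODE solutions. At each stopping boundary ($\underline{p}^{*},\overline{p}^{*}$) I impose value‑matching $V=U$ and smooth‑pasting $V'=U'$; at each interior switch point ($\check{p}$ in case (b); $\underline{p},p^{*},\overline{p}$ in case (c)) I impose continuity together with the indifference condition $g=0$ that defines the point. This yields a finite system in the integration constants and the cutoffs. The thresholds are then identified as the cost levels at which the structure degenerates: $\overline{c}$ is where the experimentation interval $(\underline{p}^{*},\overline{p}^{*})$ collapses, so no learning is worthwhile, and $\underline{c}$ is where an attracting zero‑crossing of $g$ first appears in the interior, so opposite‑biased learning becomes optimal. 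Establishing $\overline{c}\ge\underline{c}$ (strict for small $\rho$) and the claimed orderings $0<\underline{p}^{*}<\check{p}<\overline{p}^{*}<1$ and $0<\underline{p}^{*}<\underline{p}<p^{*}<\overline{p}<\overline{p}^{*}<1$ requires monotonicity/comparative‑statics of the constructed objects in $c$.

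For the verification I would differentiate $e^{-\rho t}P_{t}V(p_{t})$ along an arbitrary admissible path. Using $\dot P_{t}=-\lambda[\alpha_{t}p_{t}+\beta_{t}(1-p_{t})]P_{t}$ and \eqref{eq:pdot}, the HJB inequality makes the $V$‑terms cancel, so after integrating from $0$ to $T$ one obtains
\[
V(p_{0})\ge\int_{0}^{T}e^{-\rho t}P_{t}\left[\lambda\alpha_{t}p_{t}u_{r}^{R}+\lambda\beta_{t}(1-p_{t})u_{\ell}^{L}-c\right]dt+e^{-\rho T}P_{T}V(p_{T})\ge V^{*}(p_{0}),
\]
the last step using $V\ge U$ in \eqref{eq:DMs_probelm}. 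Equality holds for the proposed strategy because the HJB binds at $\alpha^{*}(p)$ and $T$ is chosen so that $V(p_{T})=U(p_{T})$; uniqueness follows since $g\neq 0$ off a finite set, forcing the bang‑bang choice elsewhere (with ties only at the listed beliefs).

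The hard part will be Step 2: proving that the locally glued candidate satisfies the variational inequality \emph{globally}—in particular that $g$ has exactly the claimed sign pattern and that $V\ge U$ holds throughout the experimentation region—while simultaneously establishing the transition no‑learning $\to$ own‑biased $\to$ own‑plus‑opposite as $c$ varies. Since $g$ depends on the constructed $V$ and $V'$, its zero‑crossing structure is only implicitly defined, and controlling how the number and type of crossings change with $c$ is where the real content lies; by contrast, the verification in Step 3 is routine once the candidate is in hand.
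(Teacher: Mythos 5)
Your skeleton (bang--bang reduction via a switching function, closed-form branches for $\alpha\in\{0,1\}$, value matching and smooth pasting at the stopping boundaries, then verification) is the same as the paper's, but your plan has a genuine gap exactly where you flag it, and the paper closes that gap with an idea your proposal is missing. Substituting $V'$ from the $\alpha=0$ ODE \eqref{eq:V_a0} into your switching function gives the identity
\[
g(p)=\frac{2\rho+\lambda}{\lambda}\left(U^{S}(p)-V_{0}(p)\right),
\]
with a symmetric (sign-reversed) identity along the $\alpha=1$ branch solving \eqref{eq:V_a1}, where $U^{S}$ is the explicit \emph{linear} stationary value \eqref{eq:UbarG}. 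So the zero-crossing structure of $g$ along the candidate is not ``only implicitly defined'': the sign of $g$ on each branch is determined by comparing that branch with a known linear benchmark, and the entire global sign-pattern problem reduces to showing the candidate lies weakly above $U^{S}$. This is the paper's Unimprovability Lemma \ref{lem:Branches_satisfy_HJB}; its companion, the Crossing Lemma \ref{lem:branch_crossing} (at any intersection of an $\alpha=0$ branch with an $\alpha=1$ branch, $V_0'-V_1'$ has the sign of $V-U^{S}$), pins down the uniqueness and direction of the interior crossings. The paper also does not glue local solutions region by region; it constructs two \emph{globally} defined heuristic values $V_{own}$ and $V_{opp}$ and takes the upper envelope $V_{Env}=\max\{V_{own},V_{opp}\}$ (Proposition \ref{prop:Structure_V_G}), which is what delivers $V\ge U$ on the experimentation region, the uniqueness of $\check{p},\underline{p},\overline{p}$, and the closed-form threshold $\underline{c}$ via the condition $\max\{\underline{V}_{own}(p^{*}),\overline{V}_{own}(p^{*})\}\ge U^{S}(p^{*})$.

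Two of your concrete prescriptions would fail as stated. First, at the interior switch points $\check{p}$, $\underline{p}$, $\overline{p}$ you impose ``continuity together with the indifference condition $g=0$.'' But these points are \emph{convex kinks} of the value function determined by value matching alone (two branches of the envelope crossing); by the Crossing Lemma the one-sided derivatives differ strictly whenever the value strictly exceeds $U^{S}$, so imposing $g=0$ (which, with a single derivative, amounts to $C^{1}$ pasting) over-determines your system and contradicts the true solution. Only $p^{*}$ is a smooth-pasting point, obtained by tangency with $U^{S}$ and yielding \eqref{eq:pstarG}. Second, because of these kinks your ``routine'' classical verification---differentiating $e^{-\rho t}P_{t}V(p_{t})$ along arbitrary paths---is not available as written: $V$ is not $C^{1}$, and an admissible policy can hold the belief at a kink for a set of times of positive measure (e.g.\ $\alpha=1/2$ at $\check{p}$), so one must check the HJB inequality there without a well-defined $V'$. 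The paper instead verifies $V_{Env}$ as a \emph{viscosity} solution of the variational inequality \eqref{eq:HJB_VIO}, invoking the verification theorem of \citet{bardi/capuzzo-dolceta:97} (Proposition \ref{prop:envelope_characterization}); the fact that every kink is convex, $V'_{-}(p)\le V'_{+}(p)$---again a consequence of the Crossing Lemma---is precisely what makes the supersolution inequality checkable for all test slopes in $[V'_{-}(p),V'_{+}(p)]$. Without the $U^{S}$-comparison identity and the envelope construction, your Step 2 remains an open problem rather than a proof.
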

In cases $(b)$ and $(c)$,
there are levels
of confidence, given by $\overline{p}^{*}$ and $\underline{p}^{*}$,
that the DM finds sufficient for making decisions without any evidence.
These beliefs constitute the boundaries of the experimentation region; namely, an immediate action is chosen outside these boundaries.

The optimal policy in case (b) is depicted in Panel (a) of Figure
\ref{fig:structure_of_optimal_solution} on page \pageref{fig:structure_of_optimal_solution} above. In case (c) the pattern
is more complex (see Panel (c) of Figure \ref{fig:structure_of_optimal_solution}).
Both own-biased and opposite-biased learning are optimal for some
beliefs. Theorem \ref{thm:optimal_attention_strategy} shows that
the opposite-biased region $(\underline{p},\overline{p})$ is always
sandwiched between two regions where the own-biased learning strategy is
employed. That is, near the boundaries of the experimentation region,
the own-biased strategy is always optimal.  Figure \ref{fig:examples_optimal_solution_B} shows the value of opposite-biased learning ($V_{opp}(p)$) and own-biased learning ($V_{own}(p)$) for high and low costs. It illustrates that the optimal policy picks the learning heuristic with the higher value. 
\begin{figure}[b]
\begin{tabular}{cc}
\includegraphics[height=0.17\textheight]{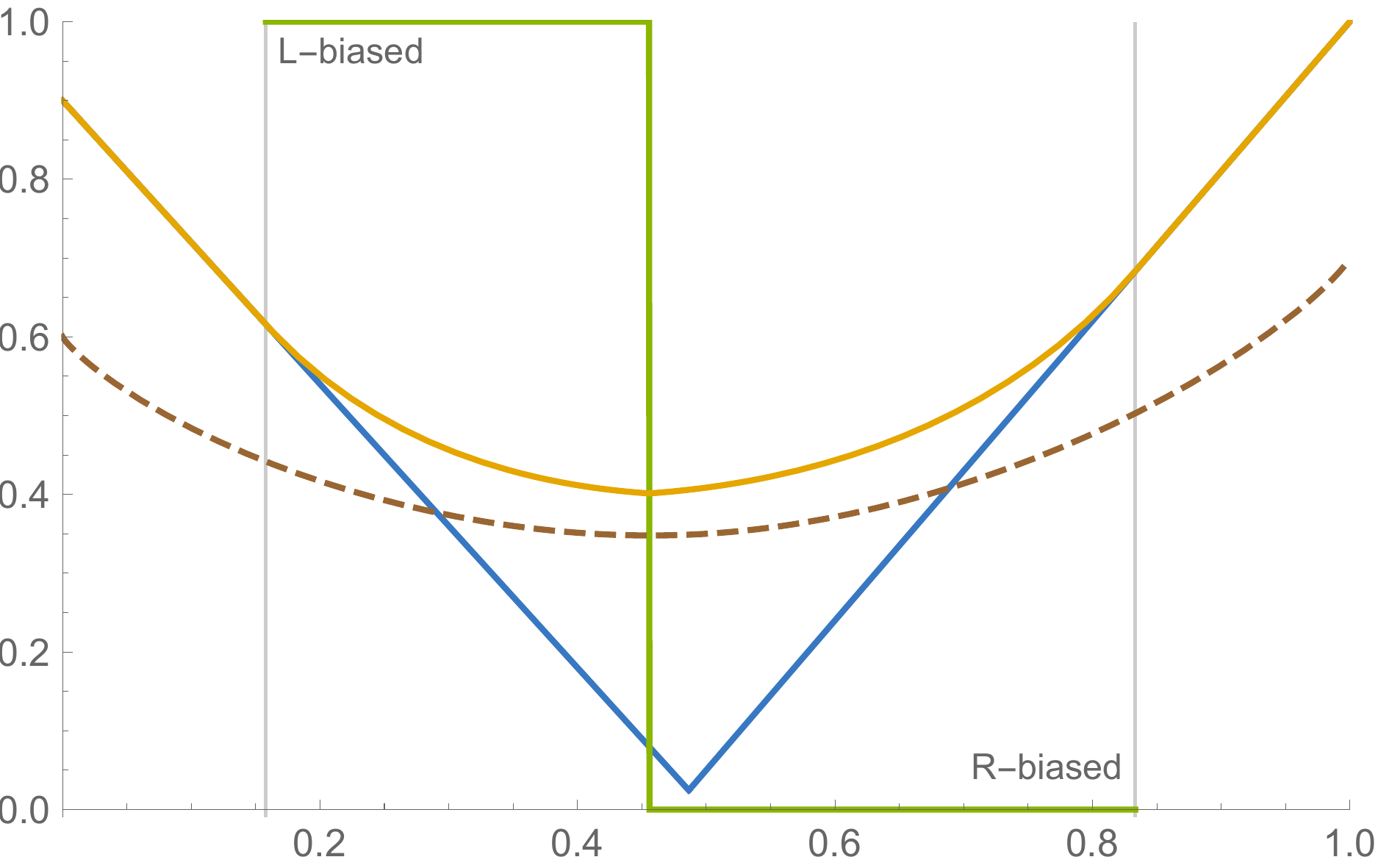}  & \includegraphics[bb=0bp 0bp 550bp 340bp,height=0.17\textheight,clip]{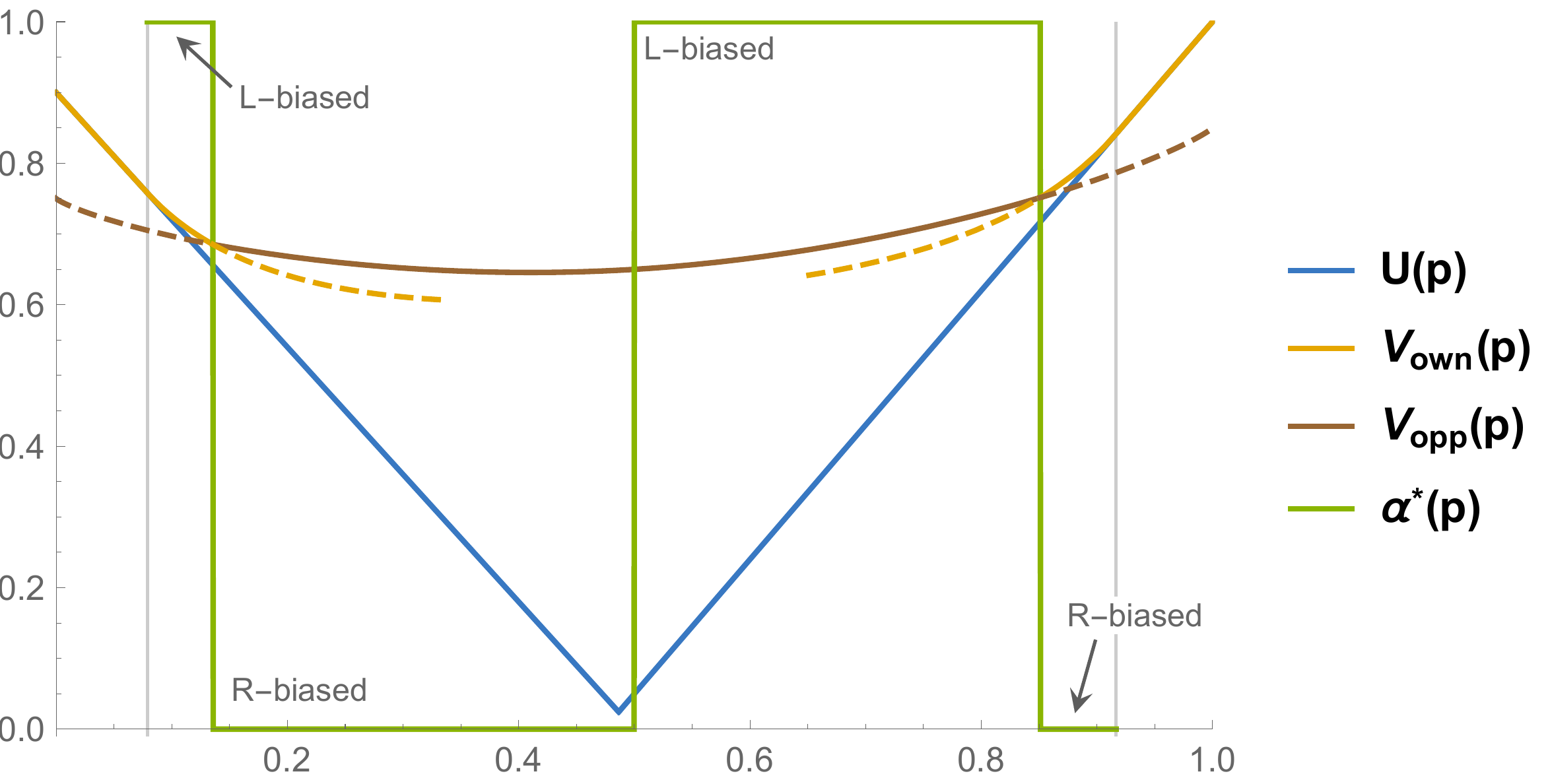}
\includegraphics[bb=560bp 0bp 675bp 340bp,height=0.17\textheight,clip]{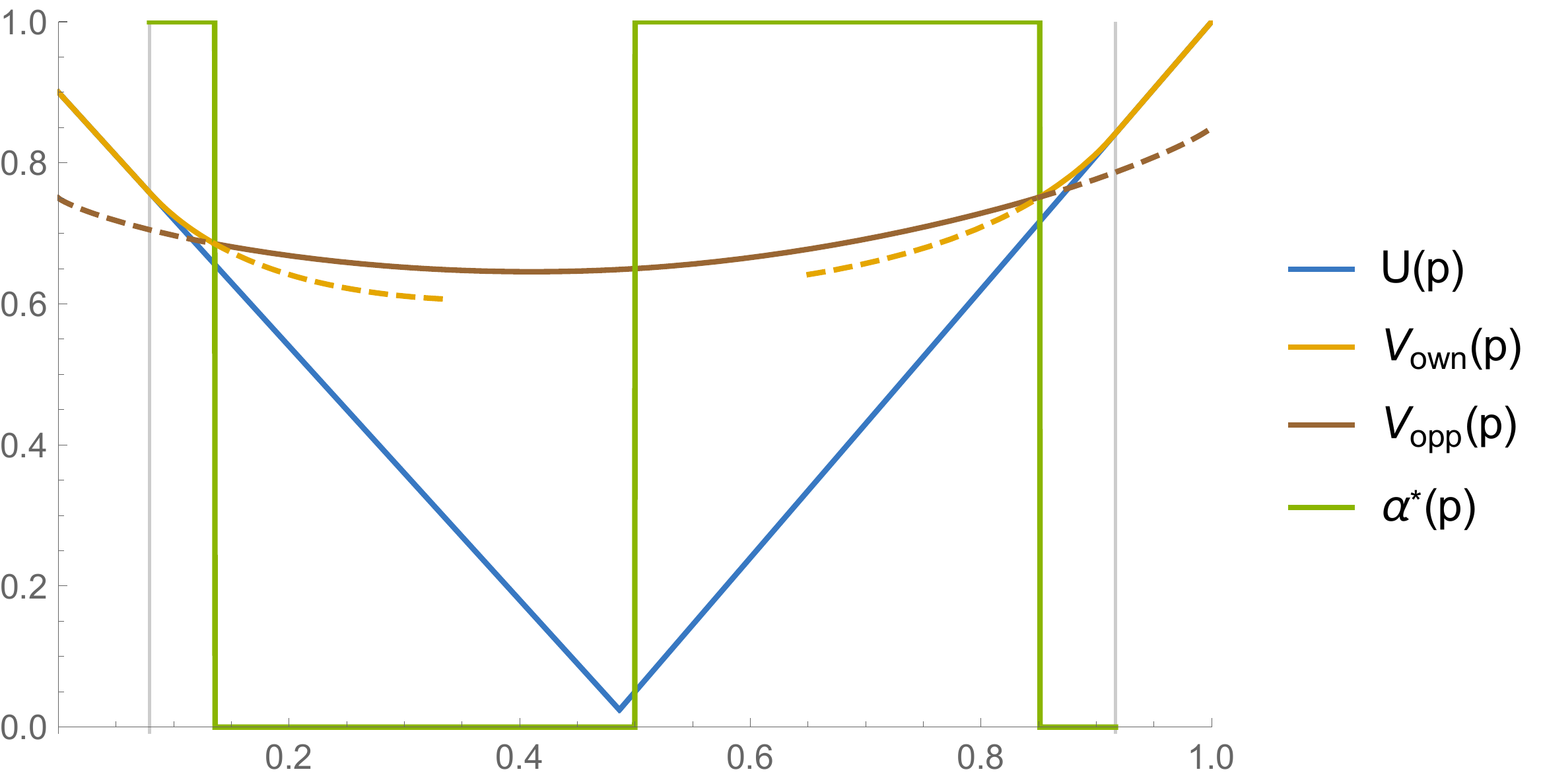}\tabularnewline
\end{tabular}
\caption{Value Function and Optimal Policy.\label{fig:examples_optimal_solution_B}}
\medskip
\begin{footnotesize}
\begin{spacing}{1} 
Note: The value function is the upper envelope of $V_{own}$ and
$V_{opp}$ (solid). ($\lambda=1$, $\rho=0$, $u_{r}^{R}=1$, $u_{\ell}^{L}=.9$,
$u_{\ell}^{R}=u_{r}^{L}=-.9$)
\end{spacing}
\end{footnotesize}
\end{figure}

 The intuition behind the optimal strategy can be explained by a trade-off between speed and accuracy. The opposite-biased strategy leads to complete learning: The DM takes an action only if she receives conclusive evidence  from breakthrough news.
Therefore, she never makes a mistake. By contrast, own-biased learning may  lead to mistakes if the DM's belief drifts to the stopping boundary ($\underline{p}^*$ or $\overline{p}^*$) before observing a breakthrough. In this case the DM stops without learning completely and may choose the wrong action. Therefore, opposite-biased learning has an accuracy advantage. At the same time, full learning under the opposite-biased strategy leads to a potentially long delay, because the DM has to wait for a breakthrough to arrive. By contrast, the delay in the own-biased strategy is limited by the time it takes the belief to reach the stopping boundary. Therefore own-biased learning has a speed advantage.   This explains why the DM never uses the opposite-biased learning except when the cost of learning is sufficiently low, which makes the speed advantage less important. 

 The speed-accuracy tradeoff in the choice between the two strategies also explains why own-biased learning is always optimal near the stopping boundaries. Here, the speed advantage of own-biased learning is particularly large because it takes only a short period of time for the belief to reach the stopping boundary. At the same time, for beliefs near the stopping boundary, there is little uncertainty so that the value of full accuracy is relatively small. The DM therefore prefers speed over accuracy.
For more uncertain beliefs, the value of learning the state is higher, making accuracy more important. At the same time, the speed advantage of own-biased learning is smaller because it takes longer for the belief to reach the stopping boundary. Therefore, the DM prefers the more accurate opposite-biased strategy for more uncertain beliefs.

\paragraph*{Speed and Accuracy of Decisions under the Optimal Strategy.}

We have already noted intuitively that the speed and accuracy of the
DM's decision depends on the mode of learning and her prior belief.
Now we make some formal observations about the optimal strategy in
line with this intuition. 
\begin{prop}[Speed and Accuracy]
\label{prop:speed_and_accuracy}~Suppose that $c<\overline{c}$. 
\begin{enumerate}
\item The average delay in the DM's decision is quasi-concave in the prior
belief. If $c\ge \underline{c}$,
the delay is maximal at $p_{0}=\check{p}$. 
If $c< \underline{c}$, 
 the delay is maximal at some
$p$ inside the opposite-biased learning region. 
\item The probability of a mistake is quasi-convex; it is zero when the initial belief is in the
opposite-biased region.   Within the own-biased region, the probability of a mistake is positive and increasing as $p$ gets closer to a stopping boundary.  
\end{enumerate}
\end{prop}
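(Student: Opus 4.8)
The plan is to treat both objects in the proposition---the expected delay $D(p_{0})$ and the probability of a wrong action $M(p_{0})$---as functionals of the optimal strategy characterized in Theorem~\ref{thm:optimal_attention_strategy}, compute each region by region, and then patch the pieces together. Writing $\eta(p):=\lambda[\alpha^{*}(p)p+(1-\alpha^{*}(p))(1-p)]$ for the breakthrough hazard and using the belief dynamics \eqref{eq:pdot}, a standard first-order dynamic-programming expansion shows that on any interval where $\alpha^{*}$ is constant the two functions solve the linear ODEs
\[
\dot p\,D'(p)=\eta(p)D(p)-1,\qquad \dot p\,M'(p)=\eta(p)M(p),
\]
with boundary data $D=0$ and $M=$ (probability that the realized state differs from the action taken) at each stopping boundary $\underline{p}^{*},\overline{p}^{*}$, and $D(p^{*})=2/\lambda$, $M(p^{*})=0$ at the absorbing belief $p^{*}$ of the opposite-biased phase, where attention is split and a breakthrough arrives at rate $\lambda/2$. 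I would solve these explicitly on each region and then check continuity (or the direction of jumps) at the interfaces. Note that $\rho$ enters only through the thresholds, not through these ODEs, since delay and accuracy are real-time and real-probability objects.

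For the accuracy claim (part 2) the opposite-biased region is immediate: there the DM stops only upon conclusive news, so $M\equiv0$. On the own-biased regions the ODE for $M$ integrates to the linear expressions
\[
M(p)=\frac{1-\overline{p}^{*}}{\overline{p}^{*}}\,p\ \text{ (right region)},\qquad M(p)=\frac{\underline{p}^{*}}{1-\underline{p}^{*}}\,(1-p)\ \text{ (left region)},
\]
which match the immediate-action values $1-p$ and $p$ at $\overline{p}^{*}$ and $\underline{p}^{*}$. These are strictly increasing (resp.\ decreasing) toward the adjacent stopping boundary, which gives the ``increasing toward the boundary'' statement; and since $M$ is decreasing on the left own-biased region, zero (or minimal at $\check{p}$) in the middle, and increasing on the right own-biased region, it is quasi-convex on the experimentation region---the natural domain, since outside it no information is acquired. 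The admissible downward jumps of $M$ at the own/opposite interfaces do not affect quasi-convexity, as the sublevel sets remain intervals.

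For the delay claim (part 1), first $D=0$ on the immediate-action region and, by continuity of the drift time, $D\to0$ at $\underline{p}^{*},\overline{p}^{*}$, with $D>0$ inside; hence $D$ has a zero plateau on both sides and quasi-concavity reduces to single-peakedness inside the experimentation region. Solving the own-biased ODE gives, e.g.\ on $(\check{p},\overline{p}^{*})$,
\[
D(p)=\frac{1}{\lambda}\left[p\ln\frac{\overline{p}^{*}(1-p)}{(1-\overline{p}^{*})p}+1-\frac{p}{\overline{p}^{*}}\right],\quad \lambda D'(p)=\ln\frac{\overline{p}^{*}(1-p)}{(1-\overline{p}^{*})p}-\frac{1}{1-p}-\frac{1}{\overline{p}^{*}}=:\phi(p),
\]
where $\phi$ is strictly decreasing; thus $D$ is decreasing throughout $(\check{p},\overline{p}^{*})$ as soon as $\phi(\check{p})\le0$, and the mirror computation makes $D$ increasing on the left region under the analogous inequality at $\check{p}$. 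This gives case (b): the peak sits at $\check{p}$. For case (c) I would solve the opposite-biased ODE with $D(p^{*})=2/\lambda$; the two branches paste into a $C^{1}$ function at $p^{*}$ with $D'(p^{*})=(1-2p^{*})/(\lambda p^{*}(1-p^{*}))$, so the interior maximum lies just to one side of $p^{*}$, and the unavoidable waiting time $2/\lambda$ forces an upward jump of $D$ at the interfaces $\underline{p},\overline{p}$, placing the global peak strictly inside the opposite-biased region. Quasi-concavity then follows from the ``increasing-then-decreasing'' shape together with these admissible jumps.

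The main obstacle is the single-peakedness of the delay: the sign conditions $\phi(\check{p})\le0$ and its left-region mirror in case (b), and the upward-jump/interior-peak facts in case (c). These say precisely that the own-biased drift time from $\check{p}$ to each stopping boundary is not too long, and they fail for arbitrary threshold configurations, so they must be extracted from the threshold characterization of Theorem~\ref{thm:optimal_attention_strategy} rather than proved in isolation. Concretely, I expect to use that the regime $c\ge\underline{c}$ is exactly what bounds the own-biased delays---were those delays longer, the more accurate opposite-biased strategy would already be preferred, i.e.\ one would have $c<\underline{c}$---thereby converting the economic threshold $\underline{c}$ into the required analytic inequality, and symmetrically that $c<\underline{c}$ is what makes the opposite-biased waiting dominate so the peak moves inside. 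A secondary but necessary step is to track the jump discontinuities of $D$ and $M$ at the own/opposite interfaces and confirm that they point the ``right'' way, so that the relevant super-/sub-level sets are intervals.
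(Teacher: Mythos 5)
Your plan runs on the same rails as the paper's proof: the paper also characterizes the expected delay $\tau$ through the recursive/ODE formulation with boundary value $\tau(p^{*})=2/\lambda$, shows $\tau$ is concave on $(\underline{p},\overline{p})$ with $C^{1}$ pasting at $p^{*}$, and computes the mistake probability explicitly along the deterministic drift path. Your closed forms for $M$ on the own-biased branches and your derivative $\phi$ are correct (indeed more careful than the paper's displayed algebra, which drops the state-$R$ term $p\,\overline{T}^{*}(p)$ from $\tau$), so part (b) is essentially the paper's argument and is fine.

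The genuine gap is exactly where you flag the ``main obstacle,'' and your proposed resolutions would not close it. First, the claim that ``the unavoidable waiting time $2/\lambda$ forces an upward jump of $D$ at $\underline{p},\overline{p}$'' is a non sequitur: nothing prevents the own-biased delay at $\overline{p}$ from exceeding $2/\lambda$, since it grows with the drift time to $\overline{p}^{*}$. The paper closes this step with a different, economic argument valid at $\rho=0$: at $\overline{p}$ the two strategies have \emph{equal value}; value decomposes as expected accuracy payoff minus $c$ times expected delay; opposite-biased learning is strictly more accurate; hence its expected delay must be strictly longer, i.e.\ $\int_{0}^{\overline{T}^{*}(\overline{p})}(1-H(t))dt<\int_{0}^{\infty}(1-G(t))dt$, which is precisely the required downward jump of $\tau$ upon leaving the opposite-biased region. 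The result is then extended to $\rho$ near zero by continuity---and note your observation that $\rho$ drops out of the ODEs does not rescue generality, because this indifference decomposition itself requires no discounting. Second, for case (b) your own analysis shows $D$ is hump-shaped on each own-biased branch (single root $p^{\dagger}$ of $\phi$), so ``maximal at $\check{p}$'' needs $\phi(\check{p})\le 0$; but your proposed conversion of $c\ge\underline{c}$ into this inequality does not follow---preference for own-biased learning at $\check{p}$ bounds only the \emph{total} expected delay via the same accuracy-versus-delay decomposition, not the sign of $D'(\check{p}^{+})$. That local sign condition is left unestablished in your proposal (the paper sidesteps it by asserting monotonicity of $\tau$ on the whole own-biased branch from its truncated expression), so an additional step relating $\check{p}$ to $p^{\dagger}$ would be needed to complete part (a).
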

This proposition shows that initial beliefs matter greatly for
the accuracy of a decision and its timing. A ``skeptic''\textemdash a
DM with uncertain initial belief\textemdash reaches a fully accurate
decision but at the expense of a long delay. By contrast, a ``believer''\textemdash a
DM with a more extreme belief\textemdash sacrifices accuracy in favor
of speed.%
\footnote{Around the boundary beliefs $\overline{p}$
and $\underline{p}$, the outcome in terms of accuracy and speed varies
discontinuously, although the value is continuous.} %
This feature stands
in contrast to rational inattention and drift-diffusion models with
non-shifting stopping boundaries. These latter models predict that
the accuracy of a chosen action is invariant to the DM's initial beliefs,
as long as they are within the experimentation region.\footnote{In a rational inattention model, an optimal  experiment for a DM involves no more signals than the number of actions chosen; otherwise, she can lower her information cost by eliminating the ``wasteful'' signals (see \cite{matejka/mckay:15}). This property means any action chosen after performing an experiment must correspond to a unique posterior, and hence a unique level of accuracy.  In drift-diffusion models, the drift-to-boundary structure of the optimal policy means that the accuracy of each decision is pinned down by a corresponding stopping boundary. In many such models, the stopping boundaries do not change over time, so the accuracy of each action is fixed. See \cite{ratcliff/mckoon:08} for a survey.} 

\subsection{Comparative Statics\label{sec:comparative_statics}}

It is instructive to study how the optimal strategy varies with the
parameters. We start by considering the experimentation region. 

\begin{prop}[Comparative Statics: Boundaries of the Experimentation Region]
\label{prop:Comparative_Statics_Experimentation_Region}~ 
\begin{enumerate}
\item The experimentation region expands as $\rho$ or $c$ falls, and covers
$(0,1)$ in the limit as $(\rho,c)\to(0,0)$.\footnote{We say a region \emph{expands} when a parameter change leads to a
superset of the original region. This includes the case that the region
appears when it was empty before. We say a region \emph{shifts up
(down)} when both boundaries of the region increase (decrease).} 
\item The experimentation region expands as $u_{r}^{L}$ or $u_{\ell}^{R}$
falls (so that ``mistakes'' become more costly), and covers $(0,1)$ in the limit as $(u_{r}^{L},u_{\ell}^{R})\to(-\infty,-\infty)$.%
\footnote{For given $c$, if $u^R_\ell$ and $u^L_r$ are sufficiently small, $\overline{c}>c$ so that the experimentation region is non-empty. Given $\overline{c}>c$, $\underline{p}^{*}$ converges
monotonically to zero as $u_{\ell}^{R}\rightarrow-\infty$, and $\overline{p}^{*}$
converges monotonically to one as $u_{r}^{L}\rightarrow-\infty$.}

\item If $c<\overline{c}$, then the experimentation region shifts down
as $u_{r}^{R}$ increases and up as $u_{\ell}^{L}$ increases. 
\end{enumerate}
\end{prop}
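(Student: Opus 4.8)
The plan is to work with the two boundaries $\underline{p}^{*},\overline{p}^{*}$ directly, combining a global monotonicity argument (clean for $c$ and for the limits) with explicit boundary equations obtained from the HJB (needed for the $\rho$-direction and for the ``shift'' claims in (c)). For the boundary equations, note that by Theorem~\ref{thm:optimal_attention_strategy} the own-biased strategy is used on an interval adjacent to each boundary in every nondegenerate case (b) and (c): $\alpha^{*}=0$ just below $\overline{p}^{*}$ and $\alpha^{*}=1$ just above $\underline{p}^{*}$. Hence each boundary is the stopping belief $q$ that maximizes the value of the corresponding own-biased arm. Writing $W(p;q)$ for the value of drifting up under $\alpha=0$ and stopping with action $r$ at $q$, $W$ solves the $\alpha=0$ specialization of \eqref{eq:HJB} with terminal condition $W(q;q)=U_{r}(q)$; optimizing over $q$ gives value matching together with smooth pasting $W_{p}(q;q)=U_{r}'(q)$, and substituting both into \eqref{eq:HJB} collapses to
\[
c+\rho\,U_{r}(\overline{p}^{*})=\lambda(1-\overline{p}^{*})(u_{\ell}^{L}-u_{r}^{L}),\qquad c+\rho\,U_{\ell}(\underline{p}^{*})=\lambda\,\underline{p}^{*}(u_{r}^{R}-u_{\ell}^{R}),
\]
the second being symmetric. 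Solving the first for $\overline{p}^{*}$ reproduces the closed form \eqref{eq:pHsG}, with denominator $D:=\rho(u_{r}^{R}-u_{r}^{L})+\lambda(u_{\ell}^{L}-u_{r}^{L})>0$; \eqref{eq:plsG} is analogous. I would verify once that this first-order condition identifies a maximum, i.e.\ that the candidate value exceeds $U$ just inside and lies below $U$ just outside, so that these equations indeed characterize the true boundaries.

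For the effect of $c$ the cleanest argument is global and avoids the formulas: for any feasible $(\alpha_t),T$ the objective \eqref{eq:DMs_probelm} is nonincreasing in $c$ (the cost enters only as $-c\int_{0}^{T}e^{-\rho t}P_{t}\,dt\le0$), so $V^{*}$ is nonincreasing in $c$ while $U$ is independent of $c$; hence $\{V^{*}>U\}$ expands as $c$ falls. The limit in (a) is equally transparent: setting $\rho=c=0$ in the boundary equations gives $\overline{p}^{*}=1$, $\underline{p}^{*}=0$ directly, and, more robustly, as $(\rho,c)\to(0,0)$ the opposite-biased strategy attains the full-information value $V_{FI}(p)=p\,u_{r}^{R}+(1-p)u_{\ell}^{L}$, which strictly exceeds $U(p)$ for every interior $p$, so the region covers $(0,1)$.

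Parts (b) and (c) then follow by the implicit function theorem applied to the boundary equations, using $D>0$ and that each left-hand side is strictly increasing in its own boundary. For (b), $u_{r}^{L}$ enters only the first equation with total coefficient $(1-\overline{p}^{*})(\rho+\lambda)>0$, so $d\overline{p}^{*}/du_{r}^{L}<0$; since $u_{r}^{L}$ is absent from the equation for $\underline{p}^{*}$, lowering $u_{r}^{L}$ only pushes $\overline{p}^{*}$ up, and symmetrically lowering $u_{\ell}^{R}$ only pushes $\underline{p}^{*}$ down, so the region expands. For the limit in (b) I would again bypass the formulas and use $V^{*}(p)\ge V_{opp}(p)$: the opposite-biased value is independent of the mistake payoffs (conclusive news always induces the correct action), whereas $U(p)\to-\infty$ for interior $p$ as $(u_{r}^{L},u_{\ell}^{R})\to(-\infty,-\infty)$, so $V^{*}>U$ throughout the interior in the limit. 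For (c), $u_{r}^{R}$ enters both equations with partials $\rho\,\overline{p}^{*}\ge0$ (first) and $-\lambda\underline{p}^{*}<0$ (second, via the RHS), giving $d\overline{p}^{*}/du_{r}^{R}\le0$ and $d\underline{p}^{*}/du_{r}^{R}<0$, a downward shift (strict in both coordinates once $\rho>0$); $u_{\ell}^{L}$ is symmetric and shifts the region up.

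The genuinely delicate case—and the step I expect to be the main obstacle—is monotonicity in $\rho$, where the global argument breaks down because the flow term in \eqref{eq:DMs_probelm} changes sign, so $V^{*}$ need not be monotone in $\rho$ strategy-by-strategy. Differentiating the boundary equations and using them to eliminate $c$ yields the compact expressions
\[
\frac{d\overline{p}^{*}}{d\rho}=-\frac{U_{r}(\overline{p}^{*})}{D},\qquad \frac{d\underline{p}^{*}}{d\rho}=\frac{U_{\ell}(\underline{p}^{*})}{\rho(u_{\ell}^{L}-u_{\ell}^{R})+\lambda(u_{r}^{R}-u_{\ell}^{R})},
\]
so the region expands as $\rho$ falls precisely when $U_{r}(\overline{p}^{*})\ge0$ and $U_{\ell}(\underline{p}^{*})\ge0$, i.e.\ when the immediate action taken at each stopping boundary is worth a nonnegative payoff. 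The crux is thus to sign $U$ at the boundaries. Since $U(0)=u_{\ell}^{L}>0$ and $U(1)=u_{r}^{R}>0$, it suffices to show $\overline{p}^{*}$ lies beyond the zero of $U_{r}$ (and symmetrically for $\underline{p}^{*}$); under the normalization in which mistakes earn a nonnegative payoff ($u_{r}^{L},u_{\ell}^{R}\ge0$, the reading the ``$0$'' in the maintained inequalities points to) one has $U\ge0$ everywhere and the conclusion is unconditional, whereas for sharply punished mistakes the boundary can fall short of that zero and the sign must be checked case by case. I would therefore structure the $\rho$-argument around establishing $U\ge0$ at the boundaries and state the monotone-in-$\rho$ conclusion under this condition, flagging it as the one place where the comparative static is not purely mechanical.
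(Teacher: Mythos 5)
Your construction of the boundaries is exactly equivalent to the paper's: the paper (Lemma \ref{lem:The-boundary-points}, restated as \eqref{eq:intersection_U_and_Uhat}) characterizes $\underline{p}^{*},\overline{p}^{*}$ as the intersections of $U$ with the full-attention value $U^{FA}$, and rearranging $U_{r}(\overline{p}^{*})=U^{FA}(\overline{p}^{*})$ yields precisely your smooth-pasting identity $c+\rho\,U_{r}(\overline{p}^{*})=\lambda(1-\overline{p}^{*})(u_{\ell}^{L}-u_{r}^{L})$. Your parts (b) and (c) are the same implicit-function/monotone-intersection computations as the paper's proof, with the same caveat at $\rho=0$ (where $\overline{p}^{*}$ responds only weakly to $u_{r}^{R}$; the paper's text likewise attaches the ``if $\rho>0$'' qualification to that half of (c)). For the $c$-direction and the two limits, your global arguments ($V^{*}$ nonincreasing in $c$ with $U$ fixed; $V^{*}\ge V_{opp}$ with $V_{opp}$ independent of the mistake payoffs while $U\to-\infty$ on the interior) are clean substitutes for the paper's route, which reads everything off the closed forms \eqref{eq:plsG}--\eqref{eq:pHsG} and the intersection picture and thereby gets monotone convergence of the cutoffs, slightly stronger than your pointwise coverage; both are fine.

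The genuinely valuable part of your proposal is the $\rho$-step, where you have in fact caught a gap in the paper's own proof rather than left one in yours. The paper's argument for (a) asserts that $U^{FA}$ is strictly decreasing in $\rho$; but $\partial U^{FA}(p)/\partial\rho=-U^{FA}(p)/(\rho+\lambda)$, so this holds only where $U^{FA}(p)>0$, and at the boundary $U^{FA}(\overline{p}^{*})=U_{r}(\overline{p}^{*})$, which your (correct) formula $d\overline{p}^{*}/d\rho=-U_{r}(\overline{p}^{*})/D$ shows is exactly the quantity that must be signed. It can be negative while the experimentation region is nonempty: with $u_{r}^{R}=u_{\ell}^{L}=1$, $u_{\ell}^{R}=u_{r}^{L}=-K$ and $\rho=0$ one computes $U_{r}(\overline{p}^{*})=1-c/\lambda$ while $\overline{c}=\lambda(1+K)/2$, so for $K>1$ and $c\in(\lambda,\lambda(1+K)/2)$ the region \emph{expands} as $\rho$ rises---fully consistent with the paper's own Proposition \ref{prop:role_of_discounting} and its discussion that large losses make a higher discount rate favor more experimentation, but inconsistent with an unconditional reading of part (a). Your proposed fix---establish $U\ge 0$ at the boundaries and state the $\rho$-monotonicity under that condition---is the right repair; the one correction to your write-up is that the ``normalization'' $u_{r}^{L},u_{\ell}^{R}\ge 0$ is \emph{not} maintained by the paper (part (b) even sends these payoffs to $-\infty$), so the conditional statement, not the normalization, is what survives. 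Everything else in your proposal is correct and essentially the paper's argument.
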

Parts (a) and (b) are quite intuitive. The DM acquires information
for a wider range of beliefs if the cost of learning $(\rho,c)$ falls,
or if mistakes become more costly in the sense that $(u_{r}^{L},u_{\ell}^{R})$
falls. The intuition for (c) is twofold:   an increase in $u_{r}^{R}$ increases the
value of a conclusive $R$-signal  and thus causes $\underline{p}^{*}$ to fall.  Further, if $\rho>0$, the immediate action  $r$ becomes more attractive, which causes $\overline p^*$ to shift down.

Next we consider how the payoffs from mistakes affect the cutoff beliefs
that separate regions in which the DM employs different modes of learning. 
\begin{prop}[Comparative Statics: Mode of Learning]
\label{prop:Comparative_Statics_Modes_of_Learning}~ 
\begin{enumerate}
\item If $\underline{c}<c<\overline{c}$, then the cutoff $\check{p}$ in
 own-biased learning decreases if $u_{\ell}^{R}$ falls, and
increases if $u_{r}^{L}$ falls. 

\item  For given $c$, the opposite-biased region appears ($\underline{c}>c$) for $(u_{r}^{L},u_{\ell}^{R})$ sufficiently small, and expands as $(u_{r}^{L},u_{\ell}^{R})$ falls. Given $\underline{c}>c$, $\underline{p}$ converges monotonically to zero as $u_{\ell}^{R}\rightarrow-\infty$, and $\overline{p}$ converges monotonically to one as $u_{r}^{L}\rightarrow-\infty$. 

\end{enumerate}
\end{prop}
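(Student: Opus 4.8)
The plan is to exploit a \emph{decoupling} in how the two ``mistake'' payoffs enter the construction of the value functions. Along the $\alpha=1$ branch (own-biased, looking for $R$-evidence) the HJB ODE involves only the match payoff $u_r^R$, and the left stopping boundary $\underline{p}^*$ is pinned down by value matching and smooth pasting against $U_\ell(p)=p\,u_\ell^R+(1-p)u_\ell^L$; hence this branch, together with $\underline{p}^*$ and its constant of integration, depends on $\{u_r^R,u_\ell^R,u_\ell^L\}$ but not on $u_r^L$. Symmetrically, the $\alpha=0$ branch and $\overline{p}^*$ depend on $\{u_\ell^L,u_r^R,u_r^L\}$ but not on $u_\ell^R$. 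Thus $u_\ell^R$ moves only the ``left'' object and $u_r^L$ only the ``right'' object. I would also record the one fact that drives part (b): under the \emph{pure} opposite-biased heuristic the DM acts only on conclusive breakthrough news and therefore never makes a mistake, so $V_{opp}$ (and the absorbing belief $p^*$) depend only on $\{u_r^R,u_\ell^L,c,\rho,\lambda\}$ and are \emph{independent} of $u_\ell^R$ and $u_r^L$. I would make all these monotonicity statements precise with the envelope theorem, so that the endogenous movement of the free boundaries contributes only second-order terms.

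For part (a) I work in the regime $\underline{c}<c<\overline{c}$, where $V^*=V_{own}$ and $\check{p}$ is the switch point. The appendix provides closed forms for the two branch solutions $V_L$ (the $\alpha=1$ branch anchored at $\underline{p}^*$) and $V_R$ (the $\alpha=0$ branch anchored at $\overline{p}^*$), and I characterize $\check{p}$ as their intersection $V_L(\check{p})=V_R(\check{p})$, with $V_L>V_R$ to the left and $V_L<V_R$ to the right (and I would check that smooth pasting and the vanishing of the coefficient of $\alpha$ in \eqref{eq:HJB} hold there, confirming the switch is optimal). Lowering $u_\ell^R$ strictly lowers the value of any strategy that takes $\ell$ with positive probability in state $R$, which the $\alpha=1$ branch does at $\underline{p}^*$; by the envelope argument $V_L$ shifts down pointwise while $V_R$ is unchanged. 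Hence the single-crossing difference $V_L-V_R$ shifts down and its zero $\check{p}$ moves left; transversality of the crossing (nonvanishing derivative of $V_L-V_R$ at $\check{p}$) then lets me invoke the implicit function theorem to sign $\partial\check{p}/\partial u_\ell^R>0$, i.e.\ $\check{p}$ falls when $u_\ell^R$ falls. The statement for $u_r^L$ is the mirror image through $V_R$.

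For part (b) the optimal value is the upper envelope of $V_{own}$ and $V_{opp}$, so the opposite-biased region is $\{p:V_{opp}(p)>V_{own}(p)\}$ intersected with the experimentation region, and $\underline{p},\overline{p}$ are its two crossings. Since $V_{opp}$ is independent of $(u_r^L,u_\ell^R)$ while $V_{own}$ falls strictly as either payoff falls (own-biased learning has positive mistake probability throughout its region by Proposition \ref{prop:speed_and_accuracy}), the set $\{V_{opp}>V_{own}\}$ weakly expands; once both payoffs are sufficiently negative, $V_{own}$ is driven below $V_{opp}$ on a nonempty interval, so the region appears and $\underline{c}$ rises above $c$. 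The decoupling localizes the two boundaries—$\underline{p}$ compares $V_{opp}$ to the $u_\ell^R$-dependent left portion of $V_{own}$ and $\overline{p}$ to the $u_r^L$-dependent right portion—giving the separate monotonicities. For the limits I would show $V_{own}(p)\to-\infty$ at each fixed $p$ bounded away from $0$ as $u_\ell^R\to-\infty$: own-biased learning reaches $\underline{p}^*$ without a breakthrough and takes $\ell$, which in state $R$ is a mistake occurring with conditional probability bounded below, so the expected loss, of order $p\,|u_\ell^R|$, diverges while $V_{opp}(p)$ stays finite; hence opposite-biased eventually dominates at every such $p$ and $\underline{p}\to0$, and symmetrically $\overline{p}\to1$ as $u_r^L\to-\infty$.

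The main obstacle I anticipate is this last limiting step. As $u_\ell^R\to-\infty$ the boundary $\underline{p}^*$ itself collapses to $0$ (Proposition \ref{prop:Comparative_Statics_Experimentation_Region}), which shrinks the conditional probability that own-biased learning reaches $\underline{p}^*$ in state $R$; I must bound the mistake probability under the \emph{re-optimized} own-biased boundaries away from zero uniformly enough that the product of this probability with $p\,|u_\ell^R|$ still diverges at a fixed $p$. The second delicate point is upgrading ``weakly expands'' to genuinely \emph{monotone} movement of $\check{p}$, $\underline{p}$, and $\overline{p}$, which requires strict single crossing and transversality of $V_L-V_R$ and of $V_{own}-V_{opp}$; here I would use the explicit branch solutions from the appendix to sign the relevant derivatives directly.
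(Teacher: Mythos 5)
Your architecture largely matches the paper's: part (a) is proved exactly along your lines (from the closed forms, $\overline{V}_{own}$ is independent of $u_{\ell}^{R}$, $\partial\underline{V}_{own}/\partial u_{\ell}^{R}>0$ with the dependence entering only through $\underline{p}^{*}$, and the Crossing Lemma's ``from above'' property makes the intersection $\check{p}$ move monotonically), and the same decoupling-plus-crossing argument delivers the monotonicity of $\underline{p}$ and $\overline{p}$ in part (b); for the appearance of the opposite-biased region the paper is even more direct, reading off from \eqref{eq:zetal} that $\underline{c}$ is decreasing in $u_{\ell}^{R}$ and $u_{r}^{L}$.

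However, your limiting step for $\underline{p}\to 0$ contains a genuine error, and the repair you sketch for the obstacle you correctly flag cannot work. You claim $V_{own}(p)\to-\infty$ at fixed $p$ because the expected mistake loss is of order $p\,\lvert u_{\ell}^{R}\rvert$ times a mistake probability that you hope to bound away from zero under the re-optimized boundary. It cannot be so bounded: from \eqref{eq:plsG}, $\underline{p}^{*}\sim\bigl(u_{\ell}^{L}\rho+c\bigr)/\bigl((\rho+\lambda)\lvert u_{\ell}^{R}\rvert\bigr)$, and conditional on state $R$ the probability of drifting from $p$ down to $\underline{p}^{*}$ without a breakthrough is $\frac{1-p}{p}\,\frac{\underline{p}^{*}}{1-\underline{p}^{*}}=O\bigl(1/\lvert u_{\ell}^{R}\rvert\bigr)$. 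Hence the product $p\,\lvert u_{\ell}^{R}\rvert\times(\text{mistake prob.})$ stays \emph{bounded}, and with $\rho>0$ the discounted mistake loss in fact vanishes, since the additional discount factor is $O\bigl((\underline{p}^{*})^{\rho/\lambda}\bigr)$. The re-optimized value does not diverge at all: the paper shows $\underline{V}_{own}(p)$ converges, uniformly on $[q,p^{*}]$ for any $q>0$, to the \emph{finite} linear function
\begin{equation*}
\underline{V}_{own}^{\circ}(p)\;=\;p\,\frac{\lambda u_{r}^{R}-c}{\rho+\lambda}\;-\;(1-p)\,\frac{c}{\rho},
\end{equation*}
which is precisely the value of choosing $\alpha=1$ forever and never stopping (the mistake loss disappears in the limit; what remains is the unbounded delay cost in state $L$). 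The decisive comparison is then qualitative rather than a divergence: simple algebra gives $\underline{V}_{own}^{\circ}(p^{*})\le U^{S}(p^{*})$ and $\underline{V}_{own}^{\circ\prime}(p)>U^{S\prime}(p)$, so by linearity $\underline{V}_{own}^{\circ}(q)<U^{S}(q)\le\underline{V}_{opp}(q)$ for every $q\in(0,p^{*})$. If $\underline{p}$ had a limit $q>0$, the uniform convergence would make the defining intersection $\underline{V}_{own}(\underline{p})=\underline{V}_{opp}(\underline{p})$ eventually impossible near $q$; hence $q=0$, and symmetrically $\overline{p}\to1$ as $u_{r}^{L}\to-\infty$. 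You should therefore replace your ``product diverges'' step by this finite-limit comparison against the stationary value $U^{S}$ in \eqref{eq:UbarG}; as stated, your mechanism proves nothing, because the object you need to diverge converges.
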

To get an intuition for Part (a), suppose $p<\check{p}$. In this
case, own-biased learning may lead to taking action $\ell$
in the wrong state (state $R$). This mistake becomes more costly as $u_{\ell}^{R}$ becomes smaller and $\check{p}$ shifts down to avoid this mistake. In other words, the DM avoids the $L$-biased source when action $\ell$ becomes more risky.

Part (b) shows that the cost of mistakes also matters for the relative
appeal of the \emph{alternative} learning strategies:   opposite-biased learning  becomes more appealing when mistakes become more costly.
In the limit where mistakes become completely unacceptable, the opposite-biased
strategy becomes optimal for all beliefs. 
One could imagine this limit
behavior as that of a scientist who views conclusive evidence of either
kind\textemdash \textit{proving} or \textit{disproving} a hypothesis\textemdash as
the only acceptable way of advancing science. Such a scientist will
rely solely on opposite-biased learning:  she will initially strive
to prove the hypothesis she conjectures to be true;  
after a series of unsuccessful
attempts to prove the hypothesis, however, she begins to doubt her
initial conjecture, and when the doubt reaches a ``boiling point''
(i.e., $p^{*}$), she begins to put some effort to disproving it as well.\footnote{Own-biased learning could also describe some aspect of scientific
inquiry if a scientist is willing to accept a small margin of error.
For instance, even a careful theorist may not verify thoroughly her
``proof'' if she believes it to be correct. Rather, she may look
for a mistake in her argument,
and without finding one may declare it a correct proof.}

Finally we show how the effect of a higher discount rate compares
 to the effect of a higher flow cost. Intuitively, one would interpret
$\rho$ as a cost of learning and would thus expect that $c$ and
$\rho$ are substitutes in the sense that a higher discount rate requires
a lower flow cost for the same structure to emerge. Formally, one would expect $\partial\overline{c}/\partial\rho<0$
and $\partial\underline{c}/\partial\rho<0$. The following proposition
shows that this is indeed the case if at least one ``mistake payoff'' ($u_{\ell}^{R}$
or $u_{r}^{L}$) is not too small. 
\begin{prop}[Discounting vs. Flow Cost]
\label{prop:role_of_discounting}~ 
\begin{enumerate}
\item Suppose $\overline{c}>0$. Then $\partial\overline{c}/\partial \rho<0$
if and only if $U(p)>0$ for all $p\in[0,1]$. 
\item Suppose $\underline{c}>0$. Then $\partial\underline{c}/\partial \rho<0$
if both $u_{r}^{R}>\left|u_{\ell}^{R}\right|$ and $u_{\ell}^{L}>\left|u_{r}^{L}\right|$;
$\partial\underline{c}/\partial\rho>0$ if $\min\left\{ u_{\ell}^{R},u_{r}^{L}\right\} $
is sufficiently small. 
\end{enumerate}
\end{prop}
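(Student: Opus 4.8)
My plan is to pin down both thresholds by the conditions that hold when the relevant experimentation sub-region degenerates to a single belief, obtain (semi-)explicit expressions for $\overline{c}$ and $\underline{c}$ as functions of $\rho$, and then differentiate. Part (a) reduces to a one-line computation once the formula for $\overline{c}$ is in hand; part (b) will require the implicit function theorem plus control of the own-biased value function, which is the real work.

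\textbf{Part (a).} The threshold $\overline{c}$ is the cost at which the own-biased region $(\underline{p}^{*},\overline{p}^{*})$ collapses to a point. At each stopping boundary the construction gives value matching and smooth pasting: at $\overline{p}^{*}$ one has $V(\overline{p}^{*})=U_{r}(\overline{p}^{*})$ and $V'(\overline{p}^{*})=u_{r}^{R}-u_{r}^{L}$, and symmetrically at $\underline{p}^{*}$. Substituting these into the own-biased HJB (the $\alpha=0$ branch at $\overline{p}^{*}$, the $\alpha=1$ branch at $\underline{p}^{*}$) and simplifying yields the boundary relations
\[
c+\rho\,U_{r}(\overline{p}^{*})=\lambda(1-\overline{p}^{*})(u_{\ell}^{L}-u_{r}^{L}),\qquad c+\rho\,U_{\ell}(\underline{p}^{*})=\lambda\,\underline{p}^{*}\,(u_{r}^{R}-u_{\ell}^{R}).
\]
These determine $\overline{p}^{*},\underline{p}^{*}$ in $(c,\rho)$; one checks $\overline{p}^{*}$ falls and $\underline{p}^{*}$ rises in $c$, and that they meet exactly at $\check{p}$, the unique belief where the two right-hand sides agree (using $\check{p}(u_{r}^{R}-u_{\ell}^{R})=(1-\check{p})(u_{\ell}^{L}-u_{r}^{L})=:G$). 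Setting $\overline{p}^{*}=\underline{p}^{*}=\check{p}$ in either relation gives $\overline{c}=\lambda G-\rho\,U(\check{p})$, so $\partial\overline{c}/\partial\rho=-U(\check{p})$. Since $U=\max\{U_{r},U_{\ell}\}$ is convex with $U_{r}$ increasing, $U_{\ell}$ decreasing, and positive endpoints $U(0)=u_{\ell}^{L},\ U(1)=u_{r}^{R}$, its minimum on $[0,1]$ is attained at the kink $\check{p}$; hence $U(\check{p})=\min_{p\in[0,1]}U(p)$ and $\partial\overline{c}/\partial\rho<0\iff U(\check{p})>0\iff U(p)>0$ for all $p$, which is exactly part (a).

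\textbf{Part (b).} The threshold $\underline{c}$ is the cost at which the opposite-biased region degenerates to its absorbing point $p^{*}$, i.e.\ where the two heuristics tie, $V_{own}(p^{*})=V_{opp}(p^{*})$. At $p^{*}$ the opposite-biased value is explicit: with $\alpha=\tfrac{1}{2}$ there is no drift, so the HJB gives $V_{opp}(p^{*})=\frac{\lambda M-2c}{2\rho+\lambda}$ with $M:=p^{*}u_{r}^{R}+(1-p^{*})u_{\ell}^{L}$. Because at $c=\underline{c}$ the two value functions meet \emph{tangentially} at $p^{*}$ (value matching and smooth pasting of the merging regions), the total derivative of $h:=V_{own}(p^{*})-V_{opp}(p^{*})$ with respect to $p^{*}$ vanishes, so the induced $\rho$-dependence of $p^{*}$ drops out. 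The implicit function theorem applied to $h(\rho,\underline{c}(\rho))=0$ then gives
\[
\frac{\partial\underline{c}}{\partial\rho}=\frac{\partial_{\rho}V_{own}(p^{*})-\partial_{\rho}V_{opp}(p^{*})}{D_{own}-D_{opp}},
\]
where $D_{own}:=-\partial_{c}V_{own}(p^{*})>0$ and $D_{opp}:=-\partial_{c}V_{opp}(p^{*})=\frac{2}{2\rho+\lambda}>0$ are the expected discounted experimentation durations. The sign is thus governed by the delay comparison in the denominator and the differential discount-sensitivity in the numerator.

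The remaining task is to sign these, and this is where the two regimes arise; it is also the main obstacle. When mistakes are moderate ($u_{r}^{R}>|u_{\ell}^{R}|$ and $u_{\ell}^{L}>|u_{r}^{L}|$) the own-biased boundaries are interior, the belief reaches them quickly, and both reward streams are positive: own-biased is the faster strategy ($D_{own}<D_{opp}$, denominator negative) while the longer-delayed $V_{opp}$ is more discount-sensitive ($\partial_{\rho}V_{opp}<\partial_{\rho}V_{own}<0$, numerator positive), giving $\partial\underline{c}/\partial\rho<0$. As $\min\{u_{\ell}^{R},u_{r}^{L}\}\to-\infty$, Proposition \ref{prop:Comparative_Statics_Experimentation_Region} forces $\overline{p}^{*}\to1$ and $\underline{p}^{*}\to0$, so the own-biased drift time from $p^{*}$ to the boundary diverges and its late flow becomes cost-dominated; then $D_{own}\to\infty>D_{opp}$ (denominator flips positive) while discounting of the now-negative late flow makes $\partial_{\rho}V_{own}$ turn positive, keeping the numerator positive, so $\partial\underline{c}/\partial\rho>0$. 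Making the duration comparison and the sign of $\partial_{\rho}V_{own}$ rigorous in the asymmetric case is the hard step, since $V_{own}$ solves a non-autonomous linear ODE (integrating factor $p^{-(1+\rho/\lambda)}(1-p)^{\rho/\lambda}$) with no elementary closed form; I would resolve it either via the explicit power-function solution on each own-biased branch or via the Feynman--Kac representation $\partial_{\rho}V=-\mathbb{E}\big[\int_{0}^{\tau}t\,e^{-\rho t}\phi_{t}\,dt+\tau e^{-\rho\tau}\Psi\big]$ combined with the boundary comparative statics already established.
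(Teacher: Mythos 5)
Your part (a) is correct and, after the boundary-collapse detour, lands exactly on the paper's argument: your smooth-pasting relations at the stopping boundaries reproduce \eqref{eq:plsG}--\eqref{eq:pHsG}, the collapse belief is the kink $\hat{p}$ of $U$ (note that what you call $\check{p}$ is the paper's $\hat{p}$; in the paper $\check{p}$ denotes the own-biased switching belief, which differs from $\hat{p}$ away from the threshold), and your $\overline{c}=\lambda G-\rho U(\hat{p})$ is precisely \eqref{eq:zetah}, so $\partial\overline{c}/\partial\rho=-U(\hat{p})$ as the paper obtains by direct differentiation. One small repair: the primitives only guarantee $u_{r}^{R}>u_{\ell}^{R}$ and $u_{\ell}^{L}>u_{r}^{L}$, not that $U_{\ell}$ is decreasing and $U_{r}$ increasing, so the minimum of $U$ need not sit at the kink; the equivalence $U(\hat{p})>0\iff U(p)>0$ on $[0,1]$ nevertheless holds by piecewise linearity together with $U(0)=u_{\ell}^{L}>0$ and $U(1)=u_{r}^{R}>0$.

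Part (b) contains a genuine gap. Your implicit-function formula and the tangency step are fine (tangency at the threshold is indeed provable: $\underline{V}_{opp}$ pastes smoothly to $U^{S}$ at $p^{*}$ by construction, and the equality case of the Crossing Lemma \ref{lem:branch_crossing} transfers this to the touching own-biased branch). But first, the entire content of the statement is the signing of your numerator and denominator, which you concede is unproven; the paper avoids this completely by differentiating the closed form \eqref{eq:zetal} --- the relevant branch is $\lambda Z(\rho/\lambda)\left(u_{r}^{R}-u_{\ell}^{R}\right)-\rho u_{r}^{R}$ with $Z(\rho)=(\rho+1)/\bigl(1+(2\rho+1)^{1/\rho}\bigr)$, and the elementary bounds $Z'(\rho)\in\left[(1+3e^{2})/(1+e^{2})^{2},\,1/2\right]$ deliver both signs in a few lines. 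Second, and more seriously, your proposed mechanism for the sign reversal is wrong. The denominator $D_{own}-D_{opp}$ cannot turn positive at the threshold: by \eqref{eq:characerization_zetal}, $h(c):=V_{own}(p^{*})-U^{S}(p^{*})$ satisfies $h\ge0$ if and only if $c\ge\underline{c}$, so $h$ crosses zero from below in $c$ and $\partial h/\partial c=D_{opp}-D_{own}\ge0$ at $c=\underline{c}$ for \emph{every} parameter configuration --- at indifference, own-biased learning is always the weakly faster strategy, since speed is its only compensation for lower accuracy. (Your limiting claim $D_{own}\to\infty$ is also impossible for $\rho>0$, since $D_{own}=\mathbb{E}\int_{0}^{\tau}e^{-\rho t}\,dt\le1/\rho$; it is the undiscounted drift time, not the discounted duration, that diverges.) Hence the reversal when $\min\{u_{\ell}^{R},u_{r}^{L}\}$ is very negative must come from the numerator turning negative, $\partial_{\rho}V_{own}<\partial_{\rho}V_{opp}$, and with your claimed signs (numerator positive, denominator flipped positive) the corrected denominator would yield $\partial\underline{c}/\partial\rho<0$ in the severe-mistake regime --- the opposite of what is to be proved. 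So even granting the steps you defer, the plan as structured fails for the second half of (b).
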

If the mistake payoffs $u_{\ell}^{R}$ and $u_{r}^{L}$ are both negative
and sufficiently large in absolute value,  both $\partial\overline{c}/\partial\rho$
and $\partial\underline{c}/\partial\rho$ are positive. Namely, a higher discount rate
calls for more experimentation in this case. This is intuitive since  if losses
are sufficiently large, the DM would prefer to delay their realization, when they are discounted more.
This favors longer experimentation. 

\section{Application: Media Consumption\label{sec:Application}}

Media outlets differ in their partisan biases.\footnote{One method to measure the partisan bias of an outlet is to compare
	the language used by the outlet to the language used by members of
	congress whose partisanship is identified by their voting decisions.
	\citet{gentzkow/shapiro:10} pioneered this method for daily newspapers;
	\citet{martin/yurukoglu:17} use it to identify the bias of cable
	news channels.} There is evidence that the consumption of biased news outlets
affects the political leaning of voters and may change their voting
decisions (see \citet{DellaVigna2007} and \citet{martin/yurukoglu:17} among others).  While a significant fraction of people multi-home and have a news
diet that contains outlets with different biases, consumers tend to
consume news from outlets with a partisan bias similar to their own
position (see for example \citet{Gentzkow2011}). This partisan
(or own-biased) selective exposure can lead to an ``echo-chamber''
effect\textemdash partisan voters become increasingly polarized (see \citet{martin/yurukoglu:17}).  Our model contributes to the literature on media choice by providing theoretical predictions about the optimal news diets for voters with different subjective beliefs, their dynamic evolutions, and the implications for polarization.\footnote{Our rational Bayesian framework to study media choice
shares a theme with \citet{Calvert1985}, \citet{Suen2004}, \citet{burke:08},
\citet{Oliveros2015}, and \citet{yoon:16}\textemdash particularly
the optimality of consuming a biased medium for a Bayesian agent.
These papers are largely static, unlike the current model which is
fully dynamic. \citet{Meyer1991} makes a similar observation in a
dynamic contest environment.  \citet{mullainathan/shleifer:05} assume a ``behavioral''
bias on the part of consumers to predict media slanting.} 

We interpret our DM as a citizen
who votes for one of two candidates, $r$ or $\ell$, possibly after consulting media outlets.\footnote{An alternative interpretation is that the citizen has no decision to make but derives a non-instrumental	payoff from reaching a certain opinion. The citizen subscribes to	media and consults these on an issue of interest. After some time,	she may ``make up her mind'' on the issue and stop acquiring further	information. When making up her mind, she enjoys a payoff that increases in the precision of her belief, for instance $\max\left\{ p_{t},1-p_{t}\right\} $. This interpretation corresponds to our model with	$u_{r}^{R}=u_{\ell}^{L}=1$ and $u_{\ell}^{R}=u_{r}^{L}=0$. 
}
Candidate $r$
has a right-wing platform, and candidate $\ell$ has a left-wing platform.
The state $\omega\in\{R,L\}$ indicates the optimal platform  
for the voter, $u_{x}^{\omega}$ representing the voter's utility
from voting for $x$ in state $\omega$.  The voter incurs flow cost $c>0$ for  paying attention to the media.%
\footnote{Since the time at which the payoff is realized (the election or implementation of a policy) is independent of when the voter makes up her mind, we assume $\rho=0$. Our model does	not capture the effect of a ``deadline,'' which is clearly relevant 	for the election example. However, in line with the two-period example discussed	in Section \ref{sec:Example}, we conjecture that the salient features	of our characterization which are discussed in the context of this application carry over to a model with a sufficiently long deadline.  See Section \ref{sec:Extensions} for further discussion of the effect of a deadline.}
Her belief about the state is captured by $p$. We say the voter
is more right-leaning the higher $p$ is.\footnote{Alternatively one could model a voter's bias in terms of her payoffs. In this case $u_{x}^{\omega}$ could incorporate her ``partisan'' preference.}

Naturally, we interpret the $L$-biased source as a \emph{left-wing outlet} and the $R$-biased source  as a \emph{right-wing outlet}.  For example, a right-wing outlet   publishes information that supports the left-wing candidate only if it passes a high standard of accuracy---that is, if it constitutes conclusive evidence.%
\footnote{The feature that a right-wing outlet could, albeit rarely,  broadcast left-favoring news may appear unusual but is a consequence of our voter being a Bayesian who cannot be systematically misled. This feature is also consistent with empirical evidence. 	\citet{Chiang2011} find that endorsements of presidential candidates by newspapers are only influential if they go against the bias of the newspaper, suggesting that consumers are, to some extent, able to correct for the bias of newspapers, as predicted by the Bayesian model.}
Of course this occurs  rarely.  Most of the time,  the outlet instead reports  $R$-favoring content, which Bayesian consumers perceive as having weak informational content.
In the same vein, we interpret an interior ``attention'' choice  $\alpha\in(0,1)$ as multi-homing by the voter across the two outlets, where $\alpha$ represents the share of time she spends on the left-wing outlet.

 Our interpretation of media bias accords well with the media literature (see \cite{Gentzkow2015} for a survey). A common model in this literature views media bias as arising from the manner in which  an outlet  ``filters'' raw news signals for its viewers.  Section \ref{sec:Micro-foundation} discusses how our information structure can be microfounded by such a model.

Given this interpretation, our theory, more specifically Theorem \ref{thm:optimal_attention_strategy}, provides a rich portrayal of voters' dynamic media choices and their effects.   Voters with extreme beliefs $p\notin\left(\underline{p}^{*},\overline{p}^{*}\right)$
always vote for their favorite candidates without consulting media. Those who consume media
exhibit the following behavior: 
\begin{itemize}
	\item If \emph{news media are moderately informative} so that the cost of information
satisfies $\underline{c}\le c<\overline{c}$, right-leaning voters with $p>\check{p}$ subscribe to  right-wing outlets and
	left-leaning voters subscribe to left-wing outlets. Over time,
	in the absence of breakthrough news  that goes against their initial beliefs, all	voters stick to their initial media choice. 
	\item If \emph{media are highly informative}, so that the cost of information
	is low ($c<\underline{c}$), moderately left-wing ($p\in(\overline{p},\overline{p}^{*})$)
	or moderately right-wing voters ($p\in(\underline{p}^{*},\underline{p})$)
	consume media that are biased against their beliefs, whereas partisan voters
	with more extreme beliefs consume media that are biased in favor
	of their beliefs.  Over time, absent breakthrough news in favor of their initial beliefs, moderate voters	($p\in\left(\underline{p},\overline{p}\right)$) become increasingly
undecided and when their beliefs reach $p^{*}$, they  multi-home 	and divide their attention between both types of outlets; whereas partisan voters become more extreme and continue to subscribe to own-based media.
\end{itemize}

The choice of opposite-biased media by moderate voters may seem counterintuitive. Consider for example a moderate right-wing voter. This voter initially tries to find right-favoring evidence which she expects more likely to arise given their belief.
Interestingly, she expects to find such
evidence in left-wing outlets  as they  scrutinize right-favoring
information more and apply a very high standard for reporting such information. 

In order to understand how consumers' media choices interact with their beliefs, it is useful to perform a simple thought experiment.  Suppose there is a unit mass of  consumers with identical costs of information acquisition $c$; whose payoffs $u^\omega_x$ are identical and symmetric (so that $\hat p=1/2$); and  whose prior beliefs $p_0$ are distributed according to some distribution function $F$ which is symmetric around $1/2$ (i.e., $F(p)=1-F(1-p)$).   Now, fix the state, say $\omega=L$, and study how the distribution of consumers' beliefs change over time due to their media choice. Of particular interest is the extent to which theirs beliefs become more or less polarized over time.  While one can use a number of different measures of polarization,\footnote{See \cite{Esteban2012} for a survey of polarization measures.} we simply focus on the difference between the median belief for consumers with $p\ge 1/2$ and the median belief for consumers with $p\le 1/2$.

\begin{prop} \label{prop:polarization} Fix the true state to be $\omega=L$ and assume symmetric payoffs $u^L_\ell=u^R_r$ and $u^R_\ell=u^L_r$.\footnote{Symmetric results hold if the true state is $\omega = R$.} 
	\begin{enumerate}
		\item  The beliefs in the subpopulation of voters consuming own-biased outlets at time $t=0$ become more polarized over time and converge to a distribution containing three mass points, $\{0, \underline p^*, \overline p^*\}$.
		
		\item If $c\in (\underline c,\overline c)$, the beliefs of all voters become more polarized over time.
		
		\item The  beliefs of the voters consuming opposite-biased outlets at time $t=0$ converge to the true state in the limit as $t \rightarrow \infty$.
	\end{enumerate}
\end{prop}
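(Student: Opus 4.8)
The plan is to fix the state $\omega=L$ and track the deterministic evolution of each voter's belief conditional on no breakthrough, using equation \eqref{eq:pdot}, together with the probability that a breakthrough has arrived by time $t$. The key observation is that in state $L$, the only conclusive signal a voter can receive is $L$-evidence, which arrives with rate $\lambda\beta_t=\lambda(1-\alpha_t)$. A voter who receives such evidence jumps to belief $p=0$; a voter who does not is governed by the no-breakthrough dynamics. Because payoffs are symmetric, the reference beliefs satisfy $\check{p}=p^*=1/2$ and the experimentation and sub-region boundaries are symmetric about $1/2$ (i.e. $\underline{p}^*=1-\overline{p}^*$, $\underline{p}=1-\overline{p}$), which I would invoke from Theorem \ref{thm:optimal_attention_strategy} and the symmetry footnote. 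I would then handle each part by partitioning the initial population according to which heuristic region $p_0$ falls into, and separately analyzing breakthrough and no-breakthrough sample paths.

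For part (a), consider the subpopulation consuming own-biased media at $t=0$, i.e. those with $p_0\in(\underline{p}^*,\underline{p})\cup(\overline{p},\overline{p}^*)$ (or, in case (b), all of $(\underline{p}^*,\overline{p}^*)\setminus\{\check p\}$). Right-leaning own-biased voters ($p_0>1/2$) choose $\alpha=0$, so absent breakthrough their belief drifts up toward $\overline{p}^*$ where they stop; left-leaning own-biased voters ($p_0<1/2$) choose $\alpha=1$ and drift down toward $\underline{p}^*$. The first step is to show these deterministic trajectories reach the respective stopping boundaries in finite time (integrate \eqref{eq:pdot} with $\alpha$ constant; the logistic ODE $\dot p=\mp\lambda p(1-p)$ has a closed-form solution reaching any interior target in finite time), so after that time a no-breakthrough voter sits permanently at $\underline{p}^*$ or $\overline{p}^*$. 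Meanwhile, each voter independently receives $L$-evidence with some cumulative probability, jumping to $p=0$; the left-leaning own-biased voters have $\alpha=1$ hence $\beta=0$ and never receive evidence before stopping, so they end at $\underline{p}^*$ with probability one, whereas the right-leaning ones have $\beta=1$ and a positive chance of jumping to $0$. Taking $t\to\infty$, the subpopulation's belief distribution converges to mass points on $\{0,\underline{p}^*,\overline{p}^*\}$. To establish \emph{increasing polarization}, I would show the median of the $p\ge 1/2$ group is nondecreasing in $t$ and the median of the $p\le1/2$ group nonincreasing, so their difference grows; this follows from the monotone upward (resp. downward) drift of surviving trajectories plus the fact that jumps only move mass to the extreme $p=0$.

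For part (b), when $c\in(\underline c,\overline c)$ the entire experimentation region is own-biased, so every experimenting voter is covered by the part (a) analysis, and the non-experimenting voters ($p_0\notin(\underline p^*,\overline p^*)$) keep their beliefs fixed forever; hence the whole population polarizes, by the same median-monotonicity argument applied to all voters. For part (c), the voters consuming opposite-biased media at $t=0$ are those with $p_0\in(\underline p,\overline p)$ (nonempty only when $c<\underline c$). Here the key is that the opposite-biased strategy \emph{never stops without a breakthrough}: by \eqref{eq:alpha_conf} the belief drifts to $p^*=1/2$ and then stalls with $\alpha=1/2$, at which point $\beta=1/2>0$ so $L$-evidence keeps arriving at rate $\lambda/2$. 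I would show that along any no-breakthrough path the voter eventually reaches and remains at $p^*$ with $\beta_t$ bounded below by a positive constant, so the survival probability $P_t$ (no $L$-evidence by $t$) decays to zero as $t\to\infty$. Consequently, with probability one every opposite-biased voter eventually receives conclusive $L$-evidence and jumps to $p=0$, the true state; formally the belief converges to $0$ almost surely, which is the claim.

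The main obstacle I anticipate is making the convergence and polarization claims precise as statements about the \emph{distribution} of beliefs across the heterogeneous population, rather than about a single voter, while correctly combining the deterministic no-breakthrough drift with the random breakthrough jumps. In particular, for part (a) the delicate point is verifying that the median within each half-population moves monotonically: this requires checking that the mass jumping to $p=0$ (which comes only from voters with $\beta_t>0$, i.e. the $\alpha=0$ own-biased group and, transiently, any $\alpha=1$ phases) does not non-monotonically reshuffle the median, and that the surviving drift dominates. I would resolve this by tracking the pushforward of $F$ under the (piecewise deterministic) belief map and exploiting that, conditional on no breakthrough, the map is a monotone increasing homeomorphism of each half-interval onto a subinterval approaching the stopping boundary, so quantiles are transported monotonically; the breakthrough jumps only add mass at $0$, which can only decrease the lower-half median further. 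The continuous-time Poisson bookkeeping for $P_t$ and the finite-time boundary-hitting estimates are routine given the closed-form logistic solution, so the real work is the distributional monotonicity in part (a).
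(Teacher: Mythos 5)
Your proposal is correct and takes essentially the same approach as the paper's proof: fix state $L$, partition voters by initial learning region, combine the deterministic drift from \eqref{eq:pdot} with Poisson jumps to $p=0$, and establish polarization by showing the median of the voters above $1/2$ is nondecreasing and the median below $1/2$ is nonincreasing, with part (c) immediate from the fact that opposite-biased learners never stop absent a breakthrough. The delicate point you flag is resolved in the paper exactly as you sketch: breakthroughs thin the still-experimenting voters at a uniform rate while sparing the mass already stopped at $\overline{p}^{*}$ (so relatively more mass is removed below the right-half median), and jumps only deposit mass at $0$, which pushes the left-half median down.
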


Figure \ref{fig:media_choice_evolution} shows snapshots of the evolution of beliefs taken at three different times, assuming that initial distribution $F$ is uniform on $[0,1]$.  The colors represent the media choice by voters who are still subscribing  to media. 
\begin{figure}[h]
\begin{center}
\includegraphics[width=1\textwidth,height=4.5cm]{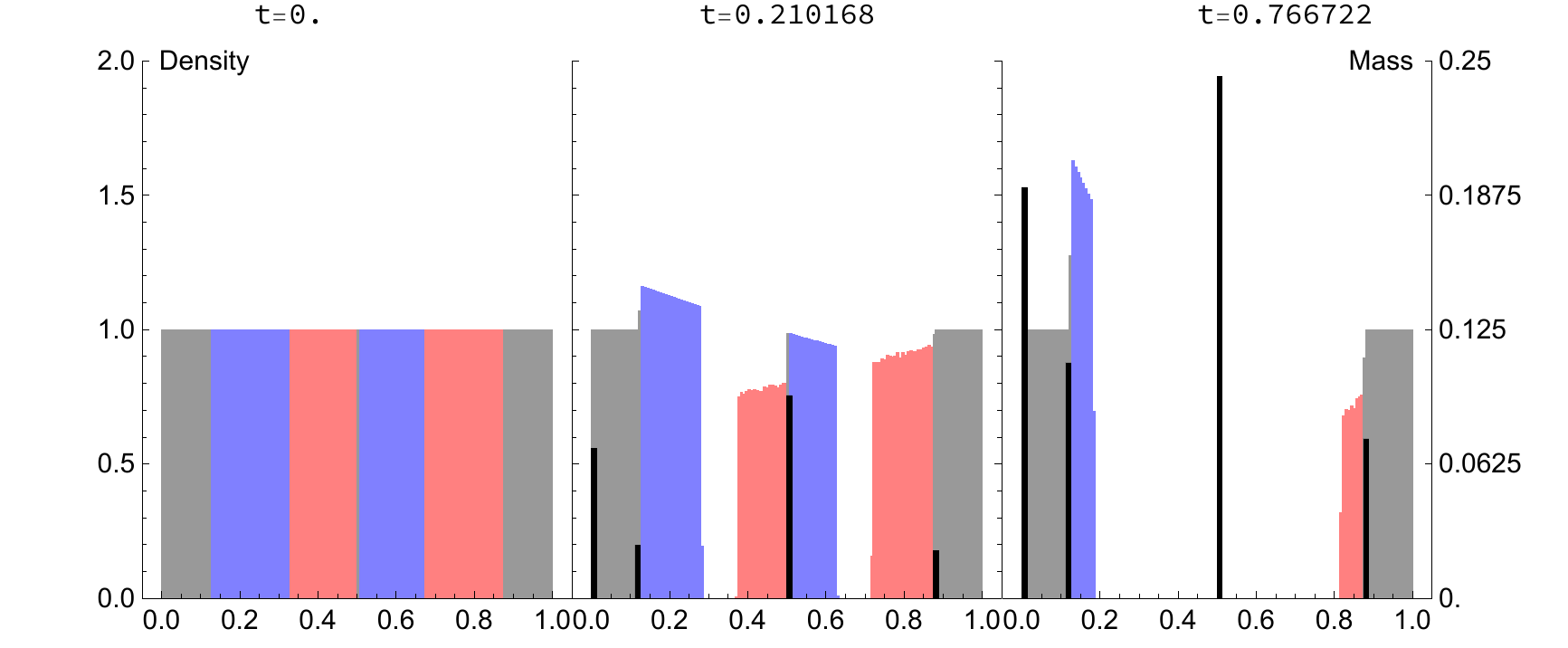}
\end{center}
\vspace{-6pt}
\caption{\label{fig:media_choice_evolution} Evolution of media choice and beliefs
		when the true state is $L$.}
\medskip
\begin{footnotesize}
\begin{spacing}{1} 
Note: Shaded areas represent the density of
			beliefs (left axis). Colors indicate the choice of media with red indicating
			right-wing/conservative and blue indicating left-wing/liberal. Bold
			bars represent mass points of beliefs (right axis).
\end{spacing}
\end{footnotesize}
\end{figure}

The figure shows that those with extreme beliefs consume own-biased outlets.  
Over time, in the absence of contradictory breakthrough news,
these outlets feed such voters with what they believe in, leading them to become more extreme.  Proposition \ref{prop:polarization}.(a) and (b) states that the beliefs of these voters become more polarized over time. Consequently, our model generates self-reinforcing beliefs\textemdash sometimes called an ``echo-chamber'' effect\textemdash that persist until
strong contradictory evidence arrives.  

The evolution of beliefs is quite different for the voters with moderate beliefs. The figure shows that they consume opposite-biased outlets. Absent breakthrough news this leads to an ``anti-echo chamber'' effect. 
Over time, the anti-echo chamber effect makes voters more undecided. Ultimately, they multi-home both left-wing and right-wing outlets, and in the limit, learn the true state, as stated in Proposition \ref{prop:polarization}.(c).\footnote{\cite{Gentzkow2011} present evidence that a significant number of consumers multi-home news channels with different slants.    To the best of our knowledge, ours is the first theoretical model to predict the multi-homing of news outlets with conflicting slants.}

In summary,
our dynamic model of media choice predicts two different dynamics
of belief evolution resulting from optimal media choice: the beliefs
of those who are sufficiently extreme become more polarized, and
the beliefs of those who are sufficiently moderate converge toward
the middle and result in the multi-homing of opposite media outlets. The ``anti-echo chamber'' effect is a novel prediction of our dynamic model and has no analogue in previous literature.

\section{Extensions} \label{sec:Extensions}

In this section, we provide a foundation for our model and link it to a model of filtering bias; and discuss several interesting extensions that correspond to various realistic features of information acquisition. The extensions suggest that our characterization of the optimal policy as well as the proof techniques are surprisingly robust.  While the discussion is kept deliberately informal and intuitive, more detailed arguments can be found in Appendix \ref{app:extensions} in the Supplemental Material.

\subsection{Discrete-Time Foundation for Conclusive Poisson Model}\label{sec:Micro-foundation}

Although we have only considered \emph{conclusive} Poisson experiments, we show here that these experiments are justified as optimal within a more general class of experiments.

Consider a discrete time analogue of our model with an arbitrary period length $dt\in (0, 1/\lambda)$. The DM's problem is the same as before, except that
she incurs a cost of $cdt$ and discounts by the factor of $e^{-\rho dt}$
for each period of information acquisition. In each period, the DM may choose an experiment of the form described in Table \ref{fig:binary}.
\begin{table}[htb]
  \centering

		\begin{tabular}{ccc}
		\tabularnewline
			\hline 
			\hline 
			state/signal  & $L$-signal  & $R$-signal \tabularnewline
			\hline 
			$L$  & $a$  & $1-a$\tabularnewline
			$R$  & $1-b$  & $b$ \tabularnewline
			\hline 
			\hline 
			\multicolumn{3}{l}{{\small{}{constraints: $a,b\in[0,1],1\le a+b\le1+\lambda dt$ }}}
		\end{tabular}
	\caption{General binary-signal experiment}
	\label{fig:binary} 
\end{table}

The total probability $a+b$ of ``informative'' signals is bounded
above by $1+\lambda dt$.
Note that the overall informativeness of
the experiment, measured by $\lambda dt$, is proportional to the length of a period, and vanishes as $dt\to0$. This captures the idea that
real information takes time to arrive. In the limit as $dt\to0$,
$\lambda$ parameterizes the constraint for ``flow'' information.

General binary-signal experiments in discrete time encompass rich
and flexible information structures. Setting $(a,b)=(1,\lambda dt)$ or $(a,b)=(\lambda dt,1)$, we obtain the experiments in Table \ref{fig:two-biased-experiments} that converge to our conclusive Poisson information structure as $dt\to 0$.
More generally, if we set $a=\gamma dt$ and $1-b=(\gamma-\lambda)dt$, for $\gamma> \lambda$, we obtain an inconclusive Poisson experiment in which breakthrough news arrives in both states but at a higher rate in state $L$ than in state $R$.  In this way, \emph{any} posterior belief  $\phi<p$  can be obtained from breakthrough news in the limit as $dt\to 0$, and a similar construction yields jumps to $\phi>p$. If we pick a posterior closer to the prior, breakthrough news arrives with a higher rate. This captures the intuitive idea that a less informative signal can be obtained more easily.\footnote{To see this, fix any posterior $\phi$ below the prior $p$. Consider the experiment with $a=\frac{p(1-\phi)}{p-\phi}\lambda dt$ and $b=1-\frac{(1-p)\phi}{p-\phi}\lambda dt$.  In the limit as $dt\to 0$, the experiment converges to a Poisson process which sends an $L$-breakthrough signal at rate $\frac{p(1-\phi)}{p-\phi}\lambda$ in state $L$, and at rate $\frac{(1-p)\phi}{p-\phi}\lambda$ in state $R$, so that upon receiving that signal the belief becomes exactly $\phi$. The arrival rate increases and converges to $\infty$ as $\phi\nearrow p$.}
Further, a ``mixing'' of Poisson processes can be attained by switching across different experiments within the class. For example, dividing attention with $\alpha=1/2$ in the baseline model, is obtained by switching  between $(a,b)=(1,\lambda dt)$ and $(a,b)=(\lambda dt,1)$ over time.%
\footnote{As usual, in the absence of news, the belief drifts in the direction implied by Bayes rule. One example of this is the stationary policy $\alpha(p)=1/2$, in which no updating occurs in the absence of breakthrough news; this is obtained when two Poisson processes with jumps to zero and one are mixed equally.}

In summary, this class encompasses a range of both conclusive and inconclusive experiments.%
\footnote{In discrete time, the class of experiments also admits a random walk, e.g.~if $a=b=(1+\lambda dt)/2$. However, this process becomes uninformative in the limit as $dt\rightarrow0$. It converges to a diffusion process with identical drift in both states. In other words, an informative DDM cannot be obtained as a limit of the current class.} %
Suppose the DM is free to choose from this rich class of experiments.  Which experiments are optimal?  Will she necessarily choose an accurate signal?   The answer is not obvious, since the DM may find inaccurate signals appealing as they are easier to obtain. Nevertheless, we show that  conclusive experiments are optimal, thus justifying  our focus on them within this class of experiments.

\begin{prop}
\label{prop:microfound} Consider the discrete-time problem with arbitrary
period length $dt\in(0,1/\lambda)$ and finite or infinite number of periods. In each period and for each belief, any binary experiment with $a+b\le1+\lambda dt$ is weakly dominated by either $\sigma^{R}$ or $\sigma^{L}$. 
\end{prop}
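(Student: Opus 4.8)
The plan is to fix the continuation value function $W$ — the value of entering the next period with a given belief — and to reduce the whole comparison to a single convexity statement. Since within a period every admissible experiment is charged the same flow cost $c\,dt$ and the same discount factor $e^{-\rho dt}$, ranking experiments at a belief $p$ is equivalent to ranking the expected continuation value they generate,
\[
E(a,b):=P_{L}\,W(\phi_{L})+P_{R}\,W(\phi_{R}),
\]
where $P_{L}=(1-p)a+p(1-b)$ and $P_{R}=(1-p)(1-a)+pb$ are the signal probabilities and $\phi_{L}=p(1-b)/P_{L}$, $\phi_{R}=pb/P_{R}$ the induced posteriors. In these coordinates $\sigma^{L}=(1,\lambda dt)$ and $\sigma^{R}=(\lambda dt,1)$, so the claim is exactly $E(a,b)\le\max\{E(\sigma^{L}),E(\sigma^{R})\}$ for every feasible $(a,b)$. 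I would first record that $W$ is convex, which needs no restriction on the class of experiments: the expected discounted payoff of any fixed history-contingent plan equals $p\,v_{R}+(1-p)\,v_{L}$, with $v_{\omega}$ the conditional payoff in state $\omega$ (independent of $p$), so $W$, being a pointwise supremum of affine functions of $p$, is convex. In the finite-horizon case this also follows by backward induction from the convex terminal value $U$.

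The heart of the argument is that $E$ is convex \emph{in the experiment} $(a,b)$. I would write each signal's contribution in unnormalized ``prior-mass'' coordinates and use the perspective map $g(x,y):=(x+y)\,W\!\left(y/(x+y)\right)$, with $g(0,0):=0$. Because $W$ is convex, $g$ is the perspective of a convex function — namely $g(x,y)=t\,W(z/t)$ at the linear substitution $(z,t)=(y,\,x+y)$ — and is therefore jointly convex on the nonnegative orthant. The $L$-signal contributes $g(x_{L},y_{L})$ with $(x_{L},y_{L})=((1-p)a,\,p(1-b))$ and the $R$-signal contributes $g(x_{R},y_{R})$ with $(x_{R},y_{R})=((1-p)(1-a),\,pb)$, both affine in $(a,b)$. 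Hence $E(a,b)=g(x_{L},y_{L})+g(x_{R},y_{R})$ is a sum of convex functions of an affine image of $(a,b)$, so it is convex in $(a,b)$.

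Finally I would exploit this convexity over the feasible set. The constraints $a,b\in[0,1]$ and $1\le a+b\le 1+\lambda dt$ (using $\lambda dt<1$) describe a quadrilateral with vertices $(0,1),(\lambda dt,1),(1,\lambda dt),(1,0)$. Two of these vertices are precisely $\sigma^{R}=(\lambda dt,1)$ and $\sigma^{L}=(1,\lambda dt)$, while $(0,1)$ and $(1,0)$ are uninformative — both states emit the same signal — so there $E=W(p)$; continuity of $g$ handles the vanishing-probability signal at these two vertices. A convex function on a polytope attains its maximum at a vertex, giving $E(a,b)\le\max\{E(\sigma^{L}),E(\sigma^{R}),W(p)\}$. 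The uninformative vertices do not bind: since $\sigma^{L}$ and $\sigma^{R}$ each induce a mean-preserving spread of the point mass at $p$ (their posteriors average back to $p$), Jensen's inequality and convexity of $W$ yield $E(\sigma^{L})\ge W(p)$ and $E(\sigma^{R})\ge W(p)$. Therefore $E(a,b)\le\max\{E(\sigma^{L}),E(\sigma^{R})\}$, which is the asserted weak dominance, and since this holds period by period with the relevant $W$ (itself convex in every period), the conclusion follows for any finite or infinite horizon.

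The step I expect to be the main obstacle — and the one worth stating with care — is the joint convexity of $E$ in $(a,b)$. The tempting shortcut, that a general feasible experiment is a Blackwell garbling of one of the conclusive experiments, is false (a symmetric experiment such as $(a,b)=(0.7,0.7)$ with $\lambda dt=0.5$ is comparable in informativeness to neither $\sigma^{L}$ nor $\sigma^{R}$), so the dominance cannot come from an informativeness ordering; it must instead come from convexity of the \emph{value} of experiments in $(a,b)$, which is what the perspective representation delivers. The remaining details are routine: verifying the vertex set of the feasibility polygon and the continuity of $g$ at the two uninformative vertices.
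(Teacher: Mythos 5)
Your proof is correct, and it reaches the result by a genuinely different mechanism than the paper's. The paper's proof starts from the same foundation you lay down---convexity of the continuation value $\widetilde{V}$, established exactly as you do, as an upper envelope of payoffs linear in the prior---but it then works only along the informativeness frontier $b=1+\lambda dt-a$: for an arbitrary $\hat{a}$ it constructs auxiliary payoffs $\hat{u}^{\omega}_{x}$ whose stopping functions $\widehat{U}_{\ell},\widehat{U}_{r}$ are tangent to $\widetilde{V}$ at the two realized posteriors, observes that the expected value of $\widehat{U}=\max\{\widehat{U}_\ell,\widehat{U}_r\}$ under the experiment is \emph{linear} in $a$, maximizes that linear function over $a\in\{\lambda dt,1\}$, and then bounds back up using $\widehat{U}\le\widetilde{V}$. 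You instead prove joint convexity of the experiment's value $E(a,b)$ in the likelihood parameters via the perspective representation $g(x,y)=(x+y)W\left(y/(x+y)\right)$ and finish by extreme-point maximization over the feasibility quadrilateral, disposing of the two uninformative vertices with the martingale property of posteriors and Jensen's inequality. The two arguments are cousins---both are convexity/supporting-hyperplane duality---but yours buys three things: it covers the entire constraint set $a+b\le 1+\lambda dt$ in one stroke, whereas the paper's computation as written fixes $a+b=1+\lambda dt$ and leaves experiments below the frontier implicit; it isolates a reusable lemma (the value of an experiment is convex in its unnormalized likelihoods) that applies verbatim to any polytope of feasible experiments; and it never needs derivatives of $\widetilde{V}$, sidestepping the differentiability that the paper's tangency conditions $\widehat{U}'_\ell(\hat{q}^L)=\widetilde{V}'(\hat{q}^L)$ invoke (harmless there, since a convex function admits subgradients everywhere, but your route avoids even that). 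The paper's route, in exchange, is more elementary and self-contained: two tangent lines and a linear objective, no perspective-function machinery. Your side remark that the dominance cannot come from Blackwell garbling is also correct---your example $(a,b)=(0.7,0.7)$ with $\lambda dt=0.5$ indeed fails to be a garbling of either conclusive experiment---and it matches the caveat the paper itself flags when it notes that the proposition does not assert a Blackwell dominance relation: which of $\sigma^L,\sigma^R$ dominates depends on the continuation value, exactly as your vertex comparison makes explicit.
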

Importantly, this proposition does not claim a Blackwell dominance relation. We
show that at each history, one of the experiments $\sigma^{R}$ or
$\sigma^{L}$ is optimal, but which one depends on the current belief
and the continuation payoffs. This result follows from
the convexity
of the DM's continuation payoff in her beliefs, which holds because the payoff of any fixed strategy is linear in the prior belief.\footnote{\label{fn:zhong}\citet{Zhong2017} demonstrates the optimality of a (non-conclusive) Poisson experiment when the DM incurs posterior separable cost that depends on the experiments as well as the current belief. While similar in spirit, our
result is not an implication of his result. We adopt a class of feasible Blackwell experiments that is independent of the DM's belief. Our constraint $a+b<1+\lambda dt$ cannot be derived from a constraint on a posterior
separable information cost function (details are available from the authors on request).  Further, we prove optimality of conclusive experiments, which is not shown in his paper.}

\paragraph{Filtering Interpretation.}  The class of experiments featured in Table \ref{fig:binary} can be motivated via a ``filtering'' model commonly adopted  in the media choice literature. According to this model, media ``bias'' or ``slant''  arises when an outlet filters rich raw information into coarse ``messages'' for the consumers (see \citet{Gentzkow2015} and \citet{Prat2013}).
Suppose that an outlet observes a signal $s\in\mathbb{R}$ where
$s$ is drawn uniformly from $[0,1]$ in state $L$ and uniformly from
$[\lambda dt,1+\lambda dt]$ in state $R$. The outlet
must \textquotedblleft filter\textquotedblright{} the signal into
coarse messages. This could reflect, for instance, limited publishing space or broadcasting time, or a limited capacity of consumers to process rich information.
Suppose, the outlet sends binary messages to the consumer as depicted in Figure \ref{fig:Filtering-Bias}. It sends an $L$-message if
$s<\hat{s}$, and an $R$-message if $s>\hat{s}$, for a threshold $\hat{s}$
chosen by the media outlet. The threshold $\hat{s}$  characterizes
the outlet\textquoteright s \textquotedblleft ideological\textquotedblright{}
orientation; the lower $\hat{s}$, the more $R$-biased it is.  Each filtering-threshold $\hat s$ induces an experiment of the form described in Table \ref{fig:binary} on page \pageref{fig:binary}, with $a=\hat{s}$ and $b=1+\lambda dt-\hat{s}$.
The ``left-wing'' and ``right-wing'' outlets in Section \ref{sec:Application} correspond to cutoffs  $\hat{s}=1$ and  $\hat{s}=\lambda dt$, respectively. 
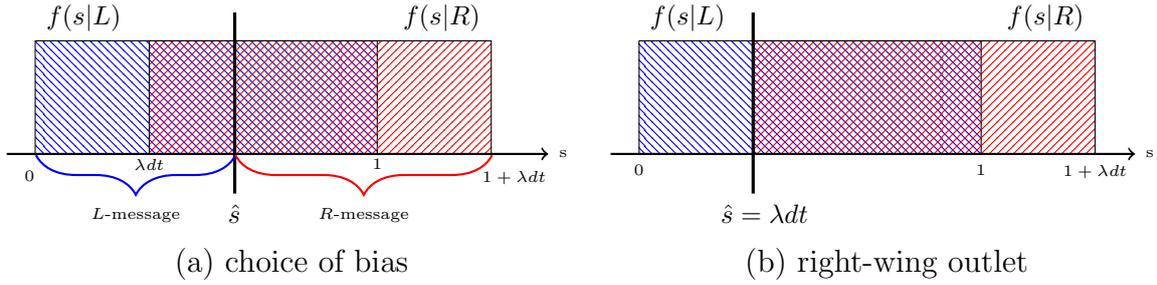
\begin{figure}
	\centering
	\begin{tabular}{cc}
		\begin{tikzpicture}[
		scale=0.75,
		1p line/.style={thick, blue},
		2p line/.style={thick, darkred},
		reference/.style = {dashed, thick},
		axis/.style={very thick},
		move/.style={red, thin ->, dashed, shorten <=2pt, shorten >=2pt}
		] 	
		\draw[pattern=north west lines, pattern color=blue] (0,-6.5) rectangle (6,-4.5); 	
		\draw[pattern=north east lines, pattern color=red] (2,-6.5) rectangle (8,-4.5); 
		
		\node[left] at (1.7, -4.1) {\footnotesize{$f(s|L)$}};
		\node[left] at (8, -4.1) {\footnotesize{$f(s|R)$}};
		
		\draw[->, thick] (-0.5, -6.5) -- (9, -6.5);
		\node[right] at (9, -6.5) {\tiny s};
		\node[below] at (-0.1, -6.6) {\tiny {0}};
		\node[below] at (8.4, -6.6) {\tiny {$1+\lambda dt$}};
		\node[below] at (6, -6.4) {\tiny  1};
		\node[below] at (2, -6.4) {\tiny $\lambda dt$}; 
		
		\draw[-, very thick] (3.5, -7.2) -- (3.5, -4);
		\node[below] at (3.49, -7.2) {\footnotesize{$\hat s$}};
		\draw [thick, blue,decorate,decoration={brace,amplitude=15pt,mirror},xshift=0.4pt,yshift=-0.6pt](0,-6.5) -- (3.5,-6.5) node[black,midway,yshift=-0.8cm] {\tiny $L$-message};
		\draw [thick, red,decorate,decoration={brace,amplitude=15pt,mirror},xshift=0.4pt,yshift=-0.6pt](3.5,-6.5) -- (8,-6.5) node[black,midway,yshift=-0.8cm] {\tiny $R$-message};
		
		\end{tikzpicture} & 
		\begin{tikzpicture}[
		scale=0.75,
		1p line/.style={thick, blue},
		2p line/.style={thick, darkred},
		reference/.style = {dashed, thick},
		axis/.style={very thick},
		move/.style={red, thin ->, dashed, shorten <=2pt, shorten >=2pt}
		]
		\draw[pattern=north west lines, pattern color=blue] (0,-6.5) rectangle (6,-4.5); 	
		\draw[pattern=north east lines, pattern color=red] (2,-6.5) rectangle (8,-4.5); 
		
		\node[left] at (1.7, -4.1) {\footnotesize{$f(s|L)$}};
		\node[left] at (8, -4.1) {\footnotesize{$f(s|R)$}};
		
		\draw[->, thick] (-0.5, -6.5) -- (8.7, -6.5);
		\node[right] at (8.7, -6.5) {\tiny s};
		\node[below] at (0, -6.5) {\tiny  0};
		\node[below] at (8, -6.5) {\tiny $1+\lambda dt$};
		\node[below] at (6, -6.5) {\tiny  1};
		
		\draw[-, very thick] (2, -7.2) -- (2, -4);
		\node[below] at (2.2, -7.2) { \footnotesize{$\hat s=\lambda dt$}};
		
		\end{tikzpicture}
		\tabularnewline
		(a) choice of bias & (b) right-wing outlet\tabularnewline
	\end{tabular}
	\caption{Filtering Bias\label{fig:Filtering-Bias}}
\end{figure}
Compared with these outlets,  the outlets choosing cutoffs $\hat s\in (\lambda dt, 1)$ can be interpreted as more moderate. In Section \ref{sec:Application}, our results hold unchanged if we expand
the set of media outlets to contain all these moderate outlets since Proposition \ref{prop:microfound} shows that, even facing such a rich choice, consumers will still choose from two extreme media outlets.

\subsection{Non-Exclusivity of Attention}\label{subsec:non-exclusive_attention}
Our model does not allow for accidental discovery of evidence; i.e., a DM  never receives evidence that she is not looking for. It is plausible, however, that an individual who looks for $R$-evidence may accidentally find the opposite and become convinced that the state is $L$.  For example, a prosecutor seeking evidence that a suspect is guilty, may stumble upon evidence to the contrary.

This possibility can be easily accommodated within our model by assuming that the DM is limited to an interior attention choice $\alpha\in[\underline{\alpha},\overline{\alpha}]$, where $0<\underline{\alpha}<1/2<\overline{\alpha}<1$. Consequently she will always be exposed to evidence of both types, and may find one type of evidence while looking only for the other.  If we set $\underline{\alpha}=1-\overline{\alpha}$, Theorem \ref{thm:optimal_attention_strategy}
as well as Propositions \ref{prop:Comparative_Statics_Experimentation_Region}, \ref{prop:Comparative_Statics_Modes_of_Learning},
and \ref{prop:role_of_discounting} remain qualitatively unchanged. Of course, the precise cutoffs that characterize the optimal policy change:  The experimentation region
shrinks and $\overline{c}$ becomes smaller as $\overline{\alpha}$ decreases.
The intuition is that a restriction on the feasible information choices
reduces the value of information acquisition.  More interestingly, opposite-biased learning is part of   the optimal policy for a
larger range of cost parameters---the
cutoff $\underline{c}$ increases   as  $\overline{\alpha}$ falls.  Opposite-biased learning is less affected by the restriction since it calls the DM to ultimately divide attention once $p^*$ is reached. Hence, at $p^*$ the constraint on $\alpha$ is not binding, so that the value of opposite-biased learning is less sensitive to the restriction
on the feasible choices for $\alpha$ than the value of own-biased learning.

\subsection{Asymmetric Returns to Attention\label{subsec:Asymmetric-Returns}}
We have also assumed that both states are equally easy to prove. The arrival rate for breakthrough news for each type of evidence is the same. We can easily relax this feature by introducing two different arrival rates, $ \overline{\lambda}^{R}$
for $R$-evidence and $ \overline{\lambda}^{L}$ for $L$-evidence.
For a given attention choice $(\alpha,\beta)$, this means that in
state $R$ evidence arrives at rate $\alpha \overline{\lambda}^{R}$,
and in state $L$ evidence arrives at rate $\beta \overline{\lambda}^{L}$.
To fix ideas, suppose $ \overline{\lambda}^{R}> \overline{\lambda}^{L}$ so that state $R$ is easier to prove.
If  $ \overline{\lambda}^{R}- \overline{\lambda}^{L}$
is small, our characterization of the optimal policy   in Theorem \ref{thm:optimal_attention_strategy} carries over to this case:  For low levels
of the cost $c$, the optimal policy combines own-biased and opposite-biased learning, and for moderate costs only own-biased learning is optimal.  If $ \overline{\lambda}^{R}- \overline{\lambda}^{L}$ is large, the structure changes as illustrated in Figure \ref{fig:asymmetric_lamda}.

\begin{figure}[hbt]
	\begin{tabular}{cc}
		\includegraphics[height=0.18\textheight]{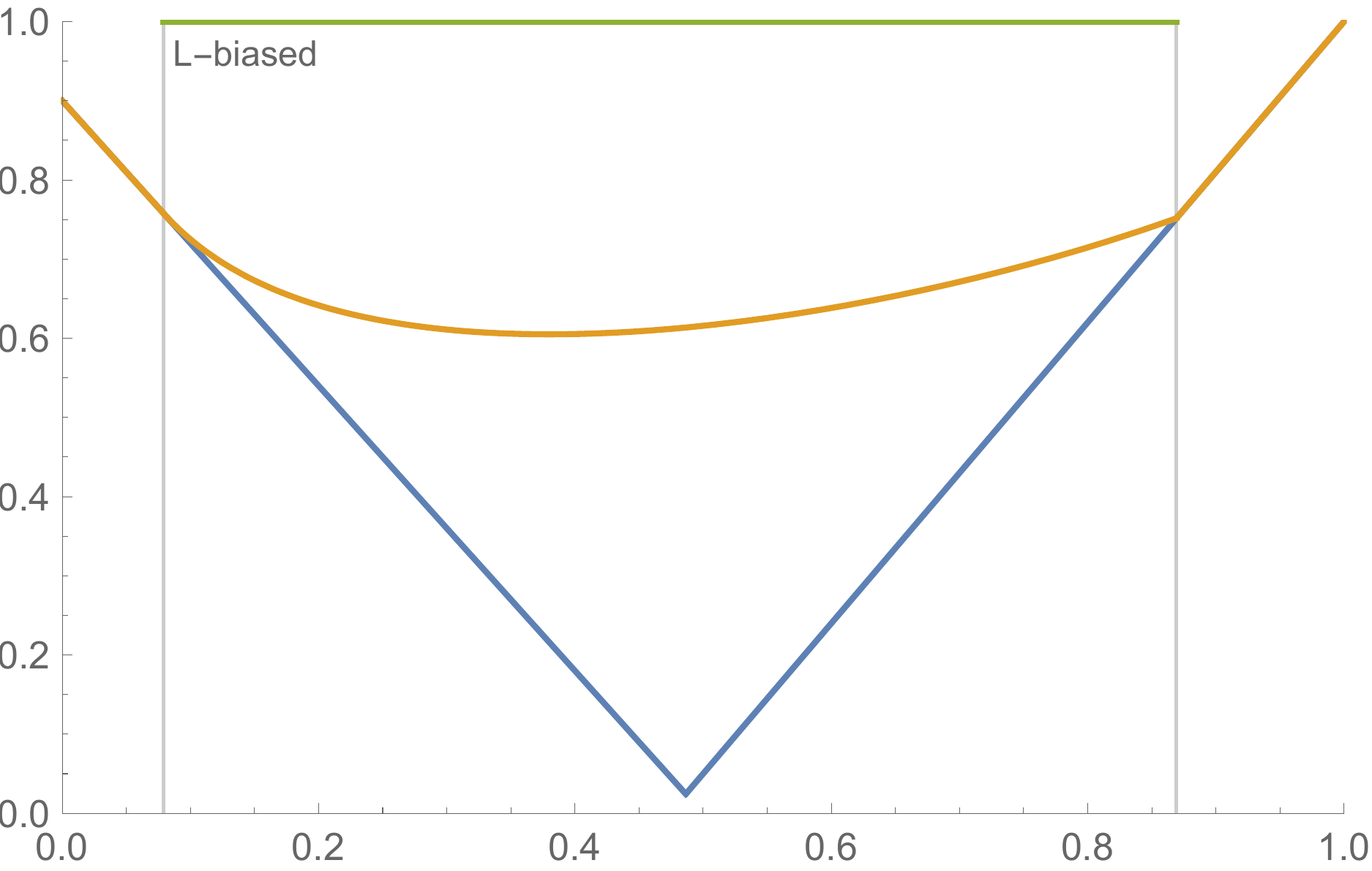}  & \includegraphics[bb=0bp 0bp 545bp 345bp,height=0.18\textheight,clip]{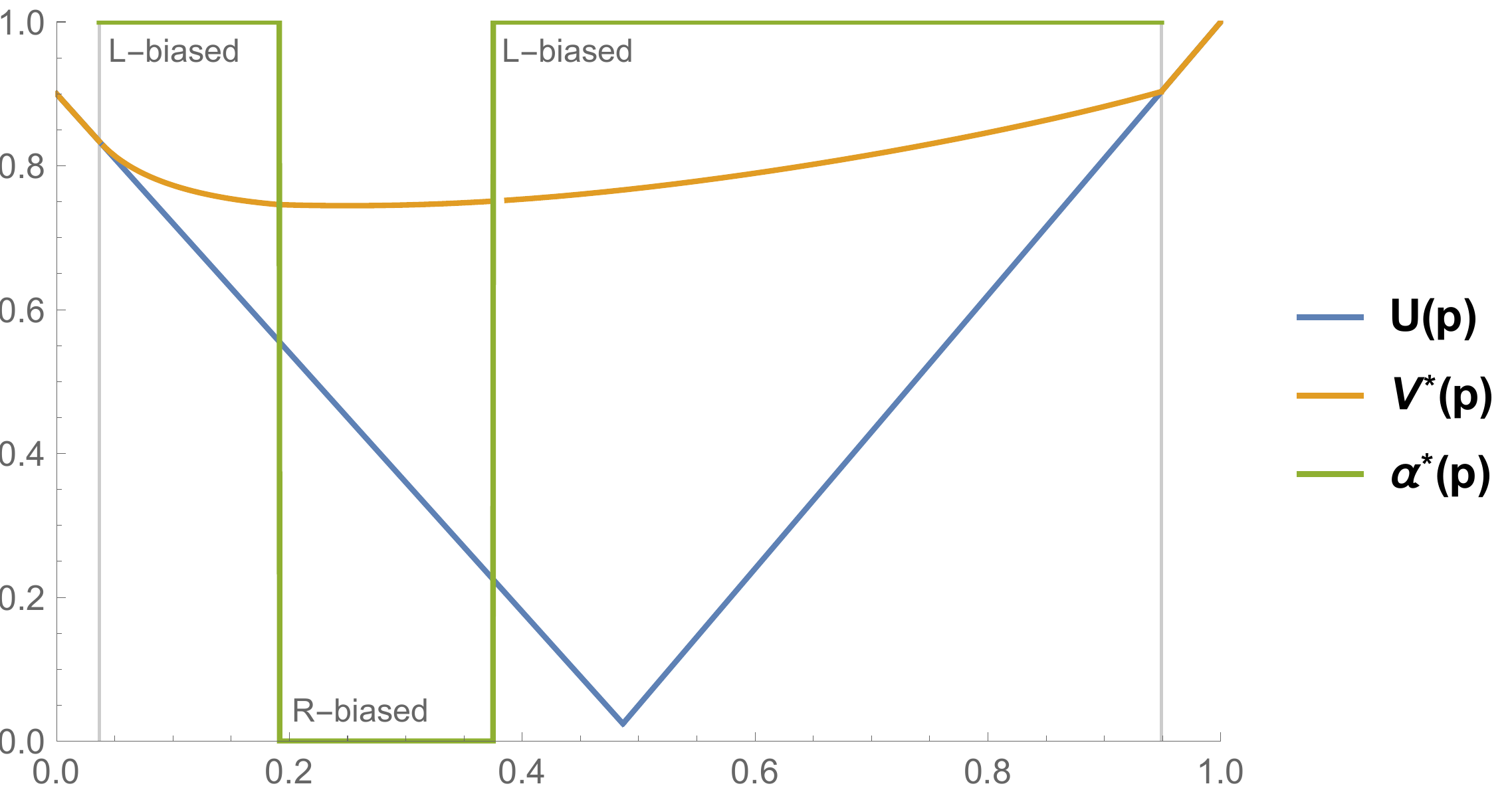}
\includegraphics[bb=560bp 0bp 675bp 345bp,height=0.18\textheight,clip]{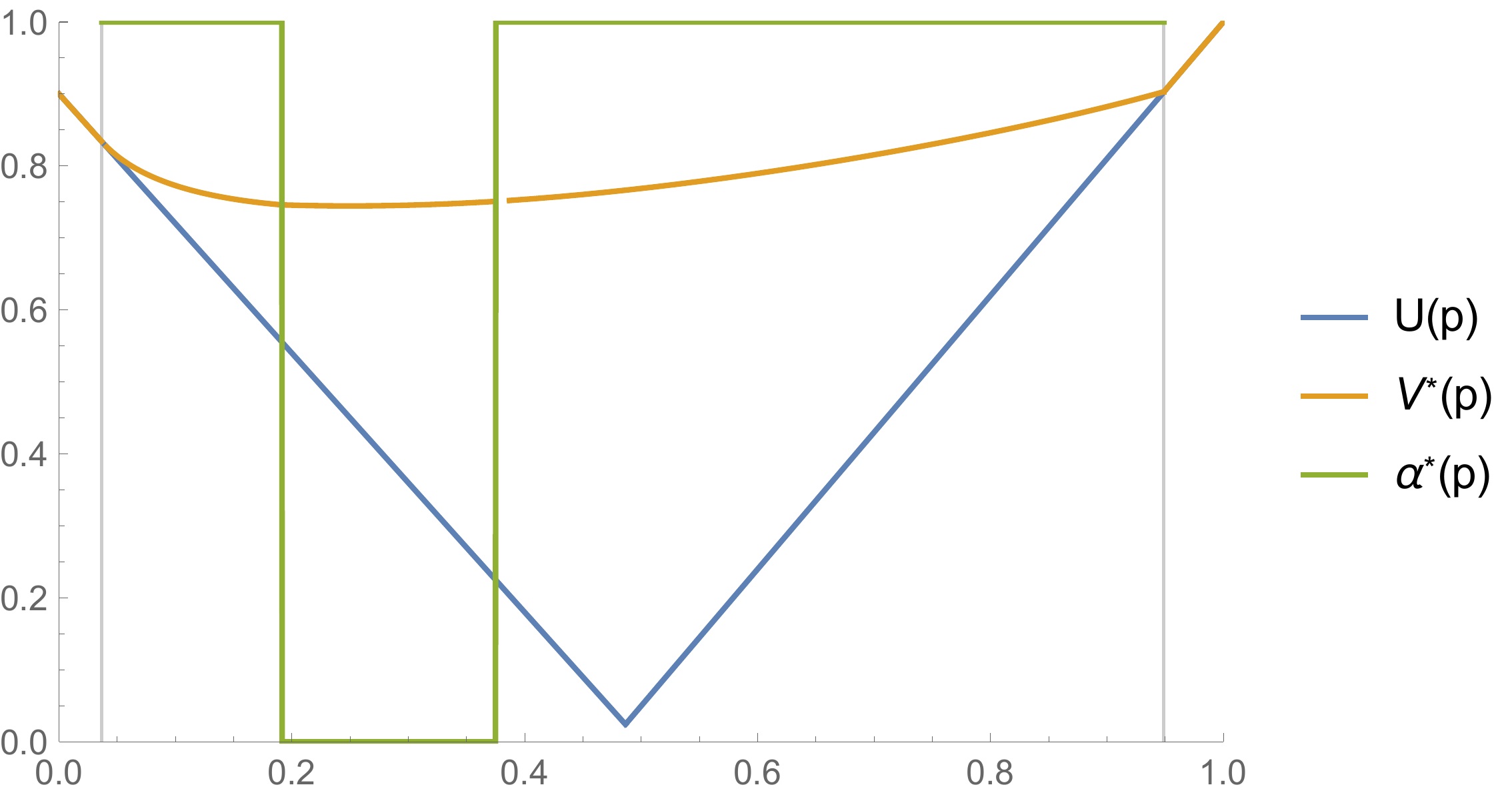}
		 \tabularnewline
		{\footnotesize{}(a) $c=.15$ (moderate cost)}  & {\footnotesize{}(b) $c=.07$ (low cost)}\tabularnewline
	\end{tabular}
\caption{\label{fig:asymmetric_lamda} Value Function and Optimal Policy with
	asymmetric returns to attention.}
\medskip
\begin{footnotesize}
\begin{spacing}{1} 
Note: $ \overline{\lambda}^{R}=1$, $ \overline{\lambda}^{L}=.6$, $\rho=0$,
	$u_{r}^{R}=u_{\ell}^{L}=1$, $u_{r}^{R}=1$, $u_{\ell}^{L}=.9$,
$u_{\ell}^{R}=u_{r}^{L}=-.9$
\end{spacing}
\end{footnotesize}
\end{figure}
In the case of moderate costs  (Panel (a)), the DM  never looks for $L$-evidence, meaning $R$-biased learning is not part of the optimal policy. In the case of low costs (Panel (b)),
the opposite-biased learning strategy appears, but it is skewed toward $L$-biased learning (or $R$-evidence seeking), and  the absorbing state $p^*$  is less than $1/2$ even when the payoffs are symmetric. As in Panel (a), for high beliefs near the stopping region, $R$-biased learning is not optimal, in contrast to the characterization in Theorem  \ref{thm:optimal_attention_strategy}.

\subsection{Diminishing Returns to Attention\label{subsec:Non-linear_model}}
 
 In our model, the DM never splits her attention or multi-homes media outlets, except at the absorbing belief $p^*$.  This feature is a consequence of the ``linear'' attention technology assumed in our model. 
The arrival rate of an $R$-breakthrough is $\lambda \alpha$ and the arrival rate of an $L$-breakthrough is $\lambda (1-\alpha)$. 
This means that the marginal return to  attention to a single news source is constant.
In practice, however, a diminishing marginal return may be realistic in some contexts; namely, one may learn more efficiently from diverse news sources than from just one.  For instance, one may obtain more information by reading the front pages of multiple newspapers, than by reading a single newspaper from front to back. In the Internet era, multi-homing is facilitated by \emph{news aggregators} such as Google News or Yahoo News that curate diverse news sources or perspectives that complement one another.  One can learn more efficiently from such aggregators than by focusing on a single news source.  

Diminishing marginal returns to attention can be incorporated in our model by assuming that the arrival rate of $R$-evidence is given by $\lambda g(\alpha)$, and the arrival rate of $L$-evidence is given by $\lambda g(1-\alpha)$,  where $g$ is an increasing function that satisfies  $g(0)=0$, $g(1)=1$.   Our baseline model corresponds to $g(x)=x$ and diminishing returns obtain if $g(x)$ is strictly concave. Panel (a) of Figure \ref{fig:Gamma} depicts these two cases. 

\begin{figure}[h]
\begin{tabular}{ccc}
	\hspace{0.03\textwidth}	\includegraphics[height=0.17\textheight]{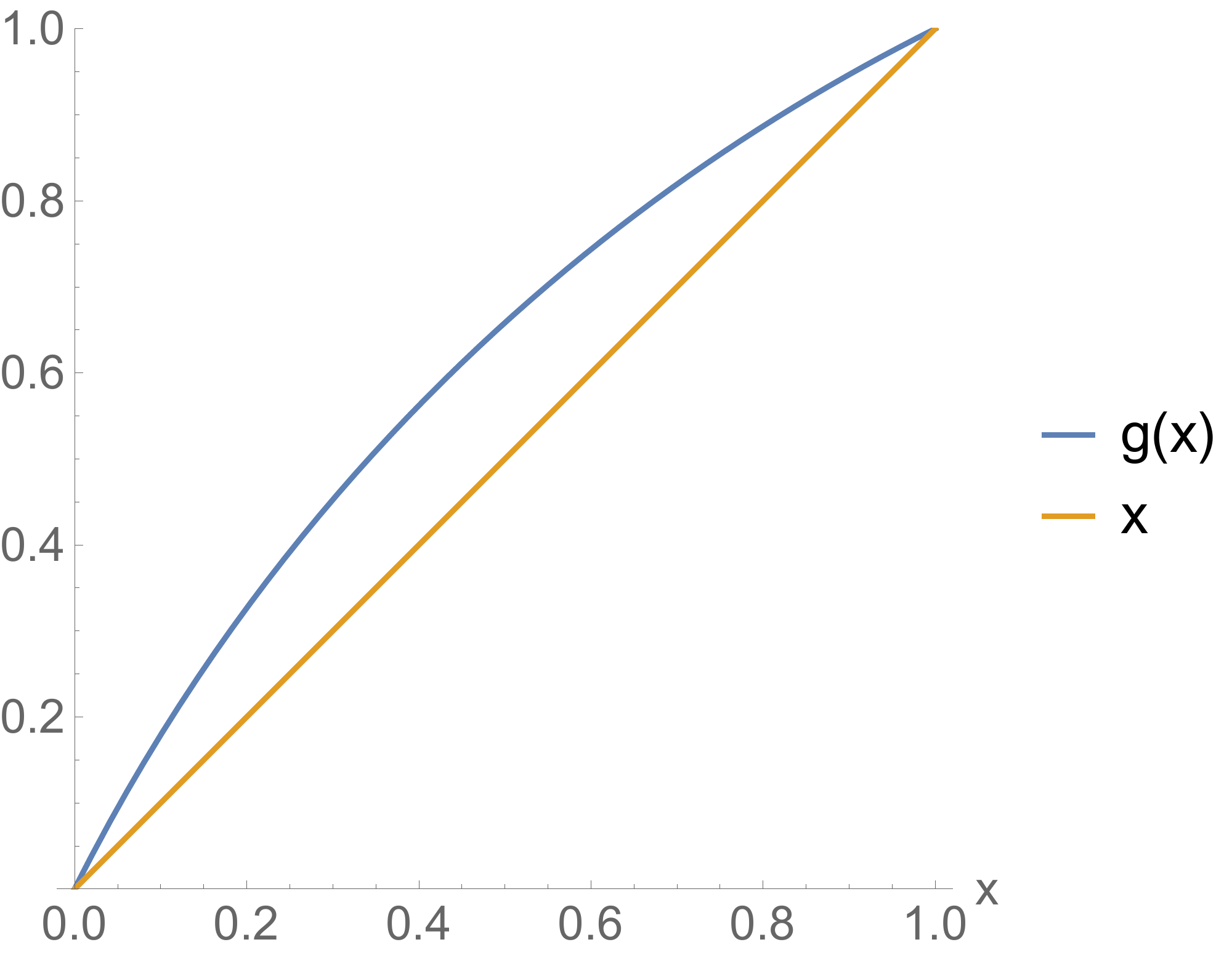}  & \hspace{0.05\textwidth} &
		\includegraphics[height=0.17\textheight]{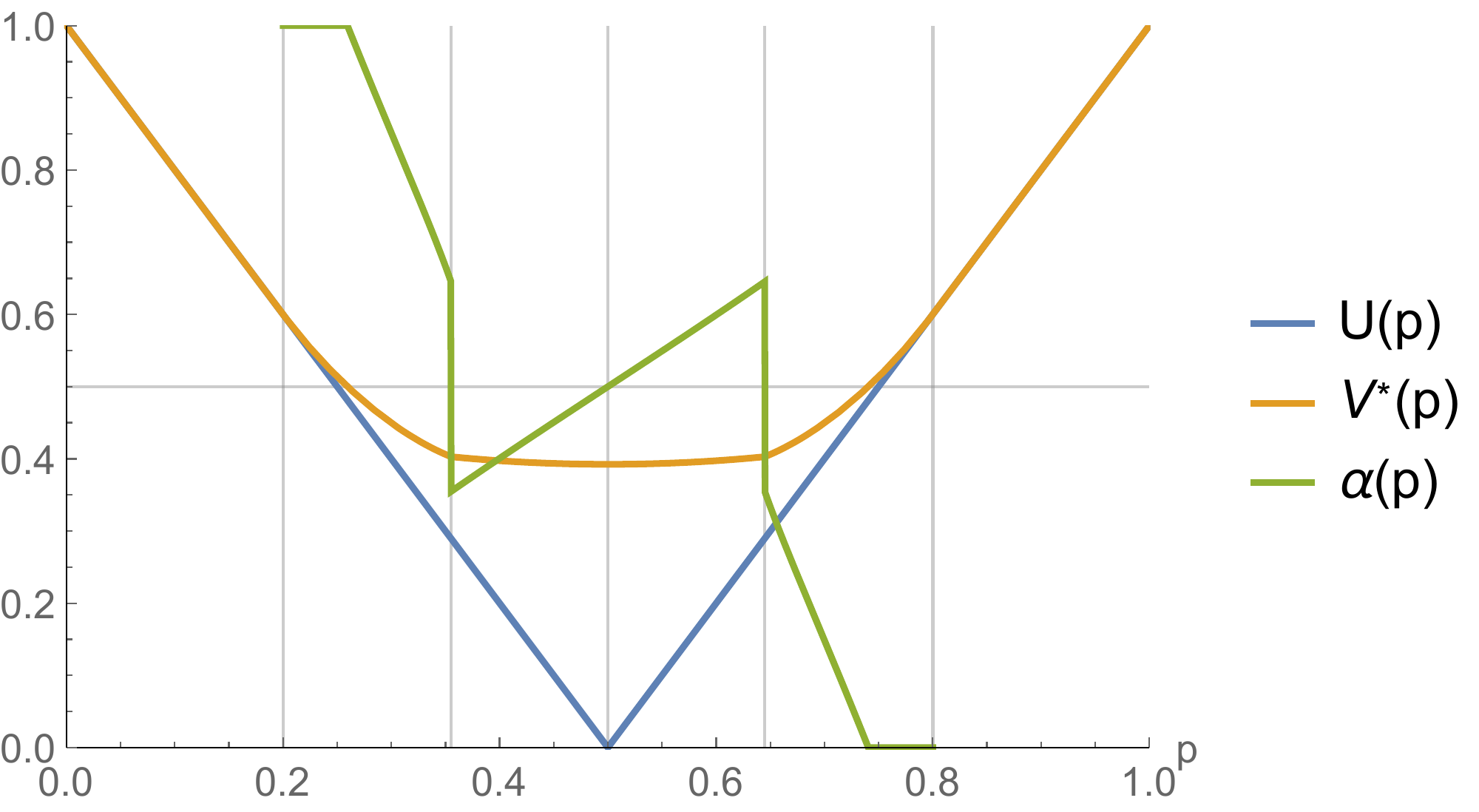}
		\\
		{\footnotesize{}(a) constant vs.~diminishing returns}  && {\footnotesize{}(b) optimal media consumption}
	\end{tabular}
\caption{\label{fig:Gamma}Diminishing returns to attention.}
\medskip
\begin{footnotesize}
\begin{spacing}{1} 
Note: $\lambda=1$, $c=.4$, $\rho=0$, $u^L_\ell=u^R_r=1$, $u^L_r=u^R_\ell=0$, $g(x)=\sqrt{1+4x-x^2}-1$.
\end{spacing}
\end{footnotesize}
\end{figure}

When attention has diminishing marginal returns,  multi-homing, or interior choices of $\alpha$, become optimal for a wide range of beliefs as depicted in panel (b) of Figure \ref{fig:Gamma}. Nevertheless, the basic structure of optimal policy resembles that of Theorem  \ref{thm:optimal_attention_strategy}, if we call the attention choice $L$-biased if $\alpha >1/2$, so that absent breakthrough news the belief  drifts toward $L$, and call the attention choice $R$-biased if $\alpha <1/2$. Specifically, the optimal policy is again characterized by own-biased and opposite-biased learning strategies. The former is optimal for extreme beliefs and the latter may be optimal for moderate beliefs.  

The resulting characterization yields richer implications for the interplay between information choice and beliefs in the context of the media choice. For voters with extreme beliefs, the echo-chamber effect is reinforced. Not only do the beliefs evolve over time due to a biased news diet, the media choice itself evolves. Over time absent breakthrough news, partisan voters' beliefs become more extreme, and this in turn leads to a more biased new-diet. For example, a right-leaning voter consumes more and more right-biased news by decreasing $\alpha$ as her belief moves more to the right over time (see Panel (b) of Figure \ref{fig:Gamma}).
A more formal result is derived for the case of symmetric payoffs in Appendix \ref{sec:General_Gamma} in the Supplemental Material.

\subsection{A Deadline for Decision Making}

What happens if the DM faces a firm deadline for her decision?  Our media application raises this issue since a voter must stop deliberating on the election date.  While a general analysis incorporating a deadline is beyond the scope of the current paper, our two-period example discussed in Section \ref{sec:Example}  sheds some light both on the robustness of our characterization as well as the effect of a deadline.  

First, both modes of learning identified in Theorem \ref{thm:optimal_attention_strategy} can arise in the first period. In the example in Section \ref{sec:Example}, a combination of own-biased learning and opposite-biased learning is optimal. %
%
For other parameter values in the example, own-biased learning is optimal for all beliefs. %
Second, we find a clear deadline effect. In the second period, own-biased learning is always optimal.
We conjecture that this pattern will hold more generally if one were to introduce a firm deadline in our continuous time model---namely, the DM will shift her attention increasingly toward own-biased news sources as the deadline approaches. This is indeed the pattern found by \cite{Stroud2008} from her analysis of the 2004 National Annenberg Election Survey.  She finds that selective exposure and partisan consumption of media outlets intensifies as the election date approaches (see Figures 1 and 2 therein).

\subsection{Non-Binary States and  Actions}

Our model can be easily extended to include more than two actions. Suppose there is a third action $m$ with payoffs $u^R_m\in(u^R_\ell,u^R_r)$ and $u^L_m\in(u^L_r,u^L_\ell)$. Then we can define a new learning heuristic, called ``$m$-strategy,'' that takes the following form:
\[
\underset{p=}{\phantom{|}}\underset{0}{|}\underbrace{\longrightarrow\longrightarrow\vphantom{p^{*}}\negthickspace\negmedspace\longrightarrow\longrightarrow}_{\alpha=0}\underline{p}_m\underbrace{\text{\ensuremath{\vphantom{p^{*}}}---------------------}}_{\text{immediate action }m}\overline{p}_m\underbrace{\longleftarrow\vphantom{p^{*}}\negthickspace\negmedspace\longleftarrow\longleftarrow\longleftarrow}_{\alpha=1}\underset{1}{|}
\]
The two cutoffs $\underline{p}_m$ and $\overline{p}_m$ are chosen optimally. The optimal policy  is a combination of own-biased learning, opposite-biased learning, and the $m$-strategy. It can take various forms. For example if $c\in(\underline{c},\overline{c})$, and the value of the $m$-strategy is higher than the value of opposite-biased learning, the optimal policy takes the following form:
\[
\underset{p=}{\phantom{|}}\underset{0}{|}\underbrace{\text{\ensuremath{\vphantom{p^{*}}}------------}}_{\text{action }\ell}\,\underline{p}^{*}\underbrace{\longleftarrow\vphantom{p^{*}}\negthickspace\negmedspace\longleftarrow}_{\alpha=1} \underline{p}\underbrace{\longrightarrow\longrightarrow\vphantom{p^{*}}\negthickspace}_{\alpha=0}\underline{p}_m\underbrace{\text{\ensuremath{\vphantom{p^{*}}}------------}}_{\text{action }m}\overline{p}_m\underbrace{\longleftarrow\vphantom{p^{*}}\negthickspace\negmedspace\longleftarrow}_{\alpha=1}\overline{p}\underbrace{\vphantom{p^{*}}\negthickspace\longrightarrow\longrightarrow}_{\alpha=0}\bar{p}^{*}\,\underbrace{\text{\ensuremath{\vphantom{p^{*}}}------------}}_{\text{action }r}\underset{1}{|}
\]
Along these lines, a finite number of actions can be added (see Appendix \ref{sec:Third-Action} in the Supplemental Material).

Extending the model to more than two states raises several issues. First, the state-space of the DM's problem becomes multi-dimensional. Second, it is natural to allow for a larger set of news sources if there are more than two states. Within our Poisson framework, with two states, two sources are sufficient to allow for good news and bad news about each state. With $n>2$ states, there could be $n$ good-news sources and $n$ bad-news sources, greatly increasing the complexity of the DM's attention allocation problem. While limited progress has been made in multi-state models with only two news sources,\footnote{See the discussion of \cite{nikandrova/pancs:15} and \cite{Mayskaya2016} in the Introduction.} we conjecture that a characterization in a general model will not be tractable.\footnote{As far as we know, even in the Wald stopping problem, tractable characterizations are not available for more than two states \citep{Peskir2006}.}

\subsection{Non-Conclusive Evidence} \label{sec:non-conclusive} So far, we have assumed that the DM can obtain conclusive evidence---that is, a signal that arrives only in one state. This can be relaxed by introducing ``noise,'' or ``false evidence.''  Suppose the DM looks for $\omega$-evidence. With noise, this is received in state $\omega$ with a Poisson rate of $\overline \lambda$ but \emph{also in state} $\omega'\ne \omega$ with a lower rate $\underline \lambda <\overline \lambda$.
If $\underline \lambda >0$, then an $\omega$-signal is no longer conclusive evidence for state $\omega$. In a previous version of this paper \citep{CM2017}, we analyze this extension and show that, as long as $\underline \lambda$ is sufficiently small, the DM finds it optimal to take an action immediately after receiving a (noisy) breakthrough signal---a property we call \emph{Single Experimentation Property} (SEP).  Given SEP, our characterization
applies without any qualitative changes.\footnote{A general characterization is difficult to obtain if SEP does not hold. In \cite{CM2017} we solve such a case. While our analytical method developed here continues to be useful, the characterization is much more complex, involving multiple jumps across different learning regions. 
} Moreover, the main implications in terms of accuracy and delay reported in Proposition \ref{prop:speed_and_accuracy} continue to hold. A DM with a more uncertain belief, who chooses opposite-biased learning, ends up making a more accurate decision but with a greater delay than a DM with a more extreme belief who employs own-biased learning and as a result is more prone to mistakes.

This dependence of the stochastic choice function on the DM's prior belief was already present in the baseline model. With noise the stochastic choice becomes richer, and new phenomena appear. In particular, we show that, conditional on the prior belief, a decision maker who (by chance) received a breakthrough very quickly, makes a more accurate decision than a DM who had to wait a long time for a breakthrough.  This finding is consistent with so-called ``speed-accuracy complementarity''---a phenomenon that a more delayed decision tends to be less accurate.\footnote{This finding is often documented in perceptual judgment or consumption choice experiments conducted in cognitive psychology.  See \cite{ratcliff/mckoon:08} for a survey, and \cite{fudenberg:15} for a recent economic theory.}
The simple intuition is that noisy evidence is less convincing if the DM was more skeptical when receiving it.  This means that if the DM is unlucky and  waits for a longer time before receiving breakthrough news, the accuracy of her action will suffer.

\section{Conclusion\label{sec:Conclusion}}

We have studied a model in which a decision maker may allocate her
limited attention to collecting different types of evidence that support
alternative actions. Assuming Poisson arrival of evidence, we have
shown that the optimal policy combines \emph{immediate action,} \emph{own-biased
learning}, and \emph{opposite-biased learning} for different prior beliefs.
We have used this characterization to obtain rich predictions
about information acquisition and choices.

We envision several avenues of extending the current work. First,
our model is relatively tractable (e.g., the value function and optimal
policy are available in closed form). Therefore, we hope that this
framework will be useful for integrating dynamic choice of information
in applied theory models. This includes our application to
media choice, which could be extended beyond our analysis in the present
paper. Dynamic information choice might be also embedded in a model of a committee or a jury, or in a principal-agent setup in which a principal tries to induce an agent to acquire information
or a strategic setup such as R\&D competition where different firms
choose from alternative innovation approaches over time.

Second, one may relax the ``lumpiness'' of information to
allow for more gradual information acquisition. Our analysis
about repeated experimentation in \citet{CM2017} points to an extension in this
direction, and suggests that our characterization is robust to such
a generalization. A complete analysis of this case will be useful
for applications in which decision makers learn gradually, such as
a researcher who makes day-to-day decisions about the next steps in
a project, as opposed to a manager who decides only based on reports
that are made once a month. We leave this for future research.
 
\appendix 

\section{Proof of Theorem \ref{thm:optimal_attention_strategy} \label{sec:Main_Appendix}}

In this appendix, we describe the construction of the heuristic strategies
and state the main steps of the proof of Theorem \ref{thm:optimal_attention_strategy}.
Omitted proofs can be found in Section \ref{sec:omitted_proofs}
in the Online Appendix. For mathematical details on optimal control
problems that are used here, see \citet{bardi/capuzzo-dolceta:97}.

\subsection{The DM's problem}

The DM chooses an attention strategy $(\alpha_{t})$ and a time $T\in[0,\infty]$
at which she will stop acquiring information if she does not observe
any signal up to time $T$. Her problem is thus given by 
\begin{align}
V^{*}(p_{0})=\max_{(\alpha_{\tau}),T} & \int_{0}^{T}e^{-\rho t}P_{t}(p_{0},(\alpha_{\tau}))\left[\lambda\alpha_{t}p_{t}\,u_{r}^{R}+\lambda\beta_{t}(1-p_{t})\,u_{\ell}^{L}-c\right]dt+e^{-\rho T}P_{T}(p_{0},(\alpha_{\tau}))U(p_{T}),\label{eq:DMs_problemG}\\
\text{s.t. } & \dot{p}_{t}=-\lambda\left(\alpha_{t}-\beta_{t}\right)p_{t}\left(1-p_{t}\right),\notag
\end{align}
where $P_{t}(p_{0},(\alpha_{\tau})):=p_{0}e^{-\lambda\int_{0}^{t}\alpha_{s}ds}+(1-p_{0})e^{-\lambda\int_{0}^{t}\beta_{s}ds}$,
and $\beta_{t}=1-\alpha_{t}$. Given that the problem is autonomous,
the optimal $\alpha_{t}$ only depends on the belief at time $t$,
and can thus be written as a policy $\alpha(p)$. A policy $\alpha(p)$
is \emph{admissible} if, together with (A.2) it defines a unique path
$p_{t}$ for any prior $p_{0}\in[0,1]$. Similarly, the decision to
stop and take an action only depends on $p_{t}$.

\subsection{Two Benchmarks and a Condition for Experimentation}

Two benchmark value functions prove useful for our analysis. The first
is the value of the \emph{stationary strategy}: 
\begin{equation}
U^{S}(p):=p\frac{\frac{1}{2}\lambda\,u_{r}^{R}-c}{\rho+\frac{1}{2}\lambda}+(1-p)\frac{\frac{1}{2}\lambda\,u_{\ell}^{L}-c}{\rho+\frac{1}{2}\lambda}=\frac{\lambda\left(pu_{r}^{R}+(1-p)u_{\ell}^{L}\right)}{2\rho+\lambda}-\frac{2c}{2\rho+\lambda},\label{eq:UbarG}
\end{equation}
which arises when the DM chooses $\alpha_{t}=\beta_{t}=1/2$ for all
$t$ and takes an optimal action only after receiving the conclusive
breakthrough news.

The second is what we call the \emph{full-attention value}: 
\begin{equation}
U^{FA}(p):=p\frac{\lambda\,u_{r}^{R}-c}{\rho+\lambda}+(1-p)\frac{\lambda\,u_{\ell}^{L}-c}{\rho+\lambda}=\frac{\lambda\left(pu_{r}^{R}+(1-p)u_{\ell}^{L}\right)}{\rho+\lambda}-\frac{c}{\rho+\lambda},\label{eq:UhatG}
\end{equation}
which arises in a ``hypothetical'' scenario in which the DM chooses
(infeasible) attention $\alpha_{t}=\beta_{t}=1$ for all $t$ and
again takes an optimal action only after receiving breakthrough news.
Since limited attention prevents the DM from achieving this value
in our model, $U^{FA}(p)$ serves only as an analytical device for
proofs.

Intuitively, $U^{FA}(p)$ is an upper bound for a payoff the DM can
obtain from experimentation. Note further that $U^{FA}(\cdot)$ is linear and $U(\cdot)$ is piecewise-linear with a kink at $\hat p$. Hence,  the 
condition\footnote{It is easy to check that $U(p)\ge U^{FA}(p)$ for all $p$ if \eqref{eq:EXPG}	is violated.} 
\begin{equation}
\tag{EXP}U^{FA}(\hat{p})>U(\hat{p}) \label{eq:EXPG}
\end{equation}
would be necessary for experimentation to be optimal.   
Indeed, if \eqref{eq:EXPG} does not hold, an immediate action is optimal for all $p\in[0,1]$:
\begin{prop}
\label{prop:no_experimentation}For all $p\in[0,1]$, $U(p)\le V^{*}(p)\le\max\left\{ U(p),U^{FA}(p)\right\} $.
In particular, if \eqref{eq:EXPG} is violated, then $V^{*}(p)=U(p)$
for all $p$.
\end{prop}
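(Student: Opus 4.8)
The lower bound is immediate: taking $T=0$ in \eqref{eq:DMs_problemG} (stop at once) is admissible and yields exactly $U(p_0)$, so $V^*(p_0)\ge U(p_0)$. The content is the upper bound $V^*(p)\le W(p):=\max\{U(p),U^{FA}(p)\}$. Granting it, the final claim follows at once: if \eqref{eq:EXPG} fails then, by the footnoted observation, $U\ge U^{FA}$ on all of $[0,1]$ (both are (piecewise) linear and $U$ has its only kink at $\hat p$), so $W=U$ and the two bounds squeeze to $V^*=U$. My plan is to exhibit $W$ as a supersolution of the variational inequality attached to \eqref{eq:HJB} and then integrate along an arbitrary admissible path.

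Write $\mathcal R(\alpha;W,W')$ for the bracketed expression in \eqref{eq:HJB}. The crux is the pointwise inequality
\[
c+\rho W(p)\ \ge\ \max_{\alpha\in[0,1]}\mathcal R(\alpha;W(p),W'(p)),
\]
which I verify piece by piece; since $\mathcal R$ is affine in $\alpha$, the maximum is attained at $\alpha\in\{0,1\}$. On $\{W=U^{FA}\}$, substituting $(\rho+\lambda)U^{FA}=\lambda(pu_r^R+(1-p)u_\ell^L)-c$ from \eqref{eq:UhatG} gives the two slacks $c+\rho U^{FA}-\mathcal R(1)=\lambda(1-p)(\rho u_\ell^L+c)/(\rho+\lambda)$ and $c+\rho U^{FA}-\mathcal R(0)=\lambda p(\rho u_r^R+c)/(\rho+\lambda)$, both nonnegative because $u_r^R,u_\ell^L>0$ and $c,\rho\ge0$. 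On $\{W=U\}$ with, say, $p>\hat p$ (so $W=U_r$), a short computation gives $\mathcal R(1)=0$ and $\mathcal R(0)=\lambda(1-p)(u_\ell^L-u_r^L)\ge0$, so the inequality reads $c+\rho U_r(p)\ge\lambda(1-p)(u_\ell^L-u_r^L)$ — which rearranges to precisely $U_r(p)\ge U^{FA}(p)$, the defining condition of this region. The mirror-image computation handles $p<\hat p$. Hence the supersolution inequality holds on every smooth piece.

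The remaining points are the finitely many kinks of $W$ — the kink of $U$ at $\hat p$ and the crossings of $U$ with $U^{FA}$ — each a convex kink of a maximum. There $q\mapsto\max_\alpha\mathcal R(\alpha;W,q)$ is convex (a max of two affine functions of $q$), so on the interval spanned by the one-sided derivatives it is dominated by the larger endpoint value; since the inequality holds at both one-sided slopes by continuity from the adjacent pieces, it holds for every intermediate slope, i.e.\ $W$ is a viscosity supersolution. I then integrate: for any admissible $((\alpha_t),T)$ set
\[
g(t):=e^{-\rho t}P_t W(p_t)+\int_0^t e^{-\rho s}P_s\big[\lambda\alpha_s p_s u_r^R+\lambda\beta_s(1-p_s)u_\ell^L-c\big]\,ds,
\]
and use $\dot P_t=-\lambda P_t\big(\alpha_t p_t+\beta_t(1-p_t)\big)$ together with \eqref{eq:pdot} to obtain
\[
g'(t)=e^{-\rho t}P_t\big(\mathcal R(\alpha_t;W,W')-c-\rho W\big)\le 0\qquad\text{a.e.}
\]
As $g$ is absolutely continuous, $g(T)\le g(0)=W(p_0)$; and $g(T)$ is at least the objective of \eqref{eq:DMs_problemG} because $W(p_T)\ge U(p_T)$. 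Thus every admissible payoff is $\le W(p_0)$, giving $V^*\le W$.

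The conceptual heart is the algebraic identity in the second paragraph: the region $\{U\ge U^{FA}\}$ is exactly where immediate action dominates the HJB flow term, so gluing $U$ and $U^{FA}$ produces a genuine supersolution with no ad hoc constant. The only technical care needed is (i) the reduction to $\alpha\in\{0,1\}$ and the convexity-in-$q$ argument that validates the supersolution property at the crossing points, and (ii) the integration across the kink times, where $W$ is merely Lipschitz; the latter is harmless since $g$ is absolutely continuous and $g'\le0$ almost everywhere suffices. These two steps are where I expect the real work to lie.
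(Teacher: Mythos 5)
Your proof is correct, but it takes a genuinely different route from the paper's. The paper proves the upper bound by a one-line relaxation: drop the attention constraint $\alpha_t+\beta_t=1$ and let the DM choose $\alpha_t=\beta_t=1$; since beliefs then never move absent a breakthrough, the relaxed problem collapses to a binary choice between stopping at once (value $U(p)$) and waiting forever (value $U^{FA}(p)$), so its value is exactly $\max\{U,U^{FA}\}$, which dominates $V^{*}$. You instead verify directly that $W=\max\{U,U^{FA}\}$ is a viscosity supersolution of \eqref{eq:HJB_VIO} and integrate along arbitrary admissible paths. Your computations check out: the slacks $\lambda(1-p)(c+\rho u_{\ell}^{L})/(\rho+\lambda)$ and $\lambda p(c+\rho u_{r}^{R})/(\rho+\lambda)$ on $\{W=U^{FA}\}$ are correct, and the observation that on $\{W=U\}$, $p>\hat p$, the inequality $c+\rho U_{r}(p)\ge\lambda(1-p)\left(u_{\ell}^{L}-u_{r}^{L}\right)$ rearranges to exactly $U_{r}(p)\ge U^{FA}(p)$ is what makes the pieces glue into a supersolution with no slack constant; since $W$ is a maximum of three affine functions, every kink is convex and your convexity-in-$q$ endpoint argument legitimately covers the whole subdifferential. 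What each approach buys: the paper's relaxation is shorter and requires no regularity or verification machinery, whereas your argument is the natural supersolution counterpart of the tools the paper deploys anyway for the exact value function (the Unimprovability Lemma and the viscosity verification in Proposition \ref{prop:envelope_characterization}), and it would survive in settings where no clean constraint relaxation freezes the belief. One small point to patch: strategies with $T=\infty$ are admissible, so $g(T)\le g(0)$ needs a limiting step; when $\rho=0$ (and hence $c>0$) you should note that $\liminf_{t\to\infty}P_{t}W(p_{t})\ge0$, which holds because either $P_{t}\to0$, or $P_{t}$ stays bounded away from zero, in which case $p_{t}\to0$ or $p_{t}\to1$ and $W$ there equals $u_{\ell}^{L}>0$ or $u_{r}^{R}>0$.
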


\subsection{The Bellman equation}

In light of Proposition \ref{prop:no_experimentation}, in the sequel we only consider the case where \eqref{eq:EXPG} holds, and construct the value function for the range of beliefs
where $U^{FA}(p)>U(p)$. The HJB equation for the DM's problem in
\eqref{eq:DMs_problemG} is the following variational inequality 
\begin{equation}
\max\left\{ -c-\rho V(p)+\max_{\alpha\in[0,1]}F_{\alpha}(p,V(p),V'(p)),U(p)-V(p)\right\} =0,\label{eq:HJB_VIO}
\end{equation}
where 
\begin{equation}
F_{\alpha}(p,V(p),V'(p)):=\begin{Bmatrix}\alpha\lambda p\left(u_{r}^{R}-V(p)\right)+(1-\alpha)\lambda(1-p)\left(u_{\ell}^{L}-V(p)\right)\\
-\lambda\left(2\alpha-1\right)p(1-p)V'(p)
\end{Bmatrix}.\label{eq:HJBG_RHS}
\end{equation}
In the ``experimentation region'' where $V(p)>U(p)$, the HJB equation
reduces to 
\begin{align}
c+\rho V(p) & =F(p,V(p),V'(p))\left(:=\max_{\alpha\in[0,1]}F_{\alpha}(p,V(p),V'(p))\right).\label{eq:HJBG}
\end{align}
If $V(p)=U(p)$, then $T(p)=0$ is optimal and we must have $c+\rho V(p)\ge F(p,V(p),V'(p)).$

In the following, we will construct a candidate value function and show that it
satisfies \eqref{eq:HJB_VIO} for all points of differentiability.
This would be sufficient if the candidate were differentiable everywhere.
Since our candidate function has kinks, we show instead that it is
a \emph{viscosity solution} of \eqref{eq:HJB_VIO}, a necessary and
sufficient   condition for the value function according to the verification
theorem we invoke (see Proposition \ref{prop:envelope_characterization} below).

Note that $F_{\alpha}(\cdot)$ is linear in $\alpha$. Therefore,
the optimal policy is a bang-bang solution and we have $\alpha^{*}(p)\in\{0,1\}$
except for posteriors where the derivative of the objective vanishes.  With $\a$ set respectively to $0$ and 1, we can define functions, $V_0$ and $V_1$, satisfying the ODEs:     
\begin{align}
c+\rho V_{0}(p) & =F_{0}(p,V_{0}(p),V'_{0}(p))=\lambda(1-p)\left(u_{\ell}^{L}-V_{0}(p)\right)+\lambda p(1-p)V_{0}'(p),\label{eq:V_a0}\\
c+\rho V_{1}(p) & =F_{1}(p,V_{1}(p),V'_{1}(p))=\lambda p\left(u_{r}^{R}-V_{1}(p)\right)-\lambda p(1-p)V_{1}'(p).\label{eq:V_a1}
\end{align}
Solutions to these ODEs with boundary condition $V(x)=W$ are well-defined
if $x\in(0,1)$, and denoted by $V_{0}(p;x,W)$ and $V_{1}(p;x,W)$,
respectively.\footnote{$V_{0}(p;x,W)$ and $V_{1}(p;x,W)$ are uniquely defined if $x\in(0,1)$
because \eqref{eq:V_a0} and \eqref{eq:V_a1} satisfy local Lipschitz
continuity for all $p\in(0,1)$.}

\subsection{Own-Biased Strategy\label{subsec:Contradictory-Evidence}}

Recall the structure of the own-biased strategy given by (\ref{eq:contr_alpha_thm1}).
We will define its value, labeled $V_{own}(\cdot)$ to be an upper
envelope of two value functions, $\underline{V}_{own}(\cdot)$ and
$\overline{V}_{own}(\cdot)$, respectively its left- and right-branches.
To this end, we first compute the boundary beliefs, $\underline{p}^{*}$
and $\overline{p}^{*}$, and then construct the two branches by solving
the ODEs (\ref{eq:V_a0}) and (\ref{eq:V_a1}), using boundary conditions
at $\underline{p}^*$ and $\overline{p}^*$. The particular construction will be ultimately
justified later through our verification argument (Proposition \ref{prop:envelope_characterization}).

First, value matching and smooth pasting (relative the immediate action
payoffs) pin down the boundary beliefs:\footnote{We show in Section \ref{subsec:Solution-Candidate} that the boundary
beliefs satisfy $0<\underline{p}^{*}<\hat{p}<\overline{p}^{*}<1$
if \eqref{eq:EXPG} is satisfied.}
\begin{equation}
\underline{p}^{*}:=\frac{u_{\ell}^{L}\rho+c}{\rho\left(u_{\ell}^{L}-u_{\ell}^{R}\right)+\left(u_{r}^{R}-u_{\ell}^{R}\right)\lambda},\label{eq:plsG}
\end{equation}
\begin{equation}
\overline{p}^{*}=\frac{\left(u_{\ell}^{L}-u_{r}^{L}\right)\lambda-u_{r}^{L}\rho-c}{\rho\left(u_{r}^{R}-u_{r}^{L}\right)+\left(u_{\ell}^{L}-u_{r}^{L}\right)\lambda},\label{eq:pHsG}
\end{equation}
Next, we define the value of the left branch as $\underline{V}_{own}(p)=U_{\ell}(p)$
for $p\le\underline{p}^{*}$. For $p>\underline{p}^{*}$, we set $\underline{V}_{own}(p)=V_1(p;\underline{p},U_{\ell}(\underline{p}^{*}))$ which yields:
\begin{equation}
\underline{V}_{own}(p)=-\frac{c}{\rho}(1-p)+\frac{u_{r}^{R}\lambda-c}{\lambda+\rho}p+\frac{\lambda\left(c+u_{\ell}^{L}\rho\right)}{\rho\left(\lambda+\rho\right)}\left(\frac{\underline{p}^{*}}{1-\underline{p}^{*}}\right)^{\frac{\rho}{\lambda}}\left(\frac{1-p}{p}\right)^{\frac{\rho}{\lambda}}(1-p).\label{eq:VctL}
\end{equation}
Similarly, the value $\overline{V}_{own}(p)$ of the right branch
equals $U_r(p)$ for $p\ge\overline{p}^{*}$. For $p<\overline{p}^{*}$,
we set $\overline{V}_{own}(p)=V_0(p;\overline{p}^{*},U_{r}(\overline{p}^{*}))$ which yields:
\begin{equation}
\overline{V}_{own}(p):=-\frac{c}{\rho}p+\frac{u_{\ell}^{L}\lambda-c}{\lambda+\rho}(1-p)+\frac{\lambda\left(c+u_{r}^{R}\rho\right)}{\rho\left(\lambda+\rho\right)}\left(\frac{1-\overline{p}^{*}}{\overline{p}^{*}}\right)^{\frac{\rho}{\lambda}}\left(\frac{p}{1-p}\right)^{\frac{\rho}{\lambda}}p.\label{eq:VctR}
\end{equation}

Combining these functions, we define the value of the own-biased strategy
as $V_{own}(p):=\max\left\{ \underline{V}_{own}(p),\overline{V}_{own}(p)\right\} .$
Without further analysis, it is not clear when $V_{own}(p)$ is the
value of a strategy of the form \eqref{eq:contr_alpha_thm1}. This
will be clarified in Section \ref{subsec:Solution-Candidate}. 

\subsection{Opposite-Biased Strategy}

Recall the structure of the opposite-biased strategy given by (\ref{eq:alpha_conf}). The value of this strategy, denoted by $V_{opp}(p)$,
and the reference belief $p^{*}$ are defined as follows. First, we
observe that the value must equal the stationary value $U^{S}(p^{*})$
at $p^{*}$. Given this, we invoke value matching and smooth pasting
to pin down\footnote{Namely, we insert $V_{0}(p^{*})=U^{S}(p^{*})$ and $V'_{0}(p^{*})=U^{S\prime}(p^{*})$
in \eqref{eq:V_a0}. It turns out that this yields the same value
for $p^{*}$ as the one obtained by inserting $V_{1}(p^{*})=U^{S}(p^{*})$
and $V'_{1}(p^{*})=U^{S\prime}(p^{*})$ in \eqref{eq:V_a1}.} 
\begin{equation}
p^{*}:=\frac{\left(u_{\ell}^{L}\rho+c\right)}{\left(u_{r}^{R}\rho+c\right)+\left(u_{\ell}^{L}\rho+c\right)}.\label{eq:pstarG}
\end{equation}
For $p\le p^{*}$ we have $V_{opp}(p)=V_0(p;p^{*},U^{S}(p^{*}))$  which yields: 
\begin{equation}
\underline{V}_{opp}(p):=-\frac{c}{\rho}p+\frac{u_{\ell}^{L}\lambda-c}{\lambda+\rho}(1-p)+\frac{\lambda}{\rho\left(2\rho+\lambda\right)}\frac{\lambda\left(u_{r}^{R}\rho+c\right)}{\lambda+\rho}\left(\frac{1-p^{*}}{p^{*}}\frac{p}{1-p}\right)^{\frac{\rho}{\lambda}}p.\label{eq:VcfL}
\end{equation}
Likewise, for $p\ge p^{*}$, we have $V_{opp}(p)=V_1(p;p^{*},U^{S}(p^{*}))$  which yields:
\begin{equation}
\overline{V}_{opp}(p):=-\frac{c}{\rho}(1-p)+\frac{u_{r}^{R}\lambda-c}{\lambda+\rho}p+\frac{\lambda}{\rho\left(2\rho+\lambda\right)}\frac{\lambda\left(u_{\ell}^{L}\rho+c\right)}{\lambda+\rho}\left(\frac{p^{*}}{1-p^{*}}\frac{1-p}{p}\right)^{\frac{\rho}{\lambda}}(1-p).\label{eq:VcfR}
\end{equation}

\subsection{Solution Candidate\label{subsec:Solution-Candidate}}

Again we assume \eqref{eq:EXPG} is satisfied. We define our solution candidate as the upper
envelope of $V_{own}(p)$ and $V_{opp}(p)$, denoted by $V_{Env}(p):=\max\left\{ V_{own}(p),V_{opp}(p)\right\} $. This function is characterized as follows.
\begin{prop}[\textbf{Structure of $V_{Env}$}]
\label{prop:Structure_V_G} 
\begin{enumerate}
\item If \eqref{eq:EXPG} holds and $V_{own}(p^{*})\ge V_{opp}(p^{*})$,
then there exists a unique $\check{p}\in\left(\underline{p}^{*},\overline{p}^{*}\right)$
such that $\underline{V}_{own}(\check{p})=\overline{V}_{own}(\check{p})$
and 
\[
V_{Env}(p)=V_{own}(p)=\begin{cases}
\underline{V}_{own}(p), & \text{if }p<\check{p},\\
\overline{V}_{own}(p), & \text{if }p\ge\check{p}.
\end{cases}
\]
\item If \eqref{eq:EXPG} holds and $V_{own}(p^{*})<V_{opp}(p^{*})$, then
$p^{*}\in(\underline{p}^{*},\overline{p}^{*})$, and there exist a
unique $\underline{p}\in(\underline{p}^{*},p^{*})$ such that $V_{own}(\underline{p})=V_{opp}(\underline{p})$,
and a unique $\overline{p}\in(p^{*},\overline{p}^{*})$ such that
$V_{own}(\overline{p})=V_{opp}(\overline{p})$ and 
\[
V_{Env}(p)=\begin{cases}
\underline{V}_{own}(p), & \text{if }p<\overline{p},\\
V_{opp}(p), & \text{if }p\in[\underline{p},\overline{p}],\\
\overline{V}_{own}(p), & \text{if }p>\overline{p}.
\end{cases}
\]
\end{enumerate}
\end{prop}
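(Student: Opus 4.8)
The plan is to reduce everything to two facts about the constituent branches, all of which solve one of the two first-order ODEs \eqref{eq:V_a0} ($\alpha=0$) or \eqref{eq:V_a1} ($\alpha=1$): $\underline{V}_{own}$ and $\overline{V}_{opp}$ solve the $\alpha=1$ equation, while $\overline{V}_{own}$ and $\underline{V}_{opp}$ solve the $\alpha=0$ equation. \textbf{Tool 1 (no crossing):} since the right-hand sides of \eqref{eq:V_a0} and \eqref{eq:V_a1} are locally Lipschitz on $(0,1)$, two solutions of the \emph{same} equation cannot cross, so on any common subinterval they are globally ordered, with the order fixed by comparison at a single point. \textbf{Tool 2 (crossing identity):} if an $\alpha=0$ solution $A$ and an $\alpha=1$ solution $B$ meet at $p$ with common value $v=A(p)=B(p)$, then subtracting \eqref{eq:V_a1} from \eqref{eq:V_a0} at $(p,v)$ and using \eqref{eq:UbarG} gives
\begin{equation*}
(A-B)'(p)=\frac{2\rho+\lambda}{\lambda p(1-p)}\bigl(v-U^{S}(p)\bigr),
\end{equation*}
so the sign of the derivative of the difference at any meeting point is governed entirely by whether the common value lies above or below the stationary value $U^{S}$.

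I would first record the auxiliary sign facts that turn Tool 2 into a single-crossing statement: (i) $V_{opp}(p)>U^{S}(p)$ for $p\neq p^{*}$, with equality at $p^{*}$ (from \eqref{eq:VcfL}--\eqref{eq:VcfR} and the construction $V_{opp}(p^{*})=U^{S}(p^{*})$); and (ii) the ``off'' branch lies strictly below immediate action at each stopping boundary, so that $\Delta_{own}:=\overline{V}_{own}-\underline{V}_{own}$ satisfies $\Delta_{own}(\underline{p}^{*})<0<\Delta_{own}(\overline{p}^{*})$. Fact (ii) forces $\Delta_{own}$ to change sign on $(\underline{p}^{*},\overline{p}^{*})$; by Tool 2 its derivative at any zero $\check p$ has the sign of $V_{own}(\check p)-U^{S}(\check p)$, so once I show $V_{own}>U^{S}$ at such points, all zeros are up-crossings and there is a \emph{unique} $\check p$ with $\underline{V}_{own}>\overline{V}_{own}$ to its left and the reverse to its right. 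This is the backbone giving the piecewise form of $V_{own}$.

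For case (a), where $V_{own}(p^{*})\ge U^{S}(p^{*})=V_{opp}(p^{*})$, I apply Tool 1 to the $\alpha=1$ pair $(\underline{V}_{own},\overline{V}_{opp})$ on $p\ge p^{*}$ and to the $\alpha=0$ pair $(\overline{V}_{own},\underline{V}_{opp})$ on $p\le p^{*}$: the comparison at $p^{*}$ and the no-crossing property yield $V_{opp}\le V_{own}$ throughout, hence $V_{Env}=V_{own}$, and $V_{own}\ge V_{opp}>U^{S}$ supplies the sign needed for the backbone step. For case (b), where $V_{opp}(p^{*})>V_{own}(p^{*})$, I first locate $p^{*}\in(\underline{p}^{*},\overline{p}^{*})$ by comparing $U^{S}(p^{*})$ with immediate action at the boundaries using \eqref{eq:plsG}, \eqref{eq:pHsG}, \eqref{eq:pstarG}. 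On $p\ge p^{*}$, Tool 1 gives $\overline{V}_{opp}>\underline{V}_{own}$ throughout, so comparing $V_{opp}=\overline{V}_{opp}$ with $V_{own}=\max\{\underline{V}_{own},\overline{V}_{own}\}$ reduces to the single pair $(\overline{V}_{opp},\overline{V}_{own})$; this difference is positive at $p^{*}$ and negative at $\overline{p}^{*}$ (where $\overline{V}_{own}=U>V_{opp}$), and by Tool 2 its zeros occur only with $v=V_{opp}>U^{S}$, i.e.\ are down-crossings, so there is a unique $\overline{p}\in(p^{*},\overline{p}^{*})$. The symmetric argument with the $\alpha=0$ pair on $p\le p^{*}$ gives a unique $\underline{p}\in(\underline{p}^{*},p^{*})$, and the displayed envelope follows.

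The analytic core is routine once Tools 1--2 are in place; the real work, and the main obstacle, is verifying the sign facts globally---in particular $V_{opp}>U^{S}$ off $p^{*}$ and $V_{own}>U^{S}$ at the own-branch meeting point---because these are exactly what upgrade the crossing identity to single-crossing and fix the direction of each crossing. I expect this to require the explicit closed forms \eqref{eq:VctL}--\eqref{eq:VctR} and \eqref{eq:VcfL}--\eqref{eq:VcfR} (or a convexity/optimality argument), together with careful bookkeeping of which branch attains the maximum near $p^{*}$ and near $\underline{p}^{*},\overline{p}^{*}$, and a clean determination of the position of $p^{*}$ relative to the experimentation boundaries.
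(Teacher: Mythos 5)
Your proposal follows essentially the same route as the paper's proof. Your Tool~2 is exactly the paper's Crossing Lemma~\ref{lem:branch_crossing}, down to the identity $V_0'(p)-V_1'(p)=\tfrac{\lambda+2\rho}{\lambda p(1-p)}\left(V(p)-U^{S}(p)\right)$; your Tool~1 (solutions of the same ODE never cross) is the same device the paper invokes when it notes that $\overline{V}_{own}$ and $\underline{V}_{opp}$ both satisfy \eqref{eq:V_a0}; and your sign facts (i)--(ii) are precisely the content of Lemmas~\ref{lem:properties_contradictory} and~\ref{lem:properties_confirmatory}, established there exactly as you anticipate, from the closed forms, strict convexity of the branches, and the fact that each branch agrees with the linear $U^{FA}$ at the endpoints of its interval. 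Your case~(b) matches the paper's proof almost step for step (you run the unique-crossing argument to the right of $p^{*}$ where the paper runs it on the left; that is immaterial).

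One step in your case~(a) is looser than it should be. From $V_{own}(p^{*})\ge V_{opp}(p^{*})$ you cannot conclude that \emph{both} comparisons at $p^{*}$ go your way: the hypothesis only says $\max\{\underline{V}_{own}(p^{*}),\overline{V}_{own}(p^{*})\}\ge V_{opp}(p^{*})$, and generically the other branch lies strictly below $V_{opp}(p^{*})$. If, say, $\underline{V}_{own}(p^{*})\ge V_{opp}(p^{*})>\overline{V}_{own}(p^{*})$, then Tool~1 applied to the $\alpha=0$ pair $(\overline{V}_{own},\underline{V}_{opp})$ on $p\le p^{*}$ delivers the \emph{wrong} ordering $\overline{V}_{own}<\underline{V}_{opp}$, and that pair yields nothing on $[\underline{p}^{*},p^{*}]$. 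The fix is in your own toolkit and is what the paper does: on $[\underline{p}^{*},p^{*}]$ compare $\underline{V}_{own}$ (an $\alpha=1$ solution) with $\underline{V}_{opp}$ (an $\alpha=0$ solution); since $\underline{V}_{own}(\underline{p}^{*})=U^{FA}(\underline{p}^{*})>V_{opp}(\underline{p}^{*})$ and $V_{opp}\ge U^{S}$, Tool~2 says $\underline{V}_{own}$ can only cross $\underline{V}_{opp}$ from above, so if $\underline{V}_{own}(p^{*})\ge\underline{V}_{opp}(p^{*})$ there is no crossing and $\underline{V}_{own}$ dominates on the whole subinterval; otherwise the hypothesis forces $\overline{V}_{own}(p^{*})\ge\underline{V}_{opp}(p^{*})$ and Tool~1 then applies in the correct direction. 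So case~(a) requires this two-subcase bookkeeping rather than a single comparison at $p^{*}$. With that, plus the tangency case $v=U^{S}$ at a zero of $\overline{V}_{own}-\underline{V}_{own}$ (which the paper dispatches in a footnote using convexity of the branches), your argument is complete and coincides with the paper's.
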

To understand how we derive the structure of the solution candidate
it is useful to make several geometric observations, which are depicted for illustrative purpose in Figure \ref{fig:branches_for_proof}.  Note first that $\underline{V}_{own}(p)$ is strictly convex on $[\underline{p}^*,1]$, $\overline{V}_{own}(p)$ is strictly convex on $[0,\overline{p}^*]$, and $V_{opp}(p)$ is strictly convex on $[0,1]$. This can be seen directly from   \eqref{eq:VctL}--\eqref{eq:VctR} and \eqref{eq:VcfL}--\eqref{eq:VcfR}.\footnote{For $V_{opp}(p)$, its convexity on the whole interval $[0,1]$ follows from the convexity of the two branches $\underline{V}_{opp}(p)$ and $\overline{V}_{opp}(p)$ and smooth pasting at $p^*$.}
 Figure \ref{fig:branches_for_proof} shows that these three functions coincide with $U^{FA}(p)$ at the endpoints of the respective intervals above. When the endpoints are $p=0$ and $p=1$ this can be seen by comparing the explicit expression and \eqref{eq:UhatG}.
Our first crucial Lemma shows that the value of own-biased learning coincides with 
$U^{FA}$ at the boundary points $\underline{p}^{*}$
and $\overline{p}^{*}$.
\begin{figure}
	\begin{tabular}{cc}
		\includegraphics[bb=0bp 0bp 540bp 400bp,height=0.203\textheight]{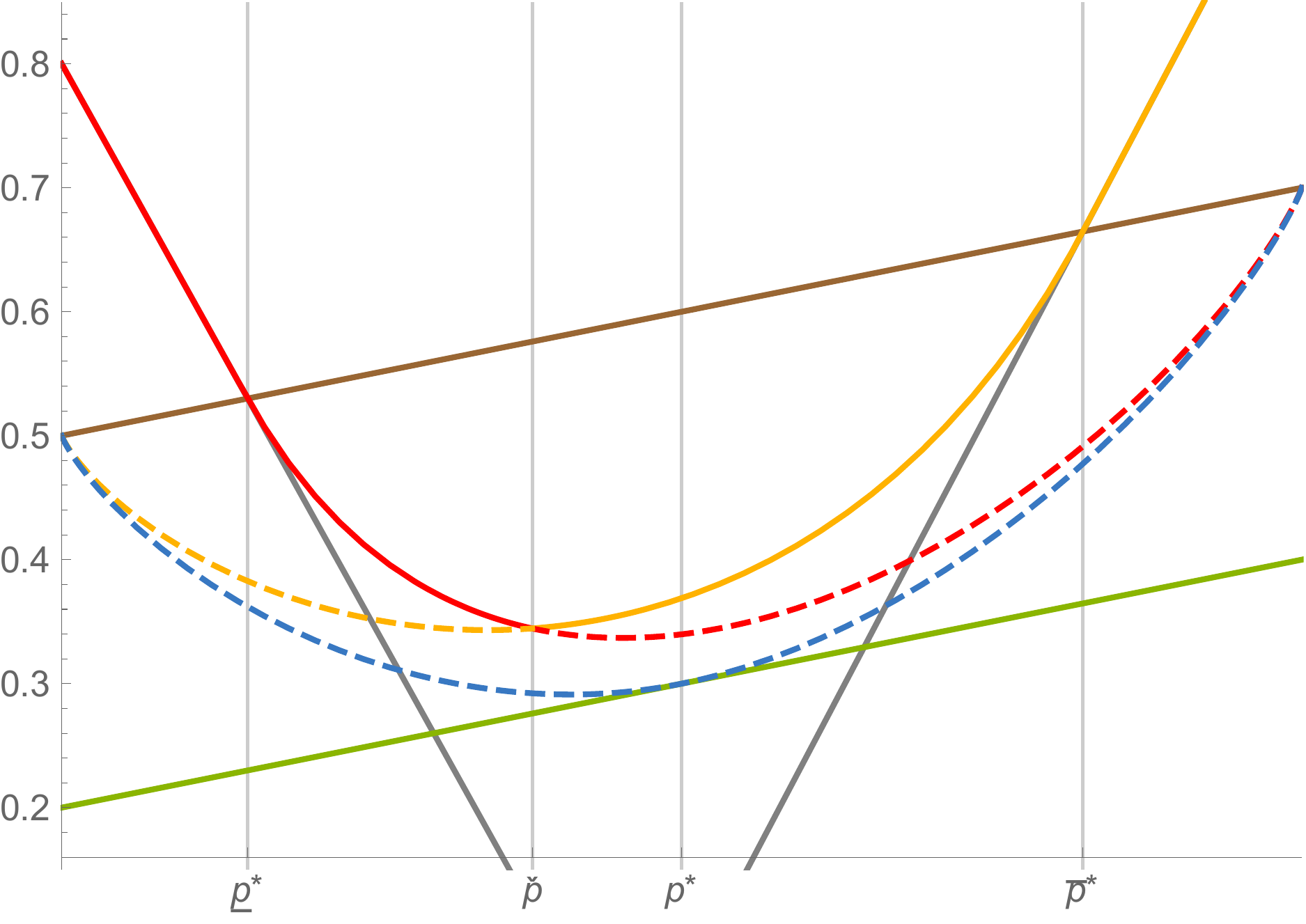} & \includegraphics[bb=0bp 0bp 675bp 400bp,height=0.203\textheight]{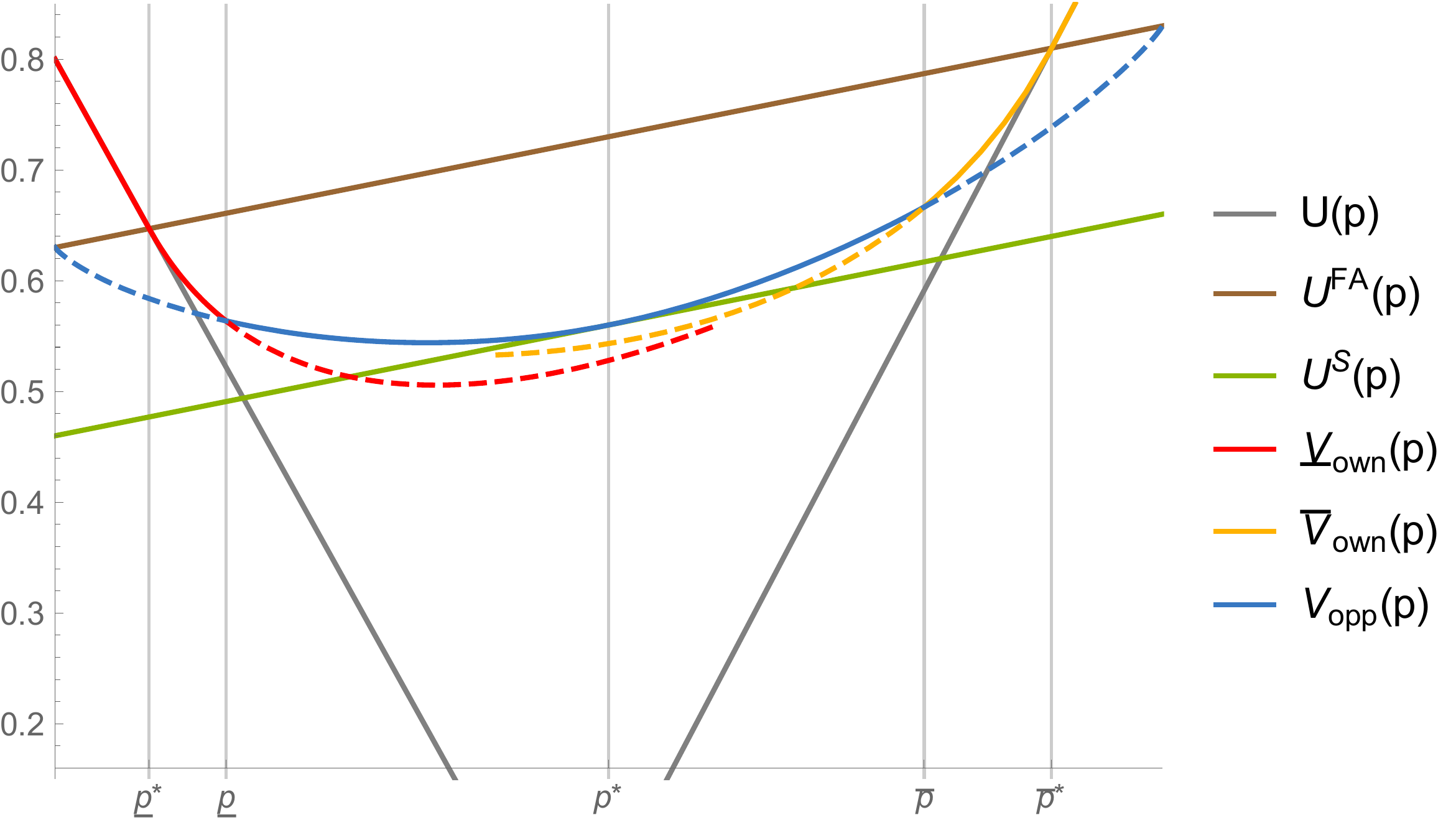}\tabularnewline
		{\footnotesize{}{}{}{}{}{}{}{}(a) only own-biased ($c=.3$)} & {\footnotesize{}{}{}{}{}{}{}{}(b) opposite-biased and own-biased ($c=.13$)}\tabularnewline
	\end{tabular}
	\caption{\label{fig:branches_for_proof} Branches of the value function and
	solution candidate.}
\medskip
\begin{footnotesize}
\begin{spacing}{1} 
Note: Dashed lines indicated segments of the branches that are not part of $V_{Env}$. (Parameters: $\lambda=1$, $\rho=0$, $u^R_r=1$, $u^L_\ell=.8$, $u^R_\ell=-1$, $u^L_r=-.8$)
\end{spacing}
\end{footnotesize}
\end{figure}
\begin{lem}
\label{lem:The-boundary-points}The boundary points $\underline{p}^{*}$
and $\overline{p}^{*}$ satisfy
\begin{equation}
U_{\ell}(\underline{p}^{*})=U^{FA}(\underline{p}^{*})\quad\text{and}\quad U_{r}(\overline{p}^{*})=U^{FA}(\overline{p}^{*}).\label{eq:geom_charac_boundaries}
\end{equation}
If \eqref{eq:EXPG} is satisfied, then $0<\underline{p}^{*}<\hat{p}<\overline{p}^{*}<1$.
\end{lem}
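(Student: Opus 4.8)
The plan is to prove the two assertions separately: the geometric identities \eqref{eq:geom_charac_boundaries}, which reduce to linear algebra, and then the ordering $0<\underline{p}^{*}<\hat{p}<\overline{p}^{*}<1$, which follows from a sign-and-continuity argument exploiting that $U_{\ell}$, $U_{r}$, and $U^{FA}$ are all \emph{affine} in $p$.

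\textbf{Part 1 (the identities).} I would verify \eqref{eq:geom_charac_boundaries} by direct computation. Both $U_{\ell}(p)=pu_{\ell}^{R}+(1-p)u_{\ell}^{L}$ and $U^{FA}$ in \eqref{eq:UhatG} are affine, so $U_{\ell}(p)=U^{FA}(p)$ is a single linear equation. Multiplying through by $\rho+\lambda$ and collecting the terms in $p$, the coefficient of $p$ equals $\lambda(u_{r}^{R}-u_{\ell}^{R})+\rho(u_{\ell}^{L}-u_{\ell}^{R})$ and the constant equals $\rho u_{\ell}^{L}+c$, so the unique root is exactly $\underline{p}^{*}$ as defined in \eqref{eq:plsG}. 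The equation $U_{r}(p)=U^{FA}(p)$ is handled symmetrically and yields $\overline{p}^{*}$ in \eqref{eq:pHsG}. Conceptually these identities are not accidental: imposing value matching $V_{1}(\underline{p}^{*})=U_{\ell}(\underline{p}^{*})$ and smooth pasting $V_{1}'(\underline{p}^{*})=U_{\ell}'(\underline{p}^{*})$ in the ODE \eqref{eq:V_a1}, and using that $U_{\ell}$ is affine, collapses the right-hand side to $\lambda\underline{p}^{*}(u_{r}^{R}-u_{\ell}^{R})$, which is precisely the relation that places $U_{\ell}$ on $U^{FA}$; the same manipulation with $V_{0}$, $U_{r}$, and \eqref{eq:V_a0} delivers the right boundary.

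\textbf{Part 2 (the ordering).} Here I would argue geometrically. The maintained assumptions $u_{r}^{R}>\max\{0,u_{\ell}^{R}\}$ and $u_{\ell}^{L}>\max\{0,u_{r}^{L}\}$ imply $U_{r}(1)>U_{\ell}(1)$ and $U_{\ell}(0)>U_{r}(0)$, so $U$ has a single kink at some $\hat{p}\in(0,1)$ with $U=U_{\ell}$ on $[0,\hat{p}]$ and $U=U_{r}$ on $[\hat{p},1]$. Evaluating $U^{FA}$ at the endpoints gives
\[
U^{FA}(0)-U_{\ell}(0)=-\frac{c+\rho u_{\ell}^{L}}{\rho+\lambda}<0,\qquad U^{FA}(1)-U_{r}(1)=-\frac{c+\rho u_{r}^{R}}{\rho+\lambda}<0,
\]
where strictness uses $u_{\ell}^{L},u_{r}^{R}>0$ together with the assumption that $c$ and $\rho$ are not both zero. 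These inequalities hold irrespective of \eqref{eq:EXPG}. Now invoke \eqref{eq:EXPG}: at the kink $U^{FA}(\hat{p})>U(\hat{p})$, i.e. $U^{FA}(\hat{p})>U_{\ell}(\hat{p})$ and $U^{FA}(\hat{p})>U_{r}(\hat{p})$. The affine function $U^{FA}-U_{\ell}$ is thus negative at $p=0$ and positive at $p=\hat{p}$, so it changes sign exactly once on $(0,\hat{p})$; its unique root is $\underline{p}^{*}$ by Part 1, whence $0<\underline{p}^{*}<\hat{p}$. Symmetrically, $U^{FA}-U_{r}$ is positive at $\hat{p}$ and negative at $p=1$, so its unique root $\overline{p}^{*}$ lies in $(\hat{p},1)$, giving $\hat{p}<\overline{p}^{*}<1$. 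As a byproduct, the sign changes show the relevant slope differences are nonzero, so the denominators in \eqref{eq:plsG}--\eqref{eq:pHsG} are well-defined (indeed positive).

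Since this is a verification lemma, there is no deep obstacle. The only points requiring care are the endpoint sign computations—where one must treat the boundary cases $\rho=0$ and $c=0$ using that at most one of them vanishes—and the elementary observation that an affine function changing sign on an interval has a unique interior root, which is what pins the two crossing points strictly on the correct side of $\hat{p}$. The algebra in Part 1 is routine but must be executed carefully to reproduce the exact constants in \eqref{eq:plsG} and \eqref{eq:pHsG}.
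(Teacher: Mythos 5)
Your proof is correct and takes essentially the same route as the paper: the identities in \eqref{eq:geom_charac_boundaries} are verified by the same direct algebra the paper dismisses as ``straightforward'' in Lemma \ref{lem:properties_contradictory}(c), and your ordering argument for Part 2 --- the endpoint signs $U^{FA}(0)<U_{\ell}(0)$ and $U^{FA}(1)<U_{r}(1)$, condition \eqref{eq:EXPG} at the kink $\hat{p}$, and affineness --- is exactly the intermediate-value rephrasing (indeed the contrapositive) of the slope-comparison argument the paper packages as Lemma \ref{lem:Properties_of_Benchmarks}(b). Your added care about nonvanishing denominators in \eqref{eq:plsG}--\eqref{eq:pHsG} and the case $\rho=0$ or $c=0$ is a harmless refinement, not a departure.
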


Equipped with these preliminary observations, we can turn to the characterization of the upper envelope in Proposition \ref{prop:Structure_V_G}. The following crucial ``crossing lemma'' characterizes how a function solving \eqref{eq:V_a0} (such as $\overline{V}_{own}$ or $\underline{V}_{opp}$) intersects a function that solves \eqref{eq:V_a1} (such as $\underline{V}_{own}$ or $\overline{V}_{opp}$).
\begin{lem}[\textbf{Crossing Lemma}]
\label{lem:branch_crossing}  Let $V_{0}$ satisfy \eqref{eq:V_a0} and $V_{1}$ satisfy \eqref{eq:V_a1}. 
If $V_{0}(p)=V_{1}(p)>(=)U^{S}(p)$
at some at $p\in(0,1)$, then $V_{0}'(p)>(=)V_{1}'(p)$. 
\end{lem}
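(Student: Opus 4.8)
The plan is to exploit that both ODEs \eqref{eq:V_a0} and \eqref{eq:V_a1} are linear in the derivative, so at a common point they pin down each derivative explicitly and the conclusion follows from a single subtraction. I would denote by $V$ the common value $V_0(p)=V_1(p)$ and solve each ODE for its transport term. From \eqref{eq:V_a0},
\[
\lambda p(1-p)V_0'(p) = c+\rho V - \lambda(1-p)\left(u_\ell^L - V\right),
\]
while from \eqref{eq:V_a1} (where the transport term enters with the opposite sign),
\[
\lambda p(1-p)V_1'(p) = \lambda p\left(u_r^R - V\right) - c - \rho V.
\]

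The key step is to subtract these two identities. I would first compute
\[
\lambda p(1-p)\left(V_0'(p)-V_1'(p)\right) = 2c + 2\rho V - \lambda(1-p)\left(u_\ell^L-V\right) - \lambda p\left(u_r^R-V\right),
\]
and then collect the terms proportional to $V$. Using $(1-p)+p=1$, the coefficient of $V$ becomes $2\rho+\lambda$, while the payoff terms reduce to $2c - \lambda\left[(1-p)u_\ell^L + p\,u_r^R\right]$.

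The decisive observation is that this leftover expression is precisely $-(2\rho+\lambda)U^S(p)$: from the definition \eqref{eq:UbarG} one has $(2\rho+\lambda)U^S(p) = \lambda\left[(1-p)u_\ell^L + p\,u_r^R\right] - 2c$. Substituting yields the clean identity
\[
\lambda p(1-p)\left(V_0'(p)-V_1'(p)\right) = (2\rho+\lambda)\left(V-U^S(p)\right).
\]
Since $p\in(0,1)$ gives $\lambda p(1-p) > 0$ and $2\rho+\lambda > 0$, the sign of $V_0'(p)-V_1'(p)$ matches the sign of $V-U^S(p)$. This delivers both parts simultaneously: when the common value strictly exceeds $U^S(p)$ we get $V_0'(p) > V_1'(p)$, and when it equals $U^S(p)$ the derivatives coincide.

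The computation is entirely elementary, so there is no genuine analytic obstacle; the only thing one must \emph{see} is that the particular combination of payoffs surviving the subtraction is exactly the stationary value $U^S$. I would state this recognition explicitly rather than presenting it as an unmotivated cancellation, since it is precisely what makes $U^S$ the natural threshold separating the two crossing regimes and explains why the absorbing point $p^*$ of the opposite-biased strategy is defined through $U^S$.
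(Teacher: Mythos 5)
Your proof is correct and is essentially the paper's own argument: the paper likewise solves \eqref{eq:V_a0} and \eqref{eq:V_a1} for the derivative terms at the common point and arrives at the same identity $V_0'(p)-V_1'(p)=\frac{\lambda+2\rho}{\lambda p(1-p)}\left(V(p)-U^S(p)\right)$, concluding that the sign of the derivative gap matches the sign of $V(p)-U^S(p)$. Your explicit recognition that the surviving payoff combination is exactly $(2\rho+\lambda)U^S(p)$ is a nice motivational touch, but it is the same computation.
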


The Lemma implies that $\underline{V}_{own}$ must cross $\overline{V}_{own}$  (and likewise $\underline{V}_{own}$ and $\overline{V}_{opp}$  must cross  $\underline{V}_{opp}$ and $\overline{V}_{own}$ respectively from above), when an intersection occurs above the stationary value function.

Suppose first that $\max\{\underline{V}_{own}(p),\overline{V}_{own}(p)\}\ge U^S(p)$ for all $p\in[0,1]$. Our preliminary observations imply that $\underline{V}_{own}$ and $\overline{V}_{own}$ must cross each other at some $p\in(\underline{p}^*,\overline{p}^*)$.%
\footnote{Since $\underline{V}_{own}$ is strictly convex and equal to the linear function $U^{FA}(p)$ for $p\in\{\underline{p}^*,1\}$, we have $\underline{V}_{own}(\overline{p}^*)<U^{FA}(\overline{p}^*)=\overline{V}_{own}(\overline{p}^*)$. Similarly, we obtain $\overline{V}_{own}(\underline{p}^*)<\underline{V}_{own}(\underline{p}^*)$. Hence there must be an intersection.} %
Since their upper envelope exceeds $U^S(p)$, the Crossing Lemma \ref{lem:branch_crossing} implies that $\underline{V}_{own}(p)$ intersects $\overline{V}_{own}(p)$ from above. Consequently the intersection point $\check{p}$ must be unique, as depicted in Panel (a) of Figure \ref{fig:branches_for_proof}. This summarizes the crucial step for part (a) of Proposition \ref{prop:Structure_V_G}.%
\footnote{See the complete proof in Appendix \ref{sec:omitted_proofs} for the remaining steps: (i) $\max\{\underline{V}_{own}(p^*),\overline{V}_{own}(p^*)\}\ge U^S(p^*)$ implies the stronger condition $\max\{\underline{V}_{own}(p),\overline{V}_{own}(p)\}\ge U^S(p)$ for all $p\in[0,1]$ that we used here, and (ii) $V_{opp}(p)\le V_{own}(p)$ for all $p\in(0,1)$ if $\max\{\underline{V}_{own}(p^*),\overline{V}_{own}(p^*)\}\ge U^S(p^*)$.} %

Suppose next that $\max\{\underline{V}_{own}(p^*),\overline{V}_{own}(p^*)\}< U^S(p^*)$, as in Part (b) of Proposition \ref{prop:Structure_V_G}, a case depicted in Panel (b) of Figure \ref{fig:branches_for_proof}. Consider the interval $(\underline{p}^*,p^*)$.
From Lemma \ref{lem:The-boundary-points} and our preliminary observations, we have $V_{opp}(\underline{p}^*)<U^{FA}(\underline{p}^*)=\underline{V}_{own}(\underline{p}^*)$.
 Since $V_{opp}(p^*)>\underline{V}_{own}(p^*)$, there must be an intersection between $V_{opp}(p)$ and $\underline{V}_{own}(p)$ at some $p \in (\underline{p}^*, p^*)$.
 Since $V_{opp}(p)=\underline{V}_{opp}(p)>U^S(p)$ for all $p<p^*$, the Crossing Lemma \ref{lem:branch_crossing} implies that $\underline{V}_{own}(p)$ intersects $V_{opp}(p)$ from above and hence there is a unique intersection $\underline{p}\in(\underline{p}^*,p^*)$. 
 The characterization of the upper envelope for $p<p^*$ is completed by noting that $\overline{V}_{own}(p)<{V}_{opp}(p)$ for all $p<p^*$.\footnote{This follows from the fact that $\overline{V}_{own}(p^*) < \overline{V}_{opp} (p^*)=\underline{V}_{opp}(p^*)$.  Since both $\overline{V}_{own}$  and $\underline{V}_{opp}$ satisfy \eqref{eq:V_a0}, the former stays below the latter for all $p<p^*$.} A symmetric argument is used to characterize $V_{Env}(p)$ for $p>p^*$. 

\subsection{Verification of the Candidate}

We now show that $V_{Env}$ is the value function of the DM's problem
in \eqref{eq:DMs_problemG}. 
\begin{prop}
\label{prop:envelope_characterization}$V^{*}(p)=V_{Env}(p)$ for
all $p\in[0,1]$. Up to tie-breaking at $\check{p}$, $\underline{p}$,
$\overline{p}$, $\underline{p}^{*}$, and $\overline{p}^{*}$, the
optimal policy is unique. 
\end{prop}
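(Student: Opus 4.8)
The plan is to identify the candidate $V_{Env}$ with the value function via a viscosity-solution verification argument. By the verification theorem we invoke \citep{bardi/capuzzo-dolceta:97}, $V^{*}$ is the unique bounded continuous viscosity solution of the variational inequality \eqref{eq:HJB_VIO} with boundary data $V(0)=u_{\ell}^{L}$ and $V(1)=u_{r}^{R}$; hence it suffices to show that $V_{Env}$ is such a solution. First I would record the easy structural facts: by Lemma \ref{lem:The-boundary-points} we have $0<\underline{p}^{*}$ and $\overline{p}^{*}<1$, so $p=0,1$ lie in the stopping region and $V_{Env}(0)=U(0)=u_{\ell}^{L}$, $V_{Env}(1)=U(1)=u_{r}^{R}$; and by Proposition \ref{prop:Structure_V_G} the function $V_{Env}$ is continuous and, being the upper envelope of the strictly convex branches $\underline{V}_{own},\overline{V}_{own},V_{opp}$ and the convex stopping payoff $U$, is itself convex. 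The remaining content is the verification of the viscosity inequalities at points of differentiability and at the kinks.

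At a differentiability point in the experimentation region $\{V_{Env}>U\}$, the relevant branch solves its $\alpha$-specific ODE (\eqref{eq:V_a0} or \eqref{eq:V_a1}) by construction, so \eqref{eq:HJBG} holds \emph{provided the prescribed control attains the maximum in \eqref{eq:HJBG_RHS}}. Since $F_{\alpha}$ is affine in $\alpha$ with slope
\[
\Phi(p):=\lambda p\big(u_{r}^{R}-V_{Env}(p)\big)-\lambda(1-p)\big(u_{\ell}^{L}-V_{Env}(p)\big)-2\lambda p(1-p)V_{Env}'(p)=F_{1}-F_{0},
\]
the choice $\alpha=1$ is optimal iff $\Phi(p)\ge0$, $\alpha=0$ iff $\Phi(p)\le0$, and $\alpha=\tfrac12$ is admissible only where $\Phi(p)=0$. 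The crux of the whole verification is therefore to show that $\Phi\ge0$ precisely on the regions where the candidate prescribes $\alpha=1$, $\Phi\le0$ on the $\alpha=0$ regions, and $\Phi(p^{*})=0$ at the absorbing belief. I would establish these sign patterns from the convexity of each branch together with the boundary identities of Lemma \ref{lem:The-boundary-points} and, crucially, the Crossing Lemma \ref{lem:branch_crossing}: because $\Phi=F_{1}-F_{0}$ vanishes exactly where a solution of \eqref{eq:V_a1} meets a solution of \eqref{eq:V_a0}, the sign of $\Phi$ flips exactly at the prescribed switch points $\check{p}$ (or $\underline{p},p^{*},\overline{p}$ in case (c)), and the crossing-from-above property fixes these flips to be consistent with the candidate. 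In the stopping region $\{V_{Env}=U\}$ I would instead verify the complementary inequality $-c-\rho U(p)+\max_{\alpha}F_{\alpha}(p,U(p),U'(p))\le0$, which follows from $U\ge U^{FA}$ outside the experimentation region as in Proposition \ref{prop:no_experimentation}.

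For the kinks, convexity of $V_{Env}$ is what makes the argument clean: every kink is convex, so no $C^{1}$ test function touches $V_{Env}$ from above there and the subsolution inequality is vacuous. The only nontrivial check is the supersolution inequality $-c-\rho V_{Env}(p)+\max_{\alpha}F_{\alpha}(p,V_{Env}(p),q)\ge0$ for every slope $q$ in the one-sided interval $[V_{Env}'(p^{-}),V_{Env}'(p^{+})]$ at each kink ($\check{p}$ in case (b); $\underline{p}$ and $\overline{p}$ in case (c)). Since the equation holds with equality at each endpoint slope (inherited from the adjoining branch) and $\max_{\alpha}F_{\alpha}$ is monotone and affine in the slope argument, the inequality extends to the whole interval. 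At $\underline{p}^{*},\overline{p}^{*},p^{*}$ value matching \emph{and} smooth pasting were imposed, so these are $C^{1}$ junctions, not kinks, and require nothing beyond the differentiable-point analysis. This completes the proof that $V_{Env}$ is a viscosity solution, whence $V^{*}=V_{Env}$.

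Finally, for uniqueness of the optimal policy, off the finite set $\{\check{p},\underline{p},\overline{p},\underline{p}^{*},\overline{p}^{*}\}$ and the absorbing point $p^{*}$ the sign analysis above gives $\Phi(p)\neq0$, so the bang-bang maximizer in \eqref{eq:HJBG_RHS} is unique and $\alpha^{*}(\cdot)$ is pinned down. At $p^{*}$, although $F_{\alpha}$ is flat in $\alpha$, remaining optimal requires the belief to stay absorbed, which forces $\alpha=\tfrac12$. The remaining beliefs form a finite, hence Lebesgue-null, set that the deterministic no-breakthrough path traverses in zero time, so reassigning $\alpha$ there alters neither $V^{*}$ nor the realized belief trajectory; this is exactly the tie-breaking indeterminacy in the statement. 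The main obstacle throughout is the sign analysis of $\Phi$ in the second step: solving an ODE under a guessed $\alpha$ does not by itself certify that this $\alpha$ maximizes $F_{\alpha}$ given the resulting slope, and reconciling the guessed controls with the pointwise optimality condition is where the convexity of the branches and the Crossing Lemma must be combined carefully.
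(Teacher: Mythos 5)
Your overall architecture is exactly the paper's: verification via the viscosity-solution theorem of \citet{bardi/capuzzo-dolceta:97}, branch-wise checks at points of differentiability, convex kinks handled through the interval of one-sided slopes, the stopping region disposed of via Proposition \ref{prop:no_experimentation}, and uniqueness from strict bang-bang optimality plus $\alpha=1/2$ being the only admissible choice at $p^{*}$. But the step you yourself flag as the crux is misstated in a way that would fail if executed literally. Your claim that ``$\Phi=F_{1}-F_{0}$ vanishes exactly where a solution of \eqref{eq:V_a1} meets a solution of \eqref{eq:V_a0}'' is false: substituting the relevant ODE into $\Phi$ shows that along any \eqref{eq:V_a1}-branch $\Phi$ is a positive multiple of $V_{1}(p)-U^{S}(p)$, and along any \eqref{eq:V_a0}-branch a positive multiple of $U^{S}(p)-V_{0}(p)$ (this identity is the paper's Unimprovability Lemma \ref{lem:Branches_satisfy_HJB}, and the same computation underlies the Crossing Lemma \ref{lem:branch_crossing}). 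Hence $\Phi$ vanishes only where the candidate touches $U^{S}$, i.e.\ at $p^{*}$; at $\check{p},\underline{p},\overline{p}$ it does not pass through zero but jumps across the kink because the slope jumps. The sign pattern is therefore certified not by a crossing/sign-flip argument but by the single inequality $V_{Env}(p)\ge V_{opp}(p)\ge U^{S}(p)$ (from strict convexity of $V_{opp}$ and the envelope construction), which makes each branch's prescribed control maximal immediately\textemdash this is the missing idea your plan would need.

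Second, at the kinks you state the supersolution inequality with the wrong sign: the condition to verify is $c+\rho V_{Env}(p)\ge\max_{\alpha}F_{\alpha}(p,V_{Env}(p),q)$ for all $q\in\left[V'_{-}(p),V'_{+}(p)\right]$ (the paper's \eqref{eq:VIO_non_diff}); with your ``$\ge0$'' the inequality is generally false in the interior of the slope interval. Relatedly, $\max_{\alpha}F_{\alpha}$ is neither monotone nor affine in the slope argument\textemdash it is the upper envelope of $F_{1}$, which is decreasing in $q$, and $F_{0}$, which is increasing in $q$, hence V-shaped. The endpoint reduction you gesture at still works, but per $\alpha$: for $q$ in the interval, $F_{1}(p,V,q)\le F_{1}(p,V,V'_{-}(p))$ and $F_{0}(p,V,q)\le F_{0}(p,V,V'_{+}(p))$, and each endpoint value equals $c+\rho V_{Env}(p)$ by Lemma \ref{lem:Branches_satisfy_HJB} applied to the adjoining branch\textemdash this is exactly the paper's inequality \eqref{eq:VIO_non_diff_suff}. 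Your remaining steps (vacuity of the subsolution test at convex kinks, $C^{1}$ pasting at $\underline{p}^{*},\overline{p}^{*},p^{*}$, and the tie-breaking discussion) match the paper once these two repairs are made.
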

For $p\notin(\underline{p}^{*},\overline{p}^*)$, $V_{Env}(p)=U(p)$
which is equal to $V^{*}(p)$ by Proposition \ref{prop:no_experimentation}.
To show optimality for beliefs inside the experimentation region,
the following Lemma is crucial.
\begin{lem}[\textbf{Unimprovability}]
\begin{enumerate}
\item If $V_{0}$ satisfies \eqref{eq:V_a0} and $V_{0}(p)\ge U^{S}(p)$
for some $p\in\left[0,1\right]$, then $V_{0}$ satisfies \eqref{eq:HJBG}
at $p$, and $\alpha=0$ is a maximizer. 
\item If $V_{1}$ satisfies \eqref{eq:V_a1} and $V_{1}(p)\ge U^{S}(p)$
for some $p\in\left[0,1\right]$, then $V_{1}$ satisfies \eqref{eq:HJBG}
at $p$, and $\alpha=1$ is a maximizer. 
\end{enumerate}
The maximizers are unique if $V_0(p),V_1(p)>U^{S}(p)$. \label{lem:Branches_satisfy_HJB} 
\end{lem}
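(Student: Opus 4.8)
The plan is to prove part (a) and obtain part (b) by the symmetric argument with the roles of $0$ and $1$ interchanged. I fix $p$ and abbreviate $V:=V_0(p)$ and $V':=V_0'(p)$. The first step is to record that $F_\alpha(p,V,V')$ is affine in $\alpha$ with endpoint values $F_0$ and $F_1$; a direct check of \eqref{eq:HJBG_RHS} gives $F_\alpha(p,V,V')=(1-\alpha)F_0(p,V,V')+\alpha F_1(p,V,V')$, so that $\max_{\alpha\in[0,1]}F_\alpha(p,V,V')=\max\{F_0(p,V,V'),F_1(p,V,V')\}$. Since $V_0$ already satisfies $c+\rho V=F_0(p,V,V')$ by \eqref{eq:V_a0}, proving that $\alpha=0$ is a maximizer (and hence that \eqref{eq:HJBG} holds) reduces to the single inequality $F_0(p,V,V')\ge F_1(p,V,V')$, with strictness delivering the uniqueness claim.

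The crux will be to relate this endpoint comparison to the hypothesis $V_0(p)\ge U^{S}(p)$, and the key observation is that the stationary choice $\alpha=\half$ sits exactly at the midpoint. Affinity gives $F_{1/2}(p,V,V')=\half\big(F_0(p,V,V')+F_1(p,V,V')\big)$, and because the coefficient $2\alpha-1$ multiplying $V'$ in \eqref{eq:HJBG_RHS} vanishes at $\alpha=\half$, the quantity $F_{1/2}$ does not depend on $V'$ at all:
\[
F_{1/2}(p,V,V')=\tfrac12\lambda\big(p\,u_r^R+(1-p)u_\ell^L\big)-\tfrac12\lambda V.
\]
Via the midpoint identity, the target inequality $F_0\ge F_1$ is equivalent to the one-sided comparison $F_0(p,V,V')\ge F_{1/2}(p,V,V')$, which I would now evaluate explicitly.

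Substituting the ODE identity $F_0(p,V,V')=c+\rho V$ together with the expression for $F_{1/2}$ above turns $F_0\ge F_{1/2}$ into $c+\rho V\ge \tfrac12\lambda\big(p\,u_r^R+(1-p)u_\ell^L\big)-\tfrac12\lambda V$, and solving for $V$ yields
\[
V\;\ge\;\frac{\tfrac12\lambda\big(p\,u_r^R+(1-p)u_\ell^L\big)-c}{\rho+\tfrac12\lambda}=U^{S}(p),
\]
where the last equality is just the closed form \eqref{eq:UbarG}. This is precisely the maintained hypothesis $V_0(p)\ge U^{S}(p)$, so $F_0\ge F_1$ holds and $\alpha=0$ maximizes; when $V_0(p)>U^{S}(p)$ the chain of inequalities is strict, giving $F_0>F_1$ and hence a unique maximizer. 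I do not anticipate a serious obstacle here: the only non-mechanical step is recognizing that the stationary value $U^{S}$ arises as the $\alpha$-midpoint of $F_\alpha$ and is free of $V'$, which collapses the bang-bang comparison to the scalar condition $V_0\ge U^{S}$; once that is seen, the remaining steps are the elementary rearrangements above, and part (b) is identical, with $V_1$ and the comparison $F_1\ge F_0$ replacing $V_0$ and $F_0\ge F_1$.
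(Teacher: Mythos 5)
Your proof is correct and is essentially the paper's own argument in different clothing: the paper substitutes the ODE into $\partial F_\alpha/\partial\alpha$ and finds it proportional to $U^{S}(p)-V_0(p)$, while your midpoint comparison $F_0\ge F_{1/2}$ is, by affinity, exactly the statement $\partial F_\alpha/\partial\alpha\le 0$, so the two reductions to $V_0(p)\ge U^{S}(p)$ coincide. The only added value of your packaging is expository---it makes transparent why $U^{S}$ appears, namely as the $V'$-free payoff of the stationary choice $\alpha=\tfrac12$.
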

As we have argued in the previous section, $V_{opp}(p)\ge U^{S}(p)$
for all $p\in[0,1]$, which implies $V_{Env}(p)\ge U^{S}(p)$.
Remember that $V_{Env}(p)$ is constructed of functions that satisfy
\eqref{eq:V_a0} or \eqref{eq:V_a1}, respectively. Therefore, Lemma
\ref{lem:Branches_satisfy_HJB} shows that $V_{Env}(p)$ satisfies
the HJB equation for all points where it is differentiable. We have thus verified optimality for all $p$ where $V_{Env}(p)$ is differentiable.   For 
verification at points where $V_{Env}(p)$ is not differentiable,
we show that it is a viscosity solution of the HJB equation (see the
proof of Proposition \ref{prop:envelope_characterization} in Appendix
\ref{appendix:proofs_mainsection} in the Supplemental Material).

\subsection{Proof of Theorem \ref{thm:optimal_attention_strategy}\label{subsec:Proof_of_Theorem_1}}

We show that Theorem \ref{thm:optimal_attention_strategy} holds with
the cutoffs: 
\begin{align}
\overline{c} & :=0\vee\frac{\lambda\left(u_{r}^{R}-u_{\ell}^{R}\right)\left(u_{\ell}^{L}-u_{r}^{L}\right)-\rho\left(u_{r}^{R}u_{\ell}^{L}-u_{\ell}^{R}u_{r}^{L}\right)}{\left(u_{r}^{R}-u_{\ell}^{R}\right)+\left(u_{\ell}^{L}-u_{r}^{L}\right)},\label{eq:zetah}\\
\underline{c} & :=0\vee\begin{cases}
\overline{c}\wedge\min\left\{ \frac{\left(\rho+\lambda\right)\left(u_{r}^{R}-u_{\ell}^{R}\right)}{1+\left(\frac{2\rho+\lambda}{\lambda}\right)^{\lambda/\rho}}-\rho u_{r}^{R},\frac{\left(\rho+\lambda\right)\left(u_{\ell}^{L}-u_{r}^{L}\right)}{1+\left(\frac{2\rho+\lambda}{\lambda}\right)^{\lambda/\rho}}-\rho u_{\ell}^{L}\right\}  & \text{if }\rho>0,\\
\overline{c}\wedge\frac{\lambda}{1+e^{2}}\min\left\{ \left(u_{r}^{R}-u_{\ell}^{R}\right),\left(u_{\ell}^{L}-u_{r}^{L}\right)\right\}  & \text{if }\rho=0,
\end{cases}\label{eq:zetal}
\end{align}
where $x\vee y=\max\left\{ x,y\right\} $ and $x\wedge y=\min\left\{ x,y\right\} $. 
\begin{proof}[Proof of Theorem \ref{thm:optimal_attention_strategy}]
Straightforward algebra shows that \eqref{eq:EXPG} is equivalent
to 
\[
c\left(\left(u_{r}^{R}-u_{\ell}^{R}\right)+\left(u_{\ell}^{L}-u_{r}^{L}\right)\right)+\rho\left(u_{r}^{R}u_{\ell}^{L}-u_{\ell}^{R}u_{r}^{L}\right)<\lambda\left(u_{r}^{R}-u_{\ell}^{R}\right)\left(u_{\ell}^{L}-u_{r}^{L}\right).
\]
By Propositions \ref{prop:no_experimentation}, immediate action is
optimal for all $p\in[0,1]$, if \eqref{eq:EXPG} is violated, which
holds if and only if $c\ge\overline{c}$, where $\overline{c}$ is
given by \eqref{eq:zetah}. This proves part (a).

Conversely, if $c\le\overline{c}$, then \eqref{eq:EXPG} is satisfied,
and by Propositions \ref{prop:Structure_V_G} and \ref{prop:envelope_characterization}
experimentation is optimal for some beliefs. We show that 
\begin{equation}
c\ge\underline{c}\quad\iff\quad\max\left\{ \underline{V}_{own}(p^{*}),\overline{V}_{own}(p^{*})\right\} \ge U^{S}(p^{*}).\label{eq:characerization_zetal}
\end{equation}
By Propositions \ref{prop:Structure_V_G} and \ref{prop:envelope_characterization},
and the Unimprovability Lemma \ref{lem:Branches_satisfy_HJB}, this
implies that the policies stated in Parts (b) and (c) of Theorem \ref{thm:optimal_attention_strategy}
are optimal.

We first assume $\rho>0$. The closed-form solutions for $\underline{V}_{own}(p^{*})$
and $\overline{V}_{own}(p^{*})$ in \eqref{eq:VctL} and \eqref{eq:VctR}
can be used to show that 
\begin{align*}
 & \max\left\{ \underline{V}_{own}(p^{*}),\overline{V}_{own}(p^{*})\right\} \ge U^{S}(p^{*})\\
\iff & \left(\max\left\{ \frac{c+\rho u_{r}^{R}}{\lambda u_{r}^{R}-(\rho+\lambda)u_{\ell}^{R}-c},\frac{c+\rho u_{\ell}^{L}}{\lambda u_{\ell}^{L}-(\rho+\lambda)u_{r}^{L}-c}\right\} \right)^{\frac{\rho}{\lambda}}\ge\frac{\lambda}{2\rho+\lambda}\iff c\ge\underline{c}.
\end{align*}
This proves \eqref{eq:characerization_zetal} for $\rho>0$. Taking
the limit $r\rightarrow0$ yields the result for $r=0$.\footnote{This can also be obtained directly by solving the ODEs for $r=0$
to obtain $\underline{V}_{own}(p^{*})$ and $\overline{V}_{own}(p^{*})$.}

By definition we have $\overline{c}\ge\underline{c}\ge0$. It remains
to show that the inequalities are strict for $\rho$ sufficiently
small. For $\rho=0$, $\underline{c}>0$ and 
\begin{align*}
\overline{c} & =\lambda\min\left\{ \left(u_{r}^{R}-u_{\ell}^{R}\right),\left(u_{\ell}^{L}-u_{r}^{L}\right)\right\} \frac{\max\left\{ \left(u_{r}^{R}-u_{\ell}^{R}\right),\left(u_{\ell}^{L}-u_{r}^{L}\right)\right\} }{\left(u_{r}^{R}-u_{\ell}^{R}\right)+\left(u_{\ell}^{L}-u_{r}^{L}\right)}\\
 & >\lambda\min\left\{ \left(u_{r}^{R}-u_{\ell}^{R}\right),\left(u_{\ell}^{L}-u_{r}^{L}\right)\right\} \frac{1}{2}>\underline{c}.
\end{align*}
Since both cutoffs are continuous in $\rho$, $\overline{c}>\underline{c}>0$
for $\rho$ in a neighborhood of zero. 
\end{proof}

\begin{singlespace}
	\bibliographystyle{economet}
	\bibliography{bibcm}
\end{singlespace}

\newpage{}

\section*{Supplemental Material (for online publication)}

\section{Remaining Proofs from Section \ref{sec:Analysis_Baseline}}

\label{appendix:proofs_mainsection}

\subsection{Omitted Proofs from Appendix \ref{sec:Main_Appendix}.}\label{sec:omitted_proofs}

\subsubsection{Proof of Proposition \ref{prop:no_experimentation}}
\begin{proof}
Since the DM can stop immediately, we have $V^{*}(p)\ge U(p)$. For
the second inequality, consider the problem of a decision maker who
can choose $\alpha_{t}\in[0,1]$ and $\beta_{t}\in[0,1]$ without
the constraint that $\alpha_{t}+\beta_{t}=1$. Clearly the value of
this problem exceeds $V^{*}(p)$ for all $p$. The value function
of the unconstrained problem is $\max\left\{ U(p),U^{FA}(p)\right\} $.
To see this, it is optimal to choose $\alpha_{t}=\beta_{t}=1$. Given
this policy, the belief does not change over time if no breakthrough
occurs. The optimal policy is therefore either to stop immediately
or to wait without deadline until a breakthrough occurs. Hence the
value of the unconstrained problem is $\max\left\{ U(p),U^{FA}(p)\right\} $.
Therefore $V^{*}(p)=U(p)=\max\left\{ U(p),U^{FA}(p)\right\} $ if
\eqref{eq:EXPG} is violated.
\end{proof}

\subsubsection{Proof of Lemma \ref{lem:branch_crossing}}
\begin{proof}
Suppose $V_{0}(p)=V_{1}(p)=V(p)$ for some $p\in(0,1)$. Solving \eqref{eq:V_a0}
and \eqref{eq:V_a1} for $V_{0}'(p)$ and $V_{1}'(p)$ and some algebra
yields 
\[
V_{0}'(p)-V_{1}'(p)=\frac{\lambda+2\rho}{\lambda p(1-p)}\left(V(p)-U^{S}(p)\right).
\]
Therefore $\mbox{sgn}(V_{0}'(p)-V_{1}'(p))=\mbox{sgn}\left(V(p)-U^{S}(p)\right)$. 
\end{proof}

\subsubsection{Proof of Lemma \ref{lem:Branches_satisfy_HJB}}
\begin{proof}
Consider first the case that $V_{0}(p)$ satisfies \eqref{eq:V_a0}.
With $V=V_{0}(p)$, and substituting $V'=V'_{0}(p)$ from \eqref{eq:V_a0},
we have 
\[
\frac{\partial F_{\alpha}(p,V_{0}(p),V'_{0}(p))}{\partial\alpha}=\frac{2\rho+\lambda}{\lambda}\left(U^{S}(p)-V_{0}(p)\right).
\]
This implies that $\alpha=0$ is a maximizer if $V_{0}(p)\ge U^{S}(p)$,
and the unique maximizer if the inequality is strict. This proves
Part (a). The proof of Part (b) follows from a similar argument. 
\end{proof}

\subsubsection{Proof of Proposition \ref{prop:Structure_V_G}}

The following three lemmas establish properties of the function $U^{S},V^{FA},V_{own}$
and $V_{opp}$ that are used in the proof of Proposition \ref{prop:Structure_V_G}.
Some of these properties were already established in Appendix \ref{sec:Main_Appendix}
and are repeated here for convenience.
\begin{lem}[Properties of $U^{S}(p)$ and $U^{FA}(p)$]
\label{lem:Properties_of_Benchmarks} 
\begin{enumerate}
\item $U^{S}(p)<U^{FA}(p)$ for all $p\in[0,1]$. 
\item $U^{S}(p)$ and $U^{FA}(p)$ are linear in $p$.\\
 If $U^{S}(p)\ge U(p)$ for some $p\in[0,1]$, then $U'_{\ell}(p)<U^{S\prime}(p)<U'_{r}(p)$
for all $p\in[0,1]$.\\
 If $U^{FA}(p)\ge U(p)$ for some $p\in[0,1]$, then $U'_{\ell}(p)<U^{FA\prime}(p)<U'_{r}(p)$
for all $p\in[0,1]$. 
\item $U^{S}(p),U^{FA}(p)<U(p)$ at $p\in\{0,1\}$; and for all $p\in[0,1]$,
$U^{S}(p)$ and $U^{FA}(p)$ are strictly decreasing without bound
in $c$. 
\end{enumerate}
\end{lem}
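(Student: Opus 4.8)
The lemma is elementary and follows from the closed forms \eqref{eq:UbarG} and \eqref{eq:UhatG} together with one short geometric argument; the plan is to dispatch part (a) and the monotonicity half of (c) by direct computation, then the endpoint inequalities in (c), and finally to deduce the slope bounds in (b) from the latter. For part (a), I would set $m(p):=pu_{r}^{R}+(1-p)u_{\ell}^{L}$, which is strictly positive on $[0,1]$ because $u_{r}^{R},u_{\ell}^{L}>0$. Since both denominators $2\rho+\lambda$ and $\rho+\lambda$ are positive ($\lambda>0$), cross-multiplying reduces $U^{S}(p)<U^{FA}(p)$ to
\[
(\lambda m(p)-2c)(\rho+\lambda)<(\lambda m(p)-c)(2\rho+\lambda),
\]
whose right-hand side minus left-hand side equals $\lambda(\rho\,m(p)+c)$. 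This is positive because $\rho,c\ge0$ are not both zero and $m(p)>0$. Linearity in $p$ is immediate from the formulas, and the monotonicity claim in (c) follows from $\partial U^{S}/\partial c=-2/(2\rho+\lambda)<0$ and $\partial U^{FA}/\partial c=-1/(\rho+\lambda)<0$, each of which drives the affine-in-$c$ value to $-\infty$.

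Next I would prove the endpoint inequalities of part (c), which also feed into (b). From the labeling $u_{\ell}^{L}>u_{r}^{L}$ and $u_{r}^{R}>u_{\ell}^{R}$ one has $U(0)=u_{\ell}^{L}$ and $U(1)=u_{r}^{R}$. Evaluating \eqref{eq:UhatG}, the inequality $U^{FA}(0)<U(0)$ is equivalent to $\lambda u_{\ell}^{L}-c<(\rho+\lambda)u_{\ell}^{L}$, i.e. $c+\rho u_{\ell}^{L}>0$, which holds since $u_{\ell}^{L}>0$ and $(\rho,c)\neq(0,0)$; the same computation at $p=1$ (using $u_{r}^{R}>0$) and the analogous ones for $U^{S}$ via \eqref{eq:UbarG} give $U^{S}(p),U^{FA}(p)<U(p)$ at $p\in\{0,1\}$.

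For the slope bounds in (b), I would use that $U(p)=\max\{U_{\ell}(p),U_{r}(p)\}$ is the upper envelope of two lines whose slopes are ordered, $U_{\ell}'=u_{\ell}^{R}-u_{\ell}^{L}<u_{r}^{R}-u_{r}^{L}=U_{r}'$, since their difference is $(u_{r}^{R}-u_{\ell}^{R})+(u_{\ell}^{L}-u_{r}^{L})>0$. Let $L\in\{U^{S},U^{FA}\}$ and suppose $L(\tilde p)\ge U(\tilde p)$ for some $\tilde p$. By the endpoint inequalities just established, $\tilde p\notin\{0,1\}$, so $\tilde p\in(0,1)$. The affine function $L-U_{\ell}$ is negative at $p=0$ and nonnegative at $\tilde p>0$, forcing its slope $L'-U_{\ell}'>0$; likewise $L-U_{r}$ is negative at $p=1$ and nonnegative at $\tilde p<1$, forcing $L'-U_{r}'<0$. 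As all slopes are constant, this gives $U_{\ell}'(p)<L'(p)<U_{r}'(p)$ for all $p$, which is exactly the claim for $L=U^{S}$ and for $L=U^{FA}$. No step is a genuine obstacle; the only points requiring care are recognizing that the hypothesis point $\tilde p$ must be interior (which is precisely why the strict endpoint inequalities of (c) must precede (b)) and extracting the slope ordering $U_{\ell}'<U_{r}'$ from the labeling assumptions rather than taking it for granted.
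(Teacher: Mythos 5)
Your proposal is correct and follows essentially the same route as the paper: parts (a) and (c) are direct algebra on the closed forms \eqref{eq:UbarG} and \eqref{eq:UhatG}, and part (b) rests on exactly the same key facts the paper uses, namely linearity together with the strict endpoint dominance $L(0)<U_{\ell}(0)$ and $L(1)<U_{r}(1)$ for $L\in\{U^{S},U^{FA}\}$. The only cosmetic difference is that the paper phrases the slope bounds as a proof by contradiction (assuming $U^{S\prime}\le U_{\ell}'$ and propagating $U^{S}<U_{\ell}\le U$ by linearity) while you argue the contrapositive directly via a two-point slope comparison, and you make explicit the algebra the paper dismisses as immediate.
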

\begin{proof}
(a) $U^{S}(p)<U^{FA}(p)$ is immediate from the expressions in \eqref{eq:UbarG}
and \eqref{eq:UhatG}.

(b) Linearity is obvious. Suppose $U^{S}(p)\ge U(p)$ for some $p\in[0,1]$.
To show $U'_{\ell}(p)<U^{S\prime}(p)$ for all $p$, suppose
by contradiction that $U^{S\prime}(p)\le U'_{\ell}(p)$ for
some $p$. Note that $U^{S}(0)=\frac{u_{\ell}^{L}\lambda-2c}{\lambda+2\rho}<u_{\ell}^{L}=U_{\ell}(0)$.
Hence, $U^{S\prime}(p)\le U'_{\ell}(p)$ and the linearity of
these functions imply $U^{S}(p)<U_{\ell}(p)\le U(p)$ for all $p$,
which is a contradiction. The other inequalities are proven similarly.

Part (c) is obtained from straightforward algebra. 
\end{proof}
The following lemma summarizes the properties of the own-biased strategy: 
\begin{lem}
\label{lem:properties_contradictory} 
\begin{enumerate}
\item $\underline{V}_{own}(p)$ and $\overline{V}_{own}(p)$ are continuously
differentiable and convex on $\left(0,1\right)$; 
\item $\underline{V}_{own}(p)$ is strictly convex and $\underline{V}_{own}(p)>U_{\ell}(p)$
on $(\underline{p}^{*},1]$, and $\overline{V}_{own}(p)$ is strictly
convex and $\overline{V}_{own}(p)>U_{r}(p)$ on $[0,\overline{p}^{*})$.
$V_{own}(p)>U(p)$ for $p\in(\underline{p}^{*},\overline{p}^{*})$. 
\item If $\underline{p}^{*},\overline{p}^{*}\in(0,1)$, they satisfy 
\begin{equation}
U_{\ell}(\underline{p}^{*})=U^{FA}(\underline{p}^{*}),\quad\text{and}\quad U_{r}(\overline{p}^{*})=U^{FA}(\overline{p}^{*}).\label{eq:intersection_U_and_Uhat}
\end{equation}
\item Suppose \eqref{eq:EXPG} holds. Then, $0<\underline{p}^{*}<\overline{p}^{*}<1$,
$\underline{V}_{own}(p)<U^{FA}(p)$ for $p\in(\underline{p}^{*},1)$,
$\overline{V}_{own}(p)<U^{FA}(p)$ for $p\in(0,\overline{p}^{*})$,
and $V_{own}(p)=U(p)>U^{FA}(p)$ for $p\not\in[\underline{p}^{*},\overline{p}^{*}]$. 
\item If \eqref{eq:EXPG} is violated, then $V_{own}(p)=U(p)$ for all $p\in[0,1]$. 
\end{enumerate}
\end{lem}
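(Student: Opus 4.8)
The plan is to derive all five parts from the closed forms \eqref{eq:VctL}--\eqref{eq:VctR}, the boundary formulas \eqref{eq:plsG}--\eqref{eq:pHsG}, the fact that $U^{FA}$ is the affine function \eqref{eq:UhatG}, and the matching already recorded in Lemma \ref{lem:The-boundary-points}. Throughout, $\underline{V}_{own}$ and $\overline{V}_{own}$ are treated as defined only when $\underline{p}^{*},\overline{p}^{*}\in(0,1)$, so that the relevant ODE solutions exist.

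For parts (a) and (b), note first that on each piece the branch is either affine (it equals $U_\ell$ on $[0,\underline{p}^{*}]$ and $U_r$ on $[\overline{p}^{*},1]$) or a smooth solution of \eqref{eq:V_a1}, \eqref{eq:V_a0}, hence $C^\infty$ there; the only delicate point is the junction, and since $\underline{p}^{*},\overline{p}^{*}$ were pinned down by value matching \emph{and} smooth pasting, the one-sided derivatives agree, so each branch is $C^1$ on $(0,1)$. Strict convexity is read off \eqref{eq:VctL}: the first two terms are affine and the last is $C\,g(p)$ with $g(p)=\left(\frac{1-p}{p}\right)^{\rho/\lambda}(1-p)$ and $C=\frac{\lambda(c+u_\ell^L\rho)}{\rho(\lambda+\rho)}\left(\frac{\underline{p}^{*}}{1-\underline{p}^{*}}\right)^{\rho/\lambda}>0$, where $C>0$ uses $u_\ell^L\rho+c>0$ (the numerator of \eqref{eq:plsG}). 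A logarithmic-derivative computation yields $g''/g=\frac{\rho}{\lambda}\left(1+\frac{\rho}{\lambda}\right)\left(\frac{1}{p}+\frac{1}{1-p}\right)^2>0$, so $\underline{V}_{own}$ is strictly convex on $(\underline{p}^{*},1)$; the $\rho=0$ case (where necessarily $c>0$) is the limit, with nonlinear term proportional to $(1-p)\ln\frac{1-p}{p}$, again strictly convex. Global convexity follows because $C^1$-pasting joins the constant-slope affine piece to the strictly convex curved piece. The inequality $\underline{V}_{own}>U_\ell$ on $(\underline{p}^{*},1]$ is then immediate: value matching plus smooth pasting make the affine $U_\ell$ the tangent line of the strictly convex $\underline{V}_{own}$ at $\underline{p}^{*}$, and a strictly convex function lies strictly above its tangent line off the contact point. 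The claims for $\overline{V}_{own}$ are symmetric, and $V_{own}>U$ on $(\underline{p}^{*},\overline{p}^{*})$ follows since there both $\underline{V}_{own}>U_\ell$ and $\overline{V}_{own}>U_r$, whence $V_{own}=\max\{\underline{V}_{own},\overline{V}_{own}\}>\max\{U_\ell,U_r\}=U$.

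Part (c) is exactly the matching identity in Lemma \ref{lem:The-boundary-points} (equivalently, substitute \eqref{eq:plsG}, \eqref{eq:pHsG} into $U_\ell(p)=U^{FA}(p)$, $U_r(p)=U^{FA}(p)$). The complementary endpoint facts $\underline{V}_{own}(1)=U^{FA}(1)$ and $\overline{V}_{own}(0)=U^{FA}(0)$ come from evaluating \eqref{eq:VctL}--\eqref{eq:VctR} at $p=1,0$, where the nonlinear term vanishes and the affine part reduces to \eqref{eq:UhatG}. For part (d), assume \eqref{eq:EXPG}, so Lemma \ref{lem:The-boundary-points} gives $0<\underline{p}^{*}<\hat{p}<\overline{p}^{*}<1$. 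The branch $\underline{V}_{own}$ coincides with the affine $U^{FA}$ at both ends of its curved domain (at $\underline{p}^{*}$ by part (c), at $1$ by the evaluation just made), so strict convexity forces $\underline{V}_{own}<U^{FA}$ on $(\underline{p}^{*},1)$ — a strictly convex function lies below the chord through two of its points — and symmetrically $\overline{V}_{own}<U^{FA}$ on $(0,\overline{p}^{*})$. For $p\notin[\underline{p}^{*},\overline{p}^{*}]$ invoke the slope ordering $U'_\ell<U^{FA\prime}<U'_r$ from Lemma \ref{lem:Properties_of_Benchmarks}(b), valid because \eqref{eq:EXPG} makes $U^{FA}(\hat{p})>U(\hat{p})$: since $U^{FA}=U_\ell$ at $\underline{p}^{*}$ and is steeper, $U^{FA}<U_\ell=U$ for $p<\underline{p}^{*}$, while the dominated branch $\overline{V}_{own}$ remains below $U^{FA}$; hence $V_{own}=U_\ell=U>U^{FA}$ there, and symmetrically for $p>\overline{p}^{*}$.

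For part (e), if \eqref{eq:EXPG} fails then $U^{FA}\le U$ on $[0,1]$ (the footnote to \eqref{eq:EXPG}), and the key is that the ordering reverses to $\overline{p}^{*}\le\hat{p}\le\underline{p}^{*}$: at the kink $U^{FA}(\hat{p})\le U(\hat{p})=U_\ell(\hat{p})=U_r(\hat{p})$, and since $U^{FA}$ meets $U_\ell$ at $\underline{p}^{*}$ passing from below to above and meets $U_r$ at $\overline{p}^{*}$ passing from above to below, this forces $\hat{p}\le\underline{p}^{*}$ and $\hat{p}\ge\overline{p}^{*}$. Hence on $[0,\hat{p}]$ the belief lies left of $\underline{p}^{*}$, giving $\underline{V}_{own}=U_\ell=U$, and on $[\hat{p},1]$ it lies right of $\overline{p}^{*}$, giving $\overline{V}_{own}=U_r=U$, while in either region the other branch is bounded above by $U^{FA}\le U$; taking the maximum yields $V_{own}=U$ (if a boundary leaves $(0,1)$ the corresponding curved branch is empty and the argument only simplifies). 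I expect the main obstacle to be the coupling underlying part (d): the whole envelope construction hinges on each branch sitting strictly below the full-attention line $U^{FA}$ across the entire experimentation region, and this is not visible term by term — it emerges only from the nontrivial coincidence that each branch touches the affine $U^{FA}$ at \emph{both} ends of its curved domain (the interior contact being part (c)/Lemma \ref{lem:The-boundary-points}) combined with the strict convexity of parts (a)--(b). The convexity computation itself, together with tracking the $\rho=0$ limit and the reversed ordering in part (e), are the spots demanding the most care.
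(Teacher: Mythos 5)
Your proposal is correct and follows essentially the same route as the paper's proof: explicit convexity of the closed forms \eqref{eq:VctL}--\eqref{eq:VctR} plus tangency at the smooth-pasting points for (a)--(b), the observation that each branch meets the affine $U^{FA}$ at both ends of its curved domain so strict convexity forces it below the chord for (d), the slope ordering of Lemma \ref{lem:Properties_of_Benchmarks}.(b) for the region outside $[\underline{p}^{*},\overline{p}^{*}]$, and the reversal $\overline{p}^{*}\le\hat{p}\le\underline{p}^{*}$ when \eqref{eq:EXPG} fails for (e). You simply fill in the ``straightforward algebra'' the paper leaves implicit (including the correct $g''/g$ computation and the $\rho=0$ limit), so no substantive difference or gap remains.
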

\begin{proof}
Parts (a)-(c) follow from straightforward algebra. For part (d), note
that \eqref{eq:EXPG} together with part (c) and Lemma \ref{lem:Properties_of_Benchmarks}.(b)
imply $0<\underline{p}^{*}<\hat{p}<\overline{p}^{*}<1$ and $U^{FA}(p)<U(p)$
for $p\notin\left[\underline{p}^{*},\overline{p}^{*}\right]$. This
implies $V_{own}(p)=U(p)>U^{FA}(p)$ for $p\not\in[\underline{p}^{*},\overline{p}^{*}]$.
To show that $\underline{V}_{own}(p)<U^{FA}(p)$ for $p\in(\underline{p}^{*},1)$,
note that $\underline{V}_{own}(\underline{p}^{*})=U_{\ell}(\underline{p}^{*})=U^{FA}(\underline{p}^{*})$
from part (c), and $\underline{V}_{own}(1)=U^{FA}(1)$ from \eqref{eq:VctL}.
Since $U^{FA}(p)$ is linear by Lemma \ref{lem:Properties_of_Benchmarks}
and $\underline{V}_{own}(p)$ is strictly convex $(\underline{p}^{*},1]$
by part (b), this implies implies that $\underline{V}_{own}(p)<U^{FA}(p)$
for $p\in(\underline{p}^{*},1)$. $\overline{V}_{own}(p)<U^{FA}(p)$
for $p\in(0,\overline{p}^{*})$ is proven similarly.

Part (e) holds because by part (c), $\underline{p}^{*}>\overline{p}^{*}$
if \eqref{eq:EXPG} is violated. 
\end{proof}
We next observe several properties of $V_{opp}(p)$. 
\begin{lem}
\label{lem:properties_confirmatory} 
\begin{enumerate}
\item $V_{opp}(p)$ is continuously differentiable and strictly convex on
$\left(0,1\right)$, and $V_{opp}(p)\ge U^{S}(p)$ for all $p\in[0,1]$
with strict inequality for $p\neq p^{*}$. 
\item Then, $V_{opp}(p)\le U^{FA}(p)$ for all $p\in[0,1]$, with equality
if and only if $p\in\left\{ 0,1\right\} $. 
\end{enumerate}
\end{lem}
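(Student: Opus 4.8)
The plan is to derive both parts from a single structural picture: $V_{opp}$ is a strictly convex $C^1$ function that is pinned to the affine benchmark $U^S$ by \emph{tangency} at $p^*$, and to the affine benchmark $U^{FA}$ by \emph{agreement at the two endpoints} of $[0,1]$. Once strict convexity and these two contact conditions are established, both inequalities are immediate from the elementary facts that a strictly convex function lies strictly above any of its tangent lines (except at the point of contact) and strictly below any of its chords (except at the chord's endpoints).

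First I would record continuous differentiability. On each branch $V_{opp}$ solves a first-order ODE with locally Lipschitz right-hand side (\eqref{eq:V_a0} for $p\le p^*$ and \eqref{eq:V_a1} for $p\ge p^*$), hence is smooth away from $p^*$. At the junction, $p^*$ is defined in \eqref{eq:pstarG} by requiring that both branches value-match \emph{and} smooth-paste to $U^S$, so $\underline{V}_{opp}(p^*)=\overline{V}_{opp}(p^*)=U^S(p^*)$ and $\underline{V}_{opp}'(p^*)=\overline{V}_{opp}'(p^*)=U^{S\prime}(p^*)$; equivalently, since the common value at $p^*$ equals $U^S(p^*)$, the Crossing Lemma \ref{lem:branch_crossing} forces the two one-sided derivatives to coincide. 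Thus $V_{opp}$ is $C^1$ on $(0,1)$, and $U^S$ is its tangent line at $p^*$.

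Next I would prove strict convexity of each branch from the closed forms. Reading off \eqref{eq:VcfL}, $\underline{V}_{opp}$ is affine in $p$ plus a strictly positive multiple (note $u_r^R\rho+c>0$) of $g(p):=\left(\tfrac{p}{1-p}\right)^{\rho/\lambda}p$. Writing $s:=\rho/\lambda$ and differentiating $\ln g$ twice, one gets $g''/g=s(s+1)\left(\tfrac1p+\tfrac1{1-p}\right)^2>0$, so $g$, and hence $\underline{V}_{opp}$, is strictly convex on $(0,1)$; the mirror-image computation handles $\overline{V}_{opp}$ via \eqref{eq:VcfR}. Because the two strictly convex branches meet with matching first derivatives at $p^*$, the derivative $V_{opp}'$ is strictly increasing across all of $(0,1)$, so $V_{opp}$ is globally strictly convex; the degenerate case $\rho=0$ is recovered by taking $\rho\to0$ in these expressions. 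This already yields part (a): since $U^S$ is the tangent to the strictly convex $V_{opp}$ at $p^*$, we get $V_{opp}(p)>U^S(p)$ for every $p\neq p^*$, with equality exactly at $p^*$. For part (b) I would evaluate the closed forms at the endpoints: at $p=0$ the power term in \eqref{eq:VcfL} vanishes, leaving $\underline{V}_{opp}(0)=\tfrac{\lambda u_\ell^L-c}{\lambda+\rho}=U^{FA}(0)$ by \eqref{eq:UhatG}, and symmetrically $\overline{V}_{opp}(1)=\tfrac{\lambda u_r^R-c}{\lambda+\rho}=U^{FA}(1)$. Hence the affine $U^{FA}$ coincides with $V_{opp}$ at both endpoints, i.e.\ it is exactly the chord of $V_{opp}$ over $[0,1]$, and strict convexity gives $V_{opp}(p)<U^{FA}(p)$ for all $p\in(0,1)$ with equality precisely at $p\in\{0,1\}$.

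The hard part will be the strict-convexity step: it is the only place that needs a genuine computation rather than an appeal to the convex/affine contact geometry, and it carries two subtleties—gluing two strictly convex branches into a globally strictly convex function (handled by the $C^1$ slope-matching at $p^*$) and treating the limit $\rho\to0$, where the $c/\rho$ terms individually blow up but cancel. Everything else reduces to the two standard one-line facts about a strictly convex function relative to its tangent and its chord.
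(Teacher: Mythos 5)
Your proposal is correct and takes essentially the same route as the paper: the paper likewise establishes strict convexity of the two closed-form branches with smooth pasting at $p^{*}$ (so that $U^{S}$ is the tangent line there, giving part (a)) and proves part (b) by checking $\underline{V}_{opp}(0)=U^{FA}(0)$ and $\overline{V}_{opp}(1)=U^{FA}(1)$ and invoking linearity of $U^{FA}$ against strict convexity of $V_{opp}$. The one soft spot is your treatment of $\rho=0$: a pointwise limit $\rho\to 0$ preserves only weak convexity, so you should instead observe that the $c/\rho$ divergences cancel to leave an explicit term proportional to $p\ln\frac{p}{1-p}$ (and its mirror image), whose second derivative $\frac{1}{p(1-p)}+\frac{1}{(1-p)^{2}}$ is strictly positive, so strict convexity survives in the limit.
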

\begin{proof}
Part (a) follows from straightforward algebra. For part (b), again
by straightforward algebra we get $U^{FA}(0)=\underline{V}_{opp}(0)=V_{opp}(0)$
and $U^{FA}(1)=\overline{V}_{opp}(0)=V_{opp}(1)$. Since $U^{FA}(p)$
is linear and $V_{opp}$ is strictly convex, this implies $V_{opp}(p)<U^{FA}(p)$
for all $p\in[0,1]$. 
\end{proof}
We are now ready to prove Proposition \ref{prop:Structure_V_G}. For
the reader's convenience, we restate the proposition.
\begin{prop*}[\textbf{Structure of $V_{Env}$}]
\begin{enumerate}
\item If \eqref{eq:EXPG} holds and $V_{own}(p^{*})\ge V_{opp}(p^{*})$,
then there exists a unique $\check{p}\in\left(\underline{p}^{*},\overline{p}^{*}\right)$
such that $\underline{V}_{own}(\check{p})=\overline{V}_{own}(\check{p})$
and 
\[
V_{Env}(p)=V_{own}(p)=\begin{cases}
\underline{V}_{own}(p), & \text{if }p<\check{p},\\
\overline{V}_{own}(p), & \text{if }p\ge\check{p}.
\end{cases}
\]
\item If \eqref{eq:EXPG} holds and $V_{own}(p^{*})<V_{opp}(p^{*})$, then
$p^{*}\in(\underline{p}^{*},\overline{p}^{*})$, and there exist a
unique $\underline{p}\in(\underline{p}^{*},p^{*})$ such that $V_{own}(\underline{p})=V_{opp}(\underline{p})$,
and a unique $\overline{p}\in(p^{*},\overline{p}^{*})$ such that
$V_{own}(\overline{p})=V_{opp}(\overline{p})$ and 
\[
V_{Env}(p)=\begin{cases}
\underline{V}_{own}(p), & \text{if }p<\overline{p},\\
V_{opp}(p), & \text{if }p\in[\underline{p},\overline{p}],\\
\overline{V}_{own}(p), & \text{if }p>\overline{p}.
\end{cases}
\]
\end{enumerate}
\end{prop*}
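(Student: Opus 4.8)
The plan is to reduce the entire characterization to two facts about how the ODE branches intersect one another and the stationary value $U^{S}$. The starting observation is that $V_{opp}(p^{*})=U^{S}(p^{*})$ by construction, so the dichotomy in the statement, $V_{own}(p^{*})\gtrless V_{opp}(p^{*})$, is exactly the dichotomy $\max\{\underline{V}_{own}(p^{*}),\overline{V}_{own}(p^{*})\}\gtrless U^{S}(p^{*})$. I would therefore phrase everything in terms of the sign of the gap between the own-biased branches and $U^{S}$ at $p^{*}$. The two workhorses are: (i) uniqueness of solutions to the first-order ODEs \eqref{eq:V_a0} and \eqref{eq:V_a1}, so that two solutions of the \emph{same} ODE never cross; and (ii) the Crossing Lemma \ref{lem:branch_crossing}, which says that a solution of \eqref{eq:V_a1} cuts a solution of \eqref{eq:V_a0} from above at any intersection lying strictly above $U^{S}$.

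For part (a), assume $\underline{V}_{own}(p^{*})\ge U^{S}(p^{*})$ (the case $\overline{V}_{own}(p^{*})\ge U^{S}(p^{*})$ is symmetric). The key step is to upgrade this pointwise inequality to $\underline{V}_{own}\ge U^{S}$ on all of $[0,1]$. I would do this by comparing $\underline{V}_{own}$ with the extended branch $\overline{V}_{opp}$: both solve \eqref{eq:V_a1}, and by Lemma \ref{lem:properties_confirmatory} the function $\overline{V}_{opp}$ is strictly convex and tangent to $U^{S}$ from above at $p^{*}$, hence $\overline{V}_{opp}>U^{S}$ everywhere except at $p^{*}$. Since two solutions of \eqref{eq:V_a1} cannot cross, $\underline{V}_{own}(p^{*})\ge U^{S}(p^{*})=\overline{V}_{opp}(p^{*})$ forces $\underline{V}_{own}\ge\overline{V}_{opp}\ge U^{S}$ globally. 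The same comparison gives $\underline{V}_{own}\ge\overline{V}_{opp}=V_{opp}$ on $[p^{*},1]$, while an application of the Crossing Lemma to $\underline{V}_{own}$ versus $\underline{V}_{opp}$ (which stays above $U^{S}$ for $p<p^{*}$) rules out any crossing on $[0,p^{*})$, yielding $\underline{V}_{own}\ge V_{opp}$ there as well; hence $V_{own}\ge V_{opp}$ throughout. It then remains to locate $\check p$: using Lemma \ref{lem:The-boundary-points} together with the convexity and endpoint identities in Lemma \ref{lem:properties_contradictory}, $\underline{V}_{own}$ lies strictly above $\overline{V}_{own}$ at $\underline{p}^{*}$ and strictly below it at $\overline{p}^{*}$, so they intersect in $(\underline{p}^{*},\overline{p}^{*})$; because this intersection lies above $U^{S}$, the Crossing Lemma makes it unique.

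For part (b), assume $\max\{\underline{V}_{own}(p^{*}),\overline{V}_{own}(p^{*})\}<U^{S}(p^{*})$. First I would place $p^{*}$ inside the experimentation region: each extended own-biased branch is strictly convex and agrees with $U^{FA}>U^{S}$ at its two endpoints (namely $\underline{p}^{*},1$ and $0,\overline{p}^{*}$ respectively, via Lemma \ref{lem:The-boundary-points} and the explicit formulas \eqref{eq:VctL}--\eqref{eq:VctR}), yet dips below $U^{S}$ at $p^{*}$; a convex function that is negative between two positive values must be negative strictly inside, so $\underline{p}^{*}<p^{*}<\overline{p}^{*}$. On $(\underline{p}^{*},p^{*})$ I compare $\underline{V}_{own}$ with $V_{opp}=\underline{V}_{opp}$: by Lemma \ref{lem:properties_confirmatory}, $\underline{V}_{own}(\underline{p}^{*})=U^{FA}(\underline{p}^{*})>V_{opp}(\underline{p}^{*})$ while $\underline{V}_{own}(p^{*})<U^{S}(p^{*})=V_{opp}(p^{*})$, giving an intersection $\underline{p}$; since $V_{opp}>U^{S}$ strictly on $[0,p^{*})$, the Crossing Lemma forces $\underline{V}_{own}$ to cut $V_{opp}$ from above, so $\underline{p}$ is unique and $\underline{V}_{own}>V_{opp}$ to its left. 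The non-crossing property applied to $\overline{V}_{own}$ and $\underline{V}_{opp}$ (same ODE \eqref{eq:V_a0}) shows $\overline{V}_{own}<V_{opp}$ on $[0,p^{*}]$, so on this interval $V_{Env}=\underline{V}_{own}$ left of $\underline{p}$ and $V_{Env}=V_{opp}$ to its right. A symmetric argument on $(p^{*},\overline{p}^{*})$ produces the unique $\overline{p}$, completing the piecewise description.

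The step I expect to be the crux is the upgrade from the single-point comparison at $p^{*}$ to a global ordering in part (a)---that is, showing $\max\{\underline{V}_{own},\overline{V}_{own}\}\ge U^{S}$ everywhere and $V_{own}\ge V_{opp}$ everywhere. This is where one must exploit that $\underline{V}_{own}$ and $\overline{V}_{opp}$ (respectively $\overline{V}_{own}$ and $\underline{V}_{opp}$) solve a common first-order ODE, so that uniqueness of solutions prevents any crossing, and combine this with the tangency of $V_{opp}$ to $U^{S}$ at $p^{*}$ from Lemma \ref{lem:properties_confirmatory}. Everything else---existence of the crossing points from convexity and the endpoint identities of Lemma \ref{lem:The-boundary-points}, and uniqueness from the Crossing Lemma---is comparatively routine once this global ordering is in hand.
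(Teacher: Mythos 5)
Your proposal is correct and runs on the same two engines as the paper's proof---the Crossing Lemma \ref{lem:branch_crossing} and the fact that two solutions of the same first-order ODE \eqref{eq:V_a0} or \eqref{eq:V_a1} cannot cross---but you organize part (a) differently. The paper proves $V_{own}\ge V_{opp}$ on $[\underline{p}^{*},p^{*}]$ by a case split at $p^{*}$: either $\underline{V}_{own}(p^{*})\ge\underline{V}_{opp}(p^{*})$, and the Crossing Lemma keeps $\underline{V}_{own}$ above $\underline{V}_{opp}$ on the interval, or else $\overline{V}_{own}(p^{*})\ge\underline{V}_{opp}(p^{*})$, and the same-ODE comparison of $\overline{V}_{own}$ with $\underline{V}_{opp}$ (both solve \eqref{eq:V_a0}) does the job. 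You instead use the \emph{extension} of $\overline{V}_{opp}$ to $(0,p^{*})$ as a global barrier: it solves \eqref{eq:V_a1}, is tangent to $U^{S}$ at $p^{*}$, and is strictly convex on all of $(0,1)$, so the single-point hypothesis $\underline{V}_{own}(p^{*})\ge U^{S}(p^{*})$ plus same-ODE ordering yields $\underline{V}_{own}\ge\overline{V}_{opp}\ge U^{S}$ globally, whence one own-branch alone dominates $V_{opp}$ everywhere. This is a clean shortcut and even gives a slightly stronger intermediate conclusion; what it buys you in return is an extra obligation: Lemma \ref{lem:properties_confirmatory} asserts convexity and the $U^{S}$-bound only for the glued function $V_{opp}$, not for the branch extended past $p^{*}$, so you must verify strict convexity of the extension directly from the closed form \eqref{eq:VcfR} (true, since the nonlinear term has a positive coefficient and is convex on all of $(0,1)$), which you only implicitly invoke.

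Two small repairs. First, in part (b) your placement of $p^{*}$ is mildly circular: the hypothesis $V_{own}(p^{*})<U^{S}(p^{*})$ constrains the \emph{piecewise} branches, which coincide with the convex ODE formulas at $p^{*}$ only if $p^{*}$ already lies in $[\underline{p}^{*},1]$ resp.\ $[0,\overline{p}^{*}]$---the very thing being proved. The one-line fix is the paper's own argument: for $p\notin(\underline{p}^{*},\overline{p}^{*})$ one has $V_{own}(p)=U(p)\ge U^{FA}(p)>U^{S}(p)$ by Lemma \ref{lem:properties_contradictory}.(d) and Lemma \ref{lem:Properties_of_Benchmarks}.(a), so the hypothesis immediately forces $p^{*}\in(\underline{p}^{*},\overline{p}^{*})$, making your convexity detour unnecessary. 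Second, your uniqueness claims from ``the intersection lies above $U^{S}$'' use the strict version of the Crossing Lemma; in the boundary case where the intersection value equals $U^{S}$ the lemma only gives tangency, and ruling out multiple intersections then requires the convexity of both branches---exactly the caveat the paper handles in a footnote to its proof of part (a). So on that point you are at parity with the published argument, but it merits a sentence rather than silence.
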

\begin{proof}
\textbf{Part (a):} We first prove that $V_{own}(p)\ge V_{opp}(p)$
for all $p\in[0,1]$. Since ${V}_{own}(p)\ge U^{FA}(p)>{V}_{opp}(p)$
for $p\not\in[\underline{p}^{*},\overline{p}^{*}]$, it suffices to
show ${V}_{own}(p)\ge{V}_{opp}(p)$ for $p\in[\underline{p}^{*},\overline{p}^{*}]$.
To this end, suppose first $p^{*}>\underline{p}^{*}$ and consider
$p\in[\underline{p}^{*},p^{*}]$ so that $V_{opp}(p)=\underline{V}_{opp}(p)$.
Recall from Lemmas \ref{lem:properties_contradictory} and \ref{lem:properties_confirmatory}
that $\underline{V}_{own}(\underline{p}^{*})=U^{FA}(\underline{p}^{*})>V_{opp}(\underline{p}^{*})$.
Since ${V}_{opp}(\cdot)\ge U^{S}(\cdot)$, by the Crossing Lemma \ref{lem:branch_crossing},
$\underline{V}_{own}$ can cross $V_{opp}=\underline{V}_{opp}(p)$
only from above on $[\underline{p}^{*},{p}^{*})$. If $\underline{V}_{own}(p^{*})\ge\underline{V}_{opp}(p)(p^{*})$,
by the Crossing Lemma \ref{lem:branch_crossing}, $\underline{V}_{opp}(p)<\underline{V}_{own}(p)\le V_{own}(p)$
for all $p\in[\underline{p}^{*},{p}^{*}]$. If $\underline{V}_{own}(p^{*})<\underline{V}_{opp}(p^{*})$,
then $\overline{V}_{own}(p^{*})=V_{own}(p^{*})\ge\underline{V}_{opp}(p^{*})$.
Since both $\overline{V}_{own}(p)$ and $\underline{V}_{opp}(p^{*})$
satisfy \eqref{eq:V_a0}, we must have $\underline{V}_{opp}(p)\le\overline{V}_{own}(p)\le V_{own}(p)$
for all $p\in[\underline{p}^{*},{p}^{*}]$. Either way, we have proven
that $V_{opp}(p)=\underline{V}_{opp}(p)\le V_{own}(p)$ for all $p\in[\underline{p}^{*},{p}^{*}]$.
A symmetric argument proves that $V_{opp}(p)\le V_{own}(p)$ for all
$p\in[{p}^{*},\overline{p}^{*}]$ in case ${p}^{*}<\overline{p}^{*}$.

We have now proven that $V_{own}(p)\ge V_{opp}(p)$ for all $p\in[0,1]$.
Recall from Lemma \ref{lem:properties_contradictory} that $\underline{V}_{own}(\underline{p}^{*})=U^{FA}(\underline{p}^{*})>\overline{V}_{own}(\underline{p}^{*})$
and $\overline{V}_{own}(\overline{p}^{*})=U^{FA}(\overline{p}^{*})>\underline{V}_{own}(\overline{p}^{*})$.
By the intermediate value theorem, there exists $\check{p}\in\left(\underline{p}^{*},\overline{p}^{*}\right)$
where $\underline{V}_{own}(\check{p})=\overline{V}_{own}(\check{p})$.
For any $p$ we have $V_{own}(p)\ge V_{opp}(p)$ and $V_{opp}(p)\ge U^{S}(p)$
and hence $V_{own}(\check{p})\ge U^{S}(\check{p})$. The Crossing
Lemma \ref{lem:branch_crossing} then implies that $\underline{V}_{own}$
cannot cross $\overline{V}_{own}$ from below at $\check{p}$.\footnote{$\underline{V}_{own}$ and $\overline{V}_{own}$ could be equal to
$U^{S}$ at $\check{p}$ which means that two branches are tangent.
However, the convexity of both branches and the fact that $V_{own}(p)\ge U^{S}(p)$
for all $p$, means that $\underline{V}_{own}$ cannot cross $\overline{V}_{own}$
from below at any point of intersection. Therefore $\check{p}$ is
unique.} This means that the intersection point $\check{p}$ is unique and
the structure stated in part (a) obtains.

\textbf{Part (b):} We first prove that $p^{*}\in\left(\underline{p}^{*},\overline{p}^{*}\right)$.
By Lemma \ref{lem:properties_contradictory}, $V_{own}(p)\ge U(p)$
for all $p\in[0,1]$. This implies $V_{opp}(p^{*})>U(p^{*})$, and
since $V_{opp}(p^{*})=U^{S}(p^{*})<U^{FA}(p^{*})$, and since by Lemma
\ref{lem:properties_contradictory}.(d) $U^{FA}(p)\le U(p)$ for $p\notin\left(\underline{p}^{*},\overline{p}^{*}\right)$,
we must have $p^{*}\in\left(\underline{p}^{*},\overline{p}^{*}\right)$.
Next, by Lemma \ref{lem:properties_confirmatory}.(b), $V_{opp}(\underline{p}^{*})<U^{FA}(\underline{p}^{*})=\underline{V}_{own}(\underline{p}^{*})$.
Therefore, $V_{opp}(p)$ and $\underline{V}_{own}(p)$ intersect at
some $\underline{p}\in\left(\underline{p}^{*},p^{*}\right)$ and by
the Crossing Lemma \ref{lem:branch_crossing}, the intersection is
unique since $V_{opp}(\underline{p})>U^{S}(\underline{p})$ for $\underline{p}\in\left(\underline{p}^{*},p^{*}\right)$
by Lemma \ref{lem:properties_confirmatory}.(a). Moreover, for $p<p^{*}$,
we have $V_{opp}(p)>\overline{V}_{own}(p)$ since both satisfy \eqref{eq:V_a0},
and hence $\overline{V}_{own}(p)<\underline{V}_{own}(p)$ for all
$p\in\left(\underline{p}^{*},\underline{p}\right)$. This proves the
result for $p\le p^{*}$. For $p>p^{*}$ the arguments are symmetric.
\end{proof}

\subsubsection{Proof of Proposition \ref{prop:envelope_characterization}}
\begin{proof}
If \eqref{eq:EXPG} is violated, $V_{Env}(p)=U(p)$ since $\underline{p}^{*}>\overline{p}^{*}$
by Proposition \ref{prop:no_experimentation}. Moreover Proposition
\ref{prop:no_experimentation} shows that $V^{*}(p)=U(p)=V_{Env}(p)$
in this case. Similarly, if \eqref{eq:EXPG} is satisfied, by Lemma
\ref{lem:The-boundary-points} and Proposition \ref{prop:Structure_V_G},
$V_{Env}(p)=U(p)$ for all $p\notin(\underline{p}^{*},\overline{p}^{*})$
and Proposition \ref{prop:no_experimentation} shows that $V^{*}(p)=U(p)=V_{Env}(p)$
for $p\notin(\underline{p}^{*},\overline{p}^{*})$.

It remains to verify $V^{*}(p)=V_{Env}(p)$ for $p\in(\underline{p}^{*},\overline{p}^{*})$
when \ref{eq:EXPG} is satisfied. In the remainder of this proof we
write $V(p)=V_{Env}(p)$. Theorem III.4.11 in \citet{bardi/capuzzo-dolceta:97}
characterizes the value function of a dynamic programming problem
with an optimal stopping decision as in \eqref{eq:DMs_problemG} as
the (unique) viscosity solution of the HJB equation.\footnote{To formally apply their theorem, we have to use $P_{t}$ as a second
state-variable and define a value function $v(p,P)=PV(p)$. Since
$v$ is continuously differentiable in $P$, it is straightforward
to apply the result directly to $V(p)$.} For all $p\in(0,1)$ where $V(p)$ is differentiable, this requires
that $V(p)$ satisfy \eqref{eq:HJB_VIO}. 

Consider points of differentiability $p\in\left(\underline{p}^{*},\overline{p}^{*}\right)$.
From \eqref{eq:VctL} and \eqref{eq:VctR}, we obtain that $\underline{V}_{own}$
and $\overline{V}_{own}$ are strictly convex on $(\underline{p}^{*},\overline{p}^{*})$.
Smooth pasting at $\underline{p}^{*}$ and $\overline{p}^{*}$, respectively,
implies that $\underline{V}_{own}(p)>U_{\ell}(p)$ and $\overline{V}_{own}(p)>U_{r}(p)$,
and therefore $V_{own}(p)>U(p)$ for $p\in(\underline{p}^{*},\overline{p}^{*})$.
This implies that \eqref{eq:HJB_VIO} is equivalent to \eqref{eq:HJBG}
for all $p\in\left(\underline{p}^{*},\overline{p}^{*}\right)$. Since
$V(p)$ satisfies \eqref{eq:V_a0} or \eqref{eq:V_a1} at points of
differentiability, and $V(p)\ge V_{opp}(p)\ge U^{S}(p)$, the Unimprovability
Lemma \ref{lem:Branches_satisfy_HJB} implies that $V(p)$ satisfies
\eqref{eq:HJBG}. Since $V_{opp}$ is strictly convex (see the discussion
after Lemma \ref{lem:branch_crossing}), $V_{opp}(p)>U^{S}(p)$, and
hence Lemma \ref{lem:Branches_satisfy_HJB} implies that the optimal
policy is unique at all points where $V(p)$ is differentiable except
$p^{*}$. At $p^{*}$, the HJB equation is satisfied for any $\alpha\in[0,1]$
but $\alpha=1/2$ is the only maximizer that defines an admissible
policy.

We have shown that $V(p)$ satisfies \eqref{eq:HJB_VIO} for all points
of differentiability. For $V(p)$ to be a viscosity solution it remains
to show that for all points of non-differentiability,
\begin{align}
\max\left\{ -c-\rho V(p)+F(p,V(p),\rho),U(p)-V(p)\right\}  & \le0,\label{eq:VIO_non_diff}
\end{align}
for all $\rho\in\left[V'_{-}(p),V'_{+}(p)\right]$; and the opposite
inequality holds for all $\rho\in\left[V'_{+}(p),V'_{-}(p)\right]$,
where $V'_{-}(p)$ denotes the left derivative at $p$, and $V'_{+}(p)$
denotes the right derivative at $p$. By Proposition \ref{prop:Structure_V_G},
non-differentiability at $\check{p}$ if \eqref{eq:EXPG} holds and
$V_{own}(p^{*})\ge V_{opp}(p^{*})$; and at $\underline{p}$ and $\overline{p}$
if \eqref{eq:EXPG} holds and $V_{own}(p^{*})<V_{opp}(p^{*})$. Since
$V(p)\ge U^{S}(p)$, the Crossing Lemma \ref{lem:branch_crossing}
implies that $V(p)$ has convex kinks at all these points so that
$V'_{-}(p)\le V'_{+}(p)$. Therefore it suffices to check \eqref{eq:VIO_non_diff}
for all $\rho\in\left[V'_{-}(p),V'_{+}(p)\right]$. $F_{\alpha}$
is linear in $\alpha$ (see \eqref{eq:HJBG_RHS}), so it suffices
to consider $\alpha\in\left\{ 0,1\right\} $. For $\alpha=1$ we have
$F_{1}(p,V(p),\rho)\le F_{1}(p,V(p),V'_{-}(p))$ and for $\alpha=0$
we have $F_{0}(p,V(p),\rho)\le F_{0}(p,V(p),V'_{+}(p))$. Therefore
if $U(p)\le V(p)$, which holds for our candidate solution by construction,
\begin{equation}
c+\rho V(p)\ge\max\left\{ F_{1}(p,V(p),V'_{-}(p)),F_{0}(p,V(p),V'_{+}(p))\right\} \label{eq:VIO_non_diff_suff}
\end{equation}
implies that \eqref{eq:VIO_non_diff} holds for all for $\rho\in\left[V'_{-}(p),V'_{+}(p)\right]$.
We distinguish two cases.

\textbf{Case A:}\textit{ \eqref{eq:EXPG} is satisfied and $V_{own}(p^{*})\ge V_{opp}(p^{*})$.}
Consider $p=\check{p}$. \eqref{eq:VIO_non_diff_suff} becomes 
\[
c+\rho\underline{V}_{own}(\check{p})=c+\rho\overline{V}_{own}(\check{p})\ge\max\left\{ F_{1}(\check{p},\underline{V}_{own}(\check{p}),\underline{V}_{own}'(\check{p})),F_{0}(\check{p},\overline{V}_{own}(\check{p}),\overline{V}_{own}'(\check{p}))\right\} .
\]
By the Unimprovability Lemma \ref{lem:Branches_satisfy_HJB}, this
holds with equality since $\underline{V}_{own}(p)$ satisfies \eqref{eq:V_a1}
and $\overline{V}_{own}(p)$ satisfies \eqref{eq:V_a0} at $\check{p}$.
As we have argued earlier, $V_{own}(p)>U(p)$ for all $p\in\left(\underline{p}^{*},\overline{p}^{*}\right)$
and hence $V(\check{p})>U(\check{p})$. \eqref{eq:VIO_non_diff} is
thus satisfied at $\check{p}$.

\textbf{Case B:} \textit{\eqref{eq:EXPG} is satisfied and $V_{own}(p^{*})<V_{opp}(p^{*})$.}
The proof is similar to Case B.

We have thus shown that $V(p)$ is a viscosity solution of \eqref{eq:HJB_VIO}
which is sufficient for $V(p)$ to be the value function of problem
\eqref{eq:DMs_problemG}. 
\end{proof}

\subsection{Proof of Proposition \ref{prop:speed_and_accuracy}}
\begin{proof}
(a) Denote the expected delay until the DM takes an action by $\tau(p)$.
At $p^{*}$ the DM uses $\alpha=1/2$. Hence the arrival rate of a
signal is $\lambda/2$ and the expected delay is given by the expectation
of the exponential distribution: 
\[
\tau(p^{*})=\frac{2}{\lambda}.
\]
For $p_{0}\in(p^{*},\overline{p})$, the expected delay must
satisfy a recursive equation with respect to any $t$:
\[
\tau(p_{0})=\int_{0}^{t}s(p_{0}\lambda e^{-\lambda s})ds+(p_{0}e^{-\lambda t})(\tau(p_{t})+t).
\]
Differentiating both sides by $t$ yields 
\begin{align*}
0= & (p_{0}e^{-\lambda t})(\tau'(p_{t})\dot{p}_{t}+1)-(\lambda pe^{-\lambda t})\tau(p_{t}),
\end{align*}
which, upon setting $t=0$, reduces to: 
\begin{align*}
\tau'(p)= & \frac{1-(\lambda p)\tau(p)}{p(1-p)\lambda}.
\end{align*}
Solving this differential equation with boundary condition $\tau(p^{*})=\frac{2}{\lambda}$
and some algebra yields $\tau''(p)<0$. Moreover the right derivative
of $\tau$ at $p^{*}$ is given by 
\[
\tau'(p_{+}^{*})=\frac{1-2p^{*}}{p^{*}(1-p^{*})\lambda}.
\]

Using similar steps for $p_{0}\in(\underline{p},p^{*})$ we have $\tau'(p_{t})=\frac{\lambda(1-p)\tau(p_{t})-1}{p(1-p)\lambda}$
and $\tau''(p)<0$. The left derivative of $\tau$ at $p^{*}$ is
given by $\tau'(p_{-}^{*})=\tau'(p_{+}^{*})$. Since $\tau$ is concave
on $\left(\underline{p},p^{*}\right)$ and on $\left(p^{*},\overline{p}\right)$
and $\tau'(p_{-}^{*})=\tau'(p_{+}^{*})$, we conclude that $\tau$
is concave on $\left(\underline{p},\overline{p}\right)$.

To show that $\tau$ is quasi-concave, it remains to show that $\tau$
is decreasing on $\left[\overline{p},\overline{p}^{*}\right]$ and
increasing on $\left[\underline{p}^{*},\underline{p}\right]$. Since
the argument is essentially the same for both cases, we consider $\left[\overline{p},\overline{p}^{*}\right]$.
The expected delay implied by the own-biased strategy is 
\[
\tau(p)=\int_{0}^{\overline{T}^{*}(p)}s((1-p)\lambda e^{-\lambda s})ds+(1-p)e^{-\lambda\overline{T}^{*}(p)}\overline{T}^{*}(p).
\]
where $\overline{T}^{*}(p)$ is the time it takes for the belief to
reach $\overline{p}^{*}$ in the absence of a signal if the DM follows
the own-biased strategy (i.e., seeks $L$-signals). Since $\overline{T}^{*}(p)$
is decreasing in $p$ we have 
\[
\tau'(p)=(1-p)e^{-\lambda\overline{T}^{*}}\overline{T}^{*\prime}(p)<0.
\]
Therefore it remains to show that $\tau(\overline{p}_{-})\ge\tau(\overline{p}_{+})$.

Suppose $r=0$. If at $\overline{p}$, the DM follows the own-biased
strategy, she enjoys the payoff of 
\begin{equation}
\left[\overline{p}u_{r}^{R}+(1-\overline{p})u_{\ell}^{L}\right]-\overline{p}\left[u_{r}^{R}-u_{\ell}^{R}\right]-c\int_{0}^{\overline{T}^{*}(\overline{p})}(1-H(t)))dt.\label{eq:contr_payoff_ph_r0}
\end{equation}
where 
 $H$ is the distribution of the time at which the DM makes a decision.

Suppose instead that the DM follows the opposite-biased strategy.
In this case her expected payoff (for $r=0$) is given by 
\begin{equation}
\left[\overline{p}u_{r}^{R}+(1-\overline{p})u_{\ell}^{L}\right]-c\int_{0}^{\infty}(1-G(t)))dt.\label{eq:conf_payoff_ph_r0}
\end{equation}
where $G$ is the distribution of time at which the DM makes a decision.

Since at $\overline{p}$, the DM is indifferent between both strategies
we must have 
\[
\int_{0}^{\overline{T}^{*}(\overline{p})}(1-H(t)))dt<\int_{0}^{\infty}(1-G(t)))dt,
\]
i.e., the DM will take a longer time for decision if she chooses a
opposite-biased strategy instead. This proves part (a) of the Proposition
for $r=0$ if $c<\underline{c}$. By continuity the result extends
to $r$ in a neighborhood of zero. For the case that $c\in[\underline{c},\overline{c})$,
it suffices to invoke the argument used for $\left(\overline{p},\overline{p}^{*}\right)$
for the whole interval $\left(\check{p},\overline{p}^{*}\right)$.

(b) Consider $p>\check{p}$ so that the DM uses $\alpha=0$ according
to the opposite-biased strategy. Inserting this in \eqref{eq:pdot}
and integrating we get 
\[
p_{t}=\frac{e^{t\lambda}p_{0}}{1+\left(e^{t\lambda}-1\right)p_{0}}.
\]
Setting $p_{\overline{T}^{*}}=\overline{p}^{*}$ and solving for $\overline{T}^{*}$
we get 
\[
\overline{T}^{*}=\frac{1}{\lambda}\log\left(\frac{\overline{p}^{*}}{\overline{p}^{*}-\overline{p}^{*}}\frac{1-p_{0}}{p_{0}}\right).
\]
The probability of a mistake is therefore 
\[
(1-p)\left(1-e^{-\lambda\overline{T}^{*}}\right)=\frac{\left(1-\overline{p}^{*}\right)\left(\overline{p}^{*}-p_{0}\right)}{\overline{p}^{*}(1-p_{0})}.
\]
Differentiating this with respect to $p_{0}$, we get 
\[
-\frac{\overline{p}^{*}-p_{0}}{\overline{p}^{*}(1-p_{0})}<0.
\]
This proves that the probability of a mistake decreases in the distance
to $\overline{p}^{*}$ for high $p$. For low $p$ the argument is
symmetric. 
\end{proof}

\subsection{Proof of Proposition \ref{prop:Comparative_Statics_Experimentation_Region}\label{subsec:Proof-Comparative-Statics-Experimentation-Region}}
\begin{proof}
(a) By \eqref{eq:intersection_U_and_Uhat}, $\underline{p}^{*}$ and
$\overline{p}^{*}$ are given by the intersections of $U(p)$ and
$U^{FA}(p)$. Since $U(p)$ is independent of $r$ and $c$, and $U^{FA}(p)$
is strictly decreasing in both parameters, the experimentation region
expands as $r$ or $c$ fall. As $(r,c)\rightarrow(0,0)$, we have
$U^{FA}(p)\rightarrow U(p)$ for $p\in\{0,1\}$, hence the experimentation
region converges to $(0,1)$.

(b) The dependence of $\underline{p}^{*}$ and $\overline{p}^{*}$
on $u_{\ell}^{R}$ and $u_{r}^{L}$ is straightforward from the expressions
for the cutoffs in \eqref{eq:plsG} and \eqref{eq:pHsG}. 
 (c) By \eqref{eq:intersection_U_and_Uhat}, $\underline{p}^{*}$
is the intersection between $U_{\ell}(p)$ and $U^{FA}(p)$. The former
is independent of $u_{r}^{R}$ and the latter is increasing in $u_{r}^{R}$.
Hence $\partial\underline{p}^{*}/\partial u_{r}^{R}<0$. Also by \eqref{eq:intersection_U_and_Uhat},
$\overline{p}^{*}$ is the intersection between $U_{r}(p)$ and $U^{FA}(p)$.
We have 
\[
\frac{\partial U_{r}(p)}{\partial u_{r}^{R}}=p>\frac{\lambda}{r+\lambda}p=\frac{\partial U^{FA}(p)}{\partial u_{r}^{R}}.
\]
This implies that $\partial\overline{p}^{*}/\partial u_{r}^{R}<0$.
The comparative statics with respect to $u_{\ell}^{L}$ is derived
similarly. 
\end{proof}

\subsection{Proof of Proposition \ref{prop:Comparative_Statics_Modes_of_Learning}\label{subsec:Proof-Comparative-Statics-Mode-of-Learning}}
\begin{proof}
(a) We prove $\partial\check{p}/\partial u_{\ell}^{R}>0$; the other
case follows from a symmetric argument. Consider $\overline{V}_{own}(p)$.
Since the right branch of the own-biased value function is obtained
from a strategy that takes action $\ell$ only if a signal has been
received, its value is independent of $u_{\ell}^{R}$, as can be seen
from \eqref{eq:VctR}. On the other hand we have $\partial\underline{V}_{own}(p)/\partial u_{\ell}^{R}>0$
from \eqref{eq:VctL}. Therefore the point of intersection of $\underline{V}_{own}$
and $\overline{V}_{own}$ is increasing in $u_{\ell}^{R}$.

(b) It is clear from \eqref{eq:zetal} that $\underline{c}$ is decreasing
in $u_{\ell}^{R}$ and $u_{r}^{L}$. Therefore, it suffices to consider
the case that $c<\underline{c}$. We prove that $\underline{p}\rightarrow0$
monotonically as $u_{\ell}^{R}\rightarrow-\infty$. If a opposite-biased
region exists, $\underline{p}\in(\underline{p}^{*},p^{*})$ is defined
as the unique intersection between $\underline{V}_{opp}(p)$ and $\underline{V}_{own}(p)$.
Note that $\underline{V}_{opp}(p)$ is independent of $u_{\ell}^{R}$
since the opposite-biased strategy never leads to a mistake. As in
(d) we have $\partial\underline{V}_{own}(p)/\partial u_{\ell}^{R}>0$.
Moreover, Lemma \ref{lem:branch_crossing} shows that $\underline{V}_{own}(p)$
crosses $\underline{V}_{opp}(p)$ from above at $\underline{p}$.
Since $V_{opp}$ is independent of $u_{\ell}^{R}$ this implies that
of $\underline{p}$ is monotonically increasing in $u_{\ell}^{R}$.

Since $\underline{p}$ is bounded from below, there exists $q=\lim_{u_{\ell}^{R}\rightarrow-\infty}\underline{p}<p^{*}$.
Suppose by contradiction that $q>0$. Notice that, for each $p\in[q,p^{*}]$,
as $u_{\ell}^{R}\rightarrow-\infty$. 
\[
\underline{V}_{own}(p)\rightarrow\frac{\lambda u_{r}^{R}pr-\lambda c(1-p)-cr}{\left(r+\lambda\right)r}=:\underline{V}_{own}^{\circ}(p),
\]
where we used the fact that $\underline{p}^{*}/(1-\underline{p}^{*})\rightarrow0$
as $u_{\ell}^{R}\rightarrow-\infty$.

Note that the convergence is uniform on $[q,p^{*}]$ since $q>0$.\footnote{Recall from \eqref{eq:VctL} that $\underline{V}_{own}(p)\to\infty$
as $p\to0$, hence the condition $q>0$ is necessary here.} Simple algebra yields 
\begin{align*}
\underline{V}_{own}^{\circ}(p^{*}) & \le U^{S}(p^{*}),\\
\text{and}\quad\underline{V}_{own}^{\circ\prime}(p) & >U^{S\prime}(p).
\end{align*}
Since $\underline{V}_{own}^{\circ}(p)$ is linear in $p$, this implies
that $\underline{V}_{opp}(q)\ge U^{S}(q)>\underline{V}_{own}^{\circ}(q)$
which is a contradiction and we must have $q=0$. The proof for $\overline{p}$
is essentially the same. 
\end{proof}

\subsection{Proof of Proposition \ref{prop:role_of_discounting}\label{subsec:Proof-role-of-discounting}}
\begin{proof}
(a) We have 
\begin{align*}
\frac{\partial\overline{c}}{\partial r} & =-\frac{u_{r}^{R}u_{\ell}^{L}-u_{\ell}^{R}u_{r}^{L}}{(u_{r}^{R}+u_{\ell}^{L})-(u_{\ell}^{R}+u_{r}^{L})},
\end{align*}
and hence $\text{sgn}\left(\partial\overline{c}/\partial r\right)=\text{sgn}\left(u_{\ell}^{R}u_{r}^{L}-u_{r}^{R}u_{\ell}^{L}\right)$.
It is straightforward to verify that $U(\hat{p})>0$ if and only if
$u_{r}^{R}u_{\ell}^{L}-u_{\ell}^{R}u_{r}^{L}>0$.

(b) Denoting $Z(\rho):=(\rho+1)/\left(1+\left(2\rho+1\right)^{\frac{1}{\rho}}\right)$,
we have 
\begin{align*}
\frac{\partial\underline{c}}{\partial r} & =\begin{cases}
Z'(r/\lambda)\left(u_{r}^{R}-u_{\ell}^{R}\right)-ru_{r}^{R} & \text{if }\left(\lambda Z(r/\lambda)-r\right)\left(u_{r}^{R}-u_{\ell}^{L}\right)-\lambda Z(r/\lambda)\left(u_{\ell}^{R}-u_{r}^{L}\right)<0,\\
Z'(r/\lambda)\left(u_{\ell}^{L}-u_{r}^{L}\right)-ru_{\ell}^{L} & \text{if }\left(\lambda Z(r/\lambda)-r\right)\left(u_{r}^{R}-u_{\ell}^{L}\right)-\lambda Z(r/\lambda)\left(u_{\ell}^{R}-u_{r}^{L}\right)>0.
\end{cases}
\end{align*}
Consider the first case. Since $Z'(\rho)\in\left[\left(1+3e^{2}\right)/\left(1+e^{2}\right)^{2},1/2\right]$,
\[
Z'(r/\lambda)\left(u_{r}^{R}-u_{\ell}^{R}\right)-u_{r}^{R}<\frac{1}{2}\left(u_{r}^{R}-u_{\ell}^{R}\right)-u_{r}^{R}=-\frac{1}{2}\left(u_{r}^{R}+u_{\ell}^{R}\right),
\]
which is negative if $u_{r}^{R}>\left|u_{\ell}^{R}\right|$. Conversely,
if $u_{\ell}^{R}$ is sufficiently negative $Z'(r/\lambda)\left(u_{r}^{R}-u_{\ell}^{R}\right)-u_{r}^{R}>0$.
The argument for the second case is similar. 
\end{proof}

\subsection{Proof of Proposition \ref{prop:polarization}}
\begin{proof}
Let $F_{t}(p)$ be the distribution function of beliefs in the
whole population at time $t$. Denote the density, whenever it exists by $f_{t}(p)$. Denote by $\delta_t(p)=F_t(p)-F_t^-(p)$ the mass at $p$ if there is a mass point. For $t=0$ we have the uniform distribution $F_{0}(p)=p$. 

(a) For part (a) we consider the subpopulation of voters with prior beliefs in $\mathcal{P}_{own}=[\underline{p}^*,\underline{p}]\cup [\overline{p},\underline{p}^*]$. Initially, these voters consume own-biased news. If we consider the same subpopulation at later points $t>0$, then their beliefs either remain inside $\mathcal{P}_{own}$, voters who have received an $L$-breakthrough, however, have a belief $p_t=0$. Therefore, for $t>0$ we consider the subpopulation of voters with beliefs in $\mathcal{P}_{own}^0=\mathcal{P}_{own}\cup\{0\}$. Within $\mathcal{P}^0_{own}$ we consider the median belief for voters with $p_t>1/2$, denoted $m_t^r$ and the median belief for voters with $p_t<1/2$, denoted by $m_t^\ell$. 

We first consider $m_t^r$ which is given by
\[
m_{t}^{r}=F_{t}^{-1}\left(F_{t}(\overline{p})+\frac{F_{t}(\overline{p}^{*})-F_{t}(\overline{p})}{2}\right)
\]
We show that this is increasing in $t$ whenever $m_{t}^{r}<\overline{p}^{*}$.
All individuals in $\mathcal{P}_{own}^0 \cup (1/2,1]$  consume $R$-biased news. This leads to two possible changes in their beliefs that effects the median. First, for voters who receive breakthrough news the belief becomes
$0$ so that they leave the set $\mathcal{P}_{own}^0 \cup (1/2,1]$. Note that conditional on the state being $L$ all individuals who acquire information recieve $L$-breakthroughs at rate $\lambda$. If $m^r_t<\overline{p}^*$, all voters in $\mathcal{P}_{own}^0 \cup (1/2,1]$ below the median still acquire information but some voters above the median have already stoped. Therefore, more voters below the median recieve breakthrough than above the median. This increases the median.

Second absent a breakthrough the belief of a voter in $\mathcal{P}_{own}^0 \cup (1/2,1]$ drifts upwards. The upward drift also increses the median.  Hence, if $m^r_t<\overline{p}^*$, $m_{t}^{r}$ is increasing over time. If $m_t^r=\overline{p}^*$, it remains constant for all $t'>t$.

Next consider the subpopulation of individuals with beliefs in $\mathcal{P}_{own}^0 \cup [0,1/2)$.
This subpopulation is composed of (i) the voters who initially consume
own-biased news and have a prior $p_0<1/2$, and (ii) voters who initially
consume own-biased news and have a prior of $p>1/2$, but received
breakthrough news at some time $t'\le t$. The median belief at time
$t$ of individuals with beliefs below $1/2$ in this subset is given
by 
\[
m_{r}^{\ell}=\begin{cases}
\underline{p}^{*}, & \text{if }F_{t}(0)\ge F_{t}(\underline{p})-F_{t}(\underline{p}^{*})\\
F_{t}^{-1}\left(F_{t}(\underline{p})-\frac{\delta_{t}(0)+F_{t}(\underline{p})-F_{t}^{-}(\underline{p}*)}{2}\right), & \text{otherwise.}
\end{cases}
\]
$m_{t}^{\ell}$ is moved by two forces. First, individuals with $p>1/2$
who receive breakthroughs enter the population with $p<1/2$, and since
they have a belief $p=0$ after the breakthrough this reduces the
median. Second, indivduals with beliefs $p<1/2$ who consume own-biased
news never receive breakthroughs if the true state is $L$. Therefore
their beliefs drift downwards which further decreases $m_{t}^{\ell}$.

In summary we have shown that $m_{t}^{r}-m_{t}^\ell$ is increasing
which concludes the proof of part (a).

Part (b) follows from similar arguments since all voters who consume
any news choose own-biased news by assumption. Therefore their beleif
dynamics as as in case (a). The reminaing voters do not consume any
news so that their beliefs remain constant and leave the median in
the subpopulations above and below $1/2$ unaffected.

The proof of part (c) is immediate from the definition of the opposite-biased strategy. 
\end{proof}

\section{Extensions}\label{app:extensions}

\subsection{Discrete Time Foundation}\label{appendix:discrete_time_foundation}

\begin{proof} [Proof of Proposition \ref{prop:microfound}]
	If the DM chooses an experiment with parameters $a$ and $b=1+\lambda dt-a$,
	then the posteriors are $q^{R}:=p\left(\lambda dt+1-a\right)/\left(p\lambda dt+(1-a)\right)$ when the $R$-signal is received, and 
	$ q^{L}:=p\left(a-\lambda dt\right)/\left(a-p\lambda dt\right)$ when an $L$-signal is received.
	The unconditional probabilities of the signals are $\text{Prob}\left[R\text{-signal}\right]=p\lambda dt+(1-a)$, and $\text{Prob}\left[L\text{-signal}\right]=a-p\lambda dt$.
	Hence the DM maximizes 
	\begin{equation}\label{eq:objective_general_experiment}
	\max_{a\in[\lambda dt,1]}\left(p\lambda dt+(1-a)\right)\widetilde{V}(q^{R})+\left(a-p\lambda dt\right)\widetilde{V}(q^{L})
	\end{equation}
	where $\widetilde{V}(q)=\max\left\{ U(q),e^{-\rho dt}V(q) - c\right\} $ and $V(p)$ is the optimal value function. 
	We note that $V(p)$ is
	weakly convex.\footnote{To see this, note that the expected value of a fixed strategy (i.e.~a mapping that specifies the attention choice and action for each
		history) is linear in the prior belief. The value function is therefore
		the upper envelope of a family of linear functions, which implies convexity.}
	Therefore the \emph{continuation value} $\widetilde{V}(p)$ is also weakly convex.
	
In the following, we fix an arbitrary weakly convex continuation value $\widetilde{V}$ and belief $p\in(0,1)$. We show that \eqref{eq:objective_general_experiment} is maximized by $\alpha=\lambda dt$ or $\alpha=1$. To do this, we rewrite the objective in \eqref{eq:objective_general_experiment} for an arbitrary choice $\hat{a}\in[\lambda dt,1]$ in a way that can be bounded by the value for $\alpha=\lambda dt$ or $\alpha=1$.

So we fix any $\hat{a}\in[\lambda dt,1]$ and denote the implied posteriors by $\hat{q}^{R}$ and $\hat{q}^{L}$. To rewrite the objective in \eqref{eq:objective_general_experiment}, we construct alternative payoff parameters $\hat{u}^\omega_x$ so that the resulting stopping payoffs satisfy $\widehat{U}_\ell(\hat{q}^L)=\widetilde{V}(\hat{q}^L)$ and $\widehat{U}'_\ell(\hat{q}^L)=\widetilde{V}'(\hat{q}^L)$, as well as $\widehat{U}_\r(\hat{q}^R)=\widetilde{V}(\hat{q}^R)$ and $\widehat{U}'_r(\hat{q}^R)=\widetilde{V}'(\hat{q}^R)$.\footnote{We use the notation $\widehat{U}_\ell(p):= p\hat{u}_{\ell}^{R}+(1-p)\hat{u}_{\ell}^{R}$, $\widehat{U}_r(p):=p\hat{u}_{r}^{R}+(1-p)\hat{u}_{r}^{R}$, and $\widehat{U}(p):= \max\{\widehat{U}_\ell(p), \widehat{U}_r(p)  \}.$}
Theses conditions yields:
	\begin{align*}
	\hat{u}_{r}^{R} & :=\widetilde{V}(\hat{q}^{R})+(1-\hat{q}^{R})\widetilde{V}'(\hat{q}^{R}),&
	\hat{u}_{\ell}^{R} & :=\widetilde{V}(\hat{q}^{R})-\hat{q}^{R}\widetilde{V}'(\hat{q}^{R}),\\
	\hat{u}_{r}^{L} & :=\widetilde{V}(\hat{q}^{L})+(1-\hat{q}^{L})\widetilde{V}'(\hat{q}^{L}),&
	\hat{u}_{\ell}^{R} & :=\widetilde{V}(\hat{q}^{L})-\hat{q}^{L}\widetilde{V}'(\hat{q}^{L}).
	\end{align*}
	
By definition,  $\widehat{U}(p)$ is tangent to $\tilde V(p)$ at $p=\hat{q}^{L}$ and at $p=\hat{q}^{R}$, and is everywhere weakly below $\widetilde V(p)$, given the convexity of $\widetilde V(p)$.
	
	The objective in \eqref{eq:objective_general_experiment} for $\hat{a}$ can be rearranged and
	bounded as follows:
	\begin{align*}
	& \left(p\lambda dt+(1-\hat{a})\right)\widetilde{V}(\hat{q}^{R})+\left(\hat{a}-p\lambda dt\right)\widetilde{V}(\hat{q}^{L})\\
	= & \left(p\lambda dt+(1-\hat{a})\right)\widehat{U}(\hat{q}^{R})+\left(\hat{a}-p\lambda dt\right)\widehat{U}(\hat{q}^{L})\\
	= & p(1+\lambda dt-\hat{a})\hat{u}_{r}^{R}+(1-p)(1-\hat{a})\hat{u}_{r}^{L}+p(a-\lambda dt)\hat{u}_{\ell}^{R}+(1-p)\hat{a}\hat{u}_{\ell}^{L}\\
	\le & \max_{a\in\{\lambda dt,1\}}p(1+\lambda dt-a)\hat{u}_{r}^{R}+(1-p)(1-a)\hat{u}_{r}^{L}+p(a-\lambda dt)\hat{u}_{\ell}^{R}+(1-p)a\hat{u}_{\ell}^{L}\\
	= & \left(p\lambda dt+(1-\hat{a}^{*})\right)\widehat{U}(\hat{q}^{R*})+\left(\hat{a}^{*}-p\lambda dt\right)\widehat{U}(\hat{q}^{L^{*}})\\
	\le & \left(p\lambda dt+(1-\hat{a}^{*})\right)\widetilde{V}(\hat{q}^{R*})+\left(\hat{a}^{*}-p\lambda dt\right)\widetilde{V}(\hat{q}^{L^{*}}).
	\end{align*}
	In the second line, we have replaced $\widetilde{V}$ by $\widehat{U}$. Writing this out in the third
	line, we see that the expression is linear in $a$. Therefore, maximizing
	over $a\in\{\lambda dt,1\}$, we get a weakly higher value. In the fifth line
	$\hat{a}^{*}$ denotes a maximizer from the forth line and $\hat{q}^{\omega*}$
	denotes the corresponding posterior beliefs. The last inequality follows
	from the fact that  $\widetilde{V}$ is weakly above $\widehat{U}$. This shows that the optimal $a$	can be found in $\{\lambda dt,1\}$.
\end{proof}

\subsection{Non-Exclusivity of Attention\label{subsec:Appendix-Imperfect-Specialization}}

The proofs of our main results only require minor modifications. One
important change is that the full attention strategy has to be defined
using $\alpha=\beta=\overline{\alpha}$. Without this modification,
Lemmas \ref{lem:The-boundary-points} and  \ref{lem:Properties_of_Benchmarks}--\ref{lem:properties_confirmatory}
are no longer valid. We also have to replace $V_{0}$ and $V_{1}$
by solutions to $c+\rho V(p)=F_{\alpha}(p,V(p),V'(p))$ for $\alpha=\underline{\alpha}$
and $\alpha=\overline{\alpha}$, respectively. The value of the stationary
strategy $U^{S}(p)$ is unchanged as discussed in the main text. The
crucial Lemmas \ref{lem:branch_crossing} and \ref{lem:Branches_satisfy_HJB}
continue to hold without modification.

Explicit expressions for the boundaries of the experimentation region
and the absorbing point $p^{*}$ are now given by
\begin{align*}
\underline{p}^{*} & =\frac{u_{\ell}^{L}\rho+c}{\rho\left(u_{\ell}^{L}-u_{\ell}^{R}\right)+\left(u_{r}^{R}-u_{\ell}^{R}\right)\lambda\overline{\alpha}},\\
\overline{p}^{*} & =\frac{\left(u_{\ell}^{L}-u_{r}^{L}\right)\lambda\overline{\alpha}-u_{r}^{L}\rho-c}{\rho\left(u_{r}^{R}-u_{r}^{L}\right)+\left(u_{\ell}^{L}-u_{r}^{L}\right)\lambda\overline{\alpha}},\\
p^{*} & =\frac{\left(u_{\ell}^{L}\rho+c\right)}{\left(u_{r}^{R}\rho+c\right)+\left(u_{\ell}^{L}\rho+c\right)}.
\end{align*}
One can see from the first two expressions that $\underline{p}^{*}$
increases and $\overline{p}^{*}$ decreases if we decrease the upper
bound $\overline{\alpha}$. This confirms the claim that the experimentation
region shrinks if the constraint on $\alpha$ is tightened. 

The cutoffs $\overline{c}$, $\underline{c}$ are given by:
\begin{align}
\overline{c} & :=0\vee\frac{\lambda\overline{\alpha}\left(u_{r}^{R}-u_{\ell}^{R}\right)\left(u_{\ell}^{L}-u_{r}^{L}\right)-\rho\left(u_{r}^{R}u_{\ell}^{L}-u_{\ell}^{R}u_{r}^{L}\right)}{\left(u_{r}^{R}-u_{\ell}^{R}\right)+\left(u_{\ell}^{L}-u_{r}^{L}\right)},\label{eq:zetah-1}\\
\underline{c} & :=0\vee\begin{cases}
\overline{c}\wedge\frac{\lambda}{1+e^{2}}\min\left\{ \left(u_{r}^{R}-u_{\ell}^{R}\right),\left(u_{\ell}^{L}-u_{r}^{L}\right)\right\}  & \text{if }\rho=1-\overline{\alpha}=0,\\
\overline{c}\wedge\min\left\{ \frac{\left(\rho+\lambda\overline{\alpha}\right)\left(u_{r}^{R}-u_{\ell}^{R}\right)}{1+\left(\frac{2\rho+\lambda}{\left(2\overline{\alpha}-1\right)\lambda}\right)^{\frac{2\overline{\alpha}-1}{(1-\overline{\alpha})+\frac{\rho}{\lambda}}}}-\rho u_{r}^{R},\frac{\left(\rho+\lambda\overline{\alpha}\right)\left(u_{\ell}^{L}-u_{r}^{L}\right)}{1+\left(\frac{2\rho+\lambda}{\left(2\overline{\alpha}-1\right)\lambda}\right)^{\frac{2\overline{\alpha}-1}{(1-\overline{\alpha})+\frac{\rho}{\lambda}}}}-\rho u_{\ell}^{L}\right\}  & \text{otherwise.}
\end{cases}\label{eq:zetal-1}
\end{align}
From the first expression it is immediately clear that $\overline{c}$
decreases if we reduce the upper bound $\overline{\alpha}$. It is
less obvious that $\underline{c}$ increases at the same time. To
see this, remember from the proof of Theorem \ref{thm:optimal_attention_strategy}
that $c>\underline{c}$ is equivalent to 
\[
\max\left\{ \underline{V}_{own}(p^{*}),\overline{V}_{own}(p^{*})\right\} >U^{S}(p^{*}).
\]
The right-hand side of this inequality is independent of $\overline{\alpha}$.
The left-hand however, is decreasing in $\overline{\alpha}$.

\subsection{Asymmetric Returns to Attention\label{subsec:Appendix-Asymmetric-Returns}}

In this section we revisit three crucial results that are used to
prove Theorem \ref{thm:optimal_attention_strategy}, and outline how
they are changed if $\overline{\lambda}^{R}\neq\overline{\lambda}^{L}$.
Throughout we assume that $\overline{\lambda}^{R}\ge\overline{\lambda}^{L}$.
Up to relabeling this is without loss of generality.
The three crucial results are: 
\begin{enumerate}
	\item The Crossing Lemma \ref{lem:branch_crossing} and the Unimprovability
	Lemma \ref{lem:Branches_satisfy_HJB}. In Appendix \ref{sec:Main_Appendix},
	we considered solutions $V_{0}$ and $V_{1}$ to the HJB equation
	where we set $\alpha=0$ or $\alpha=1$, respectively. If we generalize
	the HJB equation to allow for $\overline{\lambda}^{R}>\overline{\lambda}^{L}$,
	we can obtain similar solutions $V_{0}$ and $V_{1}$. Lemma \ref{lem:branch_crossing}
	also uses the value of the stationary strategy as a benchmark. The
	definition of the stationary strategy has to be modified if $\overline{\lambda}^{R}>\overline{\lambda}^{L}$.
	The Bayesian updating formula in the absence of a signal is now given
	by:
	\begin{equation}
	\dot{p}_{t}=-\left(\overline{\lambda}^{R}\alpha_{t}-\overline{\lambda}^{L}\beta_{t}\right)p_{t}\left(1-p_{t}\right),\label{eq:pdotAR}
	\end{equation}
	Hence the stationary attention strategy is given by 
	\[
	\alpha^{S}=\frac{\overline{\lambda}^{L}}{\overline{\lambda}^{R}+\overline{\lambda}^{L}}.
	\]
	Note that this coincides with the definition of our main model
	where $\alpha^{S}=1/2$ if $\overline{\lambda}^{R}=\overline{\lambda}^{L}$.
	The value of the stationary strategy is now 
	\[
	U^{S}(p):=p\frac{\alpha^{S}\overline{\lambda}^{R}\,u_{r}^{R}-c}{\rho+\alpha^{S}\overline{\lambda}^{R}}+(1-p)\frac{\beta^{S}\overline{\lambda}^{L}\,u_{\ell}^{L}-c}{\rho+\beta^{S}\overline{\lambda}^{L}}.
	\]
	With this definition, Lemmas \ref{lem:branch_crossing} and 
	\ref{lem:Branches_satisfy_HJB} continue to hold.
	\item Properties of the own-biased strategy in Lemma \ref{lem:properties_contradictory}:\footnote{Lemma 5 repeats the statements of Lemma 
	\ref{lem:The-boundary-points} so we do not discuss Lemma 
	\ref{lem:The-boundary-points} separately.} In Appendix \ref{sec:Main_Appendix} we have constructed the own-biased
	strategy by first obtaining the boundary points $\underline{p}^{*}$
	and $\overline{p}^{*}$ from value matching and smooth pasting. Following
	the same steps while allowing for $\overline{\lambda}^{R}>\overline{\lambda}^{L}$
	we get 
\begin{align}
\underline{p}^{*}&=\frac{u_{\ell}^{L}\rho+c}{\rho\left(u_{\ell}^{L}-u_{\ell}^{R}\right)+\left(u_{r}^{R}-u_{\ell}^{R}\right)\overline{\lambda}^{R}},\label{eq:plsAR}\\
\overline{p}^{*}&=\frac{\left(u_{\ell}^{L}-u_{r}^{L}\right)\overline{\lambda}^{L}-u_{r}^{L}\rho-c}{\rho\left(u_{r}^{R}-u_{r}^{L}\right)+\left(u_{\ell}^{L}-u_{r}^{L}\right)\overline{\lambda}^{L}}.\label{eq:pHsAR}
\end{align}
	The branches of the own-biased solution are then given by particular
	solutions $V_{0}$ and $V_{1}$ that satisfy the boundary conditions
	$V_{0}(\overline{p}^{*})=U_{r}(\overline{p}^{*})$ and $V_{1}(\underline{p}^{*})=U_{\ell}(\underline{p}^{*})$.%

	Lemma \ref{lem:properties_contradictory}.(a)-(b) hold unchanged if $\overline{\lambda}^{R}>\overline{\lambda}^{L}$.
	For the other results in Lemma \ref{lem:properties_contradictory}, we need to modify the definition of the full-attention strategy. We
	compute separately the value of full attention if the DM can obtain
	both types of evidence at rate $\overline{\lambda}^{R}$:
	\begin{align*}
	U_{R}^{FA}(p): & =p\frac{\overline{\lambda}^{R}\,u_{r}^{R}-c}{\rho+\overline{\lambda}^{R}}+(1-p)\frac{\overline{\lambda}^{R}\,u_{\ell}^{L}-c}{\rho+\overline{\lambda}^{R}}=\frac{\overline{\lambda}^{R}\,\left(pu_{r}^{R}+(1-p)u_{\ell}^{L}\right)-c}{\rho+\overline{\lambda}^{R}},
	\end{align*}
	and at rate $\overline{\lambda}^{L}$:
	\begin{align*}
	U_{L}^{FA}(p): & =p\frac{\overline{\lambda}^{L}\,u_{r}^{R}-c}{\rho+\overline{\lambda}^{L}}+(1-p)\frac{\overline{\lambda}^{L}\,u_{\ell}^{L}-c}{\rho+\overline{\lambda}^{L}}=\frac{\overline{\lambda}^{L}\,\left(pu_{r}^{R}+(1-p)u_{\ell}^{L}\right)-c}{\rho+\overline{\lambda}^{L}}.
	\end{align*}
	Generalizing Lemma \ref{lem:properties_contradictory}.(c) we obtain
	now obtain: 
	\[
	U_{\ell}(\underline{p}^{*})=U_{R}^{FA}(\underline{p}^{*}),\qquad\text{and}\qquad U_{r}(\overline{p}^{*})=U_{r}^{FA}(\overline{p}^{*}).
	\]
	Lemma \ref{lem:properties_contradictory}.(d) refers to the condition
	\eqref{eq:EXPG}. If $\overline{\lambda}^{R}>\overline{\lambda}^{L}$,
	we need to define two separate conditions 
	\begin{align}
	\tag{\ensuremath{\text{EXP}_{R}}}U_{R}^{FA}(\hat{p}) & >U(\hat{p}),\label{eq:EXP_R}\\
	\tag{\ensuremath{\text{EXP}_{L}}}U_{L}^{FA}(\hat{p}) & >U(\hat{p}).\label{eq:EXP_L}
	\end{align}
	With this Lemma \ref{lem:properties_contradictory}.(d) generalizes
	as follows: If \eqref{eq:EXP_R} holds, $0<\underline{p}^{*}<\hat{p}$
	and $\underline{V}_{own}(p)<U_{R}^{FA}(p)$ for all $p\in(\underline{p}^{*},1)$,
	$\underline{V}_{own}(p)>U_{R}^{FA}(p)$ for $p<\underline{p}^{*}$
	and $\underline{V}_{own}(p)=U_{R}^{FA}(p)$ if $p\in\left\{ \underline{p}^{*},1\right\} $.
	If \eqref{eq:EXP_L} holds, $0<\underline{p}^{*}<\overline{p}^{*}<\hat{p}$
	and $\overline{V}_{own}(p)<U_{L}^{FA}(p)$ for all $p\in(0,\overline{p}^{*})$,
	$\overline{V}_{own}(p)>U_{L}^{FA}(p)$ for $p>\overline{p}^{*}$ and
	$\overline{V}_{own}(p)=U_{L}^{FA}(p)$ if $p\in\left\{ 0,\overline{p}^{*}\right\} $.\\
	Lemma \eqref{lem:properties_contradictory}.(e) generalizes as follows:
	If \eqref{eq:EXP_L} is violated, then $\overline{V}_{own}=U(p)$ for
	all $p\in[\hat{p},1]$. If \eqref{eq:EXP_R} is violated, then $V_{own}=U(p)$
	for all $p\in[0,\hat{p}]$. 
	\item Properties of the opposite-biased solution in Lemma \ref{lem:properties_confirmatory}: As in the main model,
	we can use smooth pasting and value matching with $U^{S}$ to obtain
	$p^{*}$ as follows: 
	\begin{equation}
	p^{*}=\frac{\left(u_{\ell}^{L}\rho+c\right)\overline{\lambda}^{L}}{\left(u_{r}^{R}\rho+c\right)\overline{\lambda}^{R}+\left(u_{\ell}^{L}\rho+c\right)\overline{\lambda}^{L}}.\label{eq:pstarG_AR}
	\end{equation}
	As before we obtain the branches of the opposite-biased strategy as particular
	solutions $V_{0}$ and $V_{1}$ with the boundary condition $V_{0}(p^{*}),V_{1}(p^{*})=U^{S}(p^{*})$,
	and set

	\[
	V_{own}(p):=\begin{cases}\underline{V}_{own}(p), &\text{if }p<p^*,\\
	\overline{V}_{own}(p), &\text{if }p\ge p^* .
	\end{cases}
	\]
	With this definition Lemma \ref{lem:properties_confirmatory}.(a)
	holds unchanged. Part (b) of the Lemma has to be modified: $V_{opp}(p)=\underline{V}_{opp}(p)\le U_{L}^{FA}(p)$
	for all $p\in[0,p^{*}]$ with strict inequality for $p\neq p^{*}$,
	and $V_{opp}(p)=\overline{V}_{own}(p)\le U_{R}^{FA}(p)$ for all $p\in[p^{*},1]$
	with strict inequality for $p\neq p^{*}$.
\end{enumerate}
The fact that the important Lemmas \ref{lem:branch_crossing} and \ref{lem:Branches_satisfy_HJB} continue to hold and
we still have $V_{opp}(p)>U^{S}(p)$ for all $p\neq p^{*}$ from Lemma
\ref{lem:properties_confirmatory}.(a), together imply that many of the structural properties of $V_{Env}(p)=\max\left\{ V_{own}(p),V_{opp}(p)\right\} $
are preserved and the structure
of the optimal policy is similar to the main model with $\overline{\lambda}^{R}=\overline{\lambda}^{L}$.

However, there is one crucial difference. It is now possible that $\overline{V}_{own}(p)$ is dominated
by $\underline{V}_{own}(p)$ or by $V_{opp}(p)$ for all $p<\overline{p}^{*}$. We only consider the case
that $V_{own}(p^{*})>U^{S}(p^{*})$. In this case we can use similar
steps as in the proof of Proposition \ref{prop:Structure_V_G}.(a)
to show that $V_{Env}(p)=V_{own}(p)$ for all $p\in[0,1]$, i.e., opposite-biased learning is never optimal. However,
it is no longer guaranteed that there exists a point of intersection
$\check{p}\in(\underline{p}^{*},\overline{p}^{*})$ between $\underline{V}_{own}(p)$
and $\overline{V}_{own}(p)$. This is most easily seen by considering
the generalization of Lemma \ref{lem:properties_contradictory}.(c) outlined above.
It is easy to see that $U_{R}^{FA}(p)>U_{L}^{FA}(p)$ for all $p\in[0,1]$
since $\overline{\lambda}^{R}>\overline{\lambda}^{L}$. Since both
functions are strictly decreasing in $c$, we can find levels of $c$
for which $U_{L}^{FA}(p)<U(p)$ for all $p\in[0,1]$ but $U_{R}^{FA}(p)>U(p)$
for some $p$. In this case 
\begin{equation}
V_{own}(p)=\max\left\{ \underline{V}_{own}(p),\overline{V}_{own}(p)\right\} =\max\left\{ \underline{V}_{own}(p),U_{r}(p)\right\} ,\quad\forall p\in[0,1].\label{eq:Vct_AR}
\end{equation}

More generally, \eqref{eq:Vct_AR} may also hold if $U_{L}^{FA}(p)>U(p)$
for some $p\in[0,1]$. Before we could rule out this case since for
$\overline{\lambda}^{R}=\overline{\lambda}^{L}$, $\underline{V}_{own}(p)<U^{FA}(p)$
for all $p\in(\underline{p}^{*},1)$ and $\overline{V}_{own}(\overline{p}^{*})=U^{FA}(\overline{p}^{*})$.
This is not longer true if $\overline{\lambda}^{R}>\overline{\lambda}^{L}$.
The example in Panel (a) of Figure \ref{fig:asymmetric_lamda} depicts
such a case. Moreover we can argue that, as claimed in Section \ref{subsec:Asymmetric-Returns},
this case only arises if $\overline{\lambda}^{R}-\overline{\lambda}^{L}$
is sufficiently large. To see this fix $\overline{\lambda}^{R}$ such
that $\underline{V}_{own}(p)>U(p)$ for some $p$. Note that $\underline{V}_{own}(p)$ is independent
of $\overline{\lambda}^{L}$, since $\underline{p}^*$ does not depend on $\overline{\lambda}^L$ and $\underline{V}_{own}(p)$ is the value of seeking $R$-evidence. Moreover, we have $\underline{V}_{own}(\underline{p}^{*})>U_{r}(\underline{p}^{*})$
and one can easily verify that $\underline{V}_{own}(1)=U_{R}^{FA}(1)<U_{r}(p)$.
Since $U_{r}(p)$ is linear in $p$ and $\underline{V}_{own}(p)$ can
be verified to be strictly convex, there exists a unique $\overline{q}$
such that $\overline{V}_{own}(\underline{q})=U_{r}(\overline{q})$.
If $\overline{q}\ge\overline{p}^{*}$, the Crossing Lemma \ref{lem:branch_crossing}
implies that there exists no intersection of $\underline{V}_{own}(p)$
and $\overline{V}_{own}(p)$ between $\underline{p}^{*}$ and $\overline{p}^{*}$.
In this case \eqref{eq:Vct_AR} holds. Conversely, if $\overline{q}<\overline{p}^{*}$ an
intersection point $\check{p}\in(\underline{p}^{*},\overline{p}^{*})$
exists and $V_{own}(p)$ has the same structure as in our main
model. It remains to argue that $\overline{q}\ge\overline{p}^*$ only if $\overline{\lambda}^{R}-\overline{\lambda}^{L}$
is sufficiently large. For $\overline{\lambda}^{R}-\overline{\lambda}^{L}= 0$, $\underline{V}_{own}(\overline{p}^*)<U^{FA}_R(\overline{p}^*)=U^{FA}_L(\overline{p}^*)=\overline{V}_{own}(\overline{p}^*)=U_r(\overline{p}^*)$. Therefore, $\overline{q}<\overline{p}^*$. Decreasing $\lambda^L$ while holding $\lambda^R$ fixed does not change $\overline{q}$ but decreases $\overline{p}^*$. Hence, there exists a cutoff for $\lambda^L$ below which (for given $\lambda^R$), $\overline{q}\ge\overline{p}^*$.

\subsection{Diminishing Returns to Attention\label{sec:General_Gamma}}

As an extension to the main model in Section \ref{sec:Model},
we show that the general structure of the solution is preserved if the arrival rate of breakthroughs from a given news source does not increase linearly in the amount of attention allocated to the source.
 For the proofs we adopt a different notation than in Section  \ref{subsec:Non-linear_model}. 
 Note that each choice of attention gives rise to a pair of arrival rates $(\lambda^R,\lambda^L)$. For given $g(x)$ a pair $(\lambda^R,\lambda^L)$ is feasible if there exists $\alpha \in [0,1]$ such that $\lambda^R \le \lambda g(\alpha)$ and $\lambda^L \le \lambda g(1- \alpha)$.
 Instead of working with the function $g(x)$ we introduce a function $\Gamma(\lambda^R)$ that characterizes the upper bound of the set of feasible pairs  $(\lambda^R,\lambda^L)$ as follows:%
\footnote{The feasible set of arrival rates can also be derived from a model with many news sources but constant returns to attention. In this model, a news source is now characterized by two parameters $(\lambda^{R},\lambda^{L})$.
If an amount of attention $\alpha_{i}$ is directed to a news-source
given by $(\lambda_{i}^{R},\lambda_{i}^{L})$, the DM will receive
a signal from that source that confirms state $R$ with Poisson arrival
rate $\lambda_{i}^{R}\alpha_{i}$ if the state is indeed $R$ and
she will receive a signal that confirms state $L$ with Poisson arrival
rate $\lambda_{i}^{L}\alpha_{i}$ if the state is $L$. Hence, when
allocating her attention over two news sources with parameters $(\lambda_{i}^{R},\lambda_{i}^{L})$
and $(\lambda_{j}^{R},\lambda_{j}^{L})$ with attention levels $\alpha_{i}$
and $\alpha_{j}=1-\alpha_{i}$, the DM will receive a signal that
confirms $R$ with Poisson rate $\lambda^{R}=\alpha_{i}\lambda_{i}^{R}+(1-\alpha_{j})\lambda_{j}^{R}$,
and a signal that confirms $L$ with Poisson rate $\lambda^{L}=\alpha_{i}\lambda_{i}^{L}+(1-\alpha_{i})\lambda_{j}^{L}$.
The set of feasible arrival rates $(\lambda^{R},\lambda^{L})$ is
thus a weakly convex subset of $\mathbb{R}_{+}$. We denote the upper
bound of this set $\Gamma(\lambda^{R})$ and note that weak convexity
of the set implies weak concavity $\Gamma(\lambda^{R})$. 
In the main model studied before we had $\Gamma(\lambda^{R})=1-\lambda^{R}$,
which is the linear boundary that is spanned by the two primitive
news sources given by $(1,0)$ and $(0,1)$.
}
\[
\left\{ (\lambda^{R},\lambda^{L})\in\mathbb{R}_{+}\,\middle|\,\lambda^{L}\le\Gamma(\lambda^{R})\right\} .
\]
Remember from Section  \ref{subsec:Non-linear_model} that $\lambda^R = \lambda g(\alpha)$ and $\lambda^L = \lambda g(1-\alpha)$. If we normalize $\lambda=1$, we can derive $\Gamma(\lambda)$ from the function  $g(x)$:\footnote{The normalization of the upper bound is without loss of generality
	since only the ratios $\rho/\lambda$ and $c/\lambda$ matter.}
\[
\Gamma(\lambda^R)=g(1-g^{-1}(\lambda^R)).
\]

Clearly, the DM will only chose pairs of arrival rates on the upper
bound, i.e., $(\lambda^{R},\Gamma(\lambda^{R}))$, so we can describe
her choice by $\lambda^{R}$. To simplify the notation we omit the
superscript and write $\lambda$ instead of $\lambda^{R}$. Moreover
we assume that $\lambda\in[0,1]$. We maintain the following assumptions about the function $\Gamma$. 
\begin{assumption}
	\label{assu:Gamma}$\Gamma:[0,1]\rightarrow[0,1]$ is twice continuously
	differentiable, strictly decreasing, strictly convex, and satisfies
	$\Gamma(0)=1$, $\Gamma(1)=0$ and $\Gamma'(\gamma)=-1$, where $\gamma$
	is the unique fixed point of $\Gamma$. 
\end{assumption}

Note that $\Gamma'(\gamma)=-1$ is always fulfilled if $\Gamma$ is derived from a differentiable function $g$ since $\Gamma(\Gamma(x))=x$ in this case which implies that the graph of $\Gamma$ is symmetric with respect to the 45-degree line.

\begin{example}
	\label{exa:Gamma}A parametric example is obtained by setting $g(x)=\sqrt{1+4x+x2}-2$. The inverse is $g^{-1}(x)=2\sqrt{4-2x-x^2}$ and we obtain
\[
\Gamma(\lambda^R)=g(1-g^{-1}(\lambda^R))=\sqrt{6\sqrt{4-2\lambda^R-(\lambda^R)^2}+\lambda^R(2+\lambda^R)-8}-1.
\]	
This is the example used in Figure \ref{fig:Gamma} in Section  \ref{subsec:Non-linear_model}.
\end{example}

\subsubsection{The Decision Maker's Problem}

The DM's posterior evolves according to 
\begin{equation}
\dot{p}_{t}=-p_{t}(1-p_{t})\left(\lambda_{t}-\Gamma(\lambda_{t})\right),\label{eq:pdot_Gamma}
\end{equation}
The objective is given by 
\begin{align*}
J\left((\lambda_{t})_{t\ge0},T;p_{0}\right):= & \begin{Bmatrix}\int_{0}^{T}e^{-\rho t}P_{t}(p_{0},(\lambda_{\tau}))\left(p_{t}\lambda_{t}u_{r}^{R}+(1-p_{t})\Gamma(\lambda_{t})u_{\ell}^{L}\right)dt\\
+e^{-\rho T}P_{T}(p_{0},(\lambda_{\tau}))U(p_{T})
\end{Bmatrix},\\
\text{where }\;P_{t}(p_{0},(\lambda_{\tau})):= & p_{0}e^{-\int_{0}^{t}\lambda_{s}ds}+(1-p_{0})e^{-\int_{0}^{t}\Gamma(\lambda_{s})ds}.
\end{align*}
The DM solves the problem \eqref{eq:P-Gamma} given by: 
\begin{equation}
\tag{\ensuremath{\mathcal{P}^{\Gamma}}}V(p_{0}):=\sup_{\left((\lambda_{t})_{t\ge0},T\right)}J\left((\lambda_{t})_{t\ge0},T;p_{0}\right)\quad\text{s.t. \eqref{eq:pdot_Gamma}, and }\lambda_{t}\in[0,1].\label{eq:P-Gamma}
\end{equation}
We define 
\[
H(p,V(p),V'(p),\lambda):=\begin{Bmatrix}\lambda p\left(u_{r}^{R}-V(p)\right)+\Gamma(\lambda)(1-p)\left(u_{\ell}^{L}-V(p)\right)\\
-p(1-p)(\lambda-\Gamma(\lambda))V'(p)
\end{Bmatrix}.
\]
The HJB equation for \eqref{eq:P-Gamma} is 
\begin{equation}
\max\left\{ -c-\rho V(p)+\max_{\lambda\in[0,1]}H(p,V(p),V'(p),\lambda),U(p)-V(p)\right\} =0.\label{eq:HJB_VIO_Gamma}
\end{equation}
If $V(p)>U(p)$ this simplifies to 
\begin{equation}
c+\rho V(p)=\max_{\lambda\in[0,1]}H(p,V(p),V'(p),\lambda).\label{eq:HJB_Gamma}
\end{equation}
The first-order condition is given by 
\begin{equation}
\frac{\partial H(p,V(p),V'(p),\lambda)}{\partial\lambda}=\begin{Bmatrix}p\left(u_{r}^{R}-V(p)\right)+\Gamma'(\lambda)(1-p)\left(u_{\ell}^{L}-V(p)\right)\\
-p(1-p)(1-\Gamma'(\lambda))V'(p)
\end{Bmatrix}=0.\label{eq:FOC_Gamma}
\end{equation}
For a given policy $\lambda(p)$, we obtain the differential equation
\begin{align}
c+\rho V(p) & =H(p,V(p),V'(p),\lambda(p))\label{eq:ODE_V_Gamma}\\
\iff c+\rho V(p) & =\begin{Bmatrix}\lambda(p)p\left(u_{r}^{R}-V(p)\right)+\Gamma(\lambda(p))(1-p)\left(u_{\ell}^{L}-V(p)\right)\\
-p(1-p)(\lambda(p)-\Gamma(\lambda(p)))V'(p)
\end{Bmatrix}.\nonumber 
\end{align}

As in our original model, we will define two candidate value functions.
For this purpose, we state the HJB equation for problems in which
the DM is either restricted to choose $\lambda\ge\gamma$, 
\begin{eqnarray}
c+\rho V_{+}(p) & =\max_{\lambda\in[\gamma,1]}H(p,V_{+}(p),V_{+}'(p),\lambda),\label{eq:HJB+_Gamma}
\end{eqnarray}
or $\lambda\le\gamma$: 
\begin{eqnarray}
c+\rho V_{-}(p) & =\max_{\lambda\in[0,\gamma]}H(p,V_{-}(p),V_{-}'(p),\lambda).\label{eq:HJB-_Gamma}
\end{eqnarray}
we denote policies corresponding to solution to \eqref{eq:HJB+_Gamma}
and \eqref{eq:HJB-_Gamma} by $\lambda_{+}(p)$ and $\lambda_{-}(p)$,
respectively.

\subsubsection{Preliminary results}

We first revisit some definitions made for the original model. The
stationary strategy is now given by choosing $\lambda=\gamma$ until
a signal arrives and then taking an optimal action according to the
signal. The value of this strategy is now given by 
\[
U^{S}(p)=\frac{\gamma}{\rho+\gamma}U^{*}(p)-\frac{c}{\rho+\gamma},
\]
where 
\[
U^{*}(p)=pu_{r}^{R}+(1-p)u_{\ell}^{L}
\]
is the first best value that is achieved if the DM can learn the state
without any delay.

As in the original model, we obtain a crossing condition for functions
that satisfy \eqref{eq:HJB+_Gamma} and \eqref{eq:HJB-_Gamma} and
a condition under which solutions to \eqref{eq:HJB+_Gamma} and \eqref{eq:HJB-_Gamma}
satisfy \eqref{eq:HJB_Gamma}. 
\begin{lem}[Crossing Lemma]
	\label{lem:single_crossing_general_gamma}Suppose $V_{+}(p)$ is
	$\mathcal{C}^{1}$ at $p$ and satisfies \eqref{eq:HJB+_Gamma} and
	$V_{-}(p)$ is $\mathcal{C}^{1}$ at $p$ and satisfies \eqref{eq:HJB-_Gamma}.
	If $V_{+}(p)=V_{-}(p)\ge U^{S}(p)$, then $V_{+}'(p)\le V_{-}'(p)$.
	If $V_{+}(p)=V_{-}(p)>U^{S}(p)$, then $V_{+}'(p)<V_{-}'(p)$.
\end{lem}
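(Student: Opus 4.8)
The plan is to mirror the proof of the baseline Crossing Lemma \ref{lem:branch_crossing}, in which one writes the two HJB relations at the common point and subtracts to read off the sign of the derivative gap. Concretely, at the crossing point $p$ write $V:=V_+(p)=V_-(p)$ and let $\lambda_+\in[\gamma,1]$ and $\lambda_-\in[0,\gamma]$ denote the maximizers in \eqref{eq:HJB+_Gamma} and \eqref{eq:HJB-_Gamma}. Both relations equate the same scalar $c+\rho V$ to $H(p,V,V_\pm'(p),\lambda_\pm)$, so the task is to extract the sign of $V_+'(p)-V_-'(p)$ from these two equations together with the optimality of $\lambda_\pm$.

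The central structural fact I would exploit is that $\lambda=\gamma$ is feasible in \emph{both} restricted problems and that at $\lambda=\gamma$ the belief drift $\lambda-\Gamma(\lambda)$ vanishes. Hence $H(p,V,d,\gamma)=\gamma\,(U^*(p)-V)$ is independent of the slope $d$, where $U^*(p)=pu_r^R+(1-p)u_\ell^L$. Since $U^S$ is pinned down by $c+\rho U^S(p)=\gamma\,(U^*(p)-U^S(p))$, the hypothesis $V\ge U^S(p)$ is exactly equivalent to $c+\rho V\ge H(p,V,d,\gamma)$ for every $d$; this is the channel through which the benchmark $U^S$ enters the sign computation, just as in Lemma \ref{lem:branch_crossing}.

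I would then combine two ingredients. First, the envelope/first-order information: because $\lambda_\pm$ are maximizers and $\gamma$ lies in both feasible ranges, $H(p,V,V_+'(p),\lambda_+)\ge H(p,V,V_+'(p),\gamma)$ and $H(p,V,V_-'(p),\lambda_-)\ge H(p,V,V_-'(p),\gamma)$. Second, the monotonicity of $H$ in the slope: $\partial H/\partial V'=-p(1-p)(\lambda-\Gamma(\lambda))$ is strictly negative for $\lambda>\gamma$ and strictly positive for $\lambda<\gamma$. Feeding the common value $c+\rho V$ and the identity for $H(\cdot,\gamma)$ into these inequalities bounds $V_+'(p)$ from one side and $V_-'(p)$ from the other, with the direction governed by the sign of $V-U^S(p)$, yielding $V_+'(p)\le V_-'(p)$ (strictly when $V>U^S(p)$).

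The step I expect to be the main obstacle is precisely the one that is trivial in the baseline: there the optimal attention is bang-bang, so $V_0$ and $V_1$ solve two \emph{linear} ODEs and the derivative gap follows from explicit subtraction. Under strictly convex $\Gamma$ the maximizers $\lambda_\pm$ are generically interior and differ across the two problems, so the clean subtraction is unavailable; one must instead control the endogenous optimizer through the envelope argument above and a careful analysis of how the optimizer and the implied slope behave on the two sides of $\gamma$, where $\partial H/\partial V'$ changes sign and the drift vanishes. I would organize this by first treating the degenerate case $V=U^S(p)$ (where $\lambda_\pm=\gamma$ and the conclusion holds with equality) and then the strict case $V>U^S(p)$, using the sign reversal of $\partial H/\partial V'$ across $\gamma$ together with strict convexity of $\Gamma$ to upgrade to the strict inequality.
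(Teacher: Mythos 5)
There is a genuine gap at precisely the step you flag as the main obstacle, and your diagnosis of why the baseline argument fails leads you away from the fix. The two ingredients you propose cannot jointly produce the slope ranking. The envelope inequality against $\lambda=\gamma$, namely $c+\rho V\ge H(p,V,d,\gamma)=\gamma\left(U^{*}(p)-V\right)$, is equivalent to $V\ge U^{S}(p)$ --- and since $\gamma$ is feasible in \emph{both} restricted problems, this inequality holds automatically, at every point, for any solution of \eqref{eq:HJB+_Gamma} or \eqref{eq:HJB-_Gamma}; it merely restates the hypothesis and carries no information about $V_{+}'(p)$ versus $V_{-}'(p)$, exactly because $H(\cdot,\gamma)$ is slope-independent. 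Your second ingredient, the sign of $\partial H/\partial V'$, also cannot couple the two slopes: each HJB equality ties its own slope to its own maximizer, and there is no cross-feasibility to exploit in the strict case ($\lambda_{+}\notin[0,\gamma]$, $\lambda_{-}\notin[\gamma,1]$), so slope-monotonicity applied separately to each equation bounds nothing across the two problems. Abstractly, any objective that is linear in the slope with the stated slope-sign pattern and satisfies your two equalities plus the $\gamma$-inequality is consistent with \emph{either} ranking of the derivatives; what pins the ranking down is how the intercept $\lambda p\left(u_{r}^{R}-V\right)+\Gamma(\lambda)(1-p)\left(u_{\ell}^{L}-V\right)$ varies along the frontier, which your argument never uses.

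Moreover, the ``clean subtraction'' you declare unavailable is exactly the paper's proof, and it survives interior, differing maximizers: one needs no linearity of the ODEs in the policy, only the two HJB equalities evaluated at the realized maximizers. In the strict case $V>U^{S}(p)$ neither maximizer can equal $\gamma$ (the value at $\gamma$ is $\gamma\left(U^{*}-V\right)<c+\rho V$), so $\lambda_{+}>\gamma>\lambda_{-}$; multiplying \eqref{eq:HJB+_Gamma} at $\lambda_{+}$ by $\Gamma(\lambda_{-})-\lambda_{-}>0$ and \eqref{eq:HJB-_Gamma} at $\lambda_{-}$ by $\lambda_{+}-\Gamma(\lambda_{+})>0$ and adding, the intercepts combine (the coefficients on both $p\left(u_{r}^{R}-V\right)$ and $(1-p)\left(u_{\ell}^{L}-V\right)$ equal $\Delta(p):=\lambda_{+}\Gamma(\lambda_{-})-\lambda_{-}\Gamma(\lambda_{+})$), yielding $p(1-p)\left(\Gamma(\lambda_{-})-\lambda_{-}\right)\left(\lambda_{+}-\Gamma(\lambda_{+})\right)\left(V_{-}'(p)-V_{+}'(p)\right)=\delta(p)\left(c+\rho V\right)-\Delta(p)\left(U^{*}(p)-V\right)$, where $\delta(p):=\Gamma(\lambda_{-})-\lambda_{-}+\lambda_{+}-\Gamma(\lambda_{+})>0$. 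The benchmark $U^{S}$ then enters not through your envelope step but through a geometric property of the technology: $\left(\Delta/\delta,\Delta/\delta\right)$ is where the chord joining $\left(\lambda_{-},\Gamma(\lambda_{-})\right)$ and $\left(\lambda_{+},\Gamma(\lambda_{+})\right)$ meets the diagonal, so concavity of $\Gamma$ (what the appendix actually uses, $\Gamma''<0$, notwithstanding the wording of Assumption \ref{assu:Gamma}) forces $\Delta/\delta\le\gamma$, whence the right-hand side is at least $\left(\delta(p)\rho+\Delta(p)\right)\left(V-U^{S}(p)\right)\ge0$, strictly so when $V>U^{S}(p)$. Two smaller corrections: at $V=U^{S}(p)$ the lemma asserts only the weak inequality $V_{+}'(p)\le V_{-}'(p)$, not equality of derivatives as you claim; and your description of Lemma \ref{lem:branch_crossing} is slightly off --- there too, $U^{S}$ emerges from the subtraction algebra, not from an envelope comparison at the stationary policy.
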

\begin{proof}[Proof of Lemma \ref{lem:single_crossing_general_gamma}]
	Suppose $V(p):=V_{+}(p)=V_{-}(p)\ge U^{S}(p)$ at $p$ and denote
	the maximizers in \eqref{eq:HJB+_Gamma} and \eqref{eq:HJB-_Gamma}
	by $\lambda_{+}(p)$ and $\lambda_{-}(p)$ respectively.
	
	From (\ref{eq:HJB+_Gamma}) and (\ref{eq:HJB-_Gamma}), we obtain
	\begin{align*}
	& p(1-p)(\Gamma(\lambda_{-}(p))-\lambda_{-}(p))(\lambda_{+}(p)-\Gamma(\lambda_{+}(p)))(V_{-}'(p)-V_{+}'(p))\\
	= & \left(\delta(p)\rho+\Delta(p)\right)\left[V(p)-\frac{\frac{\Delta(p)}{\delta(p)}}{\frac{\Delta(p)}{\delta(p)}+\rho}U^{*}(p)+\frac{1}{\frac{\Delta(p)}{\delta(p)}+\rho}c\right]\\
	\ge & \left(\delta(p)\rho+\Delta(p)\right)\left[V(p)-\frac{\gamma}{\gamma+\rho}U^{*}(p)+\frac{1}{\gamma+\rho}c\right]\\
	= & \left(\delta(p)\rho+\Delta(p)\right)\left[V(p)-U^{S}(p)\right],
	\end{align*}
	where
	
	\begin{align*}
	\delta(p):= & \Gamma(\lambda_{-}(p))-\lambda_{-}(p)+\lambda_{+}(p)-\Gamma(\lambda_{+}(p))>0,\\
	\Delta(p):= & \lambda_{+}(p)\Gamma(\lambda_{-}(p))-\lambda_{-}(p)\Gamma(\lambda_{+}(p))>0,
	\end{align*}
	since $\lambda_{+}(p)>\gamma>\lambda_{-}(p)$. The inequality can
	be seen as follows. First, one can verify that $(\Delta(p)/\delta(p),\Delta(p)/\delta(p))$
	is the point of intersection between the forty-five degrees line and
	the line segment between two points, $(\lambda_{-}(p),\Gamma(\lambda_{-}(p)))$
	and $(\lambda_{+}(p),\Gamma(\lambda_{+}(p)))$. Since $\Gamma$ is
	concave, we must have $\Delta(p)/\delta(p)<\gamma$. Since $\delta(p),\Delta(p)\ge0$,
	if $V(p)\ge U^{S}(p)$, the last expression is non-negative, and if
	$V(p)>U^{S}(p)$, it is strictly positive.
\end{proof}
\begin{lem}[Unimprovability]
	\begin{enumerate}
		\item Suppose $V_{+}(p)$ is $\mathcal{C}^{1}$ at $p$ and satisfies \eqref{eq:HJB+_Gamma}.
		If $V_{+}(p)\ge\max\{U^{S}(p),U(p)\}$, then $V_{+}(p)$ satisfies
		\eqref{eq:HJB_Gamma} at $p$. 
		\item Suppose $V_{-}(p)$ is $\mathcal{C}^{1}$ at $p$ and satisfies \eqref{eq:HJB-_Gamma}.
		If $V_{-}(p)\ge\max\{U^{S}(p),U(p)\}$, then $V_{-}(p)$ satisfies
		\eqref{eq:HJB_Gamma} at $p$. 
	\end{enumerate}
	\label{lem:Unimprovability_General_Gamma}
\end{lem}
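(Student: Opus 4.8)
The plan is to reduce the Unimprovability Lemma to a one-dimensional statement about the Hamiltonian and then exploit the concavity of $\Gamma$. Fix $p$ and abbreviate $V=V_+(p)$, $V'=V_+'(p)$. Since $V\ge U(p)$, the second term in the variational inequality \eqref{eq:HJB_VIO_Gamma} is non-positive, so it suffices to show that the unconstrained maximization in \eqref{eq:HJB_Gamma} agrees with the constrained one in \eqref{eq:HJB+_Gamma}; that is, $\max_{\lambda\in[0,1]}H(p,V,V',\lambda)=\max_{\lambda\in[\gamma,1]}H(p,V,V',\lambda)=c+\rho V$. First I would record the algebraic representation $H(p,V,V',\lambda)=\lambda A+\Gamma(\lambda)B$, where $A:=p(u_r^R-V)-p(1-p)V'$ and $B:=(1-p)(u_\ell^L-V)+p(1-p)V'$, so that $A+B=U^*(p)-V$ and $\partial^2_\lambda H=\Gamma''(\lambda)B$. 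Because $\Gamma$ is concave (Assumption \ref{assu:Gamma}), $H(\cdot)$ is concave in $\lambda$ precisely when $B\ge0$ and convex when $B\le0$.

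The pivot of the argument is the value at the stationary rate $\gamma$. Using $\Gamma(\gamma)=\gamma$ and $(\rho+\gamma)U^S(p)=\gamma U^*(p)-c$, one gets $H(p,V,V',\gamma)=\gamma(U^*(p)-V)=(c+\rho V)+(\rho+\gamma)(U^S(p)-V)$. Hence $V\ge U^S(p)$ yields $H(p,V,V',\gamma)\le c+\rho V$, with equality iff $V=U^S(p)$. I also note $c+\rho V>0$: since $V\ge U^S$ and the maintained assumptions give $u_r^R,u_\ell^L>0$, we have $c+\rho V\ge c+\rho U^S=\gamma(c+\rho U^*)/(\rho+\gamma)>0$.

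Suppose $V>U^S(p)$. Then $H(p,V,V',\gamma)<c+\rho V=\max_{[\gamma,1]}H$, so the maximizer $\lambda_+$ lies in $(\gamma,1]$. If $\lambda_+\in(\gamma,1)$ is interior, the second-order condition forces $B\ge0$, so $H$ is globally concave with $\partial_\lambda H(\gamma)>0$; concavity then gives $\partial_\lambda H>0$ on all of $[0,\gamma]$, whence $\max_{[0,\gamma]}H=H(p,V,V',\gamma)<c+\rho V$. If instead $\lambda_+=1$, then $H(p,V,V',1)=A=c+\rho V>0$ while $H(p,V,V',0)=\Gamma(0)B=B$; comparing endpoints (convex case) or using monotonicity (concave case) again gives $\max_{[0,\gamma]}H\le c+\rho V$. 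Either way the constraint $\lambda\ge\gamma$ does not bind and \eqref{eq:HJB_Gamma} holds. Part (b) is proved symmetrically, interchanging the roles of $[0,\gamma]$ and $[\gamma,1]$ and of $A,B$.

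The one delicate case is equality, $V=U^S(p)$, which is exactly the absorbing belief $p^*$. Here I would use that $V_+$ is convex and $V_+\ge U^S$ with $U^S$ affine, so the graphs are tangent at $p$ and $V_+'(p)=U^{S\prime}(p)$; substituting this into $A-B=\partial_\lambda H(\gamma)$ and simplifying shows $\partial_\lambda H(\gamma)=\big(\rho[pu_r^R-(1-p)u_\ell^L]+c(2p-1)\big)/(\rho+\gamma)$, which vanishes precisely at $p=p^*$ (cf. \eqref{eq:pstarG_AR}). Thus at the equality point $\gamma$ is an interior critical point of $H$, and since $B>0$ there it is the global maximizer, so the stationary choice $\lambda=\gamma$ is optimal and \eqref{eq:HJB_Gamma} again holds. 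I expect this equality/boundary case — ruling out a spurious maximizer in $[0,\gamma)$, which comes down to controlling the sign of $\partial_\lambda H(\gamma)$ via the relation \eqref{eq:HJB+_Gamma} and the concavity of $\Gamma$ — to be the main obstacle; the strict case $V>U^S$ is clean once the concave/convex dichotomy and the pivotal identity at $\gamma$ are in hand.
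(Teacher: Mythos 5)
Your proposal is correct on the main case and takes a genuinely different route from the paper's. The paper runs no shape analysis of $H$ in $\lambda$ at all: it takes the constrained maximizer $\lambda_{+}(p)>\gamma$, substitutes $V_{+}'(p)$ out of \eqref{eq:HJB+_Gamma}, and computes in closed form the difference $H(p,V_{+}(p),V_{+}'(p),\lambda_{+}(p))-H(p,V_{+}(p),V_{+}'(p),\lambda_{-})$ for an \emph{arbitrary} $\lambda_{-}<\gamma$, bounding it below by a positive multiple of $V_{+}(p)-U^{S}(p)\ge0$; the only use of concavity of $\Gamma$ is the chord fact $\hat{\Delta}(p)/\hat{\delta}(p)<\gamma$, exactly as in the Crossing Lemma \ref{lem:single_crossing_general_gamma}. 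You instead decompose $H=\lambda A+\Gamma(\lambda)B$, isolate the pivotal identity $H(p,V,V',\gamma)=(c+\rho V)+(\rho+\gamma)\left(U^{S}(p)-V\right)$, and dispose of $[0,\gamma]$ via the concave/convex dichotomy in the sign of $B$ plus endpoint comparisons. I verified your identities ($A+B=U^{*}(p)-V$, $\partial_{\lambda}^{2}H=\Gamma''(\lambda)B$, $c+\rho V>0$, and the closed form for $A-B$ under tangency), and the strict case $V>U^{S}(p)$ is airtight, including the $\lambda_{+}=1$ endpoint subcase. The paper's computation buys uniformity (one inequality covers all $\lambda_{-}$, no second-order information needed); yours buys transparency about why $\gamma$ is the watershed ($\Gamma'(\gamma)=-1$) and localizes the whole difficulty in the sign of $B$. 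Two citation-level points: Assumption \ref{assu:Gamma} literally says ``strictly convex,'' but every proof in that section uses $\Gamma''<0$, so your concave reading is the intended one; and the belief at which $A-B$ vanishes should be referred to \eqref{eq:pstarG}, or $p^{*}=1/2$ under Assumption \ref{assu:symmetric_payoffs}, not to the asymmetric-arrival formula \eqref{eq:pstarG_AR}.

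On the knife-edge $V_{+}(p)=U^{S}(p)$, your diagnosis is sharper than the paper's, but be aware that your resolution imports hypotheses the lemma does not state: you use convexity of $V_{+}$ and the inequality $V_{+}\ge U^{S}$ \emph{in a neighborhood} to extract the tangency $V_{+}'(p)=U^{S\prime}(p)$, whereas the lemma assumes $V_{+}(p)\ge\max\{U^{S}(p),U(p)\}$ only at the point $p$. The import is substantive, not cosmetic: with $V=U^{S}(p)$ but $V'$ unconstrained, one can choose $V'$ so that $A<B$, $B>0$, and $A+\Gamma'(0)B>0$ (possible since $\Gamma'(0)>-1$ by strict concavity); then \eqref{eq:HJB+_Gamma} holds at $p$ with the constrained maximum sitting at $\gamma$, yet $H$ has an interior maximizer $\lambda^{\dagger}\in(0,\gamma)$ with $H(p,V,V',\lambda^{\dagger})>c+\rho V$, so the purely pointwise conclusion fails. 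The paper's own proof is silent on exactly this configuration---it opens with ``suppose the optimal policy satisfies $\lambda_{+}(p)>\gamma$,'' which is automatic when $V_{+}(p)>U^{S}(p)$ but excludes precisely the equality case in which $\gamma$ is the unique constrained maximizer. Since the lemma is only ever applied to $V_{own}^{\Gamma}$ and $V_{opp}^{\Gamma}$, which are convex, dominate $U^{S}$, and touch it only at $p^{*}$ with smooth pasting, your strengthened-hypothesis version is exactly what the verification needs; just state explicitly that you prove the lemma under those additional (and, in application, satisfied) conditions.
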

\begin{proof}[Proof of Lemma \ref{lem:Unimprovability_General_Gamma}]
	We prove the first statement; the second follows symmetrically. Suppose
	the optimal policy satisfies $\lambda_{+}(p)>\gamma$. By the condition,
	it is not improvable by an immediate action or by any $\lambda\ge\gamma$.
	Hence, it suffices to show that it is not improvable by any $\lambda_{-}<\gamma$.
	
	Substituting $V_{+}'(p)$ from \eqref{eq:HJB+_Gamma} and rearranging
	we get 
	\begin{align*}
	& H(p,V_{+}(p),V_{+}'(p),\lambda_{+}(p))-H(p,V_{+}(p),V_{+}'(p),\lambda_{-})\\
	= & \frac{\hat{\delta}(p)\rho+\hat{\Delta}(p)}{\lambda_{+}(p))-\Gamma(\lambda_{+}(p)))}\left[V_{+}(p)-\frac{\frac{\hat{\Delta}(p)}{\hat{\delta}(p)}}{\frac{\hat{\Delta}(p)}{\hat{\delta}(p)}+\rho}U^{S}+\frac{1}{\frac{\hat{\Delta}(p)}{\hat{\delta}(p)}+\rho}c\right]\\
	\ge & \frac{\hat{\delta}(p)\rho+\hat{\Delta}(p)}{\lambda_{+}(p))-\Gamma(\lambda_{+}(p)))}\left[V_{+}(p)-U^{S}(p)\right],
	\end{align*}
	where 
	\[
	\hat{\delta}(p):=\Gamma(\lambda_{-})-\lambda_{-}+\lambda_{+}(p)-\Gamma(\lambda_{+}(p))\mbox{ and }\hat{\Delta}(p):=\lambda_{+}(p)\Gamma(\lambda_{-})-\lambda_{-}\Gamma(\lambda_{+}(p)).
	\]
	The inequality follows from the same observation as in the proof of
	Lemma \eqref{lem:single_crossing_general_gamma}. 
\end{proof}
Before constructing the value function for \eqref{eq:P-Gamma}, we
make one general observation about the boundaries of the experimentation
region and the value opposite-biased signals at the boundaries.

For this purpose we consider a model in which the DM has full attention.
In this case we have $\lambda^{R}=1=\lambda^{L}$ and the DM only
chooses when to stop. Note that Assumption \ref{assu:Gamma} precludes
the DM from choosing $\lambda^{R}=1=\lambda^{L}$ so the full attention
model only serves as a hypothetical benchmark.

The value of this stopping problem is given by 
\[
\widehat{V}(p):=\max\left\{ U(p),U^{FA}(p)\right\} ,
\]
where 
\[
U^{FA}(p)=\frac{1}{\rho+1}U^{*}(p)-\frac{c}{\rho+1}.
\]
Moreover, we note that by Assumption \ref{assu:Gamma}, $(\lambda,\Gamma(\lambda))\le(1,1)$
for all $\lambda\in(0,1)$. Therefore, $\widehat{V}(p)$ is an upper
bound for the value function of the problem \eqref{eq:P-Gamma}.

Remember that in our original  model, the boundaries of the experimentation
region are given by the points of intersection between $U^{FA}(p)$
and $U(p)$: 
\begin{align}
U^{FA}(\overline{p}^{*}) & =U_{r}(\overline{p}^{*}).\label{eq:pLs_Gamma}\\
U^{FA}(\underline{p}^{*}) & =U_{\ell}(\underline{p}^{*}).\label{eq:pHs_Gamma}
\end{align}
If \eqref{eq:EXPG} is satisfied, we have $\underline{p}^{*}<\overline{p}^{*}$.
We now show that the value of \eqref{eq:P-Gamma} is equal to $\widehat{V}$
at these boundaries. This immediately shows that $\underline{p}^{*}$
and $\overline{p}^{*}$ are the boundaries of the experimentation
region in \eqref{eq:P-Gamma}. Moreover, we show that under Assumption
\ref{assu:Gamma}, at these boundaries, the DM does not benefit from
interior choices $\lambda\in(0,1)$. 
\begin{prop}
	Suppose \eqref{eq:EXPG} is satisfied. Then $\underline{p}^{*}$ and
	$\overline{p}^{*}$ given by \eqref{eq:pLs_Gamma} and \eqref{eq:pHs_Gamma}
	are the boundaries of the the experimentation region for the optimal
	solution to \eqref{eq:P-Gamma}. At $\underline{p}^{*}$ and $\overline{p}^{*}$,
	the value of \eqref{eq:P-Gamma} coincides with the value of our original
	model and equals $U^{FA}(p)$ The loss of restricting the DM to chose
	$\lambda\in\left\{ 0,1\right\} $ vanishes as $p\downarrow\underline{p}^{*}$
	and $p\uparrow\overline{p}^{*}$. 
\end{prop}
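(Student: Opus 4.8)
The plan is to sandwich the value $V$ of \eqref{eq:P-Gamma} between the value $V_{Env}$ of the original (linear) model and the full-attention upper bound $\widehat V=\max\{U,U^{FA}\}$, and then read off all three claims from the fact that these two bounds pinch off and on the boundary of $(\underline{p}^*,\overline{p}^*)$.

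First I would record the lower bound. Under Assumption \ref{assu:Gamma} the two extreme choices $\lambda\in\{0,1\}$ are $\Gamma$-feasible, since $(1,\Gamma(1))=(1,0)$ and $(0,\Gamma(0))=(0,1)$; these are exactly the two primitive conclusive sources of the baseline model. Rapidly alternating between them (``time-mixing'') reproduces every point of the segment $\lambda^R+\lambda^L=1$, which is the linear frontier $\Gamma_{\mathrm{lin}}(\lambda^R)=1-\lambda^R$ of the baseline model; hence the problem obtained by restricting the instantaneous control to $\lambda_t\in\{0,1\}$ has value exactly $V_{Env}$ (with $\lambda$ normalized to $1$). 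In particular every such strategy is admissible in \eqref{eq:P-Gamma}, so $V(p)\ge V_{Env}(p)$ for all $p$. The upper bound $V(p)\le\widehat V(p)$ is already established in the text preceding the proposition, using that $(\lambda,\Gamma(\lambda))\le(1,1)$ coordinatewise, so that every $\Gamma$-feasible strategy is feasible in the full-attention relaxation whose value is $\widehat V$.

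Next I would exploit the geometry recorded for the baseline model. Because \eqref{eq:EXPG} holds, $U^{FA}$ is linear, $U$ is piecewise linear, and they meet precisely at $\underline{p}^*$ and $\overline{p}^*$ defined by \eqref{eq:pLs_Gamma}--\eqref{eq:pHs_Gamma}, with $U^{FA}(p)>U(p)$ on $(\underline{p}^*,\overline{p}^*)$ and $U^{FA}(p)\le U(p)$ off it; and by Lemma \ref{lem:properties_contradictory} the baseline value satisfies $V_{Env}(p)=U(p)$ for $p\notin(\underline{p}^*,\overline{p}^*)$, $V_{Env}(p)>U(p)$ on $(\underline{p}^*,\overline{p}^*)$, and $V_{Env}(\underline{p}^*)=U^{FA}(\underline{p}^*)$, $V_{Env}(\overline{p}^*)=U^{FA}(\overline{p}^*)$ by smooth pasting. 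For $p\notin(\underline{p}^*,\overline{p}^*)$ the sandwich $U(p)=V_{Env}(p)\le V(p)\le\widehat V(p)=U(p)$ collapses, so $V(p)=U(p)$ and immediate action is optimal; for $p\in(\underline{p}^*,\overline{p}^*)$ we get $V(p)\ge V_{Env}(p)>U(p)$, so experimentation is strictly optimal. This shows that $\underline{p}^*$ and $\overline{p}^*$ are exactly the boundaries of the experimentation region. At the boundary points the same sandwich (now with $U=U^{FA}=V_{Env}$) gives $V(\overline{p}^*)=U^{FA}(\overline{p}^*)$ and $V(\underline{p}^*)=U^{FA}(\underline{p}^*)$, the common value attained in the baseline model; this is the asserted coincidence. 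For the vanishing-loss claim I would run a squeeze near each boundary. Fix the right boundary (the left is symmetric). For $p$ just below $\overline{p}^*$ the baseline optimum is own-biased learning with $\lambda=0\in\{0,1\}$, so the $\{0,1\}$-restricted value equals $\overline{V}_{own}(p)$ with no time-mixing needed, while $\widehat V(p)=U^{FA}(p)$ just inside the region; hence $0\le V(p)-\overline{V}_{own}(p)\le U^{FA}(p)-\overline{V}_{own}(p)$, and the right-hand side tends to $0$ as $p\uparrow\overline{p}^*$ by continuity of $\overline{V}_{own}$ and the smooth-pasting identity $\overline{V}_{own}(\overline{p}^*)=U^{FA}(\overline{p}^*)$.

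The step needing the most care is the identification of the $\{0,1\}$-restricted $\Gamma$-problem with the baseline model: one must verify that chattering between the two extreme sources realizes, in the limit, the whole linear frontier and the baseline value $V_{Env}$ — in particular reproducing the stationary behavior at $p^*$ in the low-cost case — rather than only the bang-bang pieces. Away from $p^*$ this is immediate since the baseline optimum is already bang-bang, so the delicate point is confined to the single absorbing belief and does not affect the boundary-neighborhood arguments that drive the vanishing-loss statement.
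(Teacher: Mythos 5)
Your proposal is correct and follows essentially the same route as the paper's own proof: sandwich the value of \eqref{eq:P-Gamma} between the baseline value (feasible via the extreme choices $\lambda\in\{0,1\}$, which are exactly the two conclusive sources) and the full-attention upper bound $\widehat V=\max\{U,U^{FA}\}$, then pinch at $\underline{p}^{*}$ and $\overline{p}^{*}$ where the baseline value meets $U^{FA}$. Your explicit squeeze for the vanishing-loss claim and your flagging of the chattering issue at $p^{*}$ (correctly confined to the absorbing belief, harmless since \eqref{eq:P-Gamma} is a supremum) are details the paper's terse three-sentence proof leaves implicit, so your write-up is if anything more careful.
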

\begin{proof}
	If the DM is restricted to chose $\lambda\in\left\{ 0,1\right\} $,
	her optimal strategy coincides with the optimal strategy in our original
	model. The value in our original model is a lower bound for the value
	of \eqref{eq:P-Gamma}. Since at $\underline{p}^{*}$ and $\overline{p}^{*}$
	the value in our original model coincides with the upper bound $U^{FA}(p)$,
	it must also coincide with the value of \eqref{eq:P-Gamma}. 
\end{proof}
Note that while Assumption \ref{assu:Gamma} requires $\Gamma(\lambda)<1$
for $\lambda>0$, it does not rule out an Inada condition like $\lim_{\lambda\rightarrow0}\Gamma'(\lambda)=0$.
This shows that at the boundaries of the experimentation region, the
value of a opposite-biased signal is zero even if it is cost-less to
obtain. We will see below when we characterize the value function
that without an Inada condition, there exist neighborhoods of $\underline{p}^{*}$
and $\overline{p}^{*}$ such that the DM does not suffer any loss
if in these neighborhoods she uses $\lambda=1$ and $\lambda=0$,
respectively.

\subsubsection{Construction of Solutions to the HJB equation}

For the remainder of this section, we will focus on the cases that
the payoffs are symmetric. This simplifies the derivations and is
sufficient to understand the main features of the optimal solution
in the extension. Formally we impose: 
\begin{assumption}
	\label{assu:symmetric_payoffs}$u_{r}^{R}=u_{\ell}^{L}=U^{S}$ and
	$u_{\ell}^{R}=u_{r}^{L}=\underline{u}$ for some $\overline{u}>\underline{u}>0$. 
\end{assumption}
In contrast to our original model, it may now be optimal to choose
$\lambda\in(0,1)$ for beliefs $p\in\left(\underline{p}^{*},\overline{p}^{*}\right)$,
i.e., in the interior of the experimentation region. For an interval
where this is the case, we will obtain a differential equation for
$\lambda(p)$ and furthermore an equation that expresses $V(p)$ as
a function of $\lambda(p)$. We begin with the latter. To state the
result in concise form we define 
\[
A(\lambda):=\frac{\Gamma(\lambda)-\Gamma'(\lambda)\,\lambda}{\Gamma(\lambda)-\Gamma'(\lambda)\,\lambda+\rho\left(1-\Gamma'(\lambda)\right)},\quad\text{and}\quad B(\lambda):=\frac{1-\Gamma'(\lambda)}{\Gamma(\lambda)-\Gamma'(\lambda)\,\lambda+\rho\left(1-\Gamma'(\lambda)\right)}.
\]
A basic observation that we will use at several points is that these
two functions are (inverse) U-shaped with (maximum) minimum at $\lambda=\gamma$. 
\begin{lem}
	\label{lem:U-shape_AB}If Assumption \ref{assu:Gamma} is satisfied,
	\[
	A'(\lambda)>(<)0\;\iff\;B'(\lambda)<(>)0\;\iff\;\lambda>(<)\gamma.
	\]
\end{lem}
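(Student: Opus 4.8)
The plan is to treat this as a direct sign analysis of two rational functions, exploiting an algebraic identity that links $A$ and $B$ before touching any derivatives. Write the common denominator as $D(\lambda):=\Gamma(\lambda)-\Gamma'(\lambda)\,\lambda+\rho\bigl(1-\Gamma'(\lambda)\bigr)$, and note first that $D>0$ on $[0,1]$: each summand is nonnegative since $\Gamma\ge0$, $\Gamma'<0$ (strict monotonicity), $\lambda\ge0$, $\rho\ge0$, and $\Gamma(0)=1>0$ guarantees strict positivity at the left endpoint. The key preliminary observation is that the two numerators $\Gamma-\Gamma'\lambda$ and $\rho(1-\Gamma')$ sum to exactly $D$, so
\[
A(\lambda)+\rho\,B(\lambda)=1,\qquad\text{hence}\qquad A'(\lambda)=-\rho\,B'(\lambda).
\]
For $\rho>0$ this alone settles the first equivalence $A'(\lambda)>0\iff B'(\lambda)<0$; I would flag the degenerate case $\rho=0$ separately, where $A\equiv1$ is constant and only the statement about $B$ carries content.

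Next I would carry out the single derivative computation that drives everything. Differentiating the denominator gives the clean form $D'(\lambda)=-\Gamma''(\lambda)(\lambda+\rho)$ (the two $\Gamma'$ terms cancel). Applying the quotient rule to $B=(1-\Gamma')/D$ and factoring out $\Gamma''$ reduces the numerator to $-D+(1-\Gamma')(\lambda+\rho)$, and the decisive step is the cancellation that collapses this to $\lambda-\Gamma$. The analogous computation for $A=(\Gamma-\Gamma'\lambda)/D$ collapses to $\rho(\Gamma-\lambda)$, consistent with $A'=-\rho B'$. I would record the outcome as
\[
A'(\lambda)=\frac{\rho\,\Gamma''(\lambda)\bigl(\Gamma(\lambda)-\lambda\bigr)}{D(\lambda)^{2}},\qquad B'(\lambda)=\frac{\Gamma''(\lambda)\bigl(\lambda-\Gamma(\lambda)\bigr)}{D(\lambda)^{2}}.
\]

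Finally I would read off the signs. Since $D^{2}>0$, the sign of each derivative is determined by the product of $\Gamma''(\lambda)$ and the linear-versus-frontier gap. By the (strict) concavity of $\Gamma$ we have $\Gamma''<0$, exactly as used in the Crossing Lemma. The auxiliary function $h(\lambda):=\lambda-\Gamma(\lambda)$ is strictly increasing, because $h'(\lambda)=1-\Gamma'(\lambda)>0$, and it has its unique zero at the fixed point $\gamma$ (where $\Gamma(\gamma)=\gamma$); hence $h(\lambda)>0\iff\lambda>\gamma$. Combining, $B'(\lambda)=\Gamma''\,h/D^{2}$ is negative precisely when $h>0$, i.e.\ when $\lambda>\gamma$, and $A'(\lambda)=-\rho\,\Gamma''\,h/D^{2}$ is positive precisely when $\lambda>\gamma$, which chains the three equivalences and simultaneously shows $A$ is U-shaped and $B$ inverse-U-shaped with the common extremum at $\gamma$.

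The computation is elementary, so the only real pitfall is bookkeeping: getting $D'$ and the two algebraic cancellations right so that the raw expressions genuinely reduce to $\pm(\lambda-\Gamma)$ up to the factors $\rho$ and $\Gamma''$. I expect the main conceptual point to be simply ensuring the sign input is $\Gamma''<0$ (strict concavity of $\Gamma$, as the surrounding proofs invoke) rather than convexity, since that is what orients the two functions correctly; with that fixed, the $\lambda\gtrless\gamma$ threshold falls out immediately from the monotonicity of $h$.
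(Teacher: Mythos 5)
Your proposal is correct, and it is essentially the argument the paper has in mind: the paper's own ``proof'' of Lemma \ref{lem:U-shape_AB} is the single sentence that it ``follows from straightforward algebra which we omit here,'' and your computation supplies exactly that omitted verification. I checked the algebra: $D'(\lambda)=-\Gamma''(\lambda)(\lambda+\rho)$, the quotient-rule numerators do collapse to give $B'(\lambda)=\Gamma''(\lambda)\bigl(\lambda-\Gamma(\lambda)\bigr)/D(\lambda)^{2}$ and $A'(\lambda)=\rho\,\Gamma''(\lambda)\bigl(\Gamma(\lambda)-\lambda\bigr)/D(\lambda)^{2}$, and $h(\lambda)=\lambda-\Gamma(\lambda)$ is strictly increasing with unique zero at the fixed point $\gamma$, so the three equivalences chain as you state. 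Your identity $A(\lambda)+\rho B(\lambda)=1$ (the two numerators sum to $D$) is a genuine economy the paper does not mention, since it reduces the work to a single derivative. Two of your side remarks are also worth keeping: for $\rho=0$ one has $A\equiv 1$, so the $A$-part of the lemma is degenerate (the paper allows $\rho=0$ with $c>0$, so this caveat is substantive, not pedantic); and the sign input must indeed be $\Gamma''<0$ --- Assumption \ref{assu:Gamma} as printed says ``strictly convex,'' but the footnote deriving $\Gamma$ as the upper boundary of a convex feasible set, and every subsequent proof (the Crossing Lemma \ref{lem:single_crossing_general_gamma}, Lemma \ref{lem:Solution_lambda}, Lemma \ref{lem:extended_solution_lambda}) use concavity, so you read the assumption the way the paper actually uses it.
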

\begin{proof}
	The Lemma follows from straightforward algebra which we omit here. 
\end{proof}
\begin{lem}
	\label{lem:Solutions_exceed_lower_bound}Suppose Assumptions \ref{assu:Gamma}
	and \ref{assu:symmetric_payoffs} are satisfied. If $p\in(0,1),$
	$V(p)$ is continuously differentiable at $p$ and satisfies \eqref{eq:HJB_Gamma}
	with maximizer $\lambda(p)\neq\gamma$, then 
	\begin{equation}
	V(p)\ge A(\lambda(p))\overline{u}-B(\lambda(p))\,c\ge U^{S}(p)\label{eq:V(p)_as_function_of_lambda(p)}
	\end{equation}
	If $\lambda$ satisfies \eqref{eq:FOC_Gamma} at $p$, then the first
	inequality binds. The statement continues to hold if we replace $V$,
	$\lambda$, and \eqref{eq:HJB_Gamma}, by $V_{+}$, $\lambda_{+}$
	and \eqref{eq:HJB+_Gamma}, or $V_{-}$, $\lambda_{-}$ and \eqref{eq:HJB-_Gamma}. 
\end{lem}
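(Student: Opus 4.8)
The plan is to derive an exact algebraic identity expressing $V(p)$ as $A(\lambda(p))\overline{u}-B(\lambda(p))c$ plus a correction term proportional to the marginal $H_\lambda:=\partial H/\partial\lambda$ evaluated at the maximizer $\lambda(p)$, and then to sign that correction. Throughout I use Assumption \ref{assu:symmetric_payoffs}, under which $u_r^R=u_\ell^L=\overline{u}$ and $U^{*}(p)\equiv\overline{u}$, so that $U^{S}(p)=(\gamma\overline{u}-c)/(\gamma+\rho)$ is constant in $p$.

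First I would eliminate $V'(p)$. Writing \eqref{eq:HJB_Gamma} at $\lambda=\lambda(p)$ together with the marginal
\[
H_\lambda=p(\overline{u}-V)+\Gamma'(1-p)(\overline{u}-V)-p(1-p)(1-\Gamma')V',
\]
I solve this second relation for $p(1-p)(1-\Gamma')V'$ and substitute into the drift term $-p(1-p)(\lambda-\Gamma)V'$ of the HJB. The coefficient of $(\overline{u}-V)$ collapses, via the cancellation
\[
\lambda p+\Gamma(1-p)-\frac{(\lambda-\Gamma)\,[\,p+\Gamma'(1-p)\,]}{1-\Gamma'}=\frac{\Gamma-\lambda\Gamma'}{1-\Gamma'}=:\Phi(\lambda),
\]
so that $c+\rho V=\Phi(\lambda)(\overline{u}-V)+\tfrac{\lambda-\Gamma}{1-\Gamma'}H_\lambda$. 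Solving for $V$ and using $\Phi/(\rho+\Phi)=A$ and $1/(\rho+\Phi)=B$ yields the key identity
\[
V(p)=A(\lambda(p))\overline{u}-B(\lambda(p))c+\frac{\lambda(p)-\Gamma(\lambda(p))}{\bigl(\rho+\Phi(\lambda(p))\bigr)\bigl(1-\Gamma'(\lambda(p))\bigr)}\,H_\lambda .
\]
When $\lambda(p)$ is interior and satisfies \eqref{eq:FOC_Gamma}, $H_\lambda=0$ and the first inequality binds, proving that clause.

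Next I would sign the correction. Since $\Gamma$ is strictly decreasing and convex, $1-\Gamma'>0$ and $\Gamma-\lambda\Gamma'>0$, so $\Phi>0$ and the denominator is strictly positive. The substance is that $(\lambda-\Gamma)$ and $H_\lambda$ share a sign at any maximizer. Because $\gamma\in(0,1)$ is the fixed point of $\Gamma$, at the corner $\lambda(p)=1$ we have $\lambda-\Gamma=1>0$ and optimality at the upper endpoint forces $H_\lambda\ge 0$; at $\lambda(p)=0$ we have $\lambda-\Gamma=-1<0$ and optimality at the lower endpoint forces $H_\lambda\le 0$; interior maximizers give $H_\lambda=0$. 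Hence $(\lambda-\Gamma)H_\lambda\ge 0$ always, the correction term is nonnegative, and $V(p)\ge A(\lambda(p))\overline{u}-B(\lambda(p))c$.

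Finally, for the second inequality I would invoke Lemma \ref{lem:U-shape_AB}: $A$ is minimized and $B$ maximized at $\lambda=\gamma$, so with $\overline{u}>0$ and $c\ge 0$ both $A(\lambda)\overline{u}$ and $-B(\lambda)c$ are minimized at $\gamma$, giving $A(\lambda)\overline{u}-B(\lambda)c\ge A(\gamma)\overline{u}-B(\gamma)c$. Since $\Gamma(\gamma)=\gamma$ and $\Gamma'(\gamma)=-1$ yield $A(\gamma)=\gamma/(\gamma+\rho)$ and $B(\gamma)=1/(\gamma+\rho)$, the right-hand side equals exactly the constant $U^{S}(p)$, closing the chain. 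The $V_+$ (resp. $V_-$) version follows verbatim, since the same identity holds with the maximizer confined to $[\gamma,1]$ (resp. $[0,\gamma]$) and at the endpoint $\lambda=\gamma$ the factor $\lambda-\Gamma$ vanishes so the correction is zero irrespective of the sign of $H_\lambda$. The main obstacle is organizational rather than deep: executing the $V'$-elimination so the coefficient of $(\overline{u}-V)$ reduces precisely to $\Phi(\lambda)$ and recognizing $\Phi/(\rho+\Phi)$, $1/(\rho+\Phi)$ as $A,B$; the one genuinely delicate point is the sign alignment of $\lambda-\Gamma$ and $H_\lambda$, where the location $\gamma\in(0,1)$ of the fixed point does the essential work.
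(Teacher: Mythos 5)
Your proof is correct and follows essentially the same route as the paper's: your quantity $H_\lambda$ is exactly the paper's $X$, and your identity $V(p)=A(\lambda(p))\overline{u}-B(\lambda(p))\,c+\frac{(\lambda-\Gamma)\,H_\lambda}{(\rho+\Phi)(1-\Gamma')}$ coincides with the paper's decomposition, since $(\rho+\Phi(\lambda))(1-\Gamma'(\lambda))=\Gamma(\lambda)-\Gamma'(\lambda)\lambda+\rho(1-\Gamma'(\lambda))$. The paper likewise signs the correction term via the Kuhn--Tucker conditions at the corners $\lambda\in\{0,1\}$ (where $\lambda-\Gamma(\lambda)$ and $H_\lambda$ share a sign) and closes the second inequality with Lemma \ref{lem:U-shape_AB} together with $A(\gamma)\overline{u}-B(\gamma)\,c=U^{S}(p)$, exactly as you do.
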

\begin{proof}[Proof of Lemma \ref{lem:Solutions_exceed_lower_bound}]
	We define the LHS of \eqref{eq:FOC_Gamma} as 
	\begin{equation}
	X:=\left(p+(1-p)\Gamma'(\lambda)\right)\left(\overline{u}-V(p)\right)-p(1-p)(1-\Gamma'(\lambda))V'(p).\label{eq:X_Gamma}
	\end{equation}
	Eliminating $V'(p)$ from \eqref{eq:ODE_V_Gamma} and \eqref{eq:X_Gamma}
	we obtain an expression for $V(p)$ in terms of $\lambda(p)$ and
	$X$: 
	\[
	V(p)=A(\lambda(p))\overline{u}-B(\lambda(p))\,c+\frac{X\left(\lambda-\Gamma(\lambda(p))\right)}{\Gamma(\lambda)-\Gamma'(\lambda)\,\lambda+\rho\left(1-\Gamma'(\lambda)\right)}.
	\]
	
	If $\lambda(p)$ is a maximizer in \eqref{eq:HJB_Gamma}, we must
	have 
	\[
	X\begin{cases}
	\ge0 & \text{if }\lambda=1,\\
	=0 & \text{if }\lambda\in(0,1),\\
	\le0 & \text{if }\lambda=0.
	\end{cases}
	\]
	Since $\lambda=1$ implies $\lambda-\Gamma(\lambda(p))>0$ and $\lambda=0$
	implies $\lambda-\Gamma(\lambda(p))<0$ we have 
	\[
	V(p)\ge A(\lambda(p))\overline{u}-B(\lambda(p))\,c,
	\]
	and the inequality holds with equality if $X=0$ which is equivalent
	to $\lambda$ satisfying \eqref{eq:FOC_Gamma}. This proves the first
	inequality and the first statement.
	
	The second inequality follows from Lemma \ref{lem:U-shape_AB} and
	$A(\gamma)\overline{u}-B(\gamma)\,c=U^{S}(p)$, which is obtained
	from straightforward algebra. It is straightforward to adapt the proofs
	to $V_{+}$ and $V_{-}$. 
\end{proof}
Using Lemma \ref{lem:Solutions_exceed_lower_bound} we can obtain
an ODE for $\lambda$ that holds whenever the optimal policy is interior,
i.e., it satisfies \eqref{eq:FOC_Gamma}. 
\begin{lem}
	\label{lem:ODE_lambda}Suppose Assumptions \ref{assu:Gamma} and \ref{assu:symmetric_payoffs}
	are satisfied. If $p\in(0,1),$ $V$ is continuously differentiable
	at $p$ and satisfies \eqref{eq:ODE_V_Gamma} and the maximizer is
	$\lambda(p)\neq\gamma$ and satisfies \eqref{eq:FOC_Gamma} at $p$,
	then 
	\begin{equation}
	\lambda'(p)=\frac{\left[p+(1-p)\Gamma'(\lambda(p))\right]\left[\Gamma(\lambda(p))-\Gamma'(\lambda(p))\,\lambda(p)+\rho\left(1-\Gamma'(\lambda(p))\right)\right]}{p(1-p)\left(\Gamma(\lambda(p))-\lambda(p)\right)\Gamma''(\lambda(p))}.\label{eq:ODE_lamda}
	\end{equation}
	The statement continues to hold if we replace $V$ and $\lambda$,
	by $V_{+}$ and $\lambda_{+}$, or $V_{-}$ and $\lambda_{-}$. 
\end{lem}
\begin{proof}[Proof of Lemma \ref{lem:ODE_lambda}]
	If $\lambda(p)\neq\gamma$ satisfies \eqref{eq:FOC_Gamma}, then
	by Lemma \ref{lem:Solutions_exceed_lower_bound} 
	\begin{align*}
	V(p) & =A(\lambda(p))\overline{u}-B(\lambda(p))\,c,\\
	\text{and}\quad V'(p) & =A'(\lambda(p))\lambda'(p)\overline{u}-B'(\lambda(p))\lambda'(p)\,c.
	\end{align*}
	Inserting these two equations in \eqref{eq:ODE_V_Gamma} and solving
	for $\lambda'(p)$ we get \eqref{eq:ODE_lamda} 
\end{proof}
Next, we state a Lemma that identifies conditions under which the
solution to \eqref{eq:ODE_lamda} remains bounded away from $\lambda=0$
or $\lambda=1$. 
\begin{lem}
	\label{lem:lambdaprime_at_upper_bound}Suppose Assumptions \ref{assu:Gamma}
	and \ref{assu:symmetric_payoffs} are satisfied. Then there exists
	function $p^{1}(x)>1/2$ for $x>\gamma$ and $p^{0}(x)<1/2$ for $x<\gamma$
	such that 
	\begin{align*}
	\lambda(p)=\lambda_{+} & >\gamma\quad\Rightarrow\quad\left\{ \lambda'(p)<0\;\iff\;p<p^{1}(\lambda_{+})\right\} ,\\
	\lambda(p)=\lambda_{-} & <\gamma\quad\Rightarrow\quad\left\{ \lambda'(p)>0\;\iff\;p>p^{0}(\lambda_{-})\right\} .
	\end{align*}
\end{lem}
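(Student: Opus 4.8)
The plan is to read the sign of $\lambda'(p)$ directly off the closed-form expression \eqref{eq:ODE_lamda} furnished by Lemma \ref{lem:ODE_lambda}, which is valid precisely where the interior first-order condition \eqref{eq:FOC_Gamma} holds. Viewing that expression as a quotient of factors, the only factor that depends on $p$ and can change sign is the numerator bracket $N_1(p,\lambda):=p+(1-p)\Gamma'(\lambda)$; the remaining $p$-dependence is the strictly positive $p(1-p)$ in the denominator. Hence the whole statement reduces to two tasks: (i) fixing the signs of the $\lambda$-only factors, and (ii) locating the zero of $N_1$ in $p$ and comparing it with $1/2$.

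First I would sign the $\lambda$-only factors using Assumption \ref{assu:Gamma}. Since $\Gamma$ is strictly decreasing we have $\Gamma'(\lambda)<0$, so $1-\Gamma'(\lambda)>0$ and the second numerator bracket $\Gamma(\lambda)-\Gamma'(\lambda)\lambda+\rho(1-\Gamma'(\lambda))$ is a sum of nonnegative terms, hence strictly positive for $\lambda\in(0,1)$. Strict convexity gives $\Gamma''(\lambda)>0$. Finally, because $\Gamma$ meets the diagonal exactly once, at its fixed point $\gamma$, crossing it from above, the denominator factor $\Gamma(\lambda)-\lambda$ has the same sign as $\gamma-\lambda$. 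Collecting these, $\operatorname{sgn}\lambda'(p)=\operatorname{sgn}N_1(p,\lambda)\cdot\operatorname{sgn}(\gamma-\lambda)$.

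Next I would locate the threshold. The map $N_1(\cdot,\lambda)$ is affine and strictly increasing in $p$ (slope $1-\Gamma'(\lambda)>0$), runs from $\Gamma'(\lambda)<0$ at $p=0$ to $1$ at $p=1$, and therefore vanishes at the single point
\[
\hat p(\lambda)=\frac{-\Gamma'(\lambda)}{1-\Gamma'(\lambda)}\in(0,1),
\]
which serves as $p^{1}$ when $\lambda=\lambda_+>\gamma$ and as $p^{0}$ when $\lambda=\lambda_-<\gamma$. Its position relative to $1/2$ is the crux: writing $\hat p=t/(1+t)$ with $t:=-\Gamma'(\lambda)$, one has $\hat p\gtrless 1/2\iff t\gtrless 1\iff\Gamma'(\lambda)\lessgtr -1$, and the tangency condition $\Gamma'(\gamma)=-1$ with $\Gamma'$ increasing (strict convexity) converts this into the dichotomy $\lambda\lessgtr\gamma$, giving the asserted placement of the two threshold functions on either side of $1/2$. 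Monotonicity of $\hat p$ (it is decreasing in $\lambda$, since $-\Gamma'$ is decreasing and $t\mapsto t/(1+t)$ is increasing) simultaneously produces the functions $p^{1}(\cdot),p^{0}(\cdot)$ and their regularity.

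Combining the two sign computations on each side of $\gamma$ yields the stated iff on the sign of $\lambda'$. I expect the main obstacle to be purely the sign bookkeeping: establishing strict positivity of the second numerator bracket for all interior $\lambda$, and, above all, correctly determining on which side of $\hat p(\lambda)$ the derivative $\lambda'$ is negative while simultaneously confirming the threshold's side of $1/2$ via $\Gamma'(\gamma)=-1$ and convexity — these two orientation checks are where an error would most easily creep in. As a safeguard I would cross-check $\operatorname{sgn}\lambda'$ through the representation $V=A(\lambda)\overline u-B(\lambda)c$ of Lemma \ref{lem:Solutions_exceed_lower_bound}: differentiating it, substituting \eqref{eq:FOC_Gamma}, and using the $U$-shape of $A,B$ from Lemma \ref{lem:U-shape_AB} to evaluate $\operatorname{sgn}(A'\overline u-B'c)$, the resulting sign of $\lambda'$ must agree with the one read off \eqref{eq:ODE_lamda}, which guards against a slipped sign.
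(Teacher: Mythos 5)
Your skeleton is exactly the paper's: read $\operatorname{sgn}\lambda'(p)$ off \eqref{eq:ODE_lamda}, observe that the only $p$-dependent, sign-changing factor is $N_1(p,\lambda)=p+(1-p)\Gamma'(\lambda)$, solve $N_1=0$ to get the threshold $\hat p(\lambda)=\left|\Gamma'(\lambda)\right|/\bigl(1+\left|\Gamma'(\lambda)\right|\bigr)$, and place it relative to $1/2$ via $\Gamma'(\gamma)=-1$. But there is a genuine orientation error that makes your argument prove the \emph{opposite} of the lemma. You take the word ``strictly convex'' in Assumption \ref{assu:Gamma} at face value and set $\Gamma''>0$ with $\Gamma'$ increasing. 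Under those signs your own computation gives $\operatorname{sgn}\lambda'(p)=\operatorname{sgn}N_1(p,\lambda)\cdot\operatorname{sgn}(\gamma-\lambda)$, so for $\lambda_+>\gamma$ you get $\lambda'(p)<0\iff N_1>0\iff p>\hat p(\lambda_+)$; moreover $\Gamma'$ increasing through $-1$ at $\gamma$ forces $\left|\Gamma'(\lambda_+)\right|<1$, hence $\hat p(\lambda_+)<1/2$. Both the direction of the iff and the side of $1/2$ are reversed relative to the statement, yet your closing paragraph asserts the stated placement follows --- it does not from your premises. The lemma is true, and the paper's proof works, because $\Gamma$ is in fact \emph{concave}: the ``strictly convex'' wording is a slip, as the rest of the appendix makes unambiguous (the microfoundation footnote says weak convexity of the feasible set of arrival rates ``implies weak concavity'' of $\Gamma$; the proof of Lemma \ref{lem:Solution_lambda} uses $b=(\rho+\gamma)\Gamma''(\gamma)<0$ and $d=-\tfrac12\Gamma''(\gamma)>0$; the proof of Lemma \ref{lem:extended_solution_lambda} says ``since $\Gamma''<0$''). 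With $\Gamma''<0$ the denominator factor $(\Gamma(\lambda)-\lambda)\Gamma''(\lambda)$ is positive on both sides of $\gamma$, so $\operatorname{sgn}\lambda'(p)=\operatorname{sgn}N_1(p,\lambda)$, and $\Gamma'$ \emph{decreasing} through $-1$ gives $\left|\Gamma'(\lambda_+)\right|>1$ for $\lambda_+>\gamma$, hence $p^1(\lambda_+)=\hat p(\lambda_+)>1/2$ and $\lambda'(p)<0\iff p<p^1(\lambda_+)$, exactly as stated (symmetrically below $\gamma$).

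Two further remarks. First, the inconsistency was detectable from within your own write-up: a strictly convex $\Gamma$ with $\Gamma(0)=1$, $\Gamma(1)=0$ lies below the chord $1-\lambda$, which is incompatible with the economics of the section (the linear frontier $\Gamma(\lambda)=1-\lambda$ is the constant-returns baseline, and diminishing returns bow the frontier outward), and your proposed cross-check via $V=A(\lambda)\overline{u}-B(\lambda)c$ and Lemma \ref{lem:U-shape_AB} would have flagged the sign flip --- but you only announced that safeguard without executing it. Second, your positivity claim for the bracket $\Gamma(\lambda)-\Gamma'(\lambda)\lambda+\rho\bigl(1-\Gamma'(\lambda)\bigr)$ and your identification of $\operatorname{sgn}\bigl(\Gamma(\lambda)-\lambda\bigr)=\operatorname{sgn}(\gamma-\lambda)$ are correct and match the paper. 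So the repair is local --- replace $\Gamma''>0$ by $\Gamma''<0$ and rerun your two ``orientation checks'' --- but as written the proof does not establish the lemma.
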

\begin{proof}
	Inserting $\lambda(p)=\lambda_{+}>\gamma$ in \eqref{eq:ODE_lamda}
	yields 
	\begin{align*}
	\\
	\lambda'(p) & <0\\
	\iff\left[p+(1-p)\Gamma'(\lambda_{+})\right]\frac{\Gamma(\lambda_{+})-\Gamma'(\lambda_{+})\,\lambda_{+}+\rho\left(1-\Gamma'(\lambda_{+})\right)}{p(1-p)\left(\Gamma(\lambda(p))-\lambda(p)\right)\Gamma''(\lambda(p))} & <0\\
	\iff p+(1-p)\Gamma'(\lambda_{+}) & <0\\
	\iff p<p^{1}(\lambda_{+}) & =\frac{\left|\Gamma'(\lambda_{+})\right|}{1+\left|\Gamma'(\lambda_{+})\right|}
	\end{align*}
	Since $\left|\Gamma'(\lambda_{+})\right|>1$ $p^{1}>1/2$. The proof
	for $\lambda(p)=\lambda_{-}<\gamma$ is similar. 
\end{proof}
Next, we show the following property that relates sufficiency of the
FOC \eqref{eq:FOC_Gamma} to convexity of the value function. 
\begin{lem}
	\label{lem:Vct_Gamma_convexity}Suppose Assumptions \ref{assu:Gamma}
	and \ref{assu:symmetric_payoffs} are satisfied. 
	\begin{enumerate}
		\item Let $W:[0,1]\rightarrow\mathbb{R}$ be weakly convex and satisfy $W(p)=U(p)$
		in neighborhoods of $0$ and 1. Then $H(p,W(p),W'(p),\lambda)$ is
		weakly concave in $\lambda$ for all $p$ and strictly concave whenever
		$W(p)>U(p)$. 
		\item Let $\lambda(p)$ be a solution to \eqref{eq:ODE_lamda} such that
		$\lambda(p)\in(0,1)$ at some $p$. Let 
		\[
		\pi(\ell)=\frac{\left(\rho+\ell\right)\Gamma'(\ell)}{\left(\rho+\ell\right)\Gamma'(\ell)-\left(\rho+\Gamma(\ell)\right)}.
		\]
		Then 
		\[
		\frac{\partial^{2}\left[A(\lambda(p))\overline{u}-B(\lambda(p))c\right]}{\partial p^{2}}\ge0\quad\text{if }\,\begin{cases}
		& \lambda(p)>\gamma\text{ and }p\le\pi(\lambda(p)),\\
		\text{or} & \lambda(p)<\gamma\text{ and }p\ge\pi(\lambda(p)).
		\end{cases}
		\]
		$\pi(\ell)>1/2$ if $\ell>\gamma$, and $\pi(\ell)<1/2$ if $\ell<\gamma$. 
	\end{enumerate}
\end{lem}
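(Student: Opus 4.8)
The plan is to handle the two parts by different means: part (a) is a short second-derivative computation in $\lambda$ combined with a tangent-line (convexity) argument, whereas part (b) reduces to proving that the value function is convex along an interior arc and is essentially a long but highly structured calculation.

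For part (a), I would differentiate $H$ twice in $\lambda$, holding the numbers $W(p)$ and $W'(p)$ fixed. Only the terms carrying $\Gamma(\lambda)$ survive, giving
\[
\frac{\partial^{2} H(p,W(p),W'(p),\lambda)}{\partial\lambda^{2}}=(1-p)\,\Gamma''(\lambda)\,\bigl[\overline u-W(p)+pW'(p)\bigr],
\]
where $\overline u:=u_r^R=u_\ell^L$. Since $\Gamma$ is concave, $\Gamma''(\lambda)<0$, and $1-p>0$, so the sign is opposite to that of the bracket. The bracket equals $\overline u-L_p(0)$, with $L_p(q):=W(p)+(q-p)W'(p)$ the tangent line of $W$ at $p$; weak convexity of $W$ together with $W(0)=U(0)=\overline u$ (using $W=U=U_\ell$ near $0$) gives $L_p(0)\le W(0)=\overline u$, so the bracket is $\ge 0$ and $H$ is weakly concave in $\lambda$. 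For strictness when $W(p)>U(p)$, I would note that $L_p(0)=\overline u$ would force the tangent at $p$ to pass through $(0,W(0))$, which by convexity makes $W$ affine and equal to $U_\ell$ on $[0,p]$, yielding $W(p)=U_\ell(p)\le U(p)$, a contradiction; hence $L_p(0)<\overline u$ and the concavity is strict.

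For part (b), the first step is to recognize via Lemma \ref{lem:Solutions_exceed_lower_bound} that $A(\lambda(p))\overline u-B(\lambda(p))c$ equals the value $V(p)$ along an arc where the interior first-order condition \eqref{eq:FOC_Gamma} holds, so the claim is convexity of $V$ there. Writing $K(\lambda):=(\Gamma-\lambda\Gamma')/(1-\Gamma')$ one checks $A=K/(K+\rho)$, $B=1/(K+\rho)$, and eliminating $V'$ between \eqref{eq:FOC_Gamma} and \eqref{eq:ODE_V_Gamma} collapses to the compact identity $c+\rho V=(\overline u-V)K(\lambda)$, i.e. $\overline u-V=\psi(\lambda):=(\rho\overline u+c)/(\rho+K(\lambda))$. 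Differentiating this once and inserting the ODE \eqref{eq:ODE_lamda} produces the crucial cancellation $K'(\lambda)\lambda'(p)=\tfrac{n(\rho+K)}{(1-\Gamma')p(1-p)}$ with $n:=p+(1-p)\Gamma'$, giving the clean first derivative $V'(p)=\dfrac{(\rho\overline u+c)\,n}{(\rho+K)(1-\Gamma')\,p(1-p)}$.

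The heart of the argument, and the main obstacle, is differentiating $V'$ a second time and simplifying. I would again substitute \eqref{eq:ODE_lamda} for $\lambda'(p)$ and use three identities to collapse the expression: $(1-\Gamma')+\tfrac{n}{1-p}=\tfrac{1}{1-p}$; the quadratic identity $(1-\Gamma')^2 p(1-p)-n\tilde n=-\Gamma'$ with $\tilde n:=(1-p)+p\Gamma'$; and $(1-\Gamma')(\rho+K)=(\rho+\Gamma)-\Gamma'(\rho+\lambda)$. After these steps the result factors as
\[
V''(p)=\frac{(\rho\overline u+c)(1-\Gamma')\bigl\{[(\rho+\Gamma)-\Gamma'(\rho+\lambda)]\,p+\Gamma'(\rho+\lambda)\bigr\}}{(\rho+K)(1-\Gamma')^{2}[p(1-p)]^{2}\,(\Gamma-\lambda)}.
\]
Every denominator factor except $\Gamma-\lambda$ is positive; the braced numerator is affine in $p$ with slope $(\rho+\Gamma)-\Gamma'(\rho+\lambda)>0$ and unique root exactly $\pi(\lambda)=\frac{(\rho+\lambda)\Gamma'}{(\rho+\lambda)\Gamma'-(\rho+\Gamma)}$, so it is $\ge0$ iff $p\ge\pi(\lambda)$, while $\operatorname{sgn}(\Gamma-\lambda)=\operatorname{sgn}(\gamma-\lambda)$. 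Combining the two signs yields $V''\ge0$ precisely when $\lambda>\gamma,\ p\le\pi(\lambda)$ or $\lambda<\gamma,\ p\ge\pi(\lambda)$, as claimed. The bounds $\pi(\ell)\gtrless 1/2$ for $\ell\gtrless\gamma$ then follow from $\pi(\ell)=\frac{(\rho+\ell)|\Gamma'(\ell)|}{(\rho+\ell)|\Gamma'(\ell)|+(\rho+\Gamma(\ell))}$ together with $|\Gamma'(\ell)|\gtrless1$ and $\ell\gtrless\Gamma(\ell)$ for $\ell\gtrless\gamma$. I expect the sole difficulty to lie in carrying out the second differentiation cleanly; the identities above are what make the final factorization, and in particular the emergence of $\pi(\lambda)$ as the root, transparent.
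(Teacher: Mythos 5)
Your proof is correct and follows essentially the same route as the paper's: part (a) is the same computation $\partial^{2}H/\partial\lambda^{2}=(1-p)\,\Gamma''(\lambda)\left[\overline{u}-W(p)+pW'(p)\right]$ combined with the tangent-line bound $W(p)-pW'(p)\le\overline{u}$ (strict when $W(p)>U(p)$), and part (b) likewise differentiates $A(\lambda(p))\overline{u}-B(\lambda(p))c$ twice, substitutes \eqref{eq:ODE_lamda}, and reduces the sign of the second derivative to that of $p\left(\rho+\Gamma(\lambda)\right)+(1-p)\left(\rho+\lambda\right)\Gamma'(\lambda)$ against $\Gamma(\lambda)-\lambda$, which is exactly the expression appearing in the paper's proof, with root $\pi(\lambda)$. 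Your explicit closed form for $V''$ is algebraically correct (the identity $c+\rho V=(\overline{u}-V)K(\lambda)$ and the cancellation $K'(\lambda)\lambda'(p)=n(\rho+K)/\left[(1-\Gamma')p(1-p)\right]$ both check out), and its sign conclusions match the lemma statement and its later use in the opposite-biased construction, whereas the paper's final displayed equivalence, which states concavity iff $(\lambda-\Gamma)\left(p\left[\rho+\Gamma\right]+(1-p)\left[\rho+\lambda\right]\Gamma'\right)<0$, carries a sign typo under the intended convention $\Gamma''<0$ (it should read $>0$), so your worked-out version is in fact the cleaner record of the argument.
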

\begin{proof}
	(a) Some algebra yields 
	\[
	\frac{\partial^{2}H(p,W(p),W'(p),\lambda)}{\partial\lambda^{2}}\le0\quad\iff W(p)-pW'(p)\le U^{S}.
	\]
	The latter inequality is satisfied under the assumptions on $W$ and
	both are strict if $W(p)>U(p)$.
	
	(b) Differentiating $A(\lambda(p))\overline{u}-B(\lambda(p))c$ with
	respect to $p$, substituting $\lambda'(p)$ from \eqref{eq:ODE_lamda}
	and differentiating again yields (after some algebra): 
	\begin{align*}
	\frac{\partial^{2}\left[A(\lambda(p))\overline{u}-B(\lambda(p))c\right]}{\partial p^{2}} & <0\\
	\iff-\frac{\left(p^{2}-(1-p)^{2}\Gamma'(\lambda(p))\right)\left(\rho+\Gamma(\lambda(p))-(\rho+\lambda(p))\Gamma'(\lambda(p))\right)}{p(1-p)\left(\rho+p\lambda(p)+(1-p)\Gamma(\lambda(p))\right)\Gamma''(\lambda(p))} & >\lambda'(p).
	\end{align*}
	Substituting $\lambda'(p)$ from \eqref{eq:ODE_lamda} in the last
	line and rearranging we get 
	\[
	\left(\lambda(p)-\Gamma(\lambda(p))\right)\left(p\left[\rho+\Gamma(\lambda(p))\right]+(1-p)\left[\rho+\lambda(p)\right]\Gamma'(\lambda(p))\right)<0.
	\]
	Solving for $p$ this yields an upper bound if $\lambda(p)>\gamma$
	so that the first term is positive and a lower bound if $\lambda(p)<\gamma$.
	The bound is $\pi(\lambda(p))$ in both cases. If $\ell>\gamma>\Gamma(\ell)$
	we have 
	\begin{align*}
	\pi(\ell) & =\frac{\left(\rho+\ell\right)\left|\Gamma'(\ell)\right|}{\left(\rho+\ell\right)\left|\Gamma'(\ell)\right|+\left(\rho+\Gamma(\ell)\right)}\\
	& >\frac{\left(\rho+\ell\right)\left|\Gamma'(\ell)\right|}{\left(\rho+\ell\right)\left|\Gamma'(\ell)\right|+\left(\rho+\ell\right)}\\
	& =\frac{\left|\Gamma'(\ell)\right|}{\left|\Gamma'(\ell)\right|+1}\\
	& >1/2.
	\end{align*}
	where the last step follows because Assumption \ref{assu:Gamma} implies
	that $\left|\Gamma'(\ell)\right|>1$ if $\ell>\gamma$. Similarly
	we obtain $\pi(\ell)<1/2$ if $\ell<\gamma$. 
\end{proof}

\subsubsection{Solution Candidates}

\paragraph{Own-Biased Learning}

The first candidate is obtained by assuming that the DM uses an own-biased
attention strategy. In contrast to our original model, where we choose
$\lambda\in\{0,1\}$, we will now also use interior values for $\lambda$.
In an own-biased strategy, the DM may now receive breakthrough news
for both states but with a higher likelihood in the state that she
find relatively unlikely. For instance, for low posterior beliefs
$p$, the own-biased strategy involves $\lambda>\gamma$. At the
same time, the belief moves in the same direction as the initial bias
if now breakthrough arrives: $\dot{p}_{t}<0$ if $\lambda>\gamma$.
We have already identified the boundaries of the experimentation region. 
\begin{lem}
	\label{lem:contra_cutoffs_Gamma}Suppose \eqref{eq:EXPG} is satisfied.
	Then $\underline{p}^{*}$ and $\overline{p}^{*}$ satisfy 
	\begin{align}
	p^{*}= & \inf\left\{ p\in[0,\hat{p})\,\middle|\,c+\rho U_{\ell}(p)\le\max_{\lambda\in[\gamma,1]}\begin{Bmatrix}\begin{array}{l}
	\left(\lambda p+\Gamma(\lambda)(1-p)\right)\left(\overline{u}-U_{\ell}(p)\right)\\
	-p(1-p)(\lambda-\Gamma(\lambda))U_{\ell}'(p)
	\end{array}\end{Bmatrix}\right\} ,\label{eq:def_pl*_general_Gamma}\\
	p^{*}= & \sup\left\{ p\in(\hat{p},1]\,\middle|\,c+\rho U_{r}(p)\le\max_{\lambda\in[0,\gamma]}\begin{Bmatrix}\begin{array}{l}
	\left(\lambda p+\Gamma(\lambda)(1-p)\right)\left(\overline{u}-U_{r}(p)\right)\\
	-p(1-p)(\lambda-\Gamma(\lambda))U_{r}'(p)
	\end{array}\end{Bmatrix}\right\} ,\label{eq:def_ph*_general_Gamma}
	\end{align}
	and the maximizers on the right-hand side are given by $\lambda=1$
	and $\lambda=0$, respectively. Moreover, 
	\begin{align*}
	U_{\ell}(\underline{p}^{*}) & \ge A(1)\overline{u}-B(1)c,\\
	\text{and}\quad U_{r}(\overline{p}^{*}) & \ge A(1)\overline{u}-B(1)c.
	\end{align*}
	The first inequality is strict if and only if $\Gamma'(1)$ is finite.
	The second is strict if and only if $\Gamma'(0)<0$. 
\end{lem}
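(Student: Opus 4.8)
The plan is to exploit the symmetric-payoff structure of Assumption \ref{assu:symmetric_payoffs}, under which $U^{*}(p)\equiv\overline{u}$ and hence $U^{FA}(p)=(\overline{u}-c)/(1+\rho)$ is \emph{constant}, together with the linearity of $U_{\ell}$ and $U_{r}$. The two displayed conditions (the first defining $\underline{p}^{*}$, the second $\overline{p}^{*}$) are HJB unimprovability tests that insert the immediate-action payoffs as continuation values, so the first task is to locate the maximizer over $\lambda$ in each test; once it is shown to be a corner, the inequalities collapse to explicit affine-in-$p$ conditions that I can match against the intersection definitions \eqref{eq:pLs_Gamma}--\eqref{eq:pHs_Gamma}.

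I would first establish the corner claims (part (ii)), which I expect to be the technical crux since this is where strict convexity of $\Gamma$ and $\Gamma'(\gamma)=-1$ do the work. Evaluating $H$ along $V=U_{\ell}$, a direct computation gives $\partial_{\lambda}^{2}H\big(p,U_{\ell}(p),U_{\ell}'(p),\lambda\big)=\Gamma''(\lambda)(1-p)\big[(\overline{u}-U_{\ell}(p))+pU_{\ell}'(p)\big]$, and the bracket vanishes identically because $U_{\ell}$ is linear with $U_{\ell}'=\underline{u}-\overline{u}$; thus $H$ is affine in $\lambda$ along $U_{\ell}$ with constant slope $\partial_{\lambda}H=p(\overline{u}-\underline{u})>0$, so the maximum over $\lambda\in[\gamma,1]$ is attained at $\lambda=1$. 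For $V=U_{r}$ the same formula yields the bracket $(\overline{u}-U_{r})+pU_{r}'=\overline{u}-\underline{u}>0$, so $H$ is strictly convex in $\lambda$; evaluating the slope at $\lambda=\gamma$ (where $\Gamma'(\gamma)=-1$ by Assumption \ref{assu:Gamma}) gives $\partial_{\lambda}H=-(1-p)(\overline{u}-\underline{u})<0$, and since convexity makes $\partial_{\lambda}H$ increasing, this forces $\partial_{\lambda}H<0$ throughout $[0,\gamma]$, so the maximum over $\lambda\in[0,\gamma]$ is at $\lambda=0$.

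With the maximizers pinned down, part (i) follows by evaluation. At $\lambda=1$ one has $H(p,U_{\ell},U_{\ell}',1)=p(\overline{u}-\underline{u})$, so the test $c+\rho U_{\ell}(p)\le p(\overline{u}-\underline{u})$ reduces, using $U_{\ell}(p)=\overline{u}-p(\overline{u}-\underline{u})$, to $p\ge(\rho\overline{u}+c)/\big((1+\rho)(\overline{u}-\underline{u})\big)$; the infimum over $[0,\hat{p})$ of this set is exactly that threshold, which I would then check equals the $\underline{p}^{*}$ defined by $U^{FA}(\underline{p}^{*})=U_{\ell}(\underline{p}^{*})$ in \eqref{eq:pHs_Gamma}. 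The companion computation at $\lambda=0$ delivers the symmetric threshold and identifies $\overline{p}^{*}$. Here I invoke \eqref{eq:EXPG} (through the preceding proposition) to guarantee $\underline{p}^{*}<\hat{p}<\overline{p}^{*}$, so that the infimum and supremum are interior rather than attained at $\hat{p}$.

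Finally, part (iii) is a short comparison once $A(1),B(1)$ are evaluated from $\Gamma(1)=0$ and $m:=|\Gamma'(1)|$: one gets $A(1)\overline{u}-B(1)c=\big(m\overline{u}-(1+m)c\big)/\big(m(1+\rho)+\rho\big)$, whence $U_{\ell}(\underline{p}^{*})-\big(A(1)\overline{u}-B(1)c\big)=U^{FA}-\big(A(1)\overline{u}-B(1)c\big)=(\rho\overline{u}+c)/\big((1+\rho)(m(1+\rho)+\rho)\big)$. This is strictly positive for finite $m$ and vanishes as $m\to\infty$, giving the inequality together with the ``strict iff $\Gamma'(1)$ finite'' claim for the left boundary. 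For the right boundary I would use the involution $\Gamma\circ\Gamma=\mathrm{id}$ (valid since $\Gamma$ is built from $g$), which yields $\Gamma'(1)\Gamma'(0)=1$ and hence $A(0)=A(1)$, $B(0)=B(1)$; the identical difference then governs $U_{r}(\overline{p}^{*})$, with strictness now equivalent to $\Gamma'(0)<0$.
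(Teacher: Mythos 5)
Your overall architecture is the paper's: differentiate the objective in $\lambda$ along the immediate-action payoffs to locate corner maximizers, reduce each test to an affine inequality in $p$ whose threshold is the intersection of $U^{FA}$ with $U_{\ell}$ (resp.\ $U_{r}$), and then compare with $A(1)\overline{u}-B(1)c$. Your part (i) computations and the left-boundary difference $(\rho\overline{u}+c)/\bigl((1+\rho)(m(1+\rho)+\rho)\bigr)$ are correct and match the paper's proof, which states the same comparison as $1/\bigl((1+\rho)\Gamma'(1)-\rho\bigr)\le 0$. The first genuine problem is your corner argument for $U_{r}$. You take Assumption \ref{assu:Gamma}'s ``strictly convex'' literally and conclude that $H$ is strictly convex in $\lambda$ along $U_{r}$, so that the negative slope at $\gamma$ propagates down to all of $[0,\gamma]$. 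But ``strictly convex'' there is evidently a typo for strictly concave: $\Gamma$ is the upper frontier of a convex set of feasible arrival rates (and $\Gamma=g(1-g^{-1}(\cdot))$ with $g$ concave is concave), the paper's proof of the Crossing Lemma \ref{lem:single_crossing_general_gamma} invokes ``Since $\Gamma$ is concave,'' and the saddle-point computation in Lemma \ref{lem:Solution_lambda} requires $\Gamma''(\gamma)<0$. Under the operative sign your monotone-slope step reverses: with $H$ concave in $\lambda$, a negative slope at $\gamma$ is compatible with a positive slope, hence an interior maximizer, on $[0,\gamma)$. The conclusion is nonetheless true and needs no curvature argument at all: under Assumption \ref{assu:symmetric_payoffs} the $\Gamma'$-terms combine exactly, giving $\partial_{\lambda}H\bigl(p,U_{r}(p),U_{r}'(p),\lambda\bigr)=(1-p)(\overline{u}-\underline{u})\,\Gamma'(\lambda)<0$ for every $\lambda$, so $\lambda=0$ maximizes on $[0,\gamma]$; this is the same exact cancellation you correctly found for $U_{\ell}$ (constant slope $p(\overline{u}-\underline{u})>0$), and it is precisely how the paper argues (``the derivative of the objective function simplifies to\dots'').

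Second, your right-hand inequality in part (iii) rests on the involution $\Gamma\circ\Gamma=\mathrm{id}$, via $\Gamma'(1)\Gamma'(0)=1$ and $A(0)=A(1)$, $B(0)=B(1)$. That identity holds when $\Gamma$ is derived from a function $g$ (the paper notes exactly this after Assumption \ref{assu:Gamma}), but it is not implied by Assumption \ref{assu:Gamma}, which is all the lemma assumes: nothing there forces the frontier to be symmetric about the $45$-degree line. The paper avoids this by computing the right boundary directly, obtaining the condition $\Gamma'(0)/\bigl(1+\rho-\rho\Gamma'(0)\bigr)\le 0$, strict iff $\Gamma'(0)<0$; equivalently, with $k:=-\Gamma'(0)$, the relevant difference is $U_{r}(\overline{p}^{*})-\bigl(A(0)\overline{u}-B(0)c\bigr)=k(\rho\overline{u}+c)/\bigl((1+\rho)(1+\rho(1+k))\bigr)\ge 0$. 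Your involution observation is a worthwhile remark---it explains why the lemma can state both bounds with the same constant $A(1)\overline{u}-B(1)c$---but a proof valid under the stated hypotheses should compare with $A(0)\overline{u}-B(0)c$ directly, as above.
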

\begin{proof}[Proof of Lemma \ref{lem:contra_cutoffs_Gamma}]
	We only give the proof for $\underline{p}^{*}$, the other case is
	symmetric. Consider the maximization problem in \eqref{eq:def_pl*_general_Gamma}.
	The derivative of the objective function simplifies to $p\left(\overline{u}-\underline{u}\right)$.
	Therefore we can set $\lambda=1$ and \eqref{eq:def_pl*_general_Gamma}
	reduces to the definition via smooth pasting and value matching as
	in our original model.
	
	The first inequality is equivalent to 
	\[
	\frac{1}{(1+\rho)\Gamma'(1)-\rho}\le0,
	\]
	which holds under Assumption \ref{assu:Gamma}. The inequality is
	strict if and only if $\Gamma'(1)$ is finite. The second inequality
	is equivalent to 
	\[
	\frac{\Gamma'(0)}{1+\rho-\rho\Gamma'(0)}\le0,
	\]
	which is strict if and only if $\Gamma'(0)<0$. 
\end{proof}
We are now ready to define the opposite-biased strategy. Given that
we impose Assumption \ref{assu:Gamma}, we only describe the construction
for the left branch which is used for $p\le1/2$. There are up to
four intervals where the opposite-biased strategy takes a different
form. First, for $p\le\underline{p}^{*}$, the DM takes immediate
action. Then there is an interval $(\underline{p}^{*},\underline{q}^{b}]$
where the DM uses the own-biased strategy from our original model.
$\underline{q}^{b}$ is given by 
\begin{align*}
\frac{\partial H(\underline{q}^{b},\underline{V}_{own}(\underline{q}^{b}),\underline{V}'_{own}(\underline{q}^{b}),1)}{\partial\lambda} & =0.
\end{align*}
Rearranging this we get 
\[
\frac{(1+\rho)\Gamma'(\underline{q}^{b})}{\rho-(1+\rho)\Gamma'(\underline{q}^{b})}+\underline{q}^{b}+(1-\underline{q}^{b})\left(\frac{1-\underline{q}^{b}}{\underline{q}^{b}}\frac{\underline{p}^{*}}{1-\underline{p}^{*}}\right)^{\rho}=0,
\]
which is equivalent to 
\begin{align*}
\underline{V}_{opp}(\underline{q}^{b}) & =A(1)\overline{u}-B(1)c.
\end{align*}

By Lemma \ref{lem:contra_cutoffs_Gamma}, $\underline{q}^{b}=\underline{p}^{*}$
if $\Gamma'(1)$ is infinite and otherwise $\underline{q}^{b}>\underline{p}^{*}$.
If $\underline{q}^{b}\ge1/2$ we define the own-biased strategy
as in our original model. If $\underline{q}^{b}<1/2$, Lemma \ref{lem:lambdaprime_at_upper_bound}
implies that $\lambda'(\underline{q}^{b})<0$ if we impose the boundary
condition $\lambda(\underline{q}^{b})=1$. Denote the unique solution
for $p\ge\underline{q}^{b}$ to \eqref{eq:ODE_lamda} with $\lambda(\underline{q}^{b})=1$
by $\lambda(p;\underline{q}^{b},1)$. Since by Lemma \ref{lem:lambdaprime_at_upper_bound},
$\lambda'(p;p,1)<0$ for all $p\le1/2$, we have $\lambda(p;\underline{q}^{b},1)<1$
for $p\in(\underline{q}^{b},1/2)$. Finally we need to take care of
the possibility that there exists $\underline{q}^{s}\in(\underline{q}^{b},1/2]$
such that $\lambda(p;\underline{q}^{b},1)=\gamma$. If no such $\underline{q}^{s}$
exists we set $\underline{q}^{s}=1/2$. If Assumption \ref{assu:symmetric_payoffs}
is satisfied, a symmetric construction can be used for the right branch
with cutoffs $\overline{q}^{b}=1-\underline{q}^{b}$ and $\overline{q}^{s}=1-\underline{q}^{s}$.

We thus define the opposite biased strategy as follows. For $p\notin\left(\underline{p}^{*},\overline{p}^{*}\right)$:
take the optimal immediate action. For $p\in\left(\underline{p}^{*},\overline{p}^{*}\right)$,
experiment according to the following attention strategy: 
\[
\lambda_{own}^{\Gamma}(p)=\begin{cases}
1, & \text{if }p\in(\underline{p}^{*},\underline{q}^{b}],\\
\lambda(p;\underline{q}^{b},1), & \text{if }p\in(\underline{q}^{b},\underline{q}^{s}],\\
\gamma, & \text{if }p\in(\underline{q}^{s},\overline{q}^{s}),\\
\lambda(p;\overline{q}^{b},0), & \text{if }p\in[\overline{q}^{s},q^{b}),\\
0, & \text{if }p\in[q^{b},\overline{p}^{*}),
\end{cases}
\]
and take an action corresponding to the signal if one is received.\footnote{If $\underline{q}^{s}=\overline{q}^{s}$, $\lambda_{own}^{\Gamma}(\overline{q}^{s})\in\left\{ \lambda(\underline{q}^{s};\underline{q}^{b},1),\lambda(\underline{q}^{s};\overline{q}^{b},0)\right\} $
	with an arbitrary tie-breaking rule.} Note that by Lemma \ref{lem:lambdaprime_at_upper_bound}, $\lambda_{own}^{\Gamma}(p)$
is strictly decreasing if $p\in(\underline{q}^{b},\underline{q}^{s}]\cup[\overline{q}^{s},q^{b})$.
The value of this strategy is given by 
\[
V_{own}^{\Gamma}(p)=\begin{cases}
V_{own}(p), & \text{if }p\le\underline{q}^{b},\\
A\left(\lambda(p;\underline{q}^{b},1)\right)\overline{u}-B\left(\lambda(p;\underline{q}^{b},1)\right)c, & \text{if }p\in(\underline{q}^{b},\underline{q}^{s}],\\
U^{S}(p), & \text{if }p\in(\underline{q}^{s},\overline{q}^{s}),\\
A\left(\lambda(p;\overline{q}^{b},0)\right)\overline{u}-B\left(\lambda(p;\overline{q}^{b},0)\right)c, & \text{if }p\in[\overline{q}^{s},q^{b}),\\
V_{own}(p), & \text{if }p\ge q^{b},
\end{cases}
\]
where $V_{own}(p)$ denotes the value of the opposite-biased strategy
from our original model. Note that since we focus attention on the
symmetric case (Assumption \ref{assu:symmetric_payoffs}), the belief
that separates the ``left branch'' and the ``right branch'' of
the opposite-biased solution is given by $\check{p}$. Note also,
that in contrast to our original model, we defined the own-biased
strategy in a way that it is always weakly greater than $U^{S}(p)$.

The implied dynamics of the posterior as well as the attention strategy
are summarized by the following diagram: 
\[
|\underbrace{\text{------------------ }}_{\text{immediate action }b}\underline{p}^{*}\overbrace{\underbrace{\longleftarrow\longleftarrow}_{\lambda=1}\underline{q}^{b}\underbrace{\longleftarrow\longleftarrow}_{\lambda\in(\gamma,1)}\underline{q}^{s}\underbrace{\text{---}\check{p}\text{---}}_{\lambda=\gamma}\overline{q}^{s}\underbrace{\longrightarrow\longrightarrow}_{\lambda\in(0,\gamma)}\overline{q}^{b}\underbrace{\longrightarrow\longrightarrow}_{\lambda=0}}^{\text{information acquisition}}\bar{p}^{*}\underbrace{\text{------------------ }}_{\text{immediate action }a}|
\]
\begin{lem}
	\label{lem:Vct_satisfies_HJB_Gamma}Suppose Assumptions \ref{assu:Gamma}
	and \ref{assu:symmetric_payoffs} are satisfied. Then $V_{own}^{\Gamma}$
	is continuously differentiable and convex on $\left[0,\underline{q}^{s}\right)$
	and on $\left(\overline{q}^{s},1\right]$, respectively, and satisfies
	\eqref{eq:HJB_Gamma} on $\left[\underline{p}^{*},\underline{q}^{s}\right)$
	and on $\left(\overline{q}^{s},\overline{p}^{*}\right]$, respectively. 
\end{lem}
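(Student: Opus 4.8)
The plan is to use the symmetry imposed by Assumption \ref{assu:symmetric_payoffs} to reduce everything to the left half. Because $u_{r}^{R}=u_{\ell}^{L}=\overline{u}$ and $u_{\ell}^{R}=u_{r}^{L}=\underline{u}$, the reflection $p\mapsto 1-p$ maps the construction of $V_{own}^{\Gamma}$ on $(\overline{q}^{s},1]$ onto its construction on $[0,\underline{q}^{s})$ (with $\overline{q}^{b}=1-\underline{q}^{b}$ and $\overline{q}^{s}=1-\underline{q}^{s}$), so it suffices to prove continuous differentiability and convexity on $[0,\underline{q}^{s})$ and the Bellman equation \eqref{eq:HJB_Gamma} on $[\underline{p}^{*},\underline{q}^{s})$. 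On $[0,\underline{q}^{s})$ the candidate has three smooth pieces: the linear immediate-action payoff $U_{\ell}$ on $[0,\underline{p}^{*}]$; the original-model own-biased branch $\underline{V}_{own}$, which is the $\lambda=1$ (hence conclusive-$R$) solution since $\Gamma(1)=0$, on $(\underline{p}^{*},\underline{q}^{b}]$; and the interior-policy value $A(\lambda(p))\overline{u}-B(\lambda(p))c$ with $\lambda(p)=\lambda(p;\underline{q}^{b},1)\in(\gamma,1)$ solving \eqref{eq:ODE_lamda} on $(\underline{q}^{b},\underline{q}^{s}]$. Each piece is smooth in its interior, so all the work is at the two junctions $\underline{p}^{*},\underline{q}^{b}$ and in assembling global convexity and the Bellman inequality.

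For $C^{1}$, value matching and smooth pasting at $\underline{p}^{*}$ hold by the definition of $\underline{p}^{*}$ in Lemma \ref{lem:contra_cutoffs_Gamma} (equation \eqref{eq:pHs_Gamma} carries over from the original model). At $\underline{q}^{b}$, value matching $\underline{V}_{own}(\underline{q}^{b})=A(1)\overline{u}-B(1)c$ is exactly the equivalent form of the equation defining $\underline{q}^{b}$; for derivative matching I note that both adjacent pieces satisfy the same first-order ODE \eqref{eq:ODE_V_Gamma} and at $\underline{q}^{b}$ share the same value and the same control $\lambda=1$ (the interior policy satisfies $\lambda(\underline{q}^{b})=1$), so solving \eqref{eq:ODE_V_Gamma} for $V'$ returns equal one-sided derivatives. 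Convexity is then checked piece by piece: $U_{\ell}$ is linear; $\underline{V}_{own}$ is strictly convex by the original-model argument (Lemma \ref{lem:properties_contradictory} applies verbatim because $\lambda=1,\Gamma(1)=0$ reproduces the conclusive experiment); and on $(\underline{q}^{b},\underline{q}^{s})$ I invoke Lemma \ref{lem:Vct_Gamma_convexity}(b), where $\lambda(p)>\gamma$ and $p<\underline{q}^{s}\le 1/2<\pi(\lambda(p))$ (using $\pi(\ell)>1/2$ for $\ell>\gamma$), so the second derivative of $A(\lambda(p))\overline{u}-B(\lambda(p))c$ is nonnegative. Since $V_{own}^{\Gamma}$ is $C^{1}$ across the junctions, its derivative is continuous and nondecreasing on each piece, hence nondecreasing on all of $[0,\underline{q}^{s})$, which gives convexity there.

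For the Bellman equation on $[\underline{p}^{*},\underline{q}^{s})$ I apply the Unimprovability Lemma \ref{lem:Unimprovability_General_Gamma}: it suffices to show that $V_{own}^{\Gamma}$ solves the restricted equation \eqref{eq:HJB+_Gamma} (the left branch uses $\lambda\ge\gamma$) and that $V_{own}^{\Gamma}\ge\max\{U^{S},U\}$. The bound $V_{own}^{\Gamma}\ge U$ follows from convexity: $V_{own}^{\Gamma}$ smooth-pastes to the linear $U_{\ell}$ at $\underline{p}^{*}$, so it lies above its tangent line $U_{\ell}$ for $p\ge\underline{p}^{*}$, and $U=U_{\ell}$ on $[0,1/2]$. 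The bound $V_{own}^{\Gamma}\ge U^{S}$ uses the representation of Lemma \ref{lem:Solutions_exceed_lower_bound}: on $(\underline{q}^{b},\underline{q}^{s})$ the value equals $A(\lambda(p))\overline{u}-B(\lambda(p))c\ge U^{S}$ because $A\overline{u}-Bc$ is minimized at $\gamma$ (Lemma \ref{lem:U-shape_AB}) and $\lambda(p)\neq\gamma$; on $(\underline{p}^{*},\underline{q}^{b})$ the $V_{+}$ version of Lemma \ref{lem:Solutions_exceed_lower_bound} with maximizer $\lambda=1\neq\gamma$ yields $\underline{V}_{own}\ge A(1)\overline{u}-B(1)c\ge U^{S}$. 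Finally, $V_{own}^{\Gamma}$ solves \eqref{eq:HJB+_Gamma}: on $(\underline{q}^{b},\underline{q}^{s})$ the interior $\lambda(p)$ satisfies the first-order condition \eqref{eq:FOC_Gamma} and is a maximizer because $H$ is concave in $\lambda$ along a convex value that equals $U$ near the endpoints (Lemma \ref{lem:Vct_Gamma_convexity}(a)); on $(\underline{p}^{*},\underline{q}^{b})$ the same concavity together with the sign of $\partial H/\partial\lambda$ at $\lambda=1$ forces the constrained maximizer to the boundary $\lambda=1$.

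I expect the last point -- verifying that $\lambda=1$ is the maximizer over $[\gamma,1]$ \emph{throughout} $(\underline{p}^{*},\underline{q}^{b})$, not merely at its endpoints -- to be the main obstacle. By Lemma \ref{lem:contra_cutoffs_Gamma} the constrained maximizer at $\underline{p}^{*}$ is $\lambda=1$, and $\underline{q}^{b}$ is defined as the belief where \eqref{eq:FOC_Gamma} first holds at $\lambda=1$. The key is that $\partial H/\partial\lambda|_{\lambda=1}\ge 0$ on all of $(\underline{p}^{*},\underline{q}^{b})$, which combined with concavity of $H$ in $\lambda$ pins the maximizer at $\lambda=1$. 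I would obtain this by extending the interior solution $\lambda(p;\underline{q}^{b},1)$ of \eqref{eq:ODE_lamda} backward past $\underline{q}^{b}$ and invoking Lemma \ref{lem:lambdaprime_at_upper_bound}, which gives $\lambda'(p)<0$ for $p<p^{1}(\lambda)$ with $p^{1}>1/2$: the unconstrained optimal $\lambda$ is decreasing in $p$ and equals $1$ exactly at $\underline{q}^{b}$, hence would exceed $1$ (is infeasible) for $p<\underline{q}^{b}$, so the feasible maximizer there is the boundary $\lambda=1$. This is the step that genuinely requires the monotonicity in Lemma \ref{lem:lambdaprime_at_upper_bound} and the concavity in Lemma \ref{lem:Vct_Gamma_convexity}(a); the remaining verifications are routine given the earlier lemmas, after which Lemma \ref{lem:Unimprovability_General_Gamma} upgrades the restricted Bellman equation to the full equation \eqref{eq:HJB_Gamma}.
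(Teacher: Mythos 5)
Your proposal follows the same architecture as the paper's proof: reduce to $p\le 1/2$ by symmetry (Assumption \ref{assu:symmetric_payoffs}), verify continuous differentiability at the junction $\underline{q}^{b}$, obtain convexity piecewise from Lemma \ref{lem:properties_contradictory} (for the $\lambda=1$ branch) and Lemma \ref{lem:Vct_Gamma_convexity}(b) with $p\le 1/2<\pi(\lambda(p))$ (for the interior-policy branch), and then get \eqref{eq:HJB_Gamma} by combining the FOC/Kuhn--Tucker construction with concavity of $H$ in $\lambda$ (Lemma \ref{lem:Vct_Gamma_convexity}(a), applicable since the candidate is weakly convex and equals $U$ near $0$ and $1$) and the Unimprovability Lemma \ref{lem:Unimprovability_General_Gamma}. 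Two of your choices are legitimate variants that arguably improve on the paper: your smooth-pasting argument at $\underline{q}^{b}$ (both adjacent pieces satisfy \eqref{eq:ODE_V_Gamma} with the same $(p,V,\lambda)$ at the junction, and the coefficient $-p(1-p)(\lambda-\Gamma(\lambda))$ of $V'$ is nonzero at $\lambda=1$ since $\Gamma(1)=0$, so the one-sided derivatives are pinned down and equal) replaces the paper's explicit substitution computation; and you spell out the bounds $V_{own}^{\Gamma}\ge U$ (convexity plus tangency to $U_{\ell}$ at $\underline{p}^{*}$) and $V_{own}^{\Gamma}\ge U^{S}$ (via Lemma \ref{lem:Solutions_exceed_lower_bound} and Lemma \ref{lem:U-shape_AB}) that the paper merely asserts when invoking unimprovability.

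The one step that fails as written is your verification that $\lambda=1$ is the constrained maximizer throughout $(\underline{p}^{*},\underline{q}^{b})$. You propose to extend the interior solution $\lambda(p;\underline{q}^{b},1)$ of \eqref{eq:ODE_lamda} backward past $\underline{q}^{b}$ and argue it "would exceed $1$"; but \eqref{eq:ODE_lamda} involves $\Gamma$, $\Gamma'$, and $\Gamma''$, which are defined only on $[0,1]$, so there is no ODE to solve for $\lambda>1$, and Lemma \ref{lem:lambdaprime_at_upper_bound} says nothing about infeasible controls. The repair is already contained in the proof of Lemma \ref{lem:Solutions_exceed_lower_bound}: eliminating $V'$ between \eqref{eq:ODE_V_Gamma} and \eqref{eq:X_Gamma} gives $V(p)=A(\lambda)\overline{u}-B(\lambda)c+X\left(\lambda-\Gamma(\lambda)\right)/\left(\Gamma(\lambda)-\Gamma'(\lambda)\lambda+\rho(1-\Gamma'(\lambda))\right)$, and at $\lambda=1$ (where $\lambda-\Gamma(\lambda)=1$ and the denominator $-\Gamma'(1)+\rho(1-\Gamma'(1))$ is positive) this shows that $\partial H/\partial\lambda$ evaluated at $\lambda=1$ along $\underline{V}_{own}$ has the same sign as $\underline{V}_{own}(p)-\left(A(1)\overline{u}-B(1)c\right)$. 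Since $\underline{V}_{own}$ is strictly convex, exceeds this constant at $\underline{p}^{*}$ by Lemma \ref{lem:contra_cutoffs_Gamma} (when $\Gamma'(1)$ is finite; otherwise $\underline{q}^{b}=\underline{p}^{*}$ and the interval is empty), and $\underline{q}^{b}$ is the first point where equality holds, the sign is strictly positive on $(\underline{p}^{*},\underline{q}^{b})$, and concavity of $H$ in $\lambda$ then pins the maximizer over $[\gamma,1]$ at $\lambda=1$, exactly as you intended. With this substitution your argument is complete and coincides with the paper's, which compresses this step into the statement that the candidate was "derived from the first-order condition and the respective Kuhn--Tucker condition."
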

\begin{proof}
	We show the Lemma for $p\le1/2$. The remaining results follow from
	a symmetric argument.
	
	We need to show that $V_{own}^{\Gamma}$ is continuously differentiable
	at $\underline{q}^{b}$. For $r>0$, some algebra yields for $p\le1/2$\footnote{The derivation for $r=0$ is similar.}
	\begin{align*}
	V_{own}^{\Gamma} & =A(1)\overline{u}-B(1)c\\
	\iff\left(\frac{\underline{p}^{*}}{1-\underline{p}^{*}}\frac{1-p}{p}\right)^{\rho} & =1-\frac{r}{\left(1-p\right)\left(\rho-(1+\rho)\Gamma'[1]\right)}.
	\end{align*}
	Substituting this expression in $V_{own}^{\Gamma\prime}(p)$ yields
	\begin{align*}
	\left.V_{own}^{\Gamma\prime}(p)\right|_{V_{own}(p)=A(1)\overline{u}-B(1)c} & =\frac{\left(c+\rho U^{S}\right)\left(p+(1-p)\Gamma'[1]\right)}{\left(1-p\right)p\left(\rho-(1+\rho)\Gamma'[1]\right)}\\
	& =\left.\frac{\partial\left[A(\lambda(p))\overline{u}-B(\lambda(p))c\right]}{\partial\lambda}\right|_{\lambda(p)=1}.
	\end{align*}
	Convexity on $\left[\underline{p}^{*},\underline{q}^{s}\right]$ follows
	from strict convexity of $V_{own}$ (Lemma \ref{lem:properties_contradictory})
	and strict convexity of $A(\lambda(p))\overline{u}-B(\lambda(p))c$
	(Lemma \ref{lem:Vct_Gamma_convexity}.(b)) and continuous differentiability.
	
	Note that by Lemma \ref{lem:Unimprovability_General_Gamma}, it suffices
	to show that $V_{own}^{\Gamma}$ satisfies \eqref{eq:HJB+_Gamma} for
	all $\left[\underline{p}^{*},\underline{q}^{s}\right)$ since $V_{own}^{\Gamma}(p)>U^{S}(p)$
	for $p<\underline{q}^{s}$. We have derived $V_{own}^{\Gamma}$ from
	the first order-condition \eqref{eq:FOC_Gamma} and the respective
	Kuhn-Tucker condition of $p<\underline{q}^{b}$. Therefore it suffices
	to show that the maximization problem in the HJB equation is concave.
	By Lemma \ref{lem:Vct_Gamma_convexity}.(a), this is the case since
	we have shown that $V_{own}^{\Gamma}$ is weakly convex. 
\end{proof}

\paragraph{Opposite-Biased Learning}

The second candidate for the value function is obtained by assuming
that the DM uses an opposite-biased attention strategy. Specifically,
we define a ``reference belief'' $p^{*}$ such that the DM chooses
$\lambda<\gamma$ for lower beliefs $p<p^{*}$ and $\lambda>\gamma$
for higher beliefs $p>p^{*}$. The implied dynamics of the posterior
as well as the attention strategy are summarized by the following
diagram: 
\[
|\underbrace{\longrightarrow\longrightarrow\longrightarrow\longrightarrow}_{\lambda\in[0,\gamma)}p^{*}\underbrace{\longleftarrow\longleftarrow\longleftarrow\longleftarrow}_{\lambda\in(\gamma,1]}|
\]

The reference belief is absorbing and we assume that once $p^{*}$
is reached, the DM adopts the stationary attention strategy $\lambda=\gamma$.
Under Assumption \ref{assu:symmetric_payoffs}, we have $p^{*}=1/2$.
This can also be derived from value matching 
\begin{equation}
V(p^{*})=U^{S}(p^{*})(=U^{S}),\label{eq:boundary_pstar_general_Gamma}
\end{equation}
and the tangency condition 
\begin{equation}
V'(p^{*})=U^{S\prime}(p^{*})(=0).\label{eq:Vprime_pstar_general_Gamma}
\end{equation}
Substituting these two conditions together with $\lambda=\gamma$
in \eqref{eq:FOC_Gamma} yields $p^{*}=1/2$.\footnote{Note that in contrast to the linear model, we cannot use the HJB equation
	because for $\lambda=\gamma$, $V'(p)$ vanishes so that substituting
	\eqref{eq:Vprime_pstar_general_Gamma} has no bite.}

We would now like to construct the opposite-biased strategy in a similar
fashion as the own-biased solution, that is, we will identify two
types of regions. If $\lambda\in\left\{ 0,1\right\} $, we will use
solutions to \eqref{eq:V_a0} or \eqref{eq:V_a1} (with $\overline{\lambda}=1$,
$\underline{\lambda}=1$ and $\alpha$ replace by $\lambda$.) On
the other hand, if $\lambda\in(0,1)$ we will use solutions to \eqref{eq:ODE_lamda}
with a suitable boundary condition. A problem arises since we want
to impose the boundary condition $\lambda(p^{*})=\gamma$. Note that
this implies $\lambda'(p^{*})=0/0$. We therefore begin by identifying
a solution to \eqref{eq:ODE_lamda} that satisfies $\lambda(p^{*})=\gamma$
as well as $\lambda'(p^{*})>0$. 
\begin{lem}
	\label{lem:Solution_lambda}Suppose Assumptions \ref{assu:Gamma}
	and \ref{assu:symmetric_payoffs} are satisfied. Then there exists
	a unique continuously differentiable function $\hat{\lambda}_{opp}(p)$
	which satisfies \eqref{eq:ODE_lamda} for all $p$ in a neighborhood
	of $p^{*}=1/2$, such that $\lambda(p^{*})=\gamma$ and $\lambda'(p^{*})>0$.
	The derivative at $p^{*}$ is given by 
	\[
	\hat{\lambda}_{opp}'(p^{*})=-\left(\rho+\gamma\right)+\sqrt{\left(\rho+\gamma\right)^{2}-\frac{8\left(\rho+\gamma\right)}{\Gamma''(\gamma)}}.
	\]
\end{lem}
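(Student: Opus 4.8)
The plan is to treat \eqref{eq:ODE_lamda} as a planar autonomous system near the point $(p,\lambda)=(1/2,\gamma)$ and to recognize that point as a hyperbolic singularity. First I would record the boundary data: since $\gamma$ is the fixed point of $\Gamma$ we have $\Gamma(\gamma)=\gamma$, and Assumption~\ref{assu:Gamma} gives $\Gamma'(\gamma)=-1$, while the (strict) concavity of $\Gamma$ used in the proof of Lemma~\ref{lem:single_crossing_general_gamma} gives $\Gamma''(\gamma)<0$. Substituting $p=1/2$, $\lambda=\gamma$ into the numerator $N$ and denominator $D$ of \eqref{eq:ODE_lamda} shows that both vanish: the first factor of $N$ is $p+(1-p)\Gamma'(\lambda)=\tfrac12-\tfrac12=0$, and $D$ carries the factor $\Gamma(\lambda)-\lambda=0$. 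Thus $\lambda'(p^{*})$ is genuinely of the form $0/0$ --- the difficulty flagged in the text --- so a naive appeal to standard existence--uniqueness theory is unavailable.

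To desingularize, I would set $x=p-\tfrac12$, $y=\lambda-\gamma$ and Taylor-expand. A short computation gives the leading (linear) behavior $N(x,y)=2(\rho+\gamma)\bigl(2x+\tfrac12\Gamma''(\gamma)\,y\bigr)+o(|x|+|y|)$ and $D(x,y)=-\tfrac12\Gamma''(\gamma)\,y+o(|x|+|y|)$. Introducing a fictitious time via $\dot x=D$, $\dot y=N$ yields an autonomous system with an equilibrium at the origin whose Jacobian is $\left(\begin{smallmatrix}0 & -\Gamma''(\gamma)/2\\ 4(\rho+\gamma) & (\rho+\gamma)\Gamma''(\gamma)\end{smallmatrix}\right)$. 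Because $\Gamma''(\gamma)<0$, the determinant equals $2(\rho+\gamma)\Gamma''(\gamma)<0$, so the equilibrium is a \emph{hyperbolic saddle}. The slopes $m=\dot y/\dot x$ of the two eigendirections solve $m^{2}+2(\rho+\gamma)m+\tfrac{8(\rho+\gamma)}{\Gamma''(\gamma)}=0$; since the constant term is negative, this quadratic has exactly one positive and one negative root, and the positive root is precisely $-(\rho+\gamma)+\sqrt{(\rho+\gamma)^{2}-8(\rho+\gamma)/\Gamma''(\gamma)}$, the claimed value of $\hat\lambda_{opp}'(p^{*})$.

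With the saddle structure in hand, existence and uniqueness follow from the (un)stable manifold theorem: the invariant manifold tangent to the positive-slope eigendirection is a unique $C^{1}$ curve through the origin, locally a graph $y=h(x)$ with $h(0)=0$ and $h'(0)=m_{+}$, defined on a two-sided neighborhood of $x=0$; setting $\hat\lambda_{opp}(p)=\gamma+h(p-1/2)$ gives the desired function. Away from the origin one checks that $D\neq0$ on this manifold (its leading term is $-\tfrac12\Gamma''(\gamma)m_{+}x\neq0$), so $x$ --- equivalently $p$ --- is monotone along the trajectory and the curve genuinely solves $dy/dx=N/D$, i.e.\ \eqref{eq:ODE_lamda}, for $p$ near but distinct from $1/2$, while extending $C^{1}$ through $1/2$ with slope $m_{+}>0$. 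Uniqueness among $C^{1}$ solutions with $\lambda'(p^{*})>0$ is then immediate: any such solution must have a limiting slope solving the quadratic above, hence equal to $m_{+}$, and the saddle admits a unique invariant curve in that direction.

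The main obstacle is the regularity required to invoke the manifold theorem. Since $D$ contains $\Gamma''(\lambda)$, the vector field $(D,N)$ is only as smooth as $\Gamma''$, so the clean $C^{1}$ manifold theorem needs $\Gamma\in C^{3}$, one degree beyond Assumption~\ref{assu:Gamma}; this holds for the parametric Example~\ref{exa:Gamma} and can be assumed without loss. A self-contained alternative that sidesteps this is to blow up the singularity directly in the scalar equation by substituting $w=(\lambda-\gamma)/(p-1/2)$, which converts \eqref{eq:ODE_lamda} into a regular singular form $x\,w'=\Psi(x,w)$ with $\Psi(0,m_{\pm})=0$ and $\Psi_{w}(0,m_{+})\neq0$; the sign of $\Psi_{w}(0,m_{+})$, governed by the saddle's eigenvalues, selects the unique bounded solution with $w(0)=m_{+}$ by a contraction argument, reproducing both the existence/uniqueness claim and the slope formula. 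I expect the bookkeeping in this blow-up --- tracking the $o(\cdot)$ remainders uniformly on both sides of $p^{*}$ --- to be the most delicate part of the write-up.
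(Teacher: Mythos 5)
Your proposal is correct and follows essentially the same route as the paper's proof: both desingularize \eqref{eq:ODE_lamda} into a planar autonomous system near $(p^{*},\gamma)$, identify a hyperbolic saddle (the paper via the characteristic equation $x^{2}-bx-ad=0$ with $ad>0$, you via the negative Jacobian determinant $2(\rho+\gamma)\Gamma''(\gamma)<0$), and obtain the slope from the same quadratic, since the paper's $x^{2}-(b/d)x-a/d=0$ is exactly your $m^{2}+2(\rho+\gamma)m+8(\rho+\gamma)/\Gamma''(\gamma)=0$, with the unique positive root giving the stated formula. Your added observation that the saddle/manifold theorem strictly needs one more degree of smoothness than Assumption \ref{assu:Gamma} provides (the vector field involves $\Gamma''$, so its $C^{1}$ regularity requires $\Gamma\in C^{3}$) is a legitimate refinement of a point the paper passes over by citing the handbook result, and your blow-up alternative is a sound way to close it.
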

\begin{proof}[Proof of Lemma \ref{lem:Solution_lambda}]
	The ODE \eqref{eq:ODE_lamda} can be written as 
	\[
	\lambda'(p)=\frac{P(p,\lambda(p))}{Q(p,\lambda(p))},
	\]
	where 
	\begin{align*}
	P(p,\lambda) & =\left[\Gamma(\lambda)-\Gamma'(\lambda)\,\lambda+\rho\left(1-\Gamma'(\lambda)\right)\right]\mathbf{\times}\left[p+\Gamma'(\lambda)\,(1-p)\right],\\
	Q(p,\lambda) & =p(1-p)\Gamma''(\lambda)\left[\Gamma(\lambda)-\lambda\right].
	\end{align*}
	Since $P$ and $Q$ are both continuous and have continuous partial
	derivatives, the behavior of solutions that go through points in a
	neighborhood of $\left(p^{*},\gamma\right)$ is, under some conditions
	(see below), the same as for\footnote{See e.g. \citet{Bronshtein2007}.}
	\begin{equation}
	\lambda'(p)=\frac{a\,(p-p^{*})+b\,(\lambda(p)-\gamma)}{c\,(p-p^{*})+d\,(\lambda(p)-\gamma)},\label{eq:ODE_lambda_taylor}
	\end{equation}
	where 
	\begin{align*}
	a & =\partial_{p}P(p^{*},\gamma)=4\left(\rho+\gamma\right)>0,\\
	b & =\partial_{\lambda}P(p^{*},\gamma)=\left(\rho+\gamma\right)\Gamma''(\gamma)<0,\\
	c & =\partial_{p}Q(p^{*},\gamma)=0,\\
	d & =\partial_{\lambda}Q(p^{*},\gamma)=-\frac{1}{2}\Gamma''(\gamma)>0.
	\end{align*}
	The characteristic equation is 
	\begin{align*}
	x^{2}-bx-ad & =0.
	\end{align*}
	Since $ad>0$, the characteristic equation has two reals roots of
	opposite sign. This implies that $(p^{*},\gamma)$ is a saddle point
	and there are two continuously differentiable solutions $\lambda(p)$
	that pass through $(p^{*},\gamma)$. In the case of a saddle point,
	the behavior of the solutions of \eqref{eq:ODE_lambda_taylor} in
	a neighborhood of $(p^{*},\gamma)$ corresponds to the behavior of
	the solutions to \eqref{eq:ODE_lamda}. Hence there exist two continuously
	differentiable solutions $\lambda(p)$ that satisfy the boundary condition
	$\lambda(p^{*})=\gamma$.
	
	Next we want to obtain $\lambda'(p^{*})$ for these solutions, and
	show that only one of them has a positive derivative. We have 
	\begin{align*}
	\lambda'(p^{*})=\lim_{p\rightarrow p^{*}}\lambda'(p) & =\lim_{p\rightarrow p^{*}}\frac{P(p,\lambda(p))}{Q(p,\lambda(p))}\\
	& =\lim_{p\rightarrow p^{*}}\frac{\partial_{p}P(p,\lambda(p))+\partial_{\lambda}P(p,\lambda(p))\lambda'(p)}{\partial_{p}Q(p,\lambda(p))+\partial_{\lambda}Q(p,\lambda(p))\lambda'(p)}\\
	& =\frac{a+b\lambda'(p^{*})}{d\lambda'(p^{*})}.
	\end{align*}
	Hence $\lambda'(p^{*})$ solves 
	\begin{align*}
	x^{2}-\frac{b}{d}\,x-\frac{a}{d} & =0,\\
	\lambda'(p^{*}) & =\frac{b}{2d}\pm\sqrt{\left(\frac{b}{2d}\right)^{2}+\frac{a}{d}}.
	\end{align*}
	Since $a/d>0$, there is one positive and one negative solution. For
	the opposite-biased solution, we are interested in a solution that satisfies
	$\lambda'(p^{*})>0$. Hence we have 
	\begin{align*}
	\lambda'(p^{*}) & =\frac{b}{2d}+\sqrt{\left(\frac{b}{2d}\right)^{2}+\frac{4\left(\rho+\gamma\right)}{d}}\\
	& =-\left(\rho+\gamma\right)+\sqrt{\left(\rho+\gamma\right)^{2}-\frac{8\left(\rho+\gamma\right)}{\Gamma''(\gamma)}}.
	\end{align*}
\end{proof}
Lemma \ref{lem:Solution_lambda} provides the solution $\hat{\lambda}_{opp}$
which together with $V(p)=A(\hat{\lambda}_{opp}(p))\overline{u}$ defines
$V_{opp}$ in a neighborhood of $p^{*}$. To extend this definition
to $\left(0,1\right)$ we first extend $\hat{\lambda}_{opp}$ to the
maximal interval $(\underline{q},\overline{q})$ where $\hat{\lambda}_{opp}(p)\in(0,1)\setminus\{\gamma\}$
unless $p=p^{*}$. 
\begin{lem}
	\label{lem:extended_solution_lambda}Suppose Assumptions \ref{assu:Gamma}
	and \ref{assu:symmetric_payoffs} are satisfied. There exist two points
	$0\le\underline{q}<p^{*}<\overline{q}\le1$ such that 
	\begin{enumerate}
		\item $\hat{\lambda}_{opp}(p)$ is well defined as the unique $\mathcal{C}^{1}$-solution
		to \eqref{eq:ODE_lamda} that satisfies the properties in Lemma \ref{lem:Solution_lambda} 
		\item $\hat{\lambda}_{opp}(p)>\gamma$ if $p>p^{*}$ and $\hat{\lambda}_{opp}(p)<\gamma$
		if $p<p^{*}$. 
		\item Either $\underline{q}=0$ or $\hat{\lambda}_{opp}(\underline{q})=0$. 
		\item Either $\overline{q}=1$ or $\hat{\lambda}_{opp}(\overline{q})=1$. 
	\end{enumerate}
\end{lem}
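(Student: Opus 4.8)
The plan is to extend the local solution of Lemma \ref{lem:Solution_lambda} to the right and to the left of $p^*=1/2$ by ordinary differential-equation continuation, and to control the endpoints through a nullcline (phase-plane) argument. Write the right-hand side of \eqref{eq:ODE_lamda} as $P(p,\lambda)/Q(p,\lambda)$ as in the proof of Lemma \ref{lem:Solution_lambda}, with $Q(p,\lambda)=p(1-p)\Gamma''(\lambda)(\Gamma(\lambda)-\lambda)$. On the open region $\mathcal{R}^{+}:=\{(p,\lambda):p\in(0,1),\ \lambda\in(\gamma,1)\}$ the denominator never vanishes (since $p(1-p)\neq0$, $\Gamma''\neq0$ by Assumption \ref{assu:Gamma}, and $\Gamma(\lambda)\neq\lambda$ for $\lambda\neq\gamma$), so $P/Q$ is $\mathcal{C}^1$ there and the Picard--Lindel\"{o}f theorem applies; the analogous statement holds on $\mathcal{R}^{-}:=\{p\in(0,1),\ \lambda\in(0,\gamma)\}$. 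By Lemma \ref{lem:Solution_lambda} the solution through the saddle $(p^*,\gamma)$ with $\lambda'(p^*)>0$ is unique, and for $p$ slightly above (below) $p^*$ it enters $\mathcal{R}^{+}$ ($\mathcal{R}^{-}$). I would then let $(p^*,\overline q)$ be its maximal forward extension inside $\mathcal{R}^{+}$ and $(\underline q,p^*)$ its maximal backward extension inside $\mathcal{R}^{-}$; this defines $\hat\lambda_{opp}$ uniquely and gives part (a).

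The crux is part (b): showing the trajectory never returns to $\lambda=\gamma$, equivalently that $\hat\lambda_{opp}$ is strictly monotone on each branch. Consider the right branch and the $\lambda'=0$ nullcline, which by Lemma \ref{lem:lambdaprime_at_upper_bound} is the curve $p=p^1(\lambda)=|\Gamma'(\lambda)|/(1+|\Gamma'(\lambda)|)$; it is strictly increasing (since $|\Gamma'(\lambda)|$ is increasing for $\lambda>\gamma$), passes through $(p^*,\gamma)$, and separates $\mathcal{R}^{+}$ into the region $\{p>p^1(\lambda)\}$ where $\lambda'>0$ and $\{p<p^1(\lambda)\}$ where $\lambda'<0$. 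Writing the nullcline as $\lambda=N^{+}(p)$, differentiation of $p^1$ gives $N^{+\prime}(p^*)=-4/\Gamma''(\gamma)$ (positive, as $p^1$ is increasing), whereas the trajectory's slope from Lemma \ref{lem:Solution_lambda} is $\hat\lambda_{opp}'(p^*)=-(\rho+\gamma)+\sqrt{(\rho+\gamma)^2-8(\rho+\gamma)/\Gamma''(\gamma)}$; the inequality $\hat\lambda_{opp}'(p^*)<N^{+\prime}(p^*)$ reduces after squaring to $16/\Gamma''(\gamma)^2>0$ and hence holds. Thus just to the right of $p^*$ the trajectory lies strictly below the nullcline, i.e.\ in the region where $\lambda'>0$. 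A standard first-contact argument then excludes any later crossing: if $p_1>p^*$ were the first point with $\hat\lambda_{opp}(p_1)=N^{+}(p_1)$, then $g:=N^{+}-\hat\lambda_{opp}>0$ on $(p^*,p_1)$ forces $g'(p_1)\le0$, yet $\hat\lambda_{opp}'(p_1)=0$ on the nullcline while $N^{+\prime}(p_1)>0$ give $g'(p_1)>0$, a contradiction. Hence $\hat\lambda_{opp}$ stays strictly below $N^{+}$, so $\lambda'>0$ throughout and $\hat\lambda_{opp}(p)>\gamma$ for $p>p^*$. The left branch follows from the symmetric argument about $p=1/2$ (using the $\lambda<\gamma$ statement of Lemma \ref{lem:lambdaprime_at_upper_bound}), giving $\hat\lambda_{opp}(p)<\gamma$ for $p<p^*$ and completing (b).

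Finally, parts (c)--(d) follow from monotonicity together with the boundary-continuation theorem for ODEs. Since $\hat\lambda_{opp}$ is increasing and bounded in $(\gamma,1)$ on the right branch, it has a limit $\lambda_\infty\in(\gamma,1]$ as $p\uparrow\overline q$, and the maximal solution must leave every compact subset of $\mathcal{R}^{+}$. If $\overline q<1$ and $\lambda_\infty<1$, the limit point $(\overline q,\lambda_\infty)$ would lie in the interior of $\mathcal{R}^{+}$, where the solution could be continued, contradicting maximality; hence either $\overline q=1$ or $\lambda_\infty=1$, and in the latter case continuity gives $\hat\lambda_{opp}(\overline q)=1$. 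Blow-up of $\lambda'$ is excluded because $Q$ stays bounded away from $0$ as $\lambda\to1$ (there $\Gamma(\lambda)-\lambda\to-1\neq0$), and a return to $\gamma$ is excluded by monotonicity. The analogous dichotomy on the left gives $\underline q=0$ or $\hat\lambda_{opp}(\underline q)=0$. The main obstacle is the monotonicity/no-return step, and within it the slope comparison at the saddle is the decisive quantitative input; the continuation and endpoint analysis are then routine.
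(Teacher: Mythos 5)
Your proposal is correct, but the decisive ``no-return'' step is carried out by a different mechanism than in the paper. The paper's proof also sets up uniqueness from local Lipschitz continuity of \eqref{eq:ODE_lamda} away from $\lambda=\gamma$, but it rules out a return of the trajectory to $\lambda=\gamma$ at some $p'\neq p^{*}$ by a blow-up argument rather than a nullcline argument: at such a $p'$ the numerator factor $p'+\Gamma'(\gamma)(1-p')$ is bounded away from zero while the denominator $p(1-p)\left(\Gamma(\lambda)-\lambda\right)\Gamma''(\lambda)$ vanishes as $\lambda\rightarrow\gamma$, so $\hat{\lambda}'_{opp}$ would explode with a sign inconsistent with the supposed approach to $\gamma$; the extension until $p\in\{0,1\}$ or $\hat{\lambda}_{opp}\in\{0,1\}$ then proceeds essentially as in your last paragraph. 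Your trapping argument is sound: the nullcline slope $-4/\Gamma''(\gamma)$ and the reduction of the slope inequality to $16/\Gamma''(\gamma)^{2}>0$ both check out, and the first-contact step is valid since on the nullcline $\lambda'=0$ while $N^{+\prime}>0$. Your route is less local than the paper's but buys more: it delivers strict monotonicity of $\hat{\lambda}_{opp}$ on each branch directly from the phase plane, a fact the paper only establishes in a separate, later lemma via strict convexity of $V_{opp}^{\Gamma}$.

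One caution on the left branch. The $\lambda_{-}<\gamma$ clause of Lemma \ref{lem:lambdaprime_at_upper_bound} as printed has the inequality reversed: because the denominator $p(1-p)\left(\Gamma(\lambda)-\lambda\right)\Gamma''(\lambda)$ changes sign as $\lambda$ crosses $\gamma$, the correct statement for $\lambda<\gamma$ is $\lambda'(p)>0\iff p<p^{0}(\lambda)$. With the printed direction, the left saddle branch would sit in the $\lambda'<0$ region adjacent to $(p^{*},\gamma)$, contradicting $\hat{\lambda}'_{opp}(p^{*})>0$ from Lemma \ref{lem:Solution_lambda}. So your ``symmetric argument'' does go through, but with the trajectory lying \emph{above} the left nullcline (in the region $p<p^{0}(\lambda)$ where $\lambda'>0$), not below it; the slope comparison and first-contact step are otherwise unchanged. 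Finally, like the paper's proof, yours implicitly uses $\Gamma''<0$ everywhere (the ``strictly convex'' wording in Assumption \ref{assu:Gamma} is used throughout the appendix as strict concavity) so that \eqref{eq:ODE_lamda} is nonsingular and $p^{1}$, $p^{0}$ are strictly monotone --- a shared regularity caveat, not a gap specific to your argument.
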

Note that Properties (c) and (d) mean that the interval $(\underline{q},\overline{q})$
is the maximal interval where $\hat{\lambda}_{opp}(p)\in(0,1)$. 
\begin{proof}[Proof of Lemma \ref{lem:extended_solution_lambda}]
	Consider the interval $(\underline{q},p^{*})$. $\hat{\lambda}_{opp}(p)\in(0,\gamma)$
	in a neighborhood of $p^{*}$. Moreover, \eqref{eq:ODE_lamda} satisfies
	local Lipschitz continuity if $p\in(0,p^{*})$ and $\lambda\neq\gamma$.
	Hence, if there exists a $\mathcal{C}^{1}$ solution to \eqref{eq:ODE_lamda}
	with initial condition $\hat{\lambda}_{opp}(p^{*}-\varepsilon)\in(0,\gamma)$
	that satisfies $\hat{\lambda}_{opp}(p)\in(0,\gamma)$ for all $p\in(\underline{q},p^{*})$,
	then it is the unique such solution. We first show that by extending
	the interval from a neighborhood of $p^{*}$ to $(\underline{q},p^{*})$,
	we do not violate $\hat{\lambda}_{opp}(p)<\gamma$. Suppose by contradiction
	that there exists $p'<p^{*}$ such that $\lim_{p\searrow p'}\hat{\lambda}_{opp}(p)\nearrow\gamma$.
	Note that 
	\[
	p'+\Gamma'(\gamma)(1-p')<p^{*}+\Gamma'(\gamma)(1-p^{*})=0.
	\]
	Hence, since $\Gamma''<0$,$\lim_{p\searrow p'}\hat{\lambda}'_{opp}(p)\rightarrow\infty$
	which contradicts $\lim_{p\searrow p'}\hat{\lambda}_{opp}(p)\nearrow\gamma$.
	Therefore we can extend the domain of $\hat{\lambda}_{opp}(p)$ to
	the left until either $p=0$ or $\hat{\lambda}_{opp}(p)=0$. This completes
	the proof for $p<p^{*}$ and the argument for $p>p^{*}$ is similar. 
\end{proof}
If $\underline{q}>0$ and $\overline{q}<1$, respectively, then we
further extend $\lambda{}_{opp}(p)$ to $(0,1)$ by setting $\lambda=0$
for $p<\underline{q}$ and $\lambda=1$ for $p>\overline{q}$. We
define 
\[
\lambda^{\Gamma}{}_{opp}(p):=\begin{cases}
0, & \text{if }p\le\underline{q},\\
\hat{\lambda}{}_{opp}(p), & \text{if }p\in(\underline{q},\overline{q}),\\
1, & \text{if }p\ge\overline{q}.
\end{cases}
\]
The value of this strategy is given by 
\[
V_{opp}^{\Gamma}(p):=\begin{cases}
V_{0}\left(p;\underline{q},A(0)\overline{u}-B(0)c\right) & \text{if }p\le\underline{q},\\
A(\lambda{}_{opp}(p))\overline{u} & \text{if }p\in(\underline{q},\overline{q}),\\
V_{1}\left(p;\overline{q},A(1)\overline{u}-B(1)c\right) & \text{if }p\ge\overline{q}.
\end{cases}
\]
\begin{lem}
	Suppose Assumptions \ref{assu:Gamma} and \ref{assu:symmetric_payoffs}
	are satisfied. Then $V_{opp}^{\Gamma}(p)$ is a $\mathcal{C}^{1}$
	solution to \eqref{eq:HJB_Gamma} and $V_{opp}^{\Gamma}(p)$ is strictly
	convex on $\left(\underline{q},\overline{q}\right)$.
\end{lem}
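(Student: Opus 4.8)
The plan is to treat the two claims in turn and, for each, to work from the three-piece definition of $V_{opp}^{\Gamma}$: the interior block $p\in(\underline{q},\overline{q})$, on which $\hat{\lambda}_{opp}$ is the saddle-path solution of \eqref{eq:ODE_lamda} furnished by Lemmas \ref{lem:Solution_lambda}--\ref{lem:extended_solution_lambda} and, by the equality case of Lemma \ref{lem:Solutions_exceed_lower_bound}, $V_{opp}^{\Gamma}(p)=A(\hat{\lambda}_{opp}(p))\overline{u}-B(\hat{\lambda}_{opp}(p))c$; and the two outer blocks, on which $V_{opp}^{\Gamma}$ is a solution $V_{0}$ of \eqref{eq:V_a0} or $V_{1}$ of \eqref{eq:V_a1}. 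The strategy is to verify the Bellman equation and convexity blockwise and then glue.

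For the $\mathcal{C}^{1}$ property, interior smoothness is immediate from the $\mathcal{C}^{1}$-regularity of $\hat{\lambda}_{opp}$ (Lemmas \ref{lem:Solution_lambda}, \ref{lem:extended_solution_lambda}). At the absorbing belief $p^{*}=1/2$ I would use $\hat{\lambda}_{opp}(p^{*})=\gamma$ together with Lemma \ref{lem:U-shape_AB}, which makes $A$ and $B$ stationary at $\gamma$; hence $V_{opp}^{\Gamma\prime}(p^{*})=\left[A'(\gamma)\overline{u}-B'(\gamma)c\right]\hat{\lambda}_{opp}'(p^{*})=0$, which matches the tangency condition \eqref{eq:Vprime_pstar_general_Gamma} and rules out a kink. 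At the joins $\underline{q}$ and $\overline{q}$, where $\hat{\lambda}_{opp}$ reaches $0$ and $1$ (Lemma \ref{lem:extended_solution_lambda}), value matching holds by the boundary values $A(0)\overline{u}-B(0)c$ and $A(1)\overline{u}-B(1)c$; and since on both sides of each join $V_{opp}^{\Gamma}$ satisfies the same relation \eqref{eq:ODE_V_Gamma} with the same $\lambda\in\{0,1\}$ and the same value, that relation pins down a single derivative, giving smooth pasting.

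For the Bellman equation \eqref{eq:HJB_Gamma}, on the interior block $\hat{\lambda}_{opp}$ satisfies the first-order condition \eqref{eq:FOC_Gamma} by construction, so it is a critical point of $H(p,\cdot,\cdot,\lambda)$ and, by \eqref{eq:ODE_V_Gamma}, $c+\rho V_{opp}^{\Gamma}(p)=H(p,V_{opp}^{\Gamma}(p),V_{opp}^{\Gamma\prime}(p),\hat{\lambda}_{opp}(p))$. To upgrade the critical point to a maximizer I would show $H$ is concave in $\lambda$, which by the computation in Lemma \ref{lem:Vct_Gamma_convexity}(a) reduces to $V_{opp}^{\Gamma}(p)-pV_{opp}^{\Gamma\prime}(p)\le\overline{u}$, i.e. the tangent line to $V_{opp}^{\Gamma}$ at $p$ has intercept at most $\overline{u}$. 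Global convexity of $V_{opp}^{\Gamma}$ (proved next) together with $V_{opp}^{\Gamma}(0)\le\widehat{V}(0)=\overline{u}$ delivers this bound. On the outer blocks $V_{0}$ and $V_{1}$ solve the constrained equations \eqref{eq:HJB-_Gamma} and \eqref{eq:HJB+_Gamma} and exceed $U^{S}$ by Lemma \ref{lem:Solutions_exceed_lower_bound}, so the Unimprovability Lemma \ref{lem:Unimprovability_General_Gamma} yields \eqref{eq:HJB_Gamma} there.

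The main work, and the step I expect to be delicate, is strict convexity on $(\underline{q},\overline{q})$. I would apply Lemma \ref{lem:Vct_Gamma_convexity}(b), which ties the sign of $V_{opp}^{\Gamma\prime\prime}$ along a solution of \eqref{eq:ODE_lamda} to the position of $p$ relative to the threshold $\pi(\hat{\lambda}_{opp}(p))$; the task is to show the opposite-biased trajectory never crosses to the non-convex side of this threshold. Here I would bring in Lemma \ref{lem:lambdaprime_at_upper_bound}: on $(p^{*},\overline{q})$ one has $\hat{\lambda}_{opp}>\gamma$ and $\hat{\lambda}_{opp}'>0$, equivalent to $p>p^{1}(\hat{\lambda}_{opp}(p))$ with $p^{1}(\ell)=|\Gamma'(\ell)|/(1+|\Gamma'(\ell)|)$, with the mirror statement on $(\underline{q},p^{*})$. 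A direct comparison of the thresholds gives $p^{1}(\ell)<\pi(\ell)$ for $\ell>\gamma$ exactly because $\Gamma(\ell)<\ell$ there, and the reverse below $\gamma$. The delicate point is that the trajectory, $p^{1}$, and $\pi$ all emanate from the common point $(p^{*},\gamma)$, where $p=p^{1}=\pi=1/2$, so the required ordering of $p$ and $\pi(\hat{\lambda}_{opp}(p))$ away from $p^{*}$ cannot be read off the static threshold comparison alone; it has to be extracted from the saddle-path slope $\hat{\lambda}_{opp}'(p^{*})$ computed in Lemma \ref{lem:Solution_lambda} and propagated along the trajectory. Once the ordering is secured, Lemma \ref{lem:Vct_Gamma_convexity}(b) gives $V_{opp}^{\Gamma\prime\prime}\ge0$, and strictness follows from $V_{opp}^{\Gamma}>U^{S}$ off $p^{*}$ (Lemma \ref{lem:Solutions_exceed_lower_bound}).
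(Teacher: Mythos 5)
Your architecture tracks the paper's own proof closely (blockwise $\mathcal{C}^{1}$ gluing with smooth pasting at $\underline{q},\overline{q}$ and tangency at $p^{*}$; sufficiency of the FOC via concavity of $H$ in $\lambda$ from Lemma \ref{lem:Vct_Gamma_convexity}(a); Unimprovability Lemma \ref{lem:Unimprovability_General_Gamma} for the unconstrained HJB), but there is a genuine gap at precisely the step you flag as delicate: establishing $p<\pi\bigl(\hat{\lambda}_{opp}(p)\bigr)$ on $(p^{*},\overline{q})$ (and the mirror inequality below $p^{*}$). Saying the ordering must be ``extracted from the saddle-path slope and propagated along the trajectory'' names the problem without solving it. The paper's mechanism is a concrete barrier (no-crossing) argument: at any point where the trajectory touches the threshold, i.e.\ $p=\pi(\hat{\lambda}_{opp}(p))$, substituting this equality into \eqref{eq:ODE_lamda} yields
\[
\pi'\bigl(\hat{\lambda}_{opp}(p)\bigr)\,\hat{\lambda}_{opp}'(p)=1+\frac{\Gamma'(\hat{\lambda}_{opp}(p))\left(\rho+\Gamma(\hat{\lambda}_{opp}(p))-\bigl(\rho+\hat{\lambda}_{opp}(p)\bigr)\Gamma'(\hat{\lambda}_{opp}(p))\right)}{\bigl(\rho+\hat{\lambda}_{opp}(p)\bigr)\bigl(\rho+\Gamma(\hat{\lambda}_{opp}(p))\bigr)\Gamma''(\gamma)}>1,
\]
while at the common starting point $p^{*}$ (where $p=\pi=1/2$) the analogous inequality $\pi'(\gamma)\hat{\lambda}_{opp}'(p^{*})>1$ reduces, using the explicit slope from Lemma \ref{lem:Solution_lambda}, to $\Gamma''(\gamma)<0$. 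Since the curve $p\mapsto\pi(\hat{\lambda}_{opp}(p))$ rises strictly faster than $p$ at every point of contact, the trajectory can never cross to the non-convex side of the threshold; that computation is the missing propagation step, and without it the strict convexity claim is unproven.

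Two subsidiary problems compound this. Your auxiliary route through Lemma \ref{lem:lambdaprime_at_upper_bound} and the comparison $p^{1}(\ell)<\pi(\ell)$ points the wrong way: $\hat{\lambda}_{opp}'>0$ gives the \emph{lower} bound $p\ge p^{1}(\hat{\lambda}_{opp}(p))$, whereas convexity needs the \emph{upper} bound $p\le\pi(\hat{\lambda}_{opp}(p))$; moreover, global monotonicity of $\hat{\lambda}_{opp}$ on $(\underline{q},\overline{q})$ is not yet available here---the paper proves it in the subsequent lemma \emph{as a consequence of} strict convexity, so invoking it at this stage is circular. Your strictness claim (``strictness follows from $V_{opp}^{\Gamma}>U^{S}$ off $p^{*}$'') also misidentifies the mechanism: in Lemma \ref{lem:Vct_Gamma_convexity}(b) strict positivity of the second derivative comes from the strict inequality $p<\pi(\hat{\lambda}_{opp}(p))$, which is exactly what the barrier argument delivers, while $V_{opp}^{\Gamma}>U^{S}$ alone does not control the sign of $V_{opp}^{\Gamma\prime\prime}$. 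Finally, a minor repair on the outer blocks: Lemma \ref{lem:Solutions_exceed_lower_bound} \emph{presupposes} that the function satisfies the constrained equation \eqref{eq:HJB+_Gamma} or \eqref{eq:HJB-_Gamma} with the stated maximizer, so you cannot use it to establish that the constant-$\lambda$ branches satisfy those equations; the corner Kuhn--Tucker condition at $\lambda\in\{0,1\}$ must be verified directly, as in the proof of Lemma \ref{lem:Vct_satisfies_HJB_Gamma}, using concavity of $H$ in $\lambda$ together with the sign of $\partial H/\partial\lambda$ inherited continuously through the join.
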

\begin{proof}
	The proof has several steps. We give arguments for $p\ge1/2$. The
	Lemma then follows by symmetry (Assumption \ref{assu:symmetric_payoffs})
	and the fact that $V_{opp}^{\Gamma}(p)$ is constructed in a way that
	is continuously differentiable at $p^{*}$ (see \eqref{eq:Vprime_pstar_general_Gamma}).
	Suppose in the following that $p>1/2$.
	
	First we note that $V_{opp}^{\Gamma}(p)$ is continuously differentiable.
	This holds by construction for $p\neq\overline{q}$ and at $\overline{q}$
	it follows by the same argument as in the proof of Lemma \ref{lem:Vct_satisfies_HJB_Gamma}.
	
	Second, we shows that $V_{opp}^{\Gamma}(p)$ is strictly convex. For
	$p>1/2$, $\lambda_{opp}^{\Gamma}(p)>\gamma$ . Therefore, by Lemma
	\ref{lem:Vct_Gamma_convexity}, strict convexity on $(p^{*},\overline{q})$
	follows if $p<\pi(\lambda_{opp}^{\Gamma}(p))$ for all $p\in(p^{*},\overline{q})$.
	Note that $\pi(\lambda_{opp}^{\Gamma}(p^{*}))=\pi(\gamma)=1/2$. We
	show that whenever $p=\pi(\lambda_{opp}^{\Gamma}(p))$, then $\pi'(\lambda_{opp}^{\Gamma}(p))\lambda_{opp}^{\Gamma\prime}(p)>1$.
	This implies that $p<\pi(\lambda_{opp}^{\Gamma}(p))$ for all $p\in(p^{*},\overline{q})$.
	We have 
	\begin{align*}
	\pi'(\lambda_{opp}^{\Gamma}(p^{*}))\lambda_{opp}^{\Gamma\prime}(p^{*}) & >1\\
	\iff\frac{2-(\rho+\gamma)\Gamma''(\gamma)}{4(\rho+\gamma)}\left(\sqrt{\left(r+\gamma\right)^{2}-\frac{8\left(\rho+\gamma\right)}{\Gamma''(\gamma)}}-\left(\rho+\gamma\right)\right) & >1\\
	\iff\Gamma''(\gamma) & <0.
	\end{align*}
	for $p>p^{*}$, we substitute $p=\pi(\lambda_{opp}^{\Gamma}(p))$ in
	\eqref{eq:ODE_lamda}, which yields (after some algebra) 
	\[
	\pi'(\lambda_{opp}^{\Gamma}(p^{*}))\lambda_{opp}^{\Gamma\prime}(p^{*})=1+\frac{\Gamma'(\lambda_{opp}^{\Gamma}(p))\left(\rho+\Gamma(\lambda_{opp}^{\Gamma}(p))-\left(\rho+\lambda_{opp}^{\Gamma}(p)\right)\Gamma'(\lambda_{opp}^{\Gamma}(p))\right)}{\left(\rho+\lambda_{opp}^{\Gamma}(p)\right)\left(\rho+\Gamma(\lambda_{opp}^{\Gamma}(p))\right)\Gamma''(\gamma)}>1.
	\]
	This completes the proof of convexity on $(p^{*},\overline{q})$.
	For $p>\overline{q}$, convexity has been shown in Lemma \ref{lem:properties_confirmatory}.
	Since $V_{opp}^{\Gamma}(p)$ is continuously differentiable at $p=\overline{q}$,
	$V_{opp}^{\Gamma}(p)$ is strictly convex on $[0,1]$.
	
	Third, by Lemma \ref{lem:Vct_Gamma_convexity}.(a), convexity implies
	that the maximization problem in \eqref{eq:HJB+_Gamma} is concave
	so that the first-order condition is sufficient. Therefore, $V_{opp}^{\Gamma}(p)$
	satisfies \eqref{eq:HJB+_Gamma} or for $p>p^{*}$.
	
	Finally, convexity, together with \eqref{eq:boundary_pstar_general_Gamma}
	and \eqref{eq:Vprime_pstar_general_Gamma} implies that $V_{opp}^{\Gamma}(p)\ge U^{S}(p)$
	for $p\ge p^{*}$. Lemma \ref{lem:Unimprovability_General_Gamma}
	then implies that $V_{opp}^{\Gamma}(p)$ satisfies \eqref{eq:HJB_Gamma}. 
\end{proof}
Finally we show that $\lambda_{opp}^{\Gamma}(p)$ is strictly increasing.
\begin{lem}
	Suppose Assumptions \ref{assu:Gamma} and \ref{assu:symmetric_payoffs}
	are satisfied and let $\underline{q},\overline{q}$ be given as in
	Lemma \ref{lem:extended_solution_lambda}. Then $\lambda_{opp}^{\Gamma}(p)$
	is strictly increasing on $\left(\underline{q},\overline{q}\right)$. 
\end{lem}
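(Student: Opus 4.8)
The plan is to avoid a direct phase-plane analysis of the ODE \eqref{eq:ODE_lamda} and instead deduce strict monotonicity of $\lambda_{opp}^{\Gamma}$ from the already-established strict convexity of its value $V_{opp}^{\Gamma}$, transferred through the chain rule. On the open interval $(\underline{q},\overline{q})$ we have $\lambda_{opp}^{\Gamma}=\hat{\lambda}_{opp}$, and by Lemma \ref{lem:extended_solution_lambda} this takes interior values in $(0,1)$, with $\hat{\lambda}_{opp}(p)>\gamma$ for $p>p^{*}$ and $\hat{\lambda}_{opp}(p)<\gamma$ for $p<p^{*}$. Because the attention choice is interior, the first-order condition \eqref{eq:FOC_Gamma} holds at every $p\in(\underline{q},\overline{q})\setminus\{p^{*}\}$, so Lemma \ref{lem:Solutions_exceed_lower_bound} supplies the representation $V_{opp}^{\Gamma}(p)=A(\hat{\lambda}_{opp}(p))\overline{u}-B(\hat{\lambda}_{opp}(p))c$ there.

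First I would record two facts about $V_{opp}^{\Gamma}$. By the preceding lemma it is strictly convex on $(\underline{q},\overline{q})$, and by the tangency condition \eqref{eq:Vprime_pstar_general_Gamma} together with Assumption \ref{assu:symmetric_payoffs} (which makes $U^{S}$ constant and $p^{*}=1/2$) we have $V_{opp}^{\Gamma\prime}(p^{*})=0$. Strict convexity and a vanishing derivative at $p^{*}$ then force $V_{opp}^{\Gamma\prime}(p)>0$ for $p\in(p^{*},\overline{q})$ and $V_{opp}^{\Gamma\prime}(p)<0$ for $p\in(\underline{q},p^{*})$. Differentiating the representation above yields $V_{opp}^{\Gamma\prime}(p)=\bigl[A'(\hat{\lambda}_{opp}(p))\overline{u}-B'(\hat{\lambda}_{opp}(p))c\bigr]\,\hat{\lambda}_{opp}'(p)$, so the sign of $\hat{\lambda}_{opp}'$ equals the sign of $V_{opp}^{\Gamma\prime}$ divided by the sign of the bracket.

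The bracket's sign is pinned down by Lemma \ref{lem:U-shape_AB}: for $\hat{\lambda}_{opp}(p)>\gamma$ we have $A'>0$ and $B'<0$, hence (using $\overline{u}>0$ and $c\ge 0$) the bracket is strictly positive; for $\hat{\lambda}_{opp}(p)<\gamma$ it is strictly negative. Combining with the signs of $V_{opp}^{\Gamma\prime}$ from the previous step gives $\hat{\lambda}_{opp}'(p)>0$ on both $(p^{*},\overline{q})$ and $(\underline{q},p^{*})$. At the single point $p=p^{*}$, where $\hat{\lambda}_{opp}=\gamma$, both the representation (which excludes $\lambda=\gamma$) and the chain-rule factor degenerate, so I would instead invoke Lemma \ref{lem:Solution_lambda} directly, which gives $\hat{\lambda}_{opp}'(p^{*})>0$. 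Since $\hat{\lambda}_{opp}$ is $\mathcal{C}^{1}$ on $(\underline{q},\overline{q})$ with strictly positive derivative at every point, it is strictly increasing, and the claim for $\lambda_{opp}^{\Gamma}$ follows.

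The main obstacle is precisely this degeneracy at $p^{*}$: the FOC-based representation of $V_{opp}^{\Gamma}$ is unavailable at $\lambda=\gamma$, and the factor $A'(\gamma)\overline{u}-B'(\gamma)c$ vanishes there, so the sign of $\hat{\lambda}_{opp}'(p^{*})$ cannot be read off from convexity and must be taken from the saddle-point computation of Lemma \ref{lem:Solution_lambda}. I would also verify the one supporting fact that makes the sign transfer legitimate away from $p^{*}$, namely that the bracket $A'(\hat{\lambda}_{opp}(p))\overline{u}-B'(\hat{\lambda}_{opp}(p))c$ never vanishes for $\hat{\lambda}_{opp}(p)\neq\gamma$; this is immediate from Lemma \ref{lem:U-shape_AB} since $A'$ and $B'$ are strictly signed and $\overline{u}>0$, so the quotient defining $\operatorname{sgn}\hat{\lambda}_{opp}'$ is well defined throughout $(\underline{q},\overline{q})\setminus\{p^{*}\}$.
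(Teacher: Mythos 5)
Your proof is correct, and it runs on the same engine as the paper's: both arguments transfer the strict convexity of $V_{opp}^{\Gamma}$ through the chain rule applied to the FOC representation from Lemma \ref{lem:Solutions_exceed_lower_bound}, and both take $\hat{\lambda}_{opp}'(p^{*})>0$ from the saddle-point computation in Lemma \ref{lem:Solution_lambda}. The difference is in the finishing step. The paper argues by contradiction and continuity: if $\lambda'(p)=0$ at some $p\neq p^{*}$, the chain rule would force $V_{opp}^{\Gamma\prime}(p)=0$, but a strictly convex function's derivative vanishes at most once and already vanishes at $p^{*}$; hence $\lambda'$ never vanishes on $(\underline{q},\overline{q})$, and since $\lambda$ is $\mathcal{C}^{1}$ with $\lambda'(p^{*})>0$, the sign propagates to the whole interval. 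You instead determine the sign of $\lambda'$ pointwise, dividing the sign of $V_{opp}^{\Gamma\prime}$ (negative left of $p^{*}$, positive right of it, by convexity and the tangency \eqref{eq:Vprime_pstar_general_Gamma}) by the sign of the bracket $A'(\lambda)\overline{u}-B'(\lambda)c$, which Lemma \ref{lem:U-shape_AB} pins down on either side of $\gamma$. Your route needs the extra input of Lemma \ref{lem:U-shape_AB} (and the observation that the bracket never vanishes for $\lambda\neq\gamma$), but it dispenses with the sign-propagation step and makes the mechanism more transparent---it explains \emph{why} $\lambda$ increases on both sides of $p^{*}$ (the factor and $V'$ flip sign together), rather than just ruling out critical points; the paper's version is shorter and never needs to sign the bracket. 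One incidental merit of your write-up: you correctly carry the $-B(\lambda)c$ term, whereas the paper's displayed middle branch $V_{opp}^{\Gamma}(p)=A(\lambda_{opp}(p))\overline{u}$ (and the derivative in its proof) drops it, inconsistently with Lemma \ref{lem:Solutions_exceed_lower_bound} and with the boundary values $A(0)\overline{u}-B(0)c$ and $A(1)\overline{u}-B(1)c$ used in the same definition; this is evidently a typo and harmless to the paper's logic, since $\lambda'(p)=0$ still forces $V_{opp}^{\Gamma\prime}(p)=0$ under the full representation.
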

\begin{proof}
	For $p\in(\underline{q},\overline{q})$, $V_{opp}^{\Gamma}(p)=A(\lambda{}_{opp}(p))\overline{u}$.
	Differentiating with respect to $p$ we get 
	\[
	V_{opp}^{\Gamma\prime}(p)=A'(\lambda{}_{opp}(p))\lambda'{}_{opp}(p)\overline{u}.
	\]
	Hence if $\lambda'(p)=0$ for $p\neq1/2$, we must have $V_{opp}^{\Gamma\prime}(p)=0$.
	Since $V_{opp}^{\Gamma\prime}(1/2)=0$, this violates strict convexity
	of $V_{opp}^{\Gamma}(p)$. Therefore $\lambda'(p)\neq0$ for all $p\in(\underline{q},\overline{q})$.
	Since $\lambda'(1/2)>0$, this implies that $\lambda'(p)>0$ if $p\in(\underline{q},\overline{q})$.
\end{proof}

\subsubsection{Optimal Solution}

As in our original model we show that the value function $V^{\Gamma}$
is the upper envelope of the two solution candidates. In contrast to our original model, the optimal policy is not a bang-bang solution. We show that inside the  own-biased region, $\alpha(p)=g^{-1}(\lambda(p))$ is decreasing whenever it is not a corner-solution. This means that more extreme beliefs lead to a more own-biased news-diet.  In the opposite-biased region, $\alpha(p)$ is strictly increasing. This implies that more moderate beliefs lead to a more balanced news-diet.
\begin{thm}
	\label{thm:Value_function_Gamma}Suppose Assumptions \ref{assu:Gamma}
	and \ref{assu:symmetric_payoffs} are satisfied. 
	\begin{enumerate}
		\item If \eqref{eq:EXPG} is violated then $V^{\Gamma}(p)=U(p)$ for all
		$p\in[0,1]$. 
		\item If \eqref{eq:EXPG} is satisfied and $V_{own}^{\Gamma}(p)>U^{S}(p)$
		for all $p\neq1/2$, then $V^{\Gamma}(p)=V_{own}^{\Gamma}(p)$ for
		all $p\in[0,1]$, and $\alpha(p)=g^{-1}(\lambda(p))$ is strictly decreasing if $V^{\Gamma}(p)>U(p)$ and $\alpha(p)=g^{-1}(\lambda(p))\in(0,1)$.
		\item If \eqref{eq:EXPG} is satisfied and $V_{own}^{\Gamma}(p)=U^{S}(p)$
		for some $p\neq1/2$, then $V^{\Gamma}(p)=\max\left\{ V_{own}^{\Gamma}(p),V_{opp}^{\Gamma}(p)\right\} $,
		and $\alpha(p)=g^{-1}(\lambda(p))$ is strictly decreasing if $V^{\Gamma}(p)=V_{own}^{\Gamma}(p)>U(p)$ and $\lambda(p)\in(0,1)$,
		and strictly increasing if $V^{\Gamma}(p)=V_{opp}^{\Gamma}(p)$.
	\end{enumerate}
\end{thm}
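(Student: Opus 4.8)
The plan is to reuse the verification architecture of the baseline model: by the verification theorem (Theorem III.4.11 in \citet{bardi/capuzzo-dolceta:97}), it suffices to exhibit a piecewise-$\mathcal{C}^1$ function with only convex kinks that lies weakly above $U$, is a viscosity solution of the variational inequality \eqref{eq:HJB_VIO_Gamma}, and coincides with the value of a feasible strategy. The crucial structural observation is that, under Assumption \ref{assu:symmetric_payoffs}, both candidates are sign-definite about $\gamma$ on each side of $p^{*}=1/2$: for $p<1/2$ the own-biased candidate $V^{\Gamma}_{own}$ uses $\lambda\ge\gamma$ (a solution of the restricted problem \eqref{eq:HJB+_Gamma}) while the opposite-biased candidate $V^{\Gamma}_{opp}$ uses $\lambda\le\gamma$ (a solution of \eqref{eq:HJB-_Gamma}), and the roles reverse for $p>1/2$. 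Hence the Crossing Lemma \ref{lem:single_crossing_general_gamma} and the Unimprovability Lemma \ref{lem:Unimprovability_General_Gamma} play exactly the roles that Lemmas \ref{lem:branch_crossing} and \ref{lem:Branches_satisfy_HJB} did in the baseline. Part (a) is then immediate from the upper bound $\widehat V=\max\{U,U^{FA}\}$ noted in the preliminary results of this subsection: if \eqref{eq:EXPG} fails, $U^{FA}\le U$ on all of $[0,1]$ by linearity, so $V^{\Gamma}=U$.

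For part (b) I would argue that $V^{\Gamma}_{own}$ is itself a classical solution of \eqref{eq:HJB_VIO_Gamma}. By hypothesis the stationary interval degenerates ($\underline q^{s}=\overline q^{s}=1/2$), so Lemma \ref{lem:Vct_satisfies_HJB_Gamma} gives that $V^{\Gamma}_{own}$ is $\mathcal{C}^1$, convex, and satisfies \eqref{eq:HJB_Gamma} separately on $[\underline p^{*},1/2)$ and on $(1/2,\overline p^{*}]$; it equals $U$ outside $(\underline p^{*},\overline p^{*})$, so the immediate-action branch of the inequality holds there; and it is $\mathcal{C}^1$ at $\underline q^{b},\overline q^{b}$ (construction) and at $\underline p^{*},\overline p^{*}$ (smooth pasting). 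The only remaining gluing point is $p=1/2$, where symmetry (Assumption \ref{assu:symmetric_payoffs}) forces $V_{own}^{\Gamma\prime}(1/2^{-})=0=V_{own}^{\Gamma\prime}(1/2^{+})$, so $V^{\Gamma}_{own}$ is $\mathcal{C}^1$ throughout $(0,1)$ and solves the variational inequality classically. The verification theorem then yields $V^{\Gamma}=V^{\Gamma}_{own}$, and since $V^{\Gamma}_{opp}$ is the value of the (feasible) opposite-biased strategy it is automatically dominated, so the maximum in the statement equals $V^{\Gamma}_{own}$.

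Part (c) requires the full envelope argument, which I would carry out as in Proposition \ref{prop:Structure_V_G}(b). Near the boundaries own-biased learning is strictly better: $V^{\Gamma}_{own}=U^{FA}$ at $\underline p^{*},\overline p^{*}$ (smooth pasting, Lemma \ref{lem:contra_cutoffs_Gamma}), while $V^{\Gamma}_{opp}\le\widehat V=U^{FA}$ there, with strict inequality by strict convexity of $V^{\Gamma}_{opp}$. On the now-nondegenerate stationary interval $(\underline q^{s},\overline q^{s})$, by contrast, $V^{\Gamma}_{own}=U^{S}<V^{\Gamma}_{opp}$. Since both candidates exceed $U^{S}$ on $(\underline p^{*},\underline q^{s})$ (Lemma \ref{lem:Solutions_exceed_lower_bound}, the maximizer there being $\neq\gamma$) and are of opposite type about $\gamma$, the Crossing Lemma \ref{lem:single_crossing_general_gamma} forbids $V^{\Gamma}_{own}$ from crossing $V^{\Gamma}_{opp}$ from below, forcing a \emph{unique} intersection $\underline p\in(\underline p^{*},\underline q^{s})$; a symmetric $\overline p$ exists on the right. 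This gives the sandwiched envelope ($V^{\Gamma}_{own}$ outside $[\underline p,\overline p]$, $V^{\Gamma}_{opp}$ inside). Verification then mirrors Proposition \ref{prop:envelope_characterization}: at points of differentiability the active branch satisfies \eqref{eq:HJB_Gamma} by Unimprovability (each branch dominates $\max\{U^{S},U\}$), and the only nonsmooth points are the convex kinks at $\underline p,\overline p$. There, the role of the linearity of $F_\alpha$ in $\alpha$ in the baseline is taken over by the sign of the $V'$-coefficient of $H$, namely $-p(1-p)(\lambda-\Gamma(\lambda))$: thus $H(p,V,\cdot,\lambda)$ is decreasing in $V'$ for $\lambda>\gamma$ and increasing for $\lambda<\gamma$. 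Hence for any subgradient $\zeta\in[V'_-,V'_+]$ one has $H(p,V,\zeta,\lambda)\le H(p,V,V'_-,\lambda)$ when $\lambda\ge\gamma$ and $H(p,V,\zeta,\lambda)\le H(p,V,V'_+,\lambda)$ when $\lambda\le\gamma$; taking maxima and using that each branch satisfies the full HJB \eqref{eq:HJB_Gamma} bounds $\max_\lambda H(p,V,\zeta,\lambda)$ by $c+\rho V$, which is the subsolution inequality, while the superdifferential is empty at a convex kink.

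Finally, the monotonicity of $\alpha(p)=g^{-1}(\lambda(p))$ is a corollary, since $g^{-1}$ is strictly increasing and $\alpha$ inherits the monotonicity of the optimal $\lambda$. In the own-biased region the optimal $\lambda$ is strictly decreasing on the interior-attention arcs (Lemma \ref{lem:lambdaprime_at_upper_bound}, with the boundary conditions $\lambda=1$ and $\lambda=0$), so $\alpha$ is strictly decreasing; in the opposite-biased region $\lambda^{\Gamma}_{opp}$ is strictly increasing (strict monotonicity of $\lambda^{\Gamma}_{opp}$, established above), so $\alpha$ is strictly increasing. I expect the main obstacle to be part (c): first, pinning down the envelope ordering—in particular ruling out that the interior-$\lambda$ arcs of the two candidates re-cross—which the Crossing Lemma resolves only because both stay strictly above $U^{S}$ between the stopping boundary and the stationary interval; and second, the kink verification, where the nonlinearity of $H$ in $\lambda$ means the clean ``check $\lambda\in\{0,1\}$'' shortcut of the baseline must be replaced by the monotonicity-in-$V'$ argument above.
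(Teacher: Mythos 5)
Your architecture is exactly what the paper intends: its own proof of this theorem is a one-line deferral to the argument for Theorem \ref{thm:optimal_attention_strategy}, and your reconstruction---part (a) from the upper bound $\widehat{V}=\max\{U,U^{FA}\}$, the envelope and uniqueness of $\underline{p},\overline{p}$ from the Crossing Lemma \ref{lem:single_crossing_general_gamma} (both candidates lying strictly above $U^{S}$ between the stopping boundary and the stationary interval), verification at smooth points from the Unimprovability Lemma \ref{lem:Unimprovability_General_Gamma}, and monotonicity of $\alpha=g^{-1}(\lambda)$ from Lemma \ref{lem:lambdaprime_at_upper_bound} and the strict monotonicity of $\lambda^{\Gamma}_{opp}$---is the intended instantiation. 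Your replacement of the baseline's linearity-in-$\alpha$ shortcut at kinks by the observation that the $V'$-coefficient of $H$ is $-p(1-p)\left(\lambda-\Gamma(\lambda)\right)$, so that $H$ is monotone in $V'$ with direction determined by $\lambda\gtrless\gamma$, is the right adaptation and correctly delivers the subdifferential test (the superdifferential being empty at a convex kink).

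The one step that fails is the smoothness claim in part (b). Symmetry does \emph{not} force $V_{own}^{\Gamma\prime}(1/2^{-})=0=V_{own}^{\Gamma\prime}(1/2^{+})$; it forces only $V_{own}^{\Gamma\prime}(1/2^{-})=-V_{own}^{\Gamma\prime}(1/2^{+})$. The hypothesis of (b) empties the stationary interval, but this leaves two subcases: either $\lambda(\cdot\,;\underline{q}^{b},1)$ reaches $\gamma$ exactly at $p=1/2$, in which case $V_{own}^{\Gamma}(1/2)=U^{S}(1/2)$ and smooth pasting does give $\mathcal{C}^{1}$; or $\lambda(1/2)>\gamma$ (including the case $\underline{q}^{b}\ge1/2$, where the construction reverts to the baseline bang-bang strategy, whose value generically has a kink at $\check{p}=1/2$ exactly as in case (a) of Proposition \ref{prop:Structure_V_G}). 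In the latter subcase $V_{own}^{\Gamma}(1/2)>U^{S}(1/2)$, and since the left branch solves \eqref{eq:HJB+_Gamma} while the right branch solves \eqref{eq:HJB-_Gamma}, the Crossing Lemma \ref{lem:single_crossing_general_gamma} yields $V_{own}^{\Gamma\prime}(1/2^{-})<V_{own}^{\Gamma\prime}(1/2^{+})$: a strict convex kink, so $V_{own}^{\Gamma}$ is not a classical solution of \eqref{eq:HJB_VIO_Gamma} and the verification theorem cannot be applied as you state it. The repair is already in your own part (c): the kink at $1/2$ is convex, and your monotonicity-in-$V'$ bound applies verbatim there, so part (b) stands once you route this case through the viscosity verification rather than the classical one.
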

\begin{proof}[Proof of Theorem \ref{thm:Value_function_Gamma}]
	Follows from the same arguments as the proof of Theorem \ref{thm:optimal_attention_strategy}. 
\end{proof}

\subsection{Multiple Actions\label{sec:Third-Action}}

In this Appendix, we extend the model in Section \ref{sec:Model}
to include a third action $x=m$ which yields $u_{m}^{R}$ and $u^L_{m}$ in states $R$ and $L$. Up to relabeling of the actions it is without loss to assume that $u_{m}^{R}\in\left(u_{\ell}^{R},u_{r}^{R}\right)$. Further we assume $u^L_{m}< u_{\ell}^{L}$ which guarantees that action $m$ does not dominate action $\ell$ for all beliefs. 

The optimal policy will be affected by the availability of action $m$ if it is optimal to take this action for some beliefs. To identify when this is the case, we define a strategy that specifies a stopping region $[\underline{p}_m,\overline{p}_m]$ in which action $m$ is taken immediately. For $p>\overline{p}_m$, the strategy prescribes attention to the $L$-biased news source ($\alpha=1$) and for $p<\underline{p}_m$, the strategy prescribes attention to the $R$-biased news source ($\alpha=0$). We call this strategy the ``$m$-strategy.'' It has the following structure:
\[
\underset{p=}{\phantom{|}}\underset{0}{|}\underbrace{\longrightarrow\longrightarrow\vphantom{p^{*}}\negthickspace\negmedspace\longrightarrow\longrightarrow}_{\alpha=0}\underline{p}_m\underbrace{\text{\ensuremath{\vphantom{p^{*}}}---------------------}}_{\text{immediate action }m}\overline{p}_m\underbrace{\longleftarrow\vphantom{p^{*}}\negthickspace\negmedspace\longleftarrow\longleftarrow\longleftarrow}_{\alpha=1}\underset{1}{|}
\]

If this strategy is part of the optimal solution (for some range of belief), the boundary points $\underline{p}_m$ and $\overline{p}_m$ must satisfy value-matching and smooth-pasting
conditions that resemble those used to define $\underline{p}^{*}$
and $\overline{p}^{*}$. We will define $\overline{p}_m$ by imposing smooth
pasting and value matching with $U_{m}(p)$ in \eqref{eq:V_a1}:
\begin{align}
c+\rho U_{m}(p) & =\lambda p \left(u_{r}^{R}-U_{m}(p)\right)-\lambda p(1-p)U_{m}'(p).\label{eq:def_pmh}
\end{align}
Similarly we will define $\underline{p}_m$ by imposing smooth pasting and
value matching with $U_{m}(p)$ in \eqref{eq:V_a0}:
\begin{align}
c+\rho U_{m}(p)= & \lambda(1-p)\left(u_{\ell}^{L}-U_{m}(p)\right)+\lambda p(1-p)U_{m}'(p).\label{eq:def_pml}
\end{align}

The following lemma identifies when solutions to \eqref{eq:def_pmh} and \eqref{eq:def_pml} exist, and when these solutions can be used to define the cutoffs $\underline{p}_m$ and $\overline{p}_m$ in a way the $m$-strategy only prescribes information acquisition if it is not dominated by immediate action $m$ or by the stationary strategy. 

\begin{lem}\label{lem:m_cutoffs}~

	\begin{enumerate}
		\item Let $u^R_m \ge U^{FA}(1)$ or $c+\rho u^L_m \le 0$. If $q_1 \in (0,1)$ is a solution to \eqref{eq:def_pmh}, then $V_1(p;q_1,U_m(q_1))\le U_m(p)$ for all $p\in[q_1,1]$.

		\item If $u^R_m < U^{FA}(1)$ and $c+\rho u^L_m > 0$, then there exists a unique solution $q_1 \in (0,1)$ to \eqref{eq:def_pmh} given by
		\begin{equation}\label{eq:q1}
			q_1 = \frac{u_{m}^{L}\rho+c}{\rho\left(u_{m}^{L}-u_{m}^{R}\right)+\left(u_{r}^{R}-u_{m}^{R}\right)\lambda}.
		\end{equation}
		and $V_1(p;q_1,U_m(q_1))$ is strictly convex on $[q_1,1]$.

		\item Let $u^L_m \ge U^{FA}(0)$ or $c+\rho u^R_m \le 0$. If $q_2 \in (0,1)$ is a solution to \eqref{eq:def_pml}, then $V_0(p;q_2,U_m(q_2))\le U_m(p)$ for all $p\in[0,q_2]$.

		\item If $u^L_m < U^{FA}(0)$ and $c+\rho u^R_m > 0$, then there exists a unique solution $q_2 \in (0,1)$ to \eqref{eq:def_pml} given by 
		\begin{equation}\label{eq:q2}
			q_2 = \frac{\left(u_{\ell}^{L}-u_{m}^{L}\right)\lambda-u_{m}^{L} \rho -c}{\rho\left(u_{m}^{R}-u_{m}^{L}\right)+\left(u_{\ell}^{L}-u_{m}^{L}\right)\lambda}
		\end{equation}
		and $V_0(p;q_2,U_m(q_2))$ is strictly convex on $[0,q_2]$.
		\item Suppose $u^R_m < U^{FA}(1)$ and $c+\rho u^L_m > 0$, and $u^L_m < U^{FA}(0)$ and $c+\rho u^R_m > 0$. If $U_m(q_1)\ge U^S(q_1)$ and $U_m(q_2)\ge U^S(q_2)$, then $q_1\ge q_2$.
	\end{enumerate}
\end{lem}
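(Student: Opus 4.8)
The plan is to treat the four existence/convexity claims (a)--(d) by one mechanism and to reserve the real work for (e). Throughout write $U_m(p)=pu_m^R+(1-p)u_m^L$, so that $U_m'(p)=u_m^R-u_m^L$ is constant. The first observation is that, because $U_m$ is \emph{linear}, substituting it into \eqref{eq:def_pmh} collapses the right-hand side: a short computation gives
\[
\lambda p\bigl(u_r^R-U_m(p)\bigr)-\lambda p(1-p)U_m'(p)=\lambda p\,(u_r^R-u_m^R),
\]
so \eqref{eq:def_pmh} reduces to the \emph{linear} equation $c+\rho U_m(p)=\lambda p(u_r^R-u_m^R)$, i.e. $g_1(p):=D_1(p-q_1)=0$, where $D_1:=\lambda(u_r^R-u_m^R)+\rho(u_m^L-u_m^R)$, $N_1:=c+\rho u_m^L$, and $q_1=N_1/D_1$ is exactly \eqref{eq:q1}. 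Thus \eqref{eq:def_pmh} has the unique root $q_1$ whenever $D_1\neq0$. An elementary sign analysis of $N_1/D_1$ then shows that ``$D_1>0$ and $q_1\in(0,1)$'' is \emph{equivalent} to the two inequalities $u_m^R<U^{FA}(1)$ and $c+\rho u_m^L>0$: indeed $q_1<1\iff c+\rho u_m^R<\lambda(u_r^R-u_m^R)\iff u_m^R<U^{FA}(1)$, and $q_1>0\iff N_1>0$. This simultaneously proves the existence/formula part of (b) and shows that, under the hypotheses of (a), any interior root must have $D_1<0$ (the boundary equalities $u_m^R=U^{FA}(1)$ or $c+\rho u_m^L=0$ push $q_1$ to an endpoint and are excluded). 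The identical computation applied to \eqref{eq:def_pml} (the $\alpha=0$ equation) yields $g_0(p)=D_2(q_2-p)$, formula \eqref{eq:q2}, and the mirror equivalence for (c)--(d), with $D_2:=\lambda(u_\ell^L-u_m^L)+\rho(u_m^R-u_m^L)$.

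For the convexity/domination statements I would exploit that $V_1:=V_1(\cdot;q_1,U_m(q_1))$ and the line $U_m$ are \emph{tangent} at $q_1$. Value matching holds by construction, and smooth pasting is automatic: both $(U_m(q_1),U_m'(q_1))$ and $(V_1(q_1),V_1'(q_1))$ satisfy \eqref{eq:V_a1} at $q_1$ with the same value, and subtracting the two instances forces $V_1'(q_1)=U_m'(q_1)$. Setting $\psi:=V_1-U_m$, the difference solves the linear ODE $\lambda p(1-p)\psi'+(\rho+\lambda p)\psi=D_1(p-q_1)$ with $\psi(q_1)=\psi'(q_1)=0$; differentiating this equation at $q_1$ gives $\psi''(q_1)=D_1/\bigl(\lambda q_1(1-q_1)\bigr)$. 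Since every solution of \eqref{eq:V_a1} of this type has single-signed curvature (as in Lemma \ref{lem:properties_contradictory}; for $\rho>0$ it is a linear particular solution plus a multiple of the strictly convex homogeneous solution $\bigl((1-p)/p\bigr)^{\rho/\lambda}(1-p)$), the sign of $V_1''=\psi''$ on the whole interval equals $\operatorname{sgn}(D_1)$. When $D_1>0$ (case (b)) this yields strict convexity of $V_1$ on $[q_1,1]$; when $D_1<0$ (the relevant case under (a)) $V_1$ is concave and, being tangent to the line $U_m$ at $q_1$, lies weakly below it, i.e. $V_1\le U_m$ on $[q_1,1]$. Parts (c)--(d) follow by applying the same argument to $V_0(\cdot;q_2,U_m(q_2))$ on $[0,q_2]$.

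The crux is (e). Here both hypothesis sets hold, so $q_1,q_2\in(0,1)$ with $D_1,D_2>0$, and the goal $q_1\ge q_2$ is equivalent to $g_1(q_2)=D_1(q_2-q_1)\le0$. The key idea is that at $q_2$ the pair $(U_m(q_2),U_m'(q_2))$ satisfies the $\alpha=0$ equation \eqref{eq:V_a0} \emph{exactly}---that is the defining property of $q_2$---so $U_m$ plays the role of ``$V_0$'' in the Unimprovability Lemma \ref{lem:Branches_satisfy_HJB} at this single point. Its proof computes, for any $(V,V')$ lying on the $\alpha=0$ ODE, the $\alpha$-derivative of the objective as a positive multiple of $U^S-V$; and since $F_\alpha$ is linear in $\alpha$ this derivative is precisely $F_1-F_0=g_1(q_2)-g_0(q_2)=g_1(q_2)$, using $g_0(q_2)=0$. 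Hence $g_1(q_2)$ has the sign of $U^S(q_2)-U_m(q_2)$, which is $\le0$ by the hypothesis $U_m(q_2)\ge U^S(q_2)$. Therefore $D_1(q_2-q_1)\le0$, and since $D_1>0$ we conclude $q_1\ge q_2$. (Symmetrically, evaluating the same identity at $q_1$, where $(U_m,U_m')$ satisfies \eqref{eq:V_a1}, converts the hypothesis $U_m(q_1)\ge U^S(q_1)$ into $g_0(q_1)\le0$ and gives the same conclusion, so either inequality alone would suffice.) I expect the main obstacle to be bookkeeping rather than conceptual: carefully verifying that the smooth-pasting identity at $q_2$ genuinely licenses substituting $U_m$ for $V_0$ in Lemma \ref{lem:Branches_satisfy_HJB}, and tracking the signs of $D_1$ and $D_2$ through the sign analysis so that the equivalences in (a)--(d) are stated with the correct treatment of the boundary cases.
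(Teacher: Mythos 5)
Your proof is correct, and it differs from the paper's in instructive ways. For (a)--(d) the paper does not exploit your ``linear collapse'' of \eqref{eq:def_pmh} and \eqref{eq:def_pml}: it instead writes the general solution of \eqref{eq:V_a1} as a linear particular solution $z(p)$ plus $C\bigl(\tfrac{1-p}{p}\bigr)^{\rho/\lambda}(1-p)$, obtains $q_1$ by an intermediate-value argument on the smooth-pasting condition, and runs a three-case analysis (comparing $u_m^R$ with $U^{FA}(1)$ and $U_m$ with $z$) to decide the sign of $C$ and hence convexity versus domination; your observation that the defect $c+\rho U_m(p)-F_1(p,U_m(p),U_m'(p))$ is exactly the linear function $D_1(q_1-p)$, so that $\psi''(q_1)=D_1/\bigl(\lambda q_1(1-q_1)\bigr)$ pins down the curvature sign, delivers existence, uniqueness, \eqref{eq:q1}, and the convexity/domination dichotomy in one stroke, matching the paper's conclusions (the same single-signed-curvature fact does the work in both proofs). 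For (e) your route is genuinely different: the paper argues by contradiction, assuming $q_1<q_2$, producing a crossing $p'\in(q_1,q_2)$ of the two strictly convex tangent branches above $U^S$, and contradicting the Crossing Lemma \ref{lem:branch_crossing}; you instead apply the pointwise identity from the proof of Lemma \ref{lem:Branches_satisfy_HJB}---valid at any $(p,V,V')$ satisfying \eqref{eq:V_a0} at that point, which $(q_2,U_m(q_2),U_m'(q_2))$ does by the very definition of $q_2$---to get $D_1(q_2-q_1)=F_1-F_0=(2\rho+\lambda)\bigl(U^S(q_2)-U_m(q_2)\bigr)\le 0$ directly. This is shorter, needs neither strict convexity of the branches nor an interior crossing, and in fact proves a slightly stronger statement, since (as you note) either one of the hypotheses $U_m(q_1)\ge U^S(q_1)$ or $U_m(q_2)\ge U^S(q_2)$ suffices, whereas the paper's linearity argument uses both to get $U_m\ge U^S$ on all of $[q_1,q_2]$. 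One knife-edge worth tightening in your write-up of (a)/(c): if $D_1=0$ and $N_1=0$ simultaneously (the paper's Case 1, $u_m^R=U^{FA}(1)$ together with $c+\rho u_m^L=0$), then \emph{every} $p\in(0,1)$ solves \eqref{eq:def_pmh} with $D_1=0$ rather than $D_1<0$; but then $U_m$ itself solves \eqref{eq:V_a1} globally, so $V_1\equiv U_m$ and the weak inequality holds trivially---your framework dispatches this in one line, and for $\rho=0$ the cases (a) and (c) are vacuous anyway since $D_1,D_2>0$ automatically.
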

	
\begin{proof}
	For (a) and (b) we note that the general solution to \eqref{eq:V_a1} is given by 
	\begin{equation*}
	 	V_1(p)=\underbrace{\frac{p \rho u^R_r \lambda-c \left(\rho +(1-p)\lambda \right)}{\rho(\rho + \lambda)}}_{=:z(p)}+\left(\frac{1-p}{p}\right)^{\frac{\rho}{\lambda}}(1-p)\,C,
	\end{equation*}
	there $C$ is the constant of integration. Clearly, the sign of $C$ determines whether the solution is convex or concave since 
	\[
	\frac{d^2}{dp^2}\left( \left( \frac{1-p}{p}\right)^{\frac{\rho}{\lambda}}(1-p) \right) > 0
	\]
	Moreover, we note the $V_1(1)=U^{FA}(1)$ regardless of the value of the constant $C$.

	For the proof of (a) we distinguish several cases: Case 1: If $u^R_m = U^{FA}(1)$ and $c+\rho u^L_m = 0$. In this case,  $U_m(p)=z(p)$ for all $p$. Hence any $q_1\in(0,1)$ satisfies \eqref{eq:def_pmh} and smooth pasting but $V(p;q_1,U_m(q_1))=U_m(p)$ for all $p \in [0,1]$.

	Case 2: $u^R_m > U^{FA}(1)$. We first show that if \eqref{eq:def_pmh} and smooth pasting is satisfied for $p'\in(0,1)$, then $U_m(p')<z(p)$. Suppose by contradiction that $U_m(p')\ge z(p')$. If \eqref{eq:def_pmh} is satisfies at $p'$, then $U_m(p)$ is tangent to $V_1(p;p',U_m(p')$at $p'$ and since $V_1(p;p',U_m(p'))\ge z(p)$, $V_1(p;p',U_m(p'))$ is weakly  convex as a function of $p$. But this implies that $V_1(1;p',U_m(p'))\ge U_m(1)=u^R_m>U^{FA}(1)$. This is a contradiction since we argued above that any solution to \eqref{eq:V_a1} satisfies $V_1(1)=U^{FA}(1)$. Hence \eqref{eq:def_pmh} or smooth pasting is vioalated at $p'$ if $U_m(p')\ge z(p')$. If  $U_m(p')\le z(p')$, \eqref{eq:def_pmh}, and smooth pasting is satisfied for $p'\in(0,1)$, then $V_1(p;p',U_m(p'))$ is strictly concave as a function of $p$ and tangent to $U_m(p)$ at $p'$. Hence $V_1(p;p',U_m(p'))< U_m(p)$ for all $p>p'$.

	Case 3: $u^R_m < U^{FA}(1)$. If $c + \rho u^L_m \le 0$, then $U_m(0)<z(0)$ and since $z(1)=U^{FA}(1)$ we have $U_m(p)<z(p)$ for all $p$. As in case 2, if $p'\in(0,1)$ satisfies \eqref{eq:def_pmh} and smooth pasting, then $V_1(p;p',U_m(p'))< U_m(p)$ for all $p>p'$ which contradicts $V_1(1;p',U_m(p'))=U^{FA}(1)$. Hence there is no solution to \eqref{eq:def_pmh} that satisdies smooth pasting. This concludes the proof of (a).

	For (b), note that if $u^R_m < U^{FA}(1)$ and $c + \rho u^L_m > 0$, then $U_m(p)$ crosses $z(p)$ from above. As in case 3 in the proof for part (a), $z(p')>U_m(p)$ implies that  \eqref{eq:def_pmh} and smooth pasting cannot be both satisfied. Next we identify a solution $q_1$ to \eqref{eq:def_pmh} for which $V'(q_1;q_1,U_m(q_1))=U'_m(q_1)$. If $U_m(q_1)=z(q_1)$ then $V'(q_1;q_1,U_m(q_1))=z'(q_1)=U'_m(q_1)$. On the hand $\lim_{q_1 \rightarrow 0} V'(q_1;q_1,U_m(q_1)) = -\infty$. Therefore, the intermediate value theorem implies that there exists $q_1\in (0,1)$ such that $V'(q_1;q_1,U_m(q_1))=U'_m(q_1)$ and simple algebra shows that is is given by \eqref{eq:q1}.

	The proofs of (c) and (d) follow from a similar argument. For part (e) suppose by contradiction that $q_1<q2$. Since both $V_1(p;q_1,U_m(q_1))$ and $V_0(p;q_0,U_m(q_0))$ are strictly convex on $[q_1,q_2]$ and coincide with $U_m(p)$ at $q_1$ and $q_2$, respectively, there exists $p'\in(q_1,q_2)$ such that $V_1(p';q_1,U_m(q_1))=V_0(p';q_0,U_m(q_0))>U_m(p')$ and $V'_1(p';q_1,U_m(q_1))>V'_0(p';q_0,U_m(q_0))$. Since $U_m(p)\ge U^S(p)$ for $p\in{q_1,q_2}$ and both $U_m$ and $U^S$ are linear, we have $V_1(p';q_1,U_m(q_1))=V_0(p';q_0,U_m(q_0))>U_S(p')$. By Lemma \ref{lem:branch_crossing} this implies $V'_1(p';q_1,U_m(q_1))<V'_0(p';q_0,U_m(q_0))$ which is a contradiction. Therefore we must have $q_1\ge q_2$. 
\end{proof}
Based on the results of this lemma, we define $\underline{p}_m$ and $\overline{p}_m$ as follows:
\begin{align*}
\overline{p}_m & =
\begin{cases}
	q_1,& \text{ if }u^R_m < U^{FA}(1),\: c+\rho u^L_m > 0 ,\text{ and }U_m(q_1)\ge U^S(q_1)\\
	1,& \text{ otherwise.}
\end{cases}\\
\underline{p}_m & = \begin{cases}
	q_2,& \text{ if }u^L_m < U^{FA}(0),\: c+\rho u^R_m > 0 ,\text{ and }U_m(q_2)\ge U^S(q_2)\\
	0,& \text{ otherwise.}
\end{cases}
\end{align*}

Consider $\overline{p}_m$. By Lemma \ref{lem:m_cutoffs}.(a)-(b) $u^R_m<U^{FA}(1)$ together with $c+\rho u^L_m > 0$ is a necessary and sufficient condition for the existence of a solution in $(0,1)$ to \eqref{eq:def_pmh} that satisfies smooth pasting and is not dominated by immediate action $m$. Hence if the necessary and sufficient condition is violated we set $\overline{p}_m=1$. Similarly, Lemma \ref{lem:m_cutoffs}.(c)-(d) motivates the definition of $\underline{p}_m=0$ if $u^B_m\ge U^{FA}(0)$ and $c+\rho u^R_m \le 0$. 

The requirements that $U_m(q_1)\ge U^S(q_1)$ in the definition of $\overline{p}_m$ and $U_m(q_2)\ge U^S(q_2)$ in the definition of $\underline{p}_m$, guarantee, respectively, that the $m$-strategy always has the structure depicted in the diagram above because it avoids defining $\overline{p}_m=q_1$ and $\underline{p}_m=q_2$ when $q_2>q_1$.

The value of the $m$-strategy is 
\[
V_{m}(p):=\begin{cases}
V_{0}(p;\underline{p}_m,U_{m}(\underline{p}_m)), & \mbox{ for }p<\underline{p}_m,\\
U_{m}(p), & \text{ for }p\in\left[\underline{p}_m,\overline{p}_m\right],\\
V_{1}(p;\overline{p}_m,U_{m}(\overline{p}_m)), & \mbox{ for }p>\overline{p}_m.
\end{cases}
\]

The Lemmas leading to the upper envelope characterization of the value
function in Proposition \ref{prop:envelope_characterization} depend on
the properties of branches defined by particular solutions to \eqref{eq:V_a0}
and \eqref{eq:V_a1}. Therefore the same steps can be applied in this
extension and we obtain that the value function of the extended problem
is given by: 
\[
V(p)=\max\left\{ V_{own}(p),V_{opp}(p),V_{m}(p)\right\} .
\]

It is straightforward to extend this to more than three actions. Suppose we have actions $\ell,r$ as well as additional actions $m_1,m_2,\ldots$, where for all $i=1,2,\ldots$, $(u^R_{m_i},u^A_{m_i})$ satisfy the conditions formulated for action $m$ at the beginning of this section. In this case we define an $m_i$-strategy for each of the actions in the same way as above. Denote the value of strategy $m_i$ by $V_{m_i}(p)$. The value function of the DM's problem is then given by 
\[
V(p)=\max\left\{ V_{own}(p),V_{opp}(p),V_{m_1}(p),V_{m_2}(p),\ldots \right\} .
\]

\end{document}